\newcommand{\ybox}{{\scalebox{.5}{\yng(1)}}}
\newcommand{\tinybox}{{\scalebox{.3}{\yng(1)}}}
\newcommand{\Mo}[1]{\cM_{0,#1}}
\newcommand{\Mbar}[1]{\overline{\Mo{#1}}}
\newcommand{\SLdot}{\cS(\lambda_\bullet)}
\newcommand{\SLdotBox}{\cS(\lambda_\bullet\ ; \ \ybox_z)}
\newcommand{\sh}{\mathrm{sh}}
\newcommand{\ev}{\mathrm{ev}}
\newcommand{\esh}{\mathrm{esh}}
\newcommand{\rsh}{\mathrm{rsh}}
\newcommand{\rect}{{\scalebox{.3}{\yng(3,3)}}}
\newcommand{\newword}[1]{\textbf{\emph{#1}}}
\newcommand{\into}{\hookrightarrow}
\newcommand{\eset}{\varnothing}
\DeclareMathOperator{\Spec}{Spec}
\renewcommand{\AA}{\mathbb{A}}
\newcommand{\CC}{\mathbb{C}}
\newcommand{\HH}{\mathbb{H}}
\newcommand{\PP}{\mathbb{P}}
\newcommand{\RR}{\mathbb{R}}
\newcommand{\ZZ}{\mathbb{Z}}
\newcommand{\cC}{\mathcal{C}}
\newcommand{\cF}{\mathcal{F}}
\newcommand{\cG}{\mathcal{G}}
\newcommand{\cM}{\mathcal{M}}
\newcommand{\cO}{\mathcal{O}}
\newcommand{\cS}{\mathcal{S}}
\newcommand{\cU}{\mathcal{U}}
\newcommand{\sB}{\mathscr{B}}
\newcommand{\sF}{\mathscr{F}}
\newcommand{\sH}{\mathscr{H}}
\newcommand{\sI}{\mathscr{I}}
\newtheorem{thm}{Theorem}[section]
\newtheorem{lemma}[thm]{Lemma}
\newtheorem{cor}[thm]{Corollary}
\newtheorem{prop}[thm]{Proposition}
\newtheorem{conj}[thm]{Conjecture}
\theoremstyle{definition}
\newtheorem{exa}[thm]{Example}
\newtheorem*{rmk}{Remark}
\numberwithin{figure}{section}
\numberwithin{equation}{section}
\theoremstyle{plain}
\title{One-dimensional Schubert problems with respect to osculating flags}
\author{Jake Levinson}
\begin{document}

\begin{abstract}
We consider Schubert problems with respect to flags osculating the rational normal curve. These problems are of special interest when the osculation points are all real -- in this case, for zero-dimensional Schubert problems, the solutions are ``as real as possible''. Recent work by Speyer has extended the theory to the moduli space $\Mbar{r}$, allowing the points to collide. These give rise to smooth covers of $\Mbar{r}(\RR)$, with structure and monodromy described by Young tableaux and jeu de taquin.


In this paper, we give analogous results on one-dimensional Schubert problems over $\Mbar{r}$. Their (real) geometry turns out to be described by orbits of Sch\"{u}tzenberger promotion and a related operation involving tableau evacuation. Over $\Mo{r}$, our results show that the real points of the solution curves are smooth.

We also find a new identity involving `first-order' K-theoretic Littlewood-Richardson coefficients, 
for which there does not appear to be a known combinatorial proof.
\end{abstract}

\maketitle

\begin{rmk}We review terminology relating to Schubert varieties in Section \ref{sec:grass-schubert}; to stable curves and $\Mbar{r}$ in Section \ref{sec:M0r}; and to tableau combinatorics and dual equivalence in Section \ref{sec:tableau-combinatorics}.
\end{rmk}

\section{Introduction}
\subsection{Osculating flags} Consider the following construction: let $f : \mathbb{P}^1 \to \mathbb{P}^{n-1}$ be the Veronese embedding $t \mapsto (t,t^2,\ldots, t^{n-1})$, and consider the Grassmannian
\[G(k,n) = G(k,H^0(\mathcal{O}_{\mathbb{P}^1}(n-1)))\]
of linear series $V$ of rank $k$ and degree $n-1$ on $\mathbb{P}^1$. At each point $f(p) \in \mathbb{P}^{n-1}$, there is the \emph{osculating flag} $\mathscr{F}(p)$ 
of planes intersecting $f(\mathbb{P}^1)$ at $f(p)$ with the highest possible multiplicity. In this paper, we consider Schubert conditions with respect to such flags.

Let $\rect$ denote the $k \times (n-k)$ rectangular partition. For a partition $\lambda \subseteq \rect$, we denote by $\Omega(\lambda,p)$ the Schubert variety for $\lambda$ with respect to $\mathscr{F}(p)$,
and for a collection of distinct points $p_\bullet = (p_1, \ldots, p_r)$ and partitions $\lambda_\bullet = (\lambda_1, \ldots, \lambda_r)$, we set
\[S(\lambda_\bullet, p_\bullet) = \bigcap_{i=1}^r \Omega(\lambda_i, p_i).\]
Note that the codimension of $\Omega(\lambda,p_i)$ is $|\lambda|$. We call the quantity $\rho(\lambda_\bullet) := k(n-k) - \sum |\lambda_i|$ the \emph{expected dimension} of $S(\lambda_\bullet, p_\bullet)$.

Geometrically, such Schubert conditions describe linear series $V \subset H^0(\mathcal{O}_{\mathbb{P}^1}(n-1))$ satisfying specified \emph{vanishing conditions} at each $p_j$. That is, the finite set
$\{\mathrm{ord}_p(s) : s \in V\} \subset \mathbb{Z}_{\geq 0}$
of orders of vanishing at $p$ of sections $s\in V$ is specified. These conditions first arose in the study of limit linear series \cite{EH86}. There it was shown that a collection of linear series on the components of a reducible nodal curve $C$ occurs as the limit of a single linear series on a smoothing of $C$ if and only if the collection satisfies `complementary' vanishing conditions with respect to the nodes of $C$.

On the other hand, such Schubert conditions have been of interest in intersection theory and real Schubert calculus, thanks to such transversality theorems as:

\begin{thm}\emph{\cite{EH86}} \label{thm:EH86}
For any choice of points $p_i$ and partitions $\lambda_i$, the intersection $S(\lambda_\bullet,p_\bullet)$ is dimensionally transverse. (It is empty if $\rho(\lambda_\bullet) < 0$.)
\end{thm}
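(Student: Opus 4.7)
The theorem has two parts: emptiness when $\rho(\lambda_\bullet) < 0$, and pure dimension $\rho$ otherwise. The lower bound $\dim S(\lambda_\bullet, p_\bullet) \geq \rho$, whenever $S$ is nonempty, is general: an intersection of subvarieties of codimensions $c_i$ in a smooth ambient variety has codimension at most $\sum c_i$. Thus the entire content of the theorem is the upper bound $\dim S \leq \rho$, together with emptiness when $\rho < 0$.

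My proposed route is via the \emph{Wronskian map}
\[ W \colon G(k,n) \longrightarrow \PP\bigl(H^0(\PP^1, \cO(k(n-k)))\bigr), \qquad V = \langle s_1,\ldots, s_k \rangle \mapsto \det\bigl(s_j^{(i-1)}\bigr)_{i,j=1}^k, \]
whose source and target both have dimension $k(n-k)$. A direct local computation shows that if $V \in \Omega(\lambda, p)$, then $\mathrm{ord}_p W(V) \geq |\lambda|$: for $V$ in the open cell $\Omega^\circ(\mu, p)$, a basis can be chosen with vanishing orders $a_0 < \cdots < a_{k-1}$ at $p$, and the leading term of the Wronskian has $t$-order $\sum a_i - \binom{k}{2} = |\mu|$; for any $\mu \supseteq \lambda$ the inequality $|\mu| \geq |\lambda|$ gives the claim. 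Consequently, $S(\lambda_\bullet, p_\bullet) \subseteq W^{-1}(L)$, where $L \subset \PP(H^0(\cO(k(n-k))))$ is the linear subspace of sections vanishing at each $p_i$ to order at least $|\lambda_i|$. Since the $p_i$ are distinct, $L$ has codimension $\sum |\lambda_i|$ (and is empty when this sum exceeds $k(n-k)$). If $W$ is a finite morphism, then $\dim W^{-1}(L) = \dim L = \rho$, completing the upper bound.

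The main obstacle, as I see it, is the finiteness of $W$. This is essentially equivalent to the $\rho = 0$ case of the theorem, so the Wronskian reformulation cleanly handles the bookkeeping but pushes the geometric work into proving finiteness. The original Eisenbud--Harris argument proceeds instead by direct degeneration: set up a one-parameter family of point configurations in which two of the $p_i$ collide on a reducible nodal curve, use their theory of limit linear series to describe the Schubert condition at the resulting node --- it distributes across the two components in a Pieri-compatible way --- and induct on $r$. The combinatorics of vanishing sequences at the node matching the Pieri rule is the technical heart of their proof; once this is in place, dimension counts propagate through the family and yield both the finiteness of $W$ and the theorem in full.
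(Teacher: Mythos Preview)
The paper does not give its own proof of this theorem: it is stated as a result of Eisenbud--Harris and cited to \cite{EH86}. The only commentary the paper offers is in Section~\ref{subsec:lin-sys}, where it remarks that the theorem is ``essentially equivalent'' to the Pl\"ucker formula (total ramification of a linear series on $\PP^1$ equals $k(n-k)$), which in turn is cited to \cite{GrHaBook}. Your Wronskian approach is exactly this Pl\"ucker-formula route in disguise: the Wronskian vanishes precisely along the ramification divisor, so its degree being $k(n-k)$ is the Pl\"ucker formula, and finiteness of $W$ is the $\rho=0$ case of the theorem. In that sense your reformulation is consonant with the paper's one-line remark.

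That said, your write-up is an outline rather than a proof. You correctly isolate the lower bound as formal and reduce the upper bound to finiteness of $W$, but you then explicitly flag finiteness as unproven and only describe the Eisenbud--Harris degeneration in words. If you want a self-contained argument, you need either to carry out the limit-linear-series induction you sketch, or to prove finiteness of $W$ directly (for instance: $W$ is projective, so it suffices to show it is quasi-finite; a positive-dimensional fiber would be a positive-dimensional family of $V$'s all with the same Wronskian, and one can rule this out by intersecting with an ample Schubert divisor $\Omega(\ybox,p)$ at a non-ramification point $p$, which is disjoint from the fiber). As written, the proposal accurately diagnoses the structure of the problem but does not close the gap it names.
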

and
\begin{thm} \emph{\cite{MTV09}} \label{thm:MTV09}
If the $p_i$ are all in $\mathbb{RP}^1$ and $\rho(\lambda_\bullet) = 0$, then $S(\lambda_\bullet, p_\bullet)$ is reduced and consists entirely of real points.
\end{thm}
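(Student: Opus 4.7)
The plan is to translate the Schubert intersection into a representation-theoretic spectral problem via the Bethe ansatz for the $\mathfrak{sl}_n$ Gaudin model. To each partition $\lambda_i$ I would associate the irreducible polynomial $\mathfrak{sl}_n$-module $V_{\lambda_i}$, and work inside the weight subspace $(V_{\lambda_1}\otimes\cdots\otimes V_{\lambda_r})_\mu$ whose weight $\mu$ is determined by the condition $\rho(\lambda_\bullet)=0$. A choice of points $p_\bullet$ determines a family of commuting Gaudin Hamiltonians on this space. The first main step is the Bethe ansatz correspondence: common eigenvectors of the Gaudin Hamiltonians are in bijection with points of $S(\lambda_\bullet,p_\bullet)$, with the eigenvalue data encoding the fundamental differential operator (equivalently, the Wronskian) of the corresponding linear series.

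The second step exploits the hypothesis that the $p_i$ are all real. Under this hypothesis, the Gaudin Hamiltonians become self-adjoint with respect to the tensor-product Shapovalov form on $V_{\lambda_1}\otimes\cdots\otimes V_{\lambda_r}$. Since each $V_{\lambda_i}$ is an irreducible polynomial representation, its Shapovalov form is positive definite, hence so is the form on the tensor product. It follows that the Hamiltonians are simultaneously diagonalizable with real eigenvalues. Via the Bethe correspondence, every point of $S(\lambda_\bullet,p_\bullet)$ is therefore real.

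For reducedness, I would combine diagonalizability with Theorem~\ref{thm:EH86}. Dimensional transversality forces the scheme $S(\lambda_\bullet,p_\bullet)$ to have length equal to the relevant Littlewood--Richardson coefficient, which in turn equals $\dim(V_{\lambda_1}\otimes\cdots\otimes V_{\lambda_r})_\mu$. On the other hand, diagonalizability of the Gaudin Hamiltonians on this weight space produces exactly this many distinct joint eigenlines. No intersection point can therefore carry nontrivial scheme-theoretic multiplicity, and the intersection is reduced.

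The main obstacle, and the substantive content of \cite{MTV09}, lies in the first two steps: identifying Schubert intersections with the spectrum of the Gaudin Hamiltonians, and establishing the self-adjointness property that forces the spectrum to be real. The final reducedness argument is then essentially formal, but the representation-theoretic input is quite deep and appears hard to replace with a purely geometric or combinatorial argument.
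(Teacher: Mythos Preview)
The paper does not prove Theorem~\ref{thm:MTV09} at all: it is quoted as a result of Mukhin--Tarasov--Varchenko \cite{MTV09} (the former Shapiro--Shapiro Conjecture) and used as background for the paper's own work on the one-dimensional case. So there is no ``paper's own proof'' to compare against; your proposal is really a sketch of the argument in the cited reference, not of anything in this paper.

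As a sketch of the MTV approach your outline is broadly on target, but one point is imprecise. The relevant finite-dimensional space is not the full weight subspace $(V_{\lambda_1}\otimes\cdots\otimes V_{\lambda_r})_\mu$, but the subspace of \emph{singular} vectors (highest-weight vectors for the diagonal $\mathfrak{sl}_n$-action) of the appropriate weight; it is that subspace whose dimension equals the Littlewood--Richardson number $c_{\lambda_\bullet}^{\rect}$, and on which the Bethe algebra acts with simple spectrum. The full weight space is strictly larger in general, so your counting argument for reducedness would not close as written. With that correction, and with the acknowledgment that the hard content---the identification of the Bethe algebra with functions on the Schubert intersection, and the positivity/self-adjointness showing the spectrum is real and simple---is exactly what \cite{MTV09} supplies, your summary is a fair high-level description of their method.
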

Theorem \ref{thm:MTV09} (originally known as the \emph{Shapiro-Shapiro Conjecture}) has inspired work relating the real structure of $S(\lambda_\bullet, p_\bullet)$, as the points $p_\bullet$ vary, to combinatorial Schubert calculus and the theory of Young tableaux. An excellent survey of this material is \cite{Sottile}. The key observation is that the cardinality of $S(\lambda_\bullet, p_\bullet)$ is the Littlewood-Richardson coefficient $c_{\lambda_1, \ldots, \lambda_r}^{\rect}$. We may then ask if there is a canonical bijection between the points of $S$ and the corresponding combinatorial objects.

First, we allow the points to vary. The construction above gives a family $S(\lambda_\bullet)$ over the space $\mathcal{U}_r$ of $r$-tuples of distinct points of $\mathbb{P}^1$ or, working up to automorphism, the moduli space $\Mo{r}$. Speyer \cite{Sp} extended the family to allow the points $p_i$ to collide, working over the compactification $\Mbar{r}$.
\begin{thm}\emph{\cite{Sp}}
There are flat, Cohen-Macaulay families $\mathcal{S}(\lambda_\bullet) \subset \mathcal{G}(k,n) \to \Mbar{r}$, whose restriction to $\Mo{r}$ is $S(\lambda_\bullet) \subset G(k,n) \times \Mo{r} \to \Mo{r}$. The boundary fibers of $\mathcal{G}(k,n)$ consist of limit linear series and the boundary fibers of $\mathcal{S}(\lambda_\bullet)$ consist of limit linear series satisfying the conditions $\lambda_\bullet$ at the marked points.
\end{thm}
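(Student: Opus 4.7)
The plan is to construct $\mathcal{G}(k,n)\to\Mbar{r}$ as a relative moduli space of Eisenbud--Harris limit linear series on the universal stable curve $\overline{\mathcal{C}}\to\Mbar{r}$, then carve out $\mathcal{S}(\lambda_\bullet)$ inside it by imposing the Schubert conditions $\lambda_i$ at the $r$ marked sections. Over $\Mo{r}$ both families agree with the ones in the statement, so the real content is at the boundary; flatness will then follow from miracle flatness, once the total space is shown to be Cohen--Macaulay and all fibers have the expected dimension.

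Concretely, at a boundary stratum whose generic point is a reducible stable curve $C = C_1 \cup \cdots \cup C_s$ (a tree of $\mathbb{P}^1$'s), I would define the fiber of $\mathcal{G}(k,n)$ as the closed subscheme of $\prod_\ell G(k, H^0(\mathcal{O}_{C_\ell}(n-1)))$ cut out by the \emph{complementary vanishing} condition at every node: at $q\in C_\ell\cap C_m$, the Schubert partitions of $V_\ell$ and $V_m$ at $q$ (recorded by the vanishing sequences at $q$, equivalently by the osculating flag of the component at $q$) must be $\rect$-complements. The fiber of $\mathcal{S}(\lambda_\bullet)$ imposes additionally the condition $\lambda_i$ at each marked section $\sigma_i$ on the component carrying it. To assemble these fibers into a scheme flat over $\Mbar{r}$, I would work in local smoothing coordinates around each boundary point and use the explicit Pl\"{u}cker-style equations of osculating Schubert varieties to realize the family as a relative determinantal scheme, verifying that it extends the family already constructed over $\Mo{r}$.

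Equidimensionality of fibers is a Schubert-calculus count: each node imposes codimension $k(n-k)$ (as a union over complementary pairs $(\mu,\mu^c)$ of products of Schubert varieties in codimensions $|\mu|$ and $|\mu^c|$), so the $s-1$ nodes of a tree cut the product dimension $s\cdot k(n-k)$ down to $k(n-k)$; applying Theorem~\ref{thm:EH86} to each $C_\ell\cong\mathbb{P}^1$ with its marked-point and node conditions then further trims to the expected dimension $\rho(\lambda_\bullet)$. The hardest step, I expect, is Cohen--Macaulayness of the total space rather than of individual fibers: even if each fiber is a reduced local complete intersection, one still needs the defining equations to form a regular sequence \emph{relative} to $\Mbar{r}$. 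My plan there is to induct on the boundary stratification of $\Mbar{r}$ --- crossing each irreducible boundary divisor corresponds to sprouting one new bubble and imposing one additional complementary-vanishing condition, and if each such step preserves the relative regular-sequence property on the preceding total space, then $\mathcal{S}(\lambda_\bullet)$ is a relative local complete intersection, giving both Cohen--Macaulayness and flatness for free.
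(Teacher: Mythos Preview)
This theorem is cited from \cite{Sp}, and the present paper does not prove it; it only sketches Speyer's construction after restating the result as Theorem~\ref{thm:recall-Sp14}. So the relevant comparison is between your outline and that sketch. The two approaches are genuinely different: Speyer does \emph{not} build the family fiber-by-fiber from limit linear series data. Instead he fixes a single ambient bundle $\sB = \prod_{T} G(k,n)_T$ over $\Mbar{r}$, one Grassmannian factor for every $3$-element subset $T\subset\{1,\dots,r\}$, embeds $G(k,n)_{\PP^1}\times\Mo{r}$ diagonally into $\sB|_{\Mo{r}}$, and defines $\cG(k,n)$ as the Zariski closure. The point is that the ambient $\sB$ is uniform over all of $\Mbar{r}$; the identification of the boundary fiber with $\prod_i G(k,n)_{C_i}$ comes \emph{after} the construction, by projecting to a well-chosen subset of the factors.

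Your plan has two real gaps. First, the product $\prod_\ell G(k,H^0(\cO_{C_\ell}(n-1)))$ changes its number of factors with the combinatorial type of $C$, so you have described the fibers but not a scheme over $\Mbar{r}$ containing them; ``work in local smoothing coordinates'' is exactly the hard part, and your induction on strata does not say how the local pieces glue. Speyer's $3$-element-subset trick is precisely what solves this. Second, and more seriously, your endgame for Cohen--Macaulayness is almost certainly false as stated: the nodal condition is a \emph{union} over all complementary pairs $(\mu,\mu^c)$ of products of Schubert varieties, so $\cG(k,n)_C = \bigcup_\nu \Phi_\nu$ (equation~\eqref{eqn:node-labeling-space}) is highly reducible and is not cut out by a regular sequence in the product. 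It is Cohen--Macaulay, but not because it is a relative local complete intersection; Speyer's argument instead exploits a Frobenius-splitting/degeneration analysis of the closure in $\sB$. Your equidimensionality count is correct and is indeed one ingredient, but it does not by itself yield flatness without the Cohen--Macaulay input, and that input will not come from an LCI argument.
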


In the case of zero-dimensional Schubert problems, the real locus of $\mathcal{S}(\lambda_\bullet)$ has the following remarkable structure:
\begin{thm}\emph{\cite{Sp}} \label{thm:Sp14}
When $\rho(\lambda_\bullet) = 0$, the map of manifolds $\mathcal{S}(\lambda_\bullet)(\mathbb{R}) \to \Mbar{r}(\mathbb{R})$ is a smooth covering map. The fibers of $\mathcal{S}(\lambda_\bullet)(\mathbb{R})$ are indexed by certain collections of Young tableaux; the monodromy of the cover is then given by operations from Sch\"{u}tzenberger's jeu de taquin.
\end{thm}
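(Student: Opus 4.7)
The plan is to show that $\SLdot(\RR) \to \Mbar{r}(\RR)$ is proper, unramified, and has fiber cardinality everywhere equal to the Littlewood-Richardson coefficient $c^{\rect}_{\lambda_1,\ldots,\lambda_r}$; such a map of real manifolds is automatically a smooth covering. Properness is inherited from Speyer's family, which is projective over $\Mbar{r}$. The tableau indexing and the jeu de taquin description of monodromy are then extracted by comparing the global cover against its combinatorial model on the open part and its limit linear series description along the boundary.

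On the interior $\Mo{r}(\RR)$, Theorem \ref{thm:MTV09} directly gives reduced, totally real fibers of cardinality $c^{\rect}_{\lambda_1,\ldots,\lambda_r}$. Flatness of Speyer's family then promotes this to an \'etale cover of $\Mo{r}(\RR)$. A canonical indexing of the real points of the fiber by growth chains
\[
\emptyset = \nu_0 \subset \nu_1 \subset \cdots \subset \nu_r = \rect, \qquad \nu_i / \nu_{i-1} \text{ of shape } \lambda_i,
\]
i.e.\ by Littlewood-Richardson chains of Young tableaux, is provided by the Wronski map: each real solution is determined by its sequence of ramification jumps across the marked points.

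To extend the covering map to the boundary, let $\xi \in \Mbar{r}(\RR)$ represent a real stable curve $C = \bigcup_i C_i$. A point of the fiber over $\xi$ is a real limit linear series on $C$ satisfying $\lambda_j$ at the $j$-th marked point. Such a datum decomposes as a linear series on each component $C_i \cong \PP^1$, together with complementary vanishing conditions $(\mu_e, \mu_e^c)$ across each node $e$. Hence the real fiber cardinality factors as
\[
\sum_{\mu_\bullet} \prod_{i} \#\bigl\{\text{real solutions on } C_i \text{ with conditions } \lambda_{\bullet,i},\, \mu_{\bullet,i} \bigr\}.
\]
Each $C_i$ carries only real special points, so Theorem \ref{thm:MTV09} applies inductively and yields the count $c^{\rect}_{\lambda_{\bullet,i},\mu_{\bullet,i}}$ on $C_i$. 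Summing and using associativity of Littlewood-Richardson coefficients (the standard recursion) collapses the total to $c^{\rect}_{\lambda_1,\ldots,\lambda_r}$, matching the generic degree. By flatness this forces reducedness of the boundary fibers, so the real map is unramified globally and hence a covering. The tableau labels extend by taking tuples of chains on the components of $C$.

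The key remaining step, and the main obstacle, is the identification of the monodromy with jeu de taquin. This is local at a codimension-one boundary divisor of $\Mbar{r}(\RR)$, where two marked points $p_i, p_{i+1}$ collide and then pass through one another; a small real loop around such a divisor induces an involution of each fiber. The claim is that this involution is the Sch\"utzenberger slide exchanging the pieces of the growth chain labeled by $\lambda_i$ and $\lambda_{i+1}$. Establishing this requires a real-analytic description of the family near the degeneration --- how the two colliding osculating flags merge into a node whose attached limit linear series interpolates between the ``before'' and ``after'' configurations of the solutions --- together with a combinatorial identification of the resulting limit tableau chain with a jeu de taquin slide. Once this wall-crossing analysis is complete, the general monodromy follows from the fact that the $1$-skeleton of the stratification of $\Mbar{r}(\RR)$ generates its fundamental groupoid.
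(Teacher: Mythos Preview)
This theorem is not proved in the paper; it is quoted from \cite{Sp} as background. There is no proof in the paper to compare your proposal against. The paper does restate Speyer's result in more detail later (Theorem~\ref{speyer-covering-space}), but again without proof.

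That said, your sketch has a genuine gap in the indexing step. You claim that each real solution is determined by its ``sequence of ramification jumps across the marked points,'' i.e.\ by a chain of partitions $\emptyset = \nu_0 \subset \cdots \subset \nu_r = \rect$. This is false: distinct real solutions can, and typically do, share the same chain of partitions. The correct invariant, as stated in Theorem~\ref{speyer-covering-space} of this paper, is a chain of \emph{dual equivalence classes} (equivalently, a dual equivalence cylindrical growth diagram), which is strictly finer than the chain of partitions. The number of chains of partitions of the required type is not $c^{\rect}_{\lambda_\bullet}$ in general; the number of chains of dual equivalence classes is. Your Wronski-map argument does not produce this finer data, and without it you cannot label the fibers bijectively or even state the monodromy rule. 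The actual construction of the labeling in \cite{Sp} proceeds by degenerating to caterpillar curves and tracking the node labelings together with the dual equivalence data on each component; it is not extracted from the Wronski map on the interior.

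Your outline of the covering-map part (proper, flat, constant real fiber count via MTV plus the LR recursion on boundary fibers) is morally correct and is indeed close to Speyer's argument for that portion. But you correctly flag the wall-crossing analysis as the main obstacle and do not carry it out; that analysis is the substance of Speyer's proof and cannot be waved away.
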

In addition to giving the desired bijection from points of $S$ to tableaux, Theorem \ref{thm:Sp14} provides a geometrical interpretation of jeu de taquin as the result of lifting arcs from $\Mbar{r}(\RR)$ to $\mathcal{S}(\lambda_\bullet)(\mathbb{R})$, i.e. varying the Schubert problem in real 1-parameter families. Related operations such as promotion and evacuation also have geometrical meanings in $\mathcal{S}(\lambda_\bullet)$, and will play a starring role in the main content of this paper, which is the study of one-dimensional Schubert problems (below).

The most important type of marked stable curve $C$ in this setting consists of $r-2$ components connected in a chain. We will call $C$ a \newword{caterpillar curve} (see Figure \ref{fig:caterpillar-curve}). Such a curve is automatically defined over $\mathbb{R}$. The statement of Theorem \ref{thm:Sp14} is simpler for caterpillar curves. For example, let $\mathcal{S} = \mathcal{S}(\ybox, \ldots, \ybox)$, with $k(n-k)$ copies of $\ybox$, and let $C$ be a caterpillar curve. Let $SYT(\rect)$ be the set of standard Young tableaux of shape $\rect$. Then:

\newtheorem*{thmsp}{Theorem 1.4 (special case)}
\begin{thmsp}
The fiber of $\mathcal{S}$ over $C$ is in bijection with $SYT(\rect)$. If we follow an arc to another caterpillar curve $C'$, the tableau is either unchanged or altered by a Bender-Knuth involution.
\end{thmsp}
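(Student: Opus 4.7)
The plan is to deduce both assertions from Theorem 1.4 by making its combinatorial content explicit in the special case $\lambda_\bullet = (\ybox, \ldots, \ybox)$ with $r = k(n-k)$, so that no new geometry is required beyond the covering-map structure already guaranteed by \cite{Sp}.

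First I would identify the fiber over a caterpillar $C$. By Theorem 1.4, the fiber is indexed by a collection of Young tableaux whose shapes are determined by the ordering of the marked points along the chain of components: concretely, a growth chain $\emptyset = \mu_0 \subset \mu_1 \subset \cdots \subset \mu_r = \rect$ in which $\mu_i/\mu_{i-1}$ is a tableau of shape $\lambda_i$, recording the vanishing orders achievable at successive nodes of the caterpillar (this is the Littlewood--Richardson rule realized stepwise). In our case every $\lambda_i = \ybox$, so each step $\mu_i / \mu_{i-1}$ is a single box; placing the label $i$ in that box assembles the chain into a standard Young tableau $T$ of shape $\rect$, and this assignment gives a bijection $\mathcal{S}|_C \cong SYT(\rect)$.

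Next I would describe the monodromy along arcs. The real caterpillar curves form the vertices of a graph on $\Mbar{r}(\RR)$, and an edge corresponds to a real arc that swaps two marked points $p_i, p_{i+1}$ which are adjacent in the chain ordering, leaving the rest untouched. Theorem 1.4 identifies the induced monodromy on the associated growth chain with a jeu de taquin operation local to positions $i$ and $i+1$. For single-box shapes the only possible local slide is the elementary involution that swaps the labels $i$ and $i+1$ in $T$ when they lie in neither a common row nor a common column, and leaves $T$ fixed otherwise --- this is precisely the Bender--Knuth involution at index $i$.

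The main obstacle is the combinatorial translation: I need to verify that Speyer's general tableau parametrization of the fiber restricts, on the single-box locus, to ordinary $SYT(\rect)$ via the bijection above, and that his jeu de taquin operation for swapping two adjacent marked points on a chain genuinely collapses to the Bender--Knuth involution in this setting. Granting this, the covering-map property, the reality of all fibers, and the smoothness of $\mathcal{S}(\RR) \to \Mbar{r}(\RR)$ are inherited directly from Theorem 1.4, and the special case follows with no further geometric input.
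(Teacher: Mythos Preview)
Your proposal is correct and matches the paper's approach: the paper does not give a separate proof of this special case, treating it simply as an instance of Theorem~1.4 (Speyer's theorem) with the combinatorics unpacked, and your identification of growth chains with $SYT(\rect)$ and of the jeu de taquin monodromy with Bender--Knuth involutions is exactly the translation the paper makes explicit later (see the remarks on $\sh_i$ and $X_\eset^\lambda(\ybox,\ldots,\ybox)$ in Section~\ref{sec:shuffling-ops}). The one ``obstacle'' you flag is handled there as well, so no additional geometric input is needed.
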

Purbhoo in \cite{Pu} has analogous results regarding the real monodromy of the Wronski map $G(k,n) \to \mathrm{Hilb}_{k(n-k)}(\PP^1)$. This map associates to a linear series $V$ its higher ramification locus, as a subscheme of $\PP^1$. Here also, the monodromy (over the locus where the map is unramified) is described in terms of jeu de taquin, yielding a geometrical interpretation of JDT and the Littlewood-Richardson rule. The primary difference is that the Wronski map is not a covering map: the fibers collide over the boundary of the Hilbert scheme.

\subsection{The case of curves; results of this paper} We now study the case $\rho(\lambda_\bullet) = 1$, so that $\mathcal{S}(\lambda_\bullet) \to \Mbar{r}$ is a family of curves. We are interested in both the geometry of the family and a combinatorial description of $\mathcal{S}(\lambda_\bullet)(\mathbb{R})$ as a CW-complex. We state our main geometrical result first:

\begin{thm} \label{thm:main-geom}
There is a finite, flat, surjective morphism $\mathcal{S}(\lambda_\bullet) \to \mathcal{C}$ of varieties over $\Mbar{r}$, where $\mathcal{C} \to \Mbar{r}$ is the universal curve. This map is defined over $\mathbb{R}$, is \'{e}tale over the real points of $\mathcal{C}$, and the preimage of every real point consists entirely of real points. In particular, for $[C] \in \Mbar{r}(\mathbb{R})$, the map of curves $\mathcal{S}(\lambda_\bullet)|_{[C]}(\mathbb{R}) \to C(\mathbb{R})$ is a covering map. 
\end{thm}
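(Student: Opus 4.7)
The plan is to realize the desired map as arising from a zero-dimensional Schubert problem on $\Mbar{r+1}$, controlled by Speyer's Theorem~\ref{thm:Sp14}. Identify the universal curve $\cC$ with $\Mbar{r+1}$ via the forgetful map $\Mbar{r+1} \to \Mbar{r}$, and augment the Schubert conditions by a single box $\ybox$ at the new marked point $z$. Write $\cS(\lambda_\bullet;\ybox_z)$ for Speyer's extended flat family of the augmented problem. Since the new expected dimension is $\rho(\lambda_\bullet) - 1 = 0$, Theorem~\ref{thm:Sp14} applies to
\[\cS(\lambda_\bullet;\ybox_z) \to \Mbar{r+1} = \cC,\]
so this structural map is finite, flat, defined over $\RR$, étale over $\cC(\RR)$, with every preimage of a real point consisting entirely of real points.

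The next step is to analyze the forgetful morphism $\Psi\colon \cS(\lambda_\bullet;\ybox_z) \to \cS(\lambda_\bullet)$ which remembers only the linear series and the original $r$ marked points. We claim $\Psi$ is an isomorphism. Both source and target are Cohen--Macaulay and flat over $\Mbar{r}$ of the same total dimension $r-2$ by Speyer's theorem, and the map is finite. It is moreover birational over $\Mo{r}$: given $V \in S(\lambda_\bullet)$, the fiber records the points $z$ with $V \in \Omega(\ybox, z)$, and after subtracting the forced Wronskian vanishings at the $p_i$ (of total multiplicity $\sum|\lambda_i| = k(n-k) - 1$) exactly one residual zero remains. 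Combined with Cohen--Macaulayness on both sides and (provided the target is normal) Zariski's main theorem, this upgrades $\Psi$ to an isomorphism.

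Granting that $\Psi$ is an isomorphism, the composite
\[\cS(\lambda_\bullet) \xrightarrow{\Psi^{-1}} \cS(\lambda_\bullet;\ybox_z) \longrightarrow \cC\]
is the desired map $\Phi$. Every property asserted in Theorem~\ref{thm:main-geom} --- finiteness, flatness, surjectivity, descent to $\RR$, étaleness over $\cC(\RR)$, and reality of preimages --- transfers directly from the augmented family, since the identification $\Psi$ is manifestly defined over $\RR$. The covering-map statement for a fixed $[C] \in \Mbar{r}(\RR)$ then follows by restriction to the real fiber.

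The main obstacle is verifying $\Psi$ is a scheme-theoretic isomorphism (rather than merely finite birational). The delicate locus is where the residual Wronskian point collides with one of the forced ramification points $p_i$: on $\cS(\lambda_\bullet)$ this corresponds to a sub-locus where $V$ acquires extra ramification at $p_i$, whereas on $\cS(\lambda_\bullet;\ybox_z)$ the underlying stable curve has bubbled off a component separating $p_i$ from $z$. Reconciling these two pictures across the boundary --- either by establishing normality of $\cS(\lambda_\bullet)$ or, more concretely, by a local analysis using Speyer's explicit description of limit linear series on caterpillar-type fibers --- is the crux of the argument.
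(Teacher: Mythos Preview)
Your overall architecture is exactly the paper's: identify $\cC$ with $\Mbar{r+1}$, form the zero-dimensional family $\cS(\lambda_\bullet;\ybox_z)\to\Mbar{r+1}$, and transport the conclusions of Theorem~\ref{thm:Sp14} through an isomorphism $\Psi:\cS(\lambda_\bullet;\ybox_z)\to\cS(\lambda_\bullet)$. You have also correctly located the entire content of the theorem in proving that $\Psi$ is a scheme-theoretic isomorphism.

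However, neither route you propose for this step is carried out, and the first one is not available. Zariski's main theorem requires normality of the \emph{target} $\cS(\lambda_\bullet)$, which is nowhere established: Speyer's theorem gives only Cohen--Macaulayness, and the paper explicitly notes that fibers may be singular or reducible (indeed, Example~\ref{exa:disconnected} exhibits a disconnected fiber). So ``finite birational onto Cohen--Macaulay'' is not enough, and you would be assuming what is essentially to be proved. Your birationality argument over $\Mo r$ is also incomplete as written: you need the residual Wronskian zero to be scheme-theoretically simple, not merely unique, and you need to handle the locus where it collides with a marked point.

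The paper (Theorem~\ref{box-lift}) instead proves $\Psi$ is an isomorphism by a direct fiberwise argument, avoiding normality entirely. Over each $[C]\in\Mbar r$ it shows every scheme-theoretic fiber of $\Psi$ is a single reduced point. The reduction to irreducible $C$ uses the nodal-gluing Lemma~\ref{nodal-iso} together with the observation (Lemma~\ref{excess-labels-dim1}) that distinct node labelings meet in reduced points. Over an irreducible $C$ the map is factored through the incidence correspondence $S'\subset G(k,n)\times\PP^1$: an explicit Pl\"ucker computation (Lemma~\ref{lem:local-equation-box}) shows the defining equation is $\prod_i (z-p_i)^{|\lambda_i|}$ times a \emph{linear} polynomial in $z$ with unit leading coefficient, giving $S'\to\cS(\lambda_\bullet)$ an isomorphism; and the remaining projection $\cS(\lambda_\bullet;\ybox_z)\to S'$ is checked to be an isomorphism at $z=p_i$ using that the relevant Littlewood--Richardson coefficient on the bubbled-off component is $1$. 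This concrete analysis is what replaces your appeal to normality.
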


The key idea behind Theorem \ref{thm:main-geom} is the following. A point $s \in S(\lambda_\bullet, p_\bullet)$ is a solution to an `underspecified' Schubert problem, and it is not hard to show that, for generic $s$, there is a unique $(r+1)$-st point $z \in \mathbb{P}^1 - \{p_\bullet\}$, such that $s$ satisfies the single-box Schubert condition $\Omega(\ybox, z)$. We show that the assignment $s \mapsto z$ extends to a morphism $S(\lambda_\bullet, p_\bullet) \to \mathbb{P}^1$. We then extend this construction to the boundary fibers. (We think of $z$ as an additional `moving $\ybox$ condition'.) In particular, thinking of $\mathcal{C} \to \Mbar{r}$ as the `forgetting map' $\Mbar{r+1} \to \Mbar{r}$, we have a diagram
\[
\xymatrix{
\cS(\lambda_\bullet;\ \ybox_{r+1}) \ar[r] \ar[d]_{\pi} & \Mbar{r+1} \ar[d] \\
\cS(\lambda_\bullet) \ar[r] \ar@{-->}[ur]^f & \Mbar{r}
}
\]
and we show that $\pi$ is an isomorphism of total spaces. The map $f$ is the map of Theorem \ref{thm:main-geom}. We then use the description of the total space of the zero-dimensional Schubert problem $\cS(\lambda_\bullet;\ \ybox_{r+1})$ to study the (one-dimensional) fibers of $\cS(\lambda_\bullet)$.\\

Over $\Mo{r}$, this result leads to the following:
\newtheorem*{cor:as-real-as-poss}{Corollary \ref{cor:as-real-as-poss}}
\begin{cor:as-real-as-poss}
If the $p_i$ are all in $\mathbb{RP}^1$, the curve $S = S(\lambda_\bullet, p_\bullet) \subset G(k,n)$ has smooth real points. Moreover, $S(\mathbb{C}) - S(\mathbb{R})$ is disconnected.
\end{cor:as-real-as-poss}

We think of Corollary \ref{cor:as-real-as-poss} as saying that $S$ is `almost as real as possible' when all the $p_j$ are real. We say `almost' because while it is often desirable for a real integral algebraic curve of genus $g$ to have $g+1$ real connected components, this is not the case for $S$ (see Example \ref{exa:large-k} for a smooth curve of genus $2$ with one real connected component). Instead, the real connected components of $S$ are determined by combinatorial data, which we state below. Note that we do not assert in general that $S$ is smooth or integral, though it is reduced. In fact there are cases where $\chi(\mathcal{O}_S) > 1$ (see Example \ref{exa:disconnected}), from which we observe:

\newtheorem*{cor*}{Corollary}
\begin{cor*}
A one-dimensional Schubert problem in $G(k,n)$ need not be a connected curve.
\end{cor*}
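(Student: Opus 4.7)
The Corollary is an existence statement, so my plan is to exhibit a one-dimensional Schubert problem whose solution curve has at least two connected components, as promised by Example \ref{exa:disconnected}. Since the paper already asserts that $S(\lambda_\bullet, p_\bullet)$ is reduced and $\chi(\mathcal{O}_S) = h^0(\mathcal{O}_S) - h^1(\mathcal{O}_S) \leq h^0(\mathcal{O}_S)$, it suffices to find a single Schubert problem with $\chi(\mathcal{O}_S) \geq 2$, since the number of connected components of a reduced projective scheme is exactly $h^0(\mathcal{O}_S)$.

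To get at the Euler characteristic, I would use the flatness of Speyer's family $\cS(\lambda_\bullet) \to \Mbar{r}$: the function $[C] \mapsto \chi(\mathcal{O}_{\cS|_{[C]}})$ is locally constant, so I can degenerate to a caterpillar curve, where both $C$ and its cover $\cS|_{[C]}$ admit a transparent combinatorial description. By Theorem \ref{thm:main-geom}, $\cS(\lambda_\bullet)|_{[C]} \to C$ is a finite flat cover of a chain of $\mathbb{P}^1$'s, and its structure --- the sheets over each component together with the identifications at the nodes --- is governed by the promotion and evacuation orbits on tableaux developed in the body of the paper. This turns $\chi(\mathcal{O}_{\cS|_{[C]}})$ into a combinatorial quantity, assembled from the number of sheets over each component and the gluings at the nodes.

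I would then scan small cases --- for instance $G(2,n)$ for modest $n$, or other setups with $\rho(\lambda_\bullet) = 1$ --- for parameters in which the tableaux indexing the sheets over each component split into two or more promotion orbits that remain disjoint after \emph{all} the nodal identifications. Every such globally persistent separation contributes an extra connected component to the cover, and hence an additional $+1$ to $\chi(\mathcal{O}_S)$.

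The main obstacle is simply locating an example in which the separation survives: in many small cases, the tableau orbits merge at a node and the cover becomes connected. One therefore needs enough combinatorial slack in $\lambda_\bullet$ to keep at least two orbits apart at every node simultaneously. Once suitable parameters are in hand, the verification that $\chi(\mathcal{O}_S) \geq 2$ is a finite tableau calculation, and the Corollary follows.
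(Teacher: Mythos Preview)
Your reduction---find an example with $\chi(\mathcal{O}_S) \geq 2$ and use $\chi \leq h^0$---is exactly the paper's. Where you diverge is in how to compute $\chi(\mathcal{O}_S)$. You propose degenerating to a caterpillar curve and reading off $\chi$ from the combinatorics of the nodal cover. The paper instead computes $\chi$ directly in K-theory: from the identity $[\cO_\alpha][\cO_\beta][\cO_\gamma] = k_{\alpha\,\tinybox\,\beta\gamma}^{\rect}[\cO_{\tinybox^c}] - k_{\alpha\beta}^{\gamma^c}[\cO_{\rect}]$ one gets $\chi(\cO_S) = k_{\alpha\,\tinybox\,\beta\gamma}^{\rect} - k_{\alpha\beta}^{\gamma^c}$, so the search reduces to finding $\alpha,\beta,\gamma$ with $k_{\alpha\beta}^{\gamma^c}=0$ and $c_{\alpha\,\tinybox\,\beta\gamma}^{\rect} \geq 2$. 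The example is $\alpha=\beta=\gamma=(3,1,1)$ in $G(4,8)$, giving $\chi=2$ (in fact $S \cong \PP^1 \sqcup \PP^1$). Your approach would also succeed, but it front-loads the heavier machinery of Sections~\ref{sec:tableau-combinatorics}--\ref{sec:schubert-real}, whereas the K-theory route needs only two Littlewood--Richardson-type numbers. One small caution: your suggestion to scan $G(2,n)$ is unlikely to turn up an example---the relevant combinatorics there is too rigid---and indeed the paper's example lives in $G(4,8)$.
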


We remark that our other results primarily concern \emph{fibers} of $\cS(\lambda_\bullet)$, not its total space. The latter is isomorphic to the total space of $\cS(\lambda_\bullet;\ \ybox_{r+1})$, hence has a description from Theorem \ref{thm:Sp14}. We do note that Theorem \ref{thm:main-geom} implies that the topology of the fiber $S(\lambda_\bullet, p_\bullet)(\mathbb{R})$ does not change over a connected component $X \subset \Mo{r}(\mathbb{R})$. In particular:

\begin{cor*}
Each real connected component of $S(\lambda_\bullet)(\mathbb{R})|_X$ is homeomorphic to a cylinder $S^1 \times X$.
\end{cor*}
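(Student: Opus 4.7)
The plan is to apply Theorem~\ref{thm:main-geom} and reduce the statement to elementary covering space theory on $X \times S^1$.

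First, I would argue that $\cC(\RR)|_X \cong X \times S^1$ as topological manifolds. Over $\Mo{r}$, the universal curve $\cC \to \Mo{r}$ is a smooth family with $\PP^1$ fibers, so its real points form a smooth $S^1$-bundle over $X$. With $r \geq 3$ real marked sections at our disposal, the bundle is trivialized explicitly by using $PGL_2(\RR)$ to send three of the marked points to $\infty, 0, 1$ on each real fiber. Combined with Theorem~\ref{thm:main-geom}, which asserts that $\cS(\lambda_\bullet) \to \cC$ is finite and \'{e}tale over real points with real preimages, this gives a finite, surjective covering map of real manifolds
\[
\pi : \cS(\lambda_\bullet)(\RR)|_X \longrightarrow \cC(\RR)|_X \cong X \times S^1.
\]

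Second, I invoke the standard topological fact that each connected component $X$ of $\Mo{r}(\RR)$ is contractible: after normalizing by $PGL_2(\RR)$ so that three specified marked points lie at $\infty, 0, 1$, the cyclic ordering of the remaining points on $\RR\PP^1$ that picks out $X$ cuts out an open box in $\RR^{r-3}$. Hence $\pi_1(X \times S^1) \cong \ZZ$, and the connected finite covers of $X \times S^1$ are classified by the subgroups $n\ZZ \subset \ZZ$ for $n \geq 1$. Each such cover has the form $(x, \theta) \mapsto (x, n\theta)$ and is therefore itself homeomorphic to $X \times S^1$.

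Combining the two steps, every connected component of $\cS(\lambda_\bullet)(\RR)|_X$ is a connected finite cover of $X \times S^1$, and hence is homeomorphic to $S^1 \times X$, as claimed. The only external input is the contractibility of the components of $\Mo{r}(\RR)$, which is standard; I do not anticipate a significant obstacle.
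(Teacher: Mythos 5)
Your proposal is correct and follows essentially the same route as the paper: Theorem \ref{thm:main-geom} makes $\cS(\lambda_\bullet)(\RR)|_X \to \cC(\RR)|_X \cong X \times S^1$ a finite covering map, and since $X$ is a contractible open cell the claim reduces to the classification of connected finite covers of $X \times S^1$ by subgroups $n\ZZ \subseteq \pi_1 \cong \ZZ$. The paper states this more tersely (fibers are unions of circles and the topology is locally constant over $X$), but the underlying argument is the same; your version just makes the trivialization of the $S^1$-bundle and the $\pi_1$ step explicit.
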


We now describe the real topology of the fibers of $\mathcal{S}(\lambda_\bullet)(\mathbb{R})$ in terms of Young tableaux. Our description extends that of Theorem \ref{thm:Sp14} via the isomorphism $\pi$ above, and is in terms of orbits of Young tableaux and dual equivalence classes under operations related to Sch\"{u}tzenberger promotion and evacuation.

We define a \newword{chain of dual equivalence classes from $\alpha$ to $\beta$} to be a sequence ${\bf D} = (D_1, \ldots, D_r)$ of dual equivalence classes of skew standard Young tableaux, such that $\sh(D_1)$ extends $\alpha$, $\sh(D_{i+1})$ extends $\sh(D_i)$ for each $i$, and $\beta$ extends $\sh(D_r)$. We say the chain has \newword{type} $(\lambda_1, \ldots, \lambda_r)$ if $\lambda_i$ is the rectification shape of $D_i$. Let $X_\alpha^\beta(\lambda_\bullet) := X_\alpha^\beta(\lambda_1, \ldots, \lambda_r)$ denote the set of such chains. In section \ref{sec:shuffling-ops}, we define noncommuting involutions $\sh_i$ and $\esh_i$, called {\bf shuffling} and {\bf evacuation-shuffling}, both of which switch $\lambda_i$ and $\lambda_{i+1}$ in the type of the chain. We note that $X_\eset^\lambda(\ybox, \ldots, \ybox)$, with $|\lambda|$ copies of $\ybox$, is just $ SYT(\lambda)$, and under this identification $\esh_i$ is the identity function and $\sh_i$ is the $i$-th Bender-Knuth involution. We note that Sch\"{u}tzenberger promotion on $SYT(\lambda)$ then corresponds to the composition
\[\sh_{|\lambda|-1} \circ \cdots \circ \sh_2 \circ \sh_1 : X_\eset^\lambda(\ybox, \ldots, \ybox) \to X_\eset^\lambda(\ybox, \ldots, \ybox).\]
We think of chains of dual equivalence classes as generalizations of standard tableaux. \\

Our main combinatorial result is the following:

\newtheorem*{thm:DE-caterpillar-covering}{Theorem \ref{thm:DE-caterpillar-covering}}
\begin{thm:DE-caterpillar-covering}
Let $C$ be a caterpillar curve with marked points $p_1, \ldots, p_r$ from left to right. Let $S = \mathcal{S}(\lambda_\bullet)|_{[C]}$. The covering map $S(\mathbb{R}) \to C(\mathbb{R})$ is as follows:
\begin{enumerate}
\item[(i)] If $q$ is the node between $p_i$ and $p_{i+1}$, the fiber of $S$ over $q$ is indexed by the set \[X_\eset^{\rect}(\lambda_1, \ldots, \lambda_i,\ybox,\lambda_{i+1}, \ldots, \lambda_r).\] The fibers over $p_1$ and $p_r$ are analogous, with $\ybox$ in the second and second-to-last positions, respectively.
\end{enumerate}
Then, for $i = 2, \ldots, r-1$, we have:
\begin{enumerate}
\item[(ii)] The arc \emph{through $p_i$} lifts to an arc from ${\bf D}$ to $\esh_i({\bf D})$, where
\[\esh_i : X_\eset^{\rect}(\lambda_1, \ldots, \ybox,\lambda_i, \ldots, \lambda_r) \to X_\eset^{\rect}(\lambda_1, \ldots, \lambda_i,\ybox, \ldots, \lambda_r)\] is the $i$-th evacuation-shuffle.

\item[(iii)] The arc \emph{opposite $p_i$} lifts to an arc from ${\bf D}$ to $\sh_i({\bf D})$, where
\[\sh_i : X_\eset^{\rect}(\lambda_1, \ldots, \ybox,\lambda_i, \ldots, \lambda_r) \to X_\eset^{\rect}(\lambda_1, \ldots, \lambda_i,\ybox, \ldots, \lambda_r)\] is the $i$-th shuffle.
\end{enumerate}
\end{thm:DE-caterpillar-covering}
\begin{figure}[h]
\centering
\includegraphics[scale=0.7]{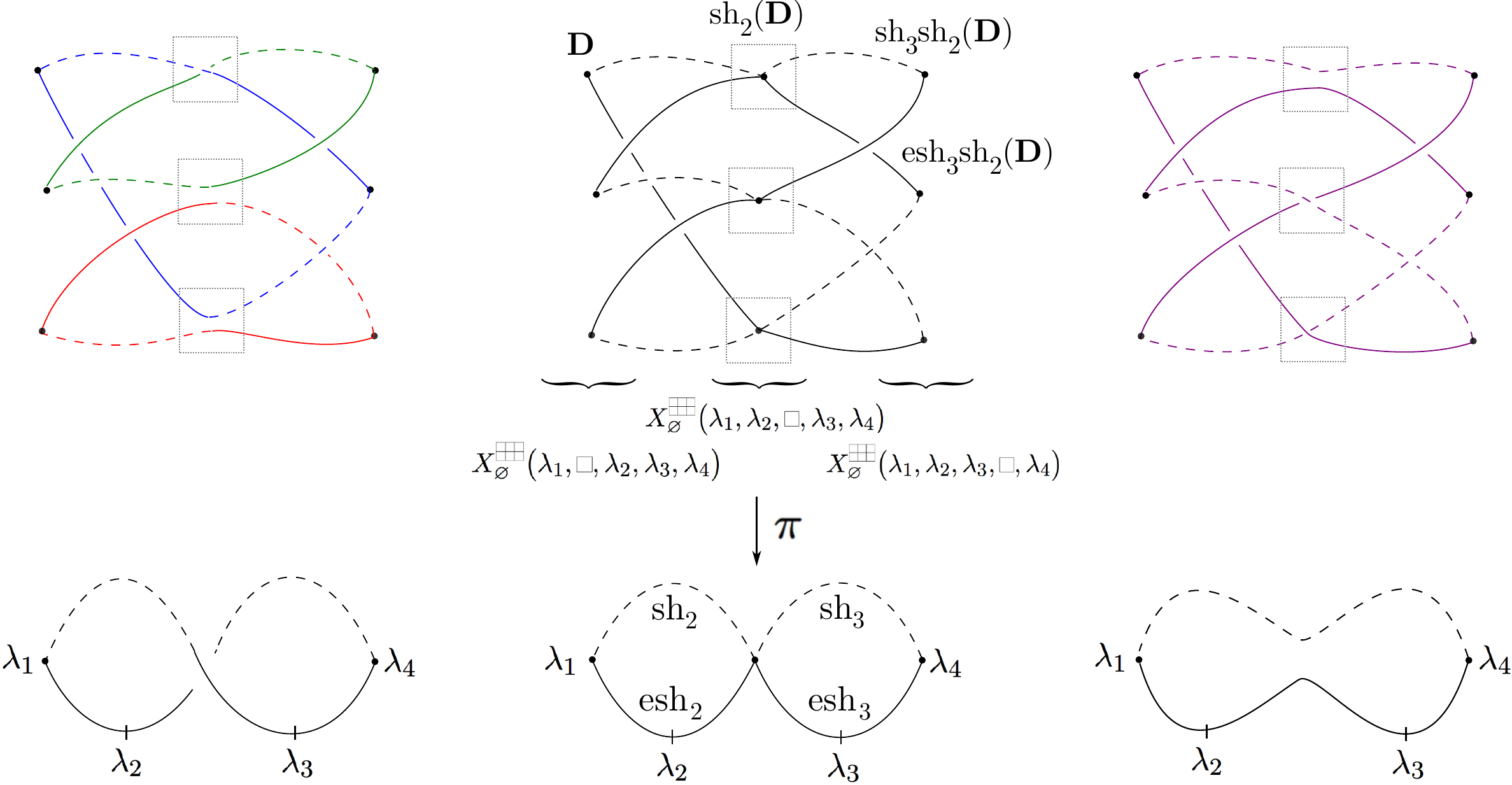} \\
\caption{Center: a covering map $\pi : S(\RR) \to C(\RR)$, where $C$ is a caterpillar curve with marked points $p_1, \ldots, p_4$. Left, right: two nearby desingularizations, obtained by smoothing the node in two different ways. Note that the number of connected components may change.}
\label{fig:covering-space}
\end{figure}

By passing to a nearby desingularization, we obtain a description for fibers over $\Mo{r}$ in terms of orbits of tableau promotion:
\newtheorem*{cor:promotion}{Corollary \ref{cor:promotion}}
\begin{cor:promotion}
If the $p_i$ are all in $\mathbb{RP}^1$ and $S = S(\ybox, \ldots, \ybox \ ; p_\bullet) \subset G(k,n)$, there is a bijection
\[\left\{\begin{split}\text{components\ } \\ \text{ of } S(\mathbb{R}) \hspace{0.4cm} \end{split}\right\} \longleftrightarrow SYT(\rect)/\omega,\]
where $\omega : SYT(\rect) \to SYT(\rect)$ is Sch\"{u}tzenberger promotion.

Similarly, if $S = S(\lambda_\bullet, p_\bullet) \subset G(k,n)$, and the circular ordering of the points is $p_1, \ldots, p_r$, there is a bijection
\[\left\{\begin{split}\text{components\ } \\ \text{ of } S(\mathbb{R}) \hspace{0.4cm} \end{split}\right\} \longleftrightarrow X_{\tinybox}^{\rect}(\lambda_\bullet)/\omega',\]
where $\omega'$ is the composition
\[\omega' = \iota^{-1} \circ \esh_1 \cdots \esh_{r-1} \sh_{r-1} \cdots \sh_{1} \circ \iota\]
and $\iota$ is the natural bijection $X_{\tinybox}^{\rect}(\lambda_\bullet) \to X_\eset^{\rect}(\ybox, \lambda_\bullet).$
\end{cor:promotion}
We emphasize that while our proofs rely crucially on degenerations over $\Mbar{r}$, Corollary \ref{cor:promotion} describes Schubert problems contained in a single Grassmannian.

We also note that the operator $\omega'$ depends on the circular ordering of the points $p_\bullet$. If two circular orderings degenerate to the same caterpillar curve $C$, we may view the operators $\omega'_1, \omega'_2$ as different sequences of shuffles and evacuation shuffles applied to $X_{\tinybox}^{\rect}(\lambda_1, \ldots, \lambda_r)$. See Corollary \ref{cor:connected-components-desing}. In general the orbit structure may differ; see Example \ref{exa:minimal-counterex} for an example in $G(3,8)$. A necessary condition, however, is that certain Littlewood-Richardson numbers be greater than 1:

\newtheorem*{cor:mult-free}{Corollary \ref{cor:mult-free}}
\begin{cor:mult-free}
Suppose the pairwise products $\lambda_i \cdot \lambda_j$ in $H^*(G(k,n))$ are multiplicity-free. Then the operators $\omega$ for different circular orderings are all conjugate. In particular, the number of real connected components of $S(\RR)$ does not depend on the ordering of the $p_\bullet$.
\end{cor:mult-free}
We note that the condition above holds for any Schubert problem on $G(2,n)$, and for any Schubert problem in which every $\lambda_i$ is a rectangular partition.

\subsection{The genus of $S$ and K-theory} A smooth, integral algebraic curve $S$ defined over $\mathbb{R}$ that is disconnected by its real points has the property
\[\#\left\{\begin{split}\text{components\ } \\ \text{ of } S(\mathbb{R}) \hspace{0.4cm} \end{split}\right\} \equiv g(S) + 1 \equiv \chi(\mathcal{O}_S) \ (\mathrm{mod}\ 2).\]
In fact, as long as $S(\mathbb{R})$ is smooth, the above equation holds (with $\chi(\mathcal{O}_S)$) even if $S$ is singular or reducible, since its singularities then occur in complex conjugate pairs.

For our curves $S(\lambda_\bullet, p_\bullet)$, we have described the left-hand side of this identity in terms of objects from $H^*(G(k,n))$; on the other hand, we may compute $\chi(\mathcal{O}_S)$ in the K-theory ring $K(G(k,n))$. Let $[\mathcal{O}_\lambda]$ denote the class of the structure sheaf of the Schubert variety for $\lambda$, 
and let $k_{\lambda_\bullet}^\nu$ be the absolute value of the coefficient of $[\mathcal{O}_\nu]$ in the K-theory product $\prod_i [\mathcal{O}_{\lambda_i}]$. This is zero unless $|\nu| \geq \sum |\lambda_i|$, and if equality holds then $k_{\lambda_\bullet}^\nu = c_{\lambda_\bullet}^\nu$. We have the following:
\newtheorem*{cor:parity-eqn}{Corollary \ref{cor:parity-eqn}}
\begin{cor:parity-eqn}
Let $\alpha, \beta, \gamma$ be partitions with $|\alpha| + |\beta| + |\gamma| = k(n-k) - 1$. Let $\omega' = \esh_2 \circ \sh_2$, where
\[\esh_2, \sh_2 : X_\eset^{\rect}(\alpha, \ybox, \beta, \gamma) \to X_\eset^{\rect}(\alpha, \beta, \ybox, \gamma)\]
are the shuffle and evacuation-shuffle operators. Then
\[\#\mathrm{orbits}(\omega') = c_{\alpha \tinybox \beta \gamma}^{\rect} - k_{\alpha \beta}^{\gamma^c} \ (\mathrm{mod} \ 2) \ \ \text{ and }\ \ \mathrm{sign}(\omega') = k_{\alpha \beta}^{\gamma^c} \ (\mathrm{mod} \ 2),\]
where we think of $\omega'$ as a permutation with sign $0$ or $1$. We also have an inequality
\[ c_{\alpha \tinybox \beta \gamma}^{\rect} \leq k_{\alpha \beta}^{\gamma^c} + \#\mathrm{orbits}(\omega').\]
\end{cor:parity-eqn}
Similar statements hold for products of more than three Schubert classes. Corollary \ref{cor:parity-eqn} has intriguing connections to Thomas and Yong's K-theoretic jeu de taquin for increasing tableaux:
\begin{thm}\emph{\cite{ThYo}} \label{thm:thomas-yong-kjdt}
Let $\alpha, \beta, \gamma$ be partitions satisfying $|\alpha| + |\beta| \leq |\gamma^c|$. Then $k_{\alpha \beta}^{\gamma^c}$ is the number of increasing tableaux of shape $\gamma^c/\alpha$ that rectify to the highest-weight tableau of shape $\beta$ under K-theoretic jeu de taquin.
\end{thm}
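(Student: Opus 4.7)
The plan is to establish the rule via a K-theoretic refinement of Sch\"utzenberger's jeu de taquin theory for increasing tableaux, and then match the resulting count against Buch's set-valued tableau rule for the K-theoretic Littlewood-Richardson coefficients $k_{\alpha\beta}^{\gamma^c}$. First, for an increasing skew tableau $T$ of shape $\nu/\mu$ and an inner corner $c$ of $\mu$, I would define the K-jeu de taquin slide into $c$ by repeatedly replacing $c$ with the smaller of its right/below neighbors, but with the crucial modification that when both neighbors carry the same label $k$, the two cells \emph{merge} and a single entry $k$ migrates into the new position. Iterating slides until no inner corners remain produces an increasing tableau of some straight shape $\lambda$, which we call a K-rectification of $T$. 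The merging is exactly the combinatorial modification required to reflect the K-theoretic overlap of Schubert classes, and it is the feature that allows increasing tableaux (rather than standard ones) to index K-theoretic multiplicities.

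The second and hardest step is \textbf{confluence}: the K-rectification $\mathrm{Krect}(T)$ must not depend on the order in which inner corners are chosen. The classical Sch\"utzenberger proof proceeds via a local ``diamond lemma'' for pairs of commuting slides, and the K-analog must accommodate the new phenomenon that applying one slide may create a merged cell that a nearby candidate slide then processes differently than it would have. The cleanest strategy is to adapt Fomin's growth-diagram framework: assign to each sequence of slides an associated sequence of interior shapes in a lattice of Young diagrams, verify that this growth model reconstructs $T$ uniquely from its rectification together with a ``recording'' sequence, and use the symmetry of the growth correspondence to deduce that the rectification depends only on $T$. Confluence yields a well-defined K-rectification and a corresponding equivalence relation on increasing tableaux (``K-Knuth equivalence'').

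With confluence in hand, let $H_\beta$ denote the highest-weight increasing tableau of shape $\beta$, i.e.\ the unique increasing filling with all entries equal to $i$ in row $i$; it is manifestly fixed by K-rectification. Let $N(\alpha,\beta,\gamma^c)$ denote the number of increasing tableaux of shape $\gamma^c/\alpha$ whose K-rectification equals $H_\beta$. To identify $N$ with $k_{\alpha\beta}^{\gamma^c}$, I would construct a bijection with Buch's set-valued tableaux of shape $\gamma^c/\alpha$ and content $\beta$ obeying the reverse-lattice word condition, via a K-theoretic ``uncrowding'' procedure that systematically splits multi-entry cells of a set-valued tableau into adjacent single-entry cells, producing an increasing tableau of the same outer shape. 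Confluence then lets one verify, entirely formally, that the uncrowded increasing tableau rectifies to $H_\beta$ precisely when the set-valued input satisfies Buch's reverse-lattice condition; the inverse ``crowding'' map is read off from the K-rectification of $T$ by recording which cells were merged along the way.

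The principal obstacle throughout is confluence. The merging phenomenon breaks the local commutativity of slides that underpins the classical proof, so one must either carefully enumerate the novel local configurations produced by merges and check invariance by hand, or develop the growth-model formalism to the point where the symmetry of the bijection between tableaux and growth sequences makes invariance automatic. Once confluence is secure, the comparison with Buch's rule is essentially a careful bookkeeping argument, and the theorem follows.
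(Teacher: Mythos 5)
This theorem is cited from \cite{ThYo}; the paper does not prove it, so there is no internal argument to compare against. Evaluating your proposal on its own terms, there is a genuine gap at the step you correctly flag as the crux: K-rectification of increasing tableaux is in fact \emph{not} confluent. There exist increasing skew tableaux whose K-rectification depends on the choice and order of inner corners, so the step ``Confluence yields a well-defined K-rectification'' fails, and the growth-diagram machinery cannot be pushed to establish it because the statement is false. This subtlety was clarified by Buch and Samuel, who exhibited counterexamples and introduced the notion of a \emph{unique rectification target} (URT): a straight-shape increasing tableau $T$ with the property that any increasing skew tableau K-rectifying to $T$ under some slide order K-rectifies to $T$ under every slide order. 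Superstandard (highest-weight) tableaux are URTs, and this weaker invariance is precisely what the K-theoretic Littlewood--Richardson rule needs. The growth-diagram / K-infusion argument you gesture at, as developed by Thomas and Yong, proves the URT property for superstandard targets, not order-independence of K-rectification in general.

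Once the target statement is corrected, the rest of your outline can be made to work: the uncrowding correspondence between increasing tableaux that rectify to the highest-weight tableau $H_\beta$ and Buch's reverse-lattice set-valued tableaux is a viable route and appears in the literature. But keep in mind that K-Knuth equivalence classes need not contain a unique straight-shape representative, so every bookkeeping step in that comparison must be phrased relative to the fixed target $H_\beta$, not to a hypothetical ``K-rectification functor.''
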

When $|\alpha| + |\beta| + |\gamma| = k(n-k) - 1$, any such tableau is standard except for a single repeated entry. An element of $X_\eset^{\rect}(\alpha, \ybox, \beta, \gamma)$ is represented by similar data: a filling of $\gamma^c/\alpha$ by, first, a single box extending $\alpha$, say to $\alpha^+$, followed by a standard tableau $T$ of shape $\gamma^c/\alpha^+$, rectifying to $\beta$. The operator $\omega'$ slides the $\ybox$ through $T$; if we view $\ybox$ as an `extra' entry for $T$, we obtain a sequence of increasing tableaux.

Despite this similarity, we do not know a direct combinatorial proof of Corollary \ref{cor:parity-eqn} in general. We do obtain an explicit description of $\omega'$ when $\beta$ is a horizontal or vertical strip (the `Pieri case'):
\newtheorem*{thm:pieri-case}{Theorem \ref{thm:pieri-case}}
\begin{thm:pieri-case}
Let $\beta$ be a horizontal strip and let $X = X_\eset^{\rect}(\alpha,\ybox,\beta,\gamma)$. There is a natural indexing of $X$ by the numbers $1, \ldots, |X|$, and under this indexing, the action of $\omega'$ is given by $\omega'(i) = i+1 \ (\mathrm{mod} \ |X|).$ In K-theory, $k_{\alpha \beta}^{\gamma^c} = |X| - 1$, and each increasing tableau corresponds to a successive pair $(X_i, X_{i+1})$ in the orbit, excluding the final pair $(X_{|X|},X_1)$.
\end{thm:pieri-case}
In this and certain other cases, the equations of Corollary \ref{cor:parity-eqn} in fact hold over $\ZZ$, and the inequality is an equality. In general, however, the quantity $c_{\alpha \tinybox \beta \gamma}^{\rect} - k_{\alpha \beta}^{\gamma^c}$ may be negative, and equality only holds mod 2. The author would be interested in combinatorial explanations of these facts.


\subsection{Acknowledgments} I am indebted first and foremost to David Speyer for introducing me to Schubert calculus (with and without osculating flags) and for many helpful conversations. Thanks also to Rohini Ramadas and Maria Gillespie, and to Oliver Pechenik for pointing out Example \ref{exa:disconnected}. Part of this work was done while supported by NSF grant DMS-1361789.

\subsection{Structure of this paper} The paper is as follows. In Section \ref{sec:schubert-mbar}, we give background on $\Mbar{r}$ and geometrical Schubert calculus; we then prove Theorem \ref{thm:main-geom}. In Section \ref{sec:tableau-combinatorics}, we give background on tableau combinatorics and dual equivalence. In Section \ref{sec:schubert-real}, we prove Theorem \ref{thm:DE-caterpillar-covering} and Corollary \ref{cor:promotion}. Finally, we discuss connections to K-theory in Section \ref{sec:k-theory}.

\section{Schubert problems over $\Mbar{r}(\CC)$} \label{sec:schubert-mbar}

\subsection{Grassmannians and Schubert varieties} \label{sec:grass-schubert}
We write $G(k,n)$ for the Grassmannian of dimension-$k$ vector subspaces of $\CC^n$, or $G(k,V)$ if we wish to specify an ambient vector space $V$.

Let $\lambda = (\lambda^{(1)} \geq \lambda^{(2)} \geq \cdots \geq \lambda^{(k)} \geq 0)$ be a partition with $k$ parts, each of size at most $n-k$. We write $\lambda \subseteq \rect$ to denote this. Let $\sF$ be a complete flag; let $F^j$ be the codimension-$j$ part of $\sF$. We define the \newword{Schubert variety}
\[\Omega(\lambda, \sF) = \{V \in G(k,n) : \dim(V \cap F^{k-j+\lambda^{(j)}}) \geq j\};\]
this is an integral subvariety of codimension $|\lambda| = \sum \lambda^{(j)}.$

The cohomology class of $\Omega(\lambda,\sF)$ does not depend on the choice of $\sF$; we write $[\Omega(\lambda)]$ for this class. It is well-known that the classes $\{ [\Omega(\lambda)]\}_{\lambda \subseteq \rect}$ form an additive basis for $H^*(G(k,n))$. Given partitions $\lambda_1, \ldots, \lambda_r$, we may write
\[[\Omega(\lambda_1)] \cdots [\Omega(\lambda_r)] = \sum_\nu c_{\lambda_\bullet}^\nu [\Omega(\nu)],\]
and we call the structure constants $c_{\lambda_\bullet}^\nu$ the Littlewood-Richardson numbers. (Note that $c_{\lambda_\bullet}^\nu = 0$ unless $|\nu| = \sum |\lambda_j|$.) We will occasionally use the identities
\[c_{\lambda_1, \ldots, \lambda_r}^{\rect} = c_{\lambda_1, \ldots, \lambda_{r-1}}^{\lambda_r^c},\]
where $\lambda_r^c$ denotes the complementary partition with respect to $\rect$,
\[\lambda^c = (n-k+1-\lambda^{(k)} \geq n-k+1- \lambda^{(k-1)} \geq \cdots \geq n-k+1- \lambda^{(1)}),\]
and the Pieri rule:
\[c_{\lambda, \tinybox}^{\mu} = \begin{cases} 1 &\text{if } |\mu| = |\lambda| + 1 \text{ and } \mu \supset \lambda, \\ 0 & \text{otherwise.} \end{cases} \]

\subsection{Linear systems and higher ramification} \label{subsec:lin-sys} We fix the following notation: for an integral projective curve $X$ of genus 0, let $G(k,n)_X = G(k,H^0(\cO_X(n-1)))$. The points $V \in G(k,n)_X$ parametrize projections from the degree $(n-1)$ Veronese embedding,
\[X \into \PP(H^0(\cO_X(n-1))^\vee) \dashrightarrow \PP(V^\vee) = \PP^{k-1},\]
that is, morphisms $X \to \PP^{k-1}$ of degree at most $n-1$.

For $p \in X$, we define the \newword{osculating flag} $\sF(p)$ in $H^0(\cO_X(n-1))$ by
\[\sF(p)^j = \{D \in H^0(\cO_X(n-1)) : D - j[p] \text{ is effective}\}.\]
Geometrically, $\sF(p)$ is dual to the unique flag $\sH$ whose projectivizations intersect $X$ at $p$ with the highest possible multiplicity. Explicitly, $\sH$ is given by the projective planes
\[\HH_j = \PP\left(\left(H^0(\cO_X(n-1))/\sF(p)^{j+1}\right)^\vee\right).\]
This is the unique plane of (projective) dimension $j$ that intersects $X$ at $p$ with multiplicity $j+1$ in the Veronese embedding. Thus $\HH_0 = p$, $\HH_1$ is the tangent line to $X$ at $p$, and so on. In coordinates, the embedding is 
\[[z:1] \mapsto [1 : z : z^2 : \cdots : z^{n-1}]\] and $\sH$ is given by the top rows of the $n \times n$ matrix
\begin{equation} \label{eqn:flag-matrix}
\begin{bmatrix}
\frac{d^i}{dz^i}(z^{j-1})
\end{bmatrix}
=
\begin{bmatrix}
1 & z & z^2 & \cdots & z^{n-1} \\
0 & 1 & 2z & \cdots & (n-1) z^{n-2} \\
0 & 0 & 2 & \cdots & (n-1)(n-2) z^{n-3} \\
\vdots & \vdots & \vdots &\ddots & \vdots \\
0 & 0 & 0 & \cdots & (n-1)!
\end{bmatrix}.
\end{equation}
Schubert conditions with respect to $\sF(p)$ or $\sH$ are called \emph{higher ramification conditions} at $p$ for the map $X \to \PP^{k-1}$. We will only consider Schubert varieties with respect to osculating flags, so by abuse of notation we will write $\Omega(\lambda,p)$ for the Schubert variety with respect to $\mathscr{F}(p)$ in $G(k,n)_X$. Given points $p_1, \ldots, p_r$ on $X$ and partitions $\lambda_1, \ldots, \lambda_r$, we define the Schubert problem
\[S(\lambda_\bullet, p_\bullet) = \bigcap_{i=1}^r \Omega(\lambda_i,p_i).\]
We will sometimes think of a point $x \in S(\lambda_\bullet,p_\bullet)$ as a morphism $X \to \PP^{k-1}$ with prescribed ramification conditions $\lambda_i$ at $p_i$ for each $i$. We have the \newword{Pl\"{u}cker Formula}, which says that the total amount of ramification of a linear series $V$ is always equal to $\dim_\CC G(k,n)$:

\begin{thm}[Pl\"{u}cker formula]
Let $V \in G(k,n)_X$. For each $x \in X$, let $\lambda_x$ be the largest Schubert condition such that $V \in \Omega(\lambda_x,x)$. Then $\lambda_x = \eset$ for all but finitely-many $x$, and
\[\sum_{x \in X} |\lambda_x| = k(n-k).\]
\end{thm}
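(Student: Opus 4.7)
The plan is to prove the Plücker formula via the \textbf{Wronskian} of the linear series $V$. Given any basis $s_1, \ldots, s_k$ of $V \subset H^0(\cO_X(n-1))$ and a local coordinate $z$ near a point $x \in X$, we form the Wronskian determinant
\[
W(s_1, \ldots, s_k)(z) = \det\!\left[ \tfrac{d^{i-1}}{dz^{i-1}} s_j \right]_{1 \le i,j \le k}.
\]
A change of basis multiplies $W$ by a nonzero scalar, and a change of local coordinate transforms it predictably, so $W$ is well-defined (up to a global scalar) as a global section of the line bundle $\cO_X(n-1)^{\otimes k} \otimes \Omega_X^{\otimes \binom{k}{2}}$. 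On $X \cong \PP^1$ this has degree $k(n-1) - 2\binom{k}{2} = k(n-k)$.

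First I would check that $W$ is not identically zero, since $s_1, \ldots, s_k$ are linearly independent sections (this is standard). Then, since a nonzero global section of a degree-$d$ line bundle on $\PP^1$ has exactly $d$ zeros counted with multiplicity, it suffices to show
\[
\mathrm{ord}_x(W) = |\lambda_x| \quad \text{for every } x \in X.
\]
Once this local identity is in hand, summing over $x$ gives $\sum_x |\lambda_x| = k(n-k)$, and in particular $|\lambda_x| = 0$ for all but finitely many $x$.

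The main content is therefore the local computation. Fix $x$ and let $a_1 < a_2 < \cdots < a_k$ be the \emph{vanishing sequence} of $V$ at $x$, i.e.\ the set of orders of vanishing $\{\mathrm{ord}_x(s) : s \in V \setminus \{0\}\}$. Unwinding the definition of $\Omega(\lambda,x)$ in terms of $\sF(x)$ from Section~\ref{subsec:lin-sys}, a direct comparison shows that the maximal $\lambda_x$ has parts $\lambda_x^{(j)} = a_{k-j+1} - (k-j)$, so
\[
|\lambda_x| \;=\; \sum_{i=1}^k a_i \;-\; \binom{k}{2}.
\]
On the other hand, choosing a basis $s_1, \ldots, s_k$ of $V$ adapted to the vanishing sequence, so that $\mathrm{ord}_x(s_j) = a_j$, each $s_j$ locally looks like $z^{a_j} \cdot u_j(z)$ with $u_j(0) \ne 0$. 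Expanding the Wronskian and collecting leading terms, I would show that the lowest-order contribution equals (up to a nonzero Vandermonde-type scalar, which is nonzero precisely because the $a_i$ are distinct) a scalar multiple of $z^{\sum a_i - \binom{k}{2}}$. Hence $\mathrm{ord}_x(W) = \sum a_i - \binom{k}{2} = |\lambda_x|$, as required.

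The step I expect to require the most care is verifying that the Vandermonde-type leading coefficient in the local expansion of $W$ is nonzero: one must check that cancellation from the higher-order terms of each $u_j$ cannot kill the leading term. This is where the distinctness of the $a_i$ enters crucially, via the nonvanishing of $\det[\binom{a_j}{i-1}]$ or an equivalent determinant of binomial coefficients. Everything else -- the well-definedness of $W$ as a section of a line bundle, the degree computation, and the translation between vanishing sequences and partitions -- is standard bookkeeping.
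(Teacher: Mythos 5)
Your proof is correct and is essentially the standard argument the paper itself defers to (it cites Griffiths--Harris rather than giving a proof): the Wronskian is a nonzero section of a line bundle of degree $k(n-1)-2\binom{k}{2}=k(n-k)$ on $\PP^1$, and its local vanishing order at $x$ is $\sum_i a_i - \binom{k}{2}=|\lambda_x|$. The cancellation worry in your last paragraph is in fact harmless: writing $s_j=\sum_{m\ge a_j}c_{j,m}z^m$, every monomial in the Laurent expansion of the determinant has exponent $\sum_j m_j-\binom{k}{2}\ge\sum_j a_j-\binom{k}{2}$, so the higher-order terms of the $u_j$ cannot contribute to the bottom degree, whose coefficient is exactly $\prod_j c_{j,a_j}$ times the falling-factorial determinant $\det\bigl[(a_j)(a_j-1)\cdots(a_j-i+2)\bigr]$, which column-reduces to the Vandermonde $\prod_{i<j}(a_j-a_i)\ne 0$.
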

See \cite{GrHaBook} for a proof. When $k=2$, the Pl\"{u}cker formula reduces to the Riemann-Hurwitz formula for ramification points of maps $\mathbb{P}^1 \to \mathbb{P}^1$ of degree $n-1$. The Pl\"{u}cker formula is essentially equivalent to Theorem \ref{thm:EH86} of \cite{EH86}, that the dimension of $S(\lambda_\bullet,p_\bullet)$ is always $k(n-k) - \sum |\lambda_i|$. Here it is helpful to note that $\Omega(\ybox, \sF)$ is an ample divisor.

Finally, we have the following formula for minors of the matrix \eqref{eqn:flag-matrix} above and the Schubert condition $\ybox$:
\begin{lemma} \label{lem:local-equation-box}
For a subset $J \subset [n]$, let
\begin{align*}
\Delta_J &= \prod_{j_1 < j_2 \in J} (j_2 - j_1), \\
e_J &= \sum_{j \in J} j - {|J| + 1 \choose 2}.
\end{align*}
Note that $e_J + e_{J^c} = |J|(n-|J|).$ We have
\[\Delta_J(z) = \Delta_J \cdot z^{e_J},\]
where $\Delta_J(z)$ is the determinant of the top-justified square minor of the matrix \eqref{eqn:flag-matrix} using the columns $J$.  A point $V \in G(k,n)_X$, with Pl\"{u}cker coordinates $pl_I$, satisfies the Schubert condition $\ybox$ with respect to the flag \eqref{eqn:flag-matrix} if and only if
\[\sum_{I \in {[n] \choose k}} (-1)^{e_{I_c}}\Delta_{I^c}(z) \cdot pl_I = \sum_{I \in {[n] \choose k}} \Delta_{I^c} \cdot (-z)^{e_{I_c}} \cdot pl_I = 0.\]
\end{lemma}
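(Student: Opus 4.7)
The lemma has two parts: the explicit form $\Delta_J(z)=\Delta_J\,z^{e_J}$ of the top-justified minor, and the identification of the Schubert condition $\Omega(\ybox,z)$ with the displayed linear equation in the Plücker coordinates. I would address them in order.

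For the minor formula, I would expand $\Delta_J(z)$ via the Leibniz formula. Each entry of the $|J|\times|J|$ top-justified minor has the form $\frac{(j_l-1)!}{(j_l-1-i)!}\,z^{j_l-1-i}$ for $0\le i\le|J|-1$ and $j_l\in J$. Every Leibniz term carries the same $z$-exponent $\sum_l(j_l-\sigma(l))=\sum_{j\in J}j-\binom{|J|+1}{2}=e_J$, independently of the permutation $\sigma$. Factoring $z^{e_J}$ out of the determinant leaves the numerical matrix with $(l,l')$-entry $\frac{(j_l-1)!}{(j_l-l')!}$, whose $l'$-th column is a polynomial of degree $l'-1$ in the variable $j_l$; the standard evaluation of such a falling-factorial Vandermonde yields $\prod_{l<m}(j_m-j_l)=\Delta_J$.

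For the Schubert condition, the plan is to realize $\Omega(\ybox,z)$ as the vanishing of a single $n\times n$ determinant and then Laplace-expand it. Let $A$ be a $k\times n$ matrix whose rows form a basis for $V$, so that $\det(A_{:,I})=pl_I$ for $|I|=k$. The subspace $\sF(z)^k\subset H^0(\cO_X(n-1))$ is the kernel of the top $k$ rows of \eqref{eqn:flag-matrix} viewed as functionals, so its annihilator is the row span of those $k$ rows. Let $M'$ be the $(n-k)\times n$ matrix whose rows are the top $n-k$ rows of \eqref{eqn:flag-matrix}; the key identification is that $V\cap\sF(z)^k\neq 0$ is equivalent to the vanishing of the determinant of the $n\times n$ block matrix obtained by stacking $M'$ on top of $A$. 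This uses the standard $V\leftrightarrow V^\perp$ duality of Schubert conditions together with the description of $\sH$ and $\sF$ as dual flags in $H^0(\cO_X(n-1))^\vee$ and $H^0(\cO_X(n-1))$.

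The final step is Laplace expansion of this determinant along its first $n-k$ rows. The expansion reads $\sum_{|J|=n-k}(-1)^{\binom{n-k+1}{2}+\sum_{j\in J}j}\det(M'_{:,J})\det(A_{:,J^c})$. By Part~1, $\det(M'_{:,J})=\Delta_J(z)$; the complementary minor $\det(A_{:,J^c})$ is $pl_{J^c}$; and reindexing $I=J^c$ the Laplace sign reduces modulo $2$ to $(-1)^{e_{I^c}}$. Assembling these pieces produces the first displayed equation of the lemma, and substituting $\Delta_{I^c}(z)=\Delta_{I^c}z^{e_{I^c}}$ then rewrites it in the second form. The main obstacle is the duality bookkeeping---verifying that the correct ``complementary'' block is indeed $M'$ (the top $n-k$ rows) and that Plücker sign conventions are tracked consistently---after which the computation is purely mechanical.
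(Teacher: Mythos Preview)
The paper itself gives no argument: its entire proof is the citation ``See \cite{Pu}.'' So there is nothing to compare at the level of strategy; you are supplying a proof where the paper simply defers to Purbhoo.

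Your treatment of the minor formula $\Delta_J(z)=\Delta_J\,z^{e_J}$ is correct and standard. For the Schubert condition, your Laplace expansion is mechanically fine and does reproduce the displayed sum, but the key identification you assert --- that $V\cap\sF(z)^k\neq 0$ is equivalent to the vanishing of $\det\bigl(\begin{smallmatrix}M'\\A\end{smallmatrix}\bigr)$ with $M'$ the top $n-k$ rows of \eqref{eqn:flag-matrix} and $A$ a matrix whose rows span $V$ --- is not correct under the conventions of Section~\ref{subsec:lin-sys}. The rows of $M'$ lie in $H^0(\cO_X(n-1))^\vee$ while the rows of $A$ lie in $H^0(\cO_X(n-1))$; after identifying both with $\CC^n$ via the monomial basis, the row span of $M'$ is \emph{not} $\sF(z)^k$ (already at $z=0$ the two are distinct coordinate subspaces). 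The clean route is the Wronskian: $V\cap\sF(z)^k\neq 0$ exactly when $\det\bigl(A\,M_k^T\bigr)=0$, where $M_k$ is the top $k$ rows, and Cauchy--Binet then yields $\sum_{|I|=k} pl_I\,\Delta_I(z)$. This differs from the lemma's displayed sum by the relabeling $I\leftrightarrow I^c$, which is a Pl\"ucker-indexing convention (Purbhoo's, in the cited reference). In short, you correctly anticipated that the ``duality bookkeeping'' is the only real obstacle, but as written that step is not carried out, and your stacked-determinant criterion does not hold.
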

\begin{proof}
See \cite{Pu}.
\end{proof}

\subsection{Curves with marked points} \label{sec:M0r} A \newword{nodal curve} is a connected, reduced projective variety $C$ of dimension 1, all of whose singularities are simple nodes. We define the \newword{dual graph} of $C$ to be the graph $G = (V,E)$, where
\[V = \{\text{irreducible components of } C\},\  E = \{\text{nodes of } C\}.\]
We say $C$ has arithmetic genus $g = \dim_\CC H^1(C,\cO_C)$. We are interested in curves of genus zero, and we note that $C$ is genus zero if and only if every irreducible component of $C$ is isomorphic to $\PP^1$ (in particular, is smooth) and the dual graph of $C$ is a tree.

We select distinct smooth points $p_1, \ldots, p_r$ on $C$, and consider $C$ up to automorphisms $\phi$ that fix the $p_i$. We say $C$ is \newword{stable} if the only automorphism of $C$ fixing the marked points is the identity. Since $\mathrm{Aut}(\PP^1)$ is simply 3-transitive, $C$ is stable if and only if every component of $C$ has $\geq3$ nodes and/or marked points. We say $p \in C$ is a \newword{special point} if it is a node or a marked point.

We define
\begin{align*}
\cU_r &= \{(p_1, \ldots, p_r) : p_i \ne p_j \text{ for all } i \ne j\} \subset \PP^1 \times \cdots \times \PP^1, \\
\Mo{r} &= \cU_r / \mathrm{Aut}(\PP^1),
\end{align*}
where the action of $\mathrm{Aut}(\PP^1)$ is the diagonal. We think of $\Mo{r}$ as the moduli space of \emph{irreducible} stable curves with $r$ distinct marked points. We have an open immersion
\[\Mo{r} \hookrightarrow \Mbar{r} = \{\text{stable, genus-0 curves with $r$ distinct smooth marked points}\} / \sim,\]
where two curves $(C, p_\bullet)$ and $(C', p'_\bullet)$ are equivalent if there is an isomorphism $\phi : C \to C'$ such that $\phi(p_i) = p'_i$. The space $\Mbar{r}$ is a smooth projective variety of dimension $r-3$, with a universal family $\cC \to \Mbar{r}$, flat and of relative dimension 1, where the fiber over the point $[C]$ is the curve $C$ itself. 

We note that $\Mbar{r}(\CC)$ has a stratification into locally closed cells indexed by trees $T$ with $r$ labeled leaves, such that every internal vertex has degree $\geq 3$, up to graph isomorphism preserving the leaf labels. The cell corresponding to $T$ is the set of stable curves whose dual graph is $T$; it has dimension
\[\sum_{\text{internal vertices } v \in T} (\deg(v) - 3).\] The unique maximal cell, corresponding to the graph with only one internal vertex, is $\Mo{r}$. The 0-dimensional cells of $\Mbar{r}$ correspond to `minimally stable' curves $C$, where each component has exactly 3 special points. If $C$ is minimally stable and the internal nodes of its dual graph form a line (i.e. there are no components having 3 nodes), we say $C$ is a \newword{caterpillar curve}. Caterpillar curves will play a special combinatorial role in this paper.

\begin{figure}[h]
\centering
\includegraphics[scale=0.5]{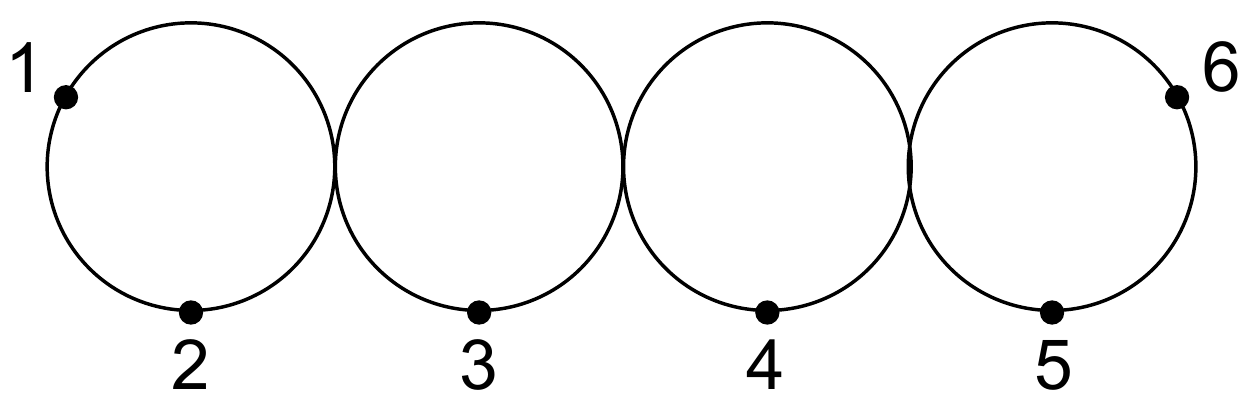}
\caption{A caterpillar curve with 6 marked points.}
\label{fig:caterpillar-curve}
\end{figure}

\subsubsection{Forgetting maps}

Let $T = \{1, \ldots, n\}$ and let $T' \subseteq T$. We define the \newword{forgetting map} $\varphi_{T'} : \Mbar{T} \to \Mbar{T - T'}$ as follows: given $[C] \in \Mbar{T}$, we forget the points with labels in $T'$; then we contract any irreducible component of $C$ that is left with fewer than three special points. This gives a stable curve with marked points labeled by $T - T'$. See Figure \ref{fig:forgetful}.

\begin{figure}[h]
\centering
\includegraphics[scale=0.5]{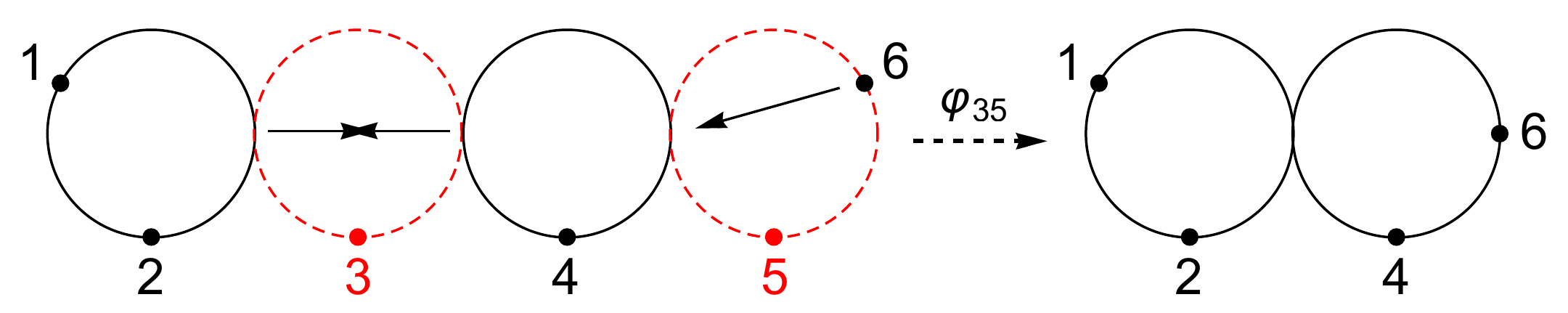} \\
\caption{If we forget the points labeled 3 and 5, we contract their components.}
\label{fig:forgetful}
\end{figure}

The simplest forgetting map $\varphi_{r+1} : \Mbar{r+1} \to \Mbar{r}$ is of special importance: the fiber over $[C] \in \Mbar{r}$ is a copy of $C$ itself. We may think of the $(r+1)$-st marked point as moving around $C$, bubbling off new components when it collides with the existing special points. Thus $\Mbar{r+1}$ is isomorphic to the universal family $\cC \to \Mbar{r}$.

\subsubsection{Topology of $\Mbar{r}$ over $\RR$}

The locus $\Mbar{r}(\RR)$ is a smooth manifold of (real) dimension $r-3$ with the structure of a CW-complex, which we now describe. We refer to \cite{Devadoss} for this material. First, a \newword{dihedral ordering} of a finite set $X$ is an equivalence class of circular orderings $<$ on $X$, where $<_1$ and $<_2$ are equivalent if they are opposites, that is
\[a <_1 b \text{ if and only if } b <_2 a.\]
In other words, a dihedral ordering is a circular ordering, up to reflection. The cells of $\Mbar{r}(\RR)$ are indexed by the following data:
\begin{enumerate}
\item An unrooted tree $T$ with $r$ labeled leaves, up to isomorphism, as above;
\item For each internal vertex $v \in T$, a dihedral ordering of the edges incident to $v$.
\end{enumerate}
The dihedral ordering arises from the fact that $\mathrm{Aut}(\RR\PP^1)$ acts only by rotating and reflecting the marked points. There are $\tfrac{1}{2}(r-1)!$ maximal cells of real dimension $r-3$, corresponding to the dihedral orderings of $r$ points on $\RR\PP^1$. The codimension-one cells correspond to curves with exactly one node; we will speak of \newword{wall-crossing} from one maximal cell to an adjacent one, which results in reversing the order of a consecutive sequence of points.

\subsection{Node labelings.}
Now let $C$ be a stable curve with components $C_i$ and marked points $p_1, \ldots, p_r$. A \newword{(strict) node labeling} $\nu$ of $C$ is a function
\[\nu : \big\{(q,C_i) : q \text{ is a node on } C_i\big\} \to \big\{ \text{partitions } \lambda \subseteq \rect \big\},\]
such that if $q$ is the node between $C_i$ and $C_j$, then
\[\nu(q,C_i) = \nu(q,C_j)^c,\]
where $\nu^c$ denotes the complementary partition. This is a choice of a pair of complementary partitions on opposite sides of each node. We will also consider \newword{excess node labelings}, where instead we allow $\nu(q,C_i) \supseteq \nu(q,C_j)^c$, i.e. the partitions may be more than complementary. Given a node labeling $\nu$, we define the space
\[\Phi_\nu = \prod_{\text{components } C_i}\ \bigcap_{\text{nodes } q \in C_i} \Omega(\nu(q,C_i),q) \subseteq \prod_{i} G(k,n)_{C_i},\]
so $\Phi_\nu$ applies the Schubert conditions from  $\nu$ separately in each $G(k,n)_{C_i}$. All our Schubert problems on $C$ take place in the ambient space
\begin{equation} \label{eqn:node-labeling-space}
\cG(k,n)_C = \bigcup_{\text{strict node labelings } \nu} \Phi_\nu \ \ \subset\ \prod_i G(k,n)_{C_i}.
\end{equation}
We note that this is the space of limit linear series on $C$, in the sense of Eisenbud-Harris \cite{EH86}.

Let $\lambda_i\ (1 \leq i \leq r)$ be a choice of partition for each marked point of $C$. Let $C_{p_i}$ be the component containing $p_i$ and $\Omega(\lambda_i,p_i) \subset G(k,n)_{C_{p_i}}$ be the Schubert variety in the appropriate Grassmannian. We define the Schubert problem on $C$,
\[\cS(\lambda_\bullet)_C = \cG(k,n)_C \cap \bigg( \bigcap_{i=1}^r \Omega(\lambda_i,p_i) \bigg),\]
Thus the components of $\cS(\lambda_\bullet)_C$ are precisely the components of
\[\Phi_\nu \cap \bigg( \bigcap_{i=1}^r \Omega(\lambda_i,p_i) \bigg),\]
for all (strict) node labelings $\nu$ of $C$. Our Schubert problems therefore describe collections of morphisms $C_i \to \PP^{k-1}$ with prescribed ramification at the nodes and marked points of $C_i$.

Speyer has shown that the above space moves in families of marked stable curves:
\begin{thm}[Theorem 1.1 of \cite{Sp}] \label{thm:recall-Sp14}
There exist flat, Cohen-Macaulay families $\cG(k,n)$ and $\cS(\lambda_\bullet)$ over $\Mbar{r}$, with an inclusion
\[
\xymatrix{
\cS(\lambda_\bullet) \ar[dr] \ar@{^{(}->}[r] & \cG(k,n) \ar[d] \\
& \Mbar{r}.
}\]
The relative dimensions of $\cG(k,n)$ and $\cS(\lambda_\bullet)$ are $k(n-k)$ and $k(n-k) - \sum|\lambda_i|$, and for each point $[C] \in \Mbar{r}$, the fibers are the spaces $\cG(k,n)_C$ and $\cS(\lambda_\bullet)_C$ described above.
\end{thm}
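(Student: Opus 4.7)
The plan is to construct $\cG(k,n) \to \Mbar{r}$ by extending $G(k,n)_{\PP^1} \times \Mo{r}$ across the boundary étale-locally, and then to cut out $\cS(\lambda_\bullet)$ as an intersection with osculating Schubert conditions along the marked sections. Fix a boundary point $[C_0] \in \Mbar{r}$ with dual graph $T$, and work in an étale neighborhood $U$ with smoothing parameters $t_e$ for each internal edge $e$ of $T$. On the universal curve $\cC|_U$, the node corresponding to $e$ has the local model $xy = t_e$, and the relative degree-$(n-1)$ line bundle extends by gluing trivializations across the node. I would then write down a rescaled version of the matrix \eqref{eqn:flag-matrix} on each component: scaling rows by appropriate powers of $t_e$ makes one block of the flag vanish as $t_e \to 0$ while the opposite block specializes to the flag on the other side of the node, producing complementary Schubert conditions there in the limit. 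Cutting out the appropriate rank-$k$ subspace by these degenerating flag-type equations defines $\cG(k,n)|_U$ as a subscheme of $\prod_i G(k,n)_{C_i} \times U$, and gluing these local pieces produces the global family.

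To prove flatness and Cohen-Macaulayness, the approach is miracle flatness: $\Mbar{r}$ is smooth, and I would verify that $\cG(k,n)$ is Cohen-Macaulay and all fibers have dimension $k(n-k)$. The fiber dimension computation is the combinatorial heart of the argument. Over $[C]$, one has $\cG(k,n)_C = \bigcup_\nu \Phi_\nu$ over strict node labelings $\nu$, and each $\Phi_\nu$ is cut out componentwise by the node Schubert conditions. By Theorem \ref{thm:EH86}, the intersection on each $C_i$ is dimensionally transverse, so $\dim \Phi_\nu = \sum_i \bigl(k(n-k) - \sum_{q \in C_i} |\nu(q,C_i)|\bigr)$. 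The strict complementarity $|\nu(q,C_i)| + |\nu(q,C_j)| = k(n-k)$ across each node then gives $\dim \Phi_\nu = k(n-k)$ uniformly. Cohen-Macaulayness of the total space should follow from the explicit local equations together with the known Cohen-Macaulayness of Schubert varieties, and miracle flatness then gives flatness of the family.

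To define $\cS(\lambda_\bullet)$, intersect $\cG(k,n)$ with the Schubert conditions along the sections $\sigma_i : \Mbar{r} \to \cC$ of the marked points. Because the osculating flag $\sF(\sigma_i)$ is defined globally on $\cC$ (the matrix \eqref{eqn:flag-matrix} is algebraic in local coordinates), this yields a global subscheme whose fiber over $[C]$ is exactly $\cG(k,n)_C \cap \bigcap_i \Omega(\lambda_i, p_i)$. The fiber dimension is $k(n-k) - \sum|\lambda_i|$: on each stratum $\Phi_\nu$, this follows by applying Theorem \ref{thm:EH86} componentwise to the combined node and marked-point conditions, reusing the complementarity bookkeeping. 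Flatness and Cohen-Macaulayness then propagate via another application of miracle flatness. The main obstacle I foresee is the local boundary construction: one must check that the flat limit of the generic family, as $t_e \to 0$, is exactly the combinatorial space $\bigcup_\nu \Phi_\nu$ on the nose, with no spurious embedded components. Equivalently, one must show that the rescaled Plücker-type equations across each node cut out precisely the union over strict node labelings; this is a delicate local computation, ultimately resting on the fact that pairs of opposite Schubert cells give a cellular decomposition of the Grassmannian.
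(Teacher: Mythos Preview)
This theorem is quoted from \cite{Sp}, and the present paper only sketches Speyer's construction rather than proving it. That construction differs from yours in a key structural way: it is global rather than \'etale-local. For each three-element subset $T$ of the marked-point labels, Speyer takes the forgetting map $\varphi_T : \Mbar{r} \to \Mbar{T}$ and the associated Grassmannian $G(k,n)_T$, pulls back, and forms the fiber product $\sB = \prod_T G(k,n)_T$ over $\Mbar{r}$. Over $\Mo{r}$ there is a diagonal embedding $G(k,n)_{\PP^1} \times \Mo{r} \hookrightarrow \sB$, and $\cG(k,n)$ is defined as the Zariski closure of its image. The identification of the boundary fiber with $\cG(k,n)_C \subset \prod_i G(k,n)_{C_i}$ then comes from observing that, for each component $C_i$ of a stable curve $C$, some triple $T_i$ picks out $C_i$ under the forgetting map, and projection onto those factors is a local isomorphism near $[C]$.

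Your approach is plausible in outline but has a gap at the gluing step. You construct $\cG(k,n)$ \'etale-locally near each boundary point $[C_0]$ inside an ambient space $\prod_{i \in V(T)} G(k,n)_{C_i} \times U$ that depends on the dual graph $T$; two overlapping charts centered at boundary strata with different dual graphs then sit inside products of different numbers of Grassmannians, and you have not explained how to match them on the overlap. Speyer's single global ambient $\sB$, indexed once and for all by all ${r \choose 3}$ triples, is designed precisely to avoid this issue. Your instinct about miracle flatness and the componentwise dimension count via Theorem~\ref{thm:EH86} is correct and close in spirit to what is actually done, and you rightly flag the identification of the flat limit with $\bigcup_\nu \Phi_\nu$ (with no embedded components) as the delicate part; in Speyer's setup this becomes an analysis of which factors of $\sB$ control the closure near a given stratum, rather than a local computation in smoothing parameters.
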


We sketch the construction. First, for each subset $T \subseteq [n]$ of size 3, we have a forgetting map $\varphi_T : \Mbar{r} \to \Mbar{T}$, and a Grassmannian $G(k,n)_T$. We pull these back to $\Mbar{r}$ and form a large fiber product
\[\sB = \prod_{\varphi_T} G(k,n)_T.\]
This is a trivial bundle over $\Mbar{r}$, with fibers isomorphic to products of ${r \choose 3}$ copies of $G(k,n)$. Over $\Mo{r}$, we have the trivial bundle
\[G(k,n)_{\PP^1} \times \Mo{r} \to \Mo{r},\]
with a diagonal embedding
\[\Delta: G(k,n)_{\PP^1} \hookrightarrow \sB\]
commuting with the projection to $\Mo{r}$. Then $\cG(k,n)$ is the closure of the image of $\Delta$. A detailed analysis of the factors in $\sB$ then establishes that $\cG(k,n)$ is flat and Cohen-Macaulay and the boundary fibers of $\cG(k,n)$ have the desired form. 

An important element of the construction is the following. Let $[C] \in \Mbar{r}$ be a stable curve. For each irreducible component $C_i \subset C$, there is a factor $G(k,n)_{T_i}$, such that, over a neighborhood $U$ of $[C]$, the projection
\[\pi : \sB \to \prod_i G(k,n)_{T_i}\]
gives an isomorphism of $\cG(k,n)$ onto its image. Moreover, this isomorphism identifies $G(k,n)_{T_i}$ with the Grassmannian $G(k,n)_{C_i}$ defined above. In particular, this embeds the fiber of $\cG(k,n)$ over $[C]$ into $\prod_{C_i} G(k,n)_{C_i}$, where it is the space $\cG(k,n)_C$ of Equation \eqref{eqn:node-labeling-space}. (The same is true of $\cS(\lambda_\bullet)$.) We will use this fact in our proof of Theorem \ref{thm:main-geom}.

\subsubsection{Excess node labelings} The irreducible components of $\cS(\lambda_\bullet)$ are described by strict node labelings, but we must also consider excess node labelings. They will arise in two ways:
\begin{itemize}
\item by intersecting components of $\cS(\lambda_\bullet)$ described by different node labelings, and
\item by the forgetting maps $\Mbar{r} \to \Mbar{s}$ with $s < r$.
\end{itemize}
For the first, consider two node labelings $\nu$ and $\nu'$ and the corresponding subsets of $S_\nu, S_{\nu'} \subseteq \cS(\lambda_\bullet,p_\bullet)$. Then we have
\[S_\nu \cap S_{\nu'} = S_{\nu \cup \nu'},\]
where $\nu \cup \nu'$ is the excess node labeling obtained by taking the union of the labels of $\nu$ and $\nu'$. Note that this intersection is nonempty if and only if, for each component $C_i \subset C$, the excess Schubert problem from $\nu \cup \nu'$ on $C_i$ is nonempty.

For the second, let $\nu$ be a node labeling on $C$. Consider a forgetting map $\Mbar{r} \to \Mbar{s}$. Let $C'$ be the image of $C$.

\begin{lemma}Assume that $S_\nu$ is nonempty. Then the labeling of the nodes of $C'$ by the same labels as $C$ (on the remaining components) is an excess node labeling.
\end{lemma}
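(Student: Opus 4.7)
The plan is to analyze how the forgetting map $\Mbar{r} \to \Mbar{s}$ affects individual components of $C$. A component $C_i$ is contracted in $C'$ precisely when it becomes unstable after forgetting marked points, i.e., is left with fewer than three special points. Since $C$ is stable, each $C_i$ has at least three special points originally, so the case to consider is a $C_i$ left with exactly two. If one of these two is a marked point, then contracting $C_i$ merely relocates that marked point to the adjacent component and does not produce a new node in $C'$, so no excess-labeling check is needed. The nontrivial case is when both remaining special points are nodes. Extending by iteration, a maximal chain $C_{i_1}, \ldots, C_{i_m}$ of such components between two non-contracted components $C_a$ and $C_b$ collapses to a single new node $q' \in C'$ between $C_a$ and $C_b$; it is at $q'$ that the excess condition must be checked.

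Set $\alpha_j = \nu(q_{j-1}, C_{i_j})$ and $\beta_j = \nu(q_j, C_{i_j})$, the labels on the two sides of $C_{i_j}$. Strict complementarity at each internal node $q_j$ of the chain gives $\alpha_{j+1} = \beta_j^c$. The labels on $C'$ at $q'$ inherited from $C_a$ and $C_b$ are $\alpha_1^c$ and $\beta_m^c$ respectively, so the excess condition to verify is $\alpha_1^c \supseteq \beta_m$. The key input is that $S_\nu \ne \eset$: this yields a limit linear series whose restriction to each $C_{i_j}$ lies in the Schubert intersection $\Omega(\alpha_j, q_{j-1}) \cap \Omega(\beta_j, q_j) \cap (\text{conditions at forgotten marked points})$. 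In particular the two-point intersection $\Omega(\alpha_j, q_{j-1}) \cap \Omega(\beta_j, q_j)$ on $C_{i_j} \cong \PP^1$ is nonempty, and by dimensional transversality (Theorem~\ref{thm:EH86}) this forces $[\Omega(\alpha_j)] \cdot [\Omega(\beta_j)] \ne 0$ in $H^*(G(k,n))$. A standard Littlewood--Richardson fact then yields the partition containment $\beta_j \subseteq \alpha_j^c$.

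Chaining these containments finishes the proof: $j=1$ gives $\beta_1 \subseteq \alpha_1^c$, and for $j \geq 2$ the identity $\alpha_j^c = \beta_{j-1}$ combined with $\beta_j \subseteq \alpha_j^c$ yields $\beta_j \subseteq \beta_{j-1}$. Telescoping, $\beta_m \subseteq \beta_{m-1} \subseteq \cdots \subseteq \beta_1 \subseteq \alpha_1^c$, which is the desired excess condition. The main obstacle is the cohomological step --- translating a nonempty geometric intersection of two Schubert varieties at distinct osculating flags into the partition containment $\beta_j \subseteq \alpha_j^c$. This relies on Eisenbud--Harris transversality together with the elementary fact that $[\Omega(\alpha)] \cdot [\Omega(\beta)] \ne 0$ in $H^*(G(k,n))$ is equivalent to the skew shape $\alpha^c / \beta$ being a valid Young diagram, i.e.\ to $\beta \subseteq \alpha^c$.
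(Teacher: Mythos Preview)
Your proof is correct and rests on the same key observation as the paper's: nonemptiness of the Schubert problem on a contracted component $Z$ forces the partition containment $\nu(q',Z) \subseteq \nu(q,Z)^c$ between its two node labels (equivalently, $[\Omega(\alpha)]\cdot[\Omega(\beta)] \ne 0$ in $H^*(G(k,n))$ iff $\beta \subseteq \alpha^c$). The paper, however, takes a shorter route: it first reduces to the case $s = r-1$ by forgetting marked points one at a time, so that at most a single component $Z$ is contracted at each step and no chain or telescoping argument is needed --- one containment suffices. Your direct treatment of the general forgetting map instead tracks a maximal chain of contracted components and telescopes the resulting inequalities $\beta_m \subseteq \cdots \subseteq \beta_1 \subseteq \alpha_1^c$. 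This is slightly more work but has the minor virtue of avoiding any appeal to the intermediate stabilizations implicit in the one-step reduction. Both approaches prove the lemma.
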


\begin{proof}
By forgetting points one at a time, it is sufficient to consider the case $s = r-1$. In this case, at most one component is contracted. Call it $Z$, and assume $Z$ is connected to two other components $X,Y$. (If $Z$ is connected to only one other component, the node vanishes when we contract $Z$, so there is nothing to prove.)
Let $q_X, q_Y$ be the pair of nodes connecting $Z$ to $X,Y$. So we have
\[\nu(q_X,X) \supseteq \nu(q_X,Z)^c \text{ and } \nu(q_Y,Z) \supseteq \nu(q_Y,Y)^c.\]

By definition, in $C'$, the node between $X$ and $Y$ has labels $\nu(q_X,X)$ and $\nu(q_Y,Y)$. Since we assumed $S_\nu$ was nonempty (for $C$), the Schubert problem on $Z$ must be nonempty, so
\[\nu(q_X,Z)^c \supseteq \nu(q_Y,Z),\]
which gives the desired containment $\nu(q_X,X) \supseteq \nu(q_Y,Y)^c.$
\end{proof}

\begin{rmk}
In the case where $Z$ is connected to only one other component $X$, the second special point on $Z$ must be a marked point $p$. If $p$ is labeled by $\lambda$, the same proof shows that $\lambda \subseteq \nu(q_X,X)$, so our contraction procedure also produces an excess Schubert condition at $p$.
\end{rmk}

Finally, we will use the fact that any excess node labeling on $C$ comes from contracting a strict node labeling with additional marked points:

\begin{lemma}
Let $\nu$ be an excess node labeling. Assume there is only one node $q = X \cap Y$ with excess labels. Then there is a unique curve $\tilde C$ with $r+1$ marked points, and a unique \emph{strict} node labeling $\tilde \nu$ of $\tilde C$, such that forgetting the $(r+1)$-st point takes $\tilde C$ to $C$ and $\tilde \nu$ to $\nu$.
\end{lemma}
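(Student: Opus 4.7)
The plan is to construct $(\tilde C, \tilde\nu)$ explicitly by blowing up the excess node $q$ of $C$: separate $X$ and $Y$ at $q$, insert a new component $Z \cong \PP^1$ adjacent to both, and place the new marked point $p_{r+1}$ at a third smooth point of $Z$. The resulting $\tilde C$ is automatically stable, since $Z$ acquires exactly three special points (two nodes and $p_{r+1}$) and the other components of $C$ are unchanged. Define $\tilde\nu$ to coincide with $\nu$ at every pre-existing node of $\tilde C$, and at the two new nodes to take the unique complementary labels whose $X$- and $Y$-side values equal $\nu(q,X)$ and $\nu(q,Y)$, respectively (so the $Z$-side labels are $\nu(q,X)^c$ and $\nu(q,Y)^c$). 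The new node labels are strict by construction, and the old ones are strict by the hypothesis that $q$ was the only excess node.

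To confirm that $(\tilde C, \tilde\nu)$ maps to $(C,\nu)$ under the forgetting map $\varphi_{r+1}$, observe that once $p_{r+1}$ is forgotten, $Z$ retains only two special points and is therefore contracted, identifying its two nodes to reproduce $q$ with $X$-side label $\nu(q,X)$ and $Y$-side label $\nu(q,Y)$. For uniqueness, suppose $(\tilde C', \tilde\nu')$ is any solution. The forgetful map must contract at least one component of $\tilde C'$, for otherwise $\tilde\nu'$ would restrict to $\nu$ on $\tilde C' = C$, contradicting strictness at $q$. The contracted component must contain $p_{r+1}$ together with exactly two further special points. Neither of those can be a pre-existing marked point of $C$: by the remark preceding the lemma, such a configuration would instead produce an excess Schubert condition at a marked point of $C$, contradicting the assumption that all excess of $\nu$ is concentrated at the node $q$. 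Hence both further special points are nodes, and the contracted component must join $X$ to $Y$ to yield the node $q$ after contraction. The marked stable curve $\tilde C'$ is then determined, since the three special points on a $\PP^1$-bubble are unique up to $\mathrm{Aut}(\PP^1)$ by 3-transitivity; and the two new node labels of $\tilde\nu'$ are forced by strictness together with the requirement that contraction reproduces $\nu(q,X)$ and $\nu(q,Y)$, giving $\tilde\nu' = \tilde\nu$.

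The main obstacle is the case analysis in the uniqueness argument, in particular excluding the scenario in which the contracted bubble carries a pre-existing marked point rather than two nodes. This requires carefully invoking the preceding remark so as to distinguish node-based excess (our hypothesis) from the marked-point excess that a $(\text{node}, \text{marked point}, p_{r+1})$-bubble would create upon contraction.
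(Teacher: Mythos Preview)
Your construction matches the paper's exactly, and the paper's own proof simply asserts that ``this construction is unique'' without further argument, so your added uniqueness discussion goes beyond what the paper supplies.

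That said, your uniqueness argument has a small logical slip and a missing case. The preceding remark you invoke concerns the situation where marked points carry partition labels $\lambda$; in the present lemma $\nu$ is purely a node labeling, so ``excess at a marked point of $C$'' is not a notion available here, and the remark does not apply. Separately, after concluding that both remaining special points on the bubble are nodes, you assert without justification that the bubble must sit at $q$; you have not excluded the possibility that it sits at some other node $q'$ of $C$.

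Both issues are fixed by a single uniform observation that replaces your invocation of the remark: if the contracted bubble is located anywhere other than at $q$---whether it carries a pre-existing marked point, or two nodes at some $q' \ne q$---then $q$ itself survives as a node of $\tilde C'$, and the forgetting map on node labelings forces $\tilde\nu'(q,X) = \nu(q,X)$ and $\tilde\nu'(q,Y) = \nu(q,Y)$. Since these are not complementary by hypothesis, $\tilde\nu'$ fails to be strict at $q$, a contradiction. This one line handles every case in which the bubble is not inserted precisely at $q$.
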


\begin{proof}
Let $\tilde C$ be the curve in which $q$ is replaced by an extra component $Q$, having nodes $q_X$ and $q_Y$, and mark $Q$ with an $(r+1)$-st marked point $p_{r+1}$. Define $\tilde \nu$ to be the same as $\nu$ for nodes other than $q_X$ and $q_Y$, and set
\begin{align*}
\tilde \nu(q_X,X) = \nu(q,X) &\text{ and } \tilde\nu(q_X,Q) = \nu(q,X)^c, \\
\tilde \nu(q_Y,Y) = \nu(q,Y) &\text{ and } \tilde\nu(q_Y,Q) = \nu(q,Y)^c.
\end{align*}
(See Figure \ref{fig:excess-pop}.) It is clear that $\tilde C$ contracts to $C$ and $\tilde \nu$ to $\nu$ under the forgetting map $\varphi_{r+1}$, and that this construction is unique.
\end{proof}

\begin{figure}[h]
\centering
\includegraphics[scale=0.5]{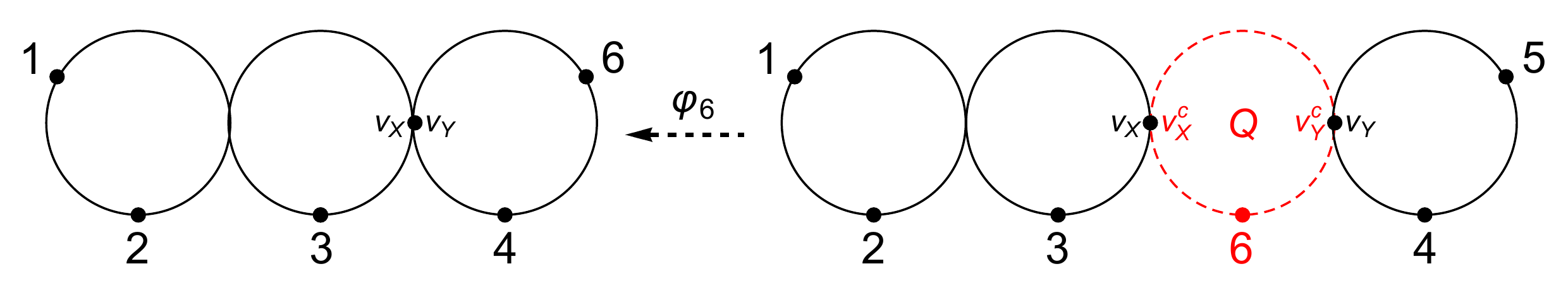}
\caption{Turning an excess node labeling into a strict node labeling (of a larger curve).}
\label{fig:excess-pop}
\end{figure}

\subsubsection{The dimension-1 case} We now assume $\sum |\lambda_i| = k(n-k)-1$, so $\cS(\lambda_\bullet,p_\bullet)$ has dimension 1.

For each node labeling $\nu$, precisely one component $C_\nu$ of $C$ has labels that sum to $k(n-k)-1$; all other components have labels that sum to $k(n-k)$. We will call $C_\nu$ the \emph{main component} of $C$ and the other $C_i$'s the \emph{frozen components} for the node labeling $\nu$. We have the following description of the connectivity between $S_\nu$ for different $\nu$'s:

\begin{lemma} \label{excess-labels-dim1}
Let $\nu$ and $\nu'$ be distinct strict node labelings, and suppose $S_\nu \cap S_{\nu'}$ is nonempty. Then the main components $C_\nu$ and $C_{\nu'}$ of $C$ are distinct and adjacent, $\nu$ and $\nu'$ agree everywhere except at the node $q = C_\nu \cap C_{\nu'}$, and $\nu'(q,C_\nu)$ is an extension of $\nu(q,C_\nu)$ by exactly 1 box (and vice versa for the labels on $C_{\nu'}$.)
\end{lemma}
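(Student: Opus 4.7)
The plan is to track, on each component $C_i$ of $C$, the sum of label-sizes contributed by $\nu$, by $\nu'$, and by the combined excess labeling $\nu \cup \nu'$ (which, at each side of each node, takes the componentwise maximum of the two partitions). Write $a_i(\nu)$, $a_i(\nu')$, and $b_i$ respectively for these sums. Since $\cS(\lambda_\bullet)$ has relative dimension $1$, the labeling $\nu$ makes exactly one component the \emph{main} one with $a_i(\nu) = k(n-k)-1$ and freezes every other component at $a_i(\nu) = k(n-k)$; the same holds for $\nu'$. Theorem~\ref{thm:EH86}, applied to the excess Schubert problem on each $C_i$, forces $b_i \leq k(n-k)$.

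The engine of the proof is the chain of inequalities $b_i \geq a_i(\nu)$ and $b_i \geq a_i(\nu')$, each with equality iff the \emph{other} labeling is contained in the one on the right at every node of $C_i$. First I would observe: if $C_i$ is frozen for $\nu$, then $b_i = k(n-k) = a_i(\nu)$, so $\nu(q, C_i) \supseteq \nu'(q, C_i)$ at every node on $C_i$; symmetrically if it is frozen for $\nu'$. Thus a component frozen for \emph{both} labelings satisfies $\nu = \nu'$ at every incident node, and by strict complementarity this equality propagates to the opposite sides of those nodes as well.

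From here the conclusion unspools. If $C_\nu = C_{\nu'}$, then every other component is doubly frozen and the previous paragraph forces $\nu = \nu'$ at every node, contradicting distinctness; hence $C_\nu \neq C_{\nu'}$. Setting $M := C_\nu$, the component $M$ is frozen for $\nu'$, and the equations $b_M = k(n-k)$ and $a_M(\nu) = k(n-k)-1$, together with $\nu'(q,M) \supseteq \nu(q,M)$ at each node on $M$, force precisely one node $q^*$ on $M$ where $\nu'(q^*, M)$ extends $\nu(q^*, M)$ by a single box, with $\nu = \nu'$ at all other nodes on $M$. If $q^*$ joined $M$ to a component other than $C_{\nu'}$, that component would be doubly frozen and the second paragraph would force $\nu = \nu'$ at $q^*$, a contradiction. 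Hence $q^* = M \cap C_{\nu'}$, giving the adjacency, and strict complementarity applied to both $\nu$ and $\nu'$ at $q^*$ yields the matching one-box-extension statement on the $C_{\nu'}$-side.

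The only real obstacle is bookkeeping: once the dictionary --- that $b_i$ dominates both $a_i(\nu)$ and $a_i(\nu')$, while nonemptiness of the excess problem caps $b_i$ from above --- is set up, every step is forced by elementary comparisons of sums of partition sizes.
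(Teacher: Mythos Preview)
Your proof is correct and follows essentially the same approach as the paper's. Both arguments hinge on the observation that, by Theorem~\ref{thm:EH86}, nonemptiness of $S_{\nu \cup \nu'}$ forces the excess labeling to satisfy $b_i \leq k(n-k)$ on each component, so that on any component frozen for $\nu$ one has $\nu' \subseteq \nu$ nodewise (and symmetrically); the paper states this more tersely as ``otherwise the Schubert problem on $C_i$ will be overdetermined,'' while you make the bookkeeping explicit with the sums $a_i(\nu)$, $a_i(\nu')$, $b_i$. The only stylistic difference is that the paper treats ``$C_\nu = C_{\nu'}$'' and ``$C_\nu, C_{\nu'}$ non-adjacent'' in a single stroke (every node then has a doubly-frozen side), whereas you separate these into step~6 and step~9.
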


\begin{proof}
If $C_i$ is a ``frozen'' component, the labels on it cannot change: otherwise the Schubert problem on $C_i$ will be overdetermined and $S_\nu \cap S_{\nu'}$ will be empty. Hence $\nu$ and $\nu'$ agree on any component  which is frozen for both. Moreover, if $q$ is a node and one side of $q$ is frozen for both labelings, then $\nu$ and $\nu'$ agree on the frozen side, hence on both sides (since the labelings are strict). In particular, if the main components $C_\nu, C_{\nu'}$ are equal or non-adjacent, every node must have at least one side on a shared frozen component, hence $\nu = \nu'$, a contradiction.

The only case remaining is where there exists a node $q$ between the main components. Since $C_\nu$ is frozen for $\nu'$, we see that $\nu(q,C_\nu) \cup \nu'(q,C_\nu) = \nu'(q,C_\nu)$; by counting, the latter is exactly one box larger than $\nu(q,C_\nu)$.
\end{proof}

\subsection{Lifting to $\Mbar{r+1}$.} Consider the following observation: let $p_1, \ldots, p_r$ be distinct points on $\PP^1$, and let $x \in S(\lambda_\bullet,p_\bullet) \subset G(k,n)$ be a point of the solution to the corresponding Schubert problem. So $x$ induces a morphism $f : \PP^1 \to \PP^{k-1}$ with higher ramification as specified by $\lambda_\bullet$.

We have prescribed only $k(n-k)-1$ worth of higher ramification of $f$. Hence, by the Pl\"{u}cker formulas, there exists a unique point having additional ramification by 1 box. (Either some $p_i$ satisfies a one-box-larger Schubert condition $\lambda_i'$, or some unmarked point $z \in \PP^1$ is simply ramified and $x \in S(\lambda_\bullet, p_\bullet) \cap S(\ybox, z)$ for a unique $z$.) Let $S'$ be the incidence correspondence
\begin{equation}
\label{incidence-correspondence} S' = \{(x,z) : x \in \cS(\lambda_\bullet,p_\bullet)) \cap S(\ybox, z)\} \subset G(k,n) \times (\PP^1 - \{p_1, \ldots, p_r\}).
\end{equation}
The projection to $G(k,n)$ induces a map $\pi: S' \to S(\lambda_\bullet, p_\bullet)$; by the above remarks, $\pi$ is injective. In fact, letting $\overline{S'} \subset G(k,n) \times \PP^1$ be the closure, we will show that $\pi : \overline{S'} \to S(\lambda_\bullet,p_\bullet)$ is an isomorphism, and remains so when the $p_i$ (and $z$) are allowed to collide. (Note that we are not assuming $S(\lambda_\bullet,p_\bullet)$ to be smooth.)

We will need the following lemma on simple nodes:

\begin{lemma} \label{nodal-iso} Let $X,Y \subseteq Z$ be subschemes such that $X \cup Y = Z$ and the scheme-theoretic intersection $X \cap Y$ is one reduced point. Let $f : A \to Z$ be a morphism whose restrictions $f^{-1}(X) \to X$ and $f^{-1}(Y) \to Y$ are isomorphisms. Then $f$ is an isomorphism.
\end{lemma}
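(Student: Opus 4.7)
The plan is to reduce to checking that $f$ induces isomorphisms on stalks, with the essential work happening at the unique node point $p := X \cap Y$.

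First, I would show $f$ is a homeomorphism. Surjectivity follows from $Z = X \cup Y$ and the isomorphism hypotheses. Injectivity is straightforward away from $p$, since such points lie in exactly one of $X$ or $Y$; at $p$, both $f^{-1}(X) \cap f^{-1}(Y)$ and $X \cap Y$ are single reduced points, forcing the preimage $q$ to be unique. That $f$ is closed follows by decomposing $C \subseteq A$ as $C = (C \cap f^{-1}(X)) \cup (C \cap f^{-1}(Y))$ and applying the two restriction isomorphisms. In particular $(f_*\cO_A)_p = \cO_{A,q}$.

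Next, I would check $f^\# \colon \cO_Z \to f_*\cO_A$ on stalks. Away from $p$, any point has a neighborhood in $Z$ contained entirely in $X$ or $Y$, and the claim follows from the isomorphism hypothesis. For the stalk at $p$, let $R = \cO_{Z,p}$, $S = \cO_{A,q}$, and $\phi \colon R \to S$ the (local) ring map. Let $I, J \subset R$ be the local ideals of $X, Y$, and set $I' = \phi(I)S$, $J' = \phi(J)S$. The hypotheses translate as $I \cap J = 0$ (from $X \cup Y = Z$), $I + J = \mathfrak{m}$ with $R/\mathfrak{m} = k$ (from $X \cap Y$ reduced at $p$), and isomorphisms $R/I \cong S/I'$, $R/J \cong S/J'$. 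Injectivity of $\phi$ is immediate: $\phi(r) = 0$ forces $r \in I \cap J = 0$. For surjectivity, I compare the Mayer--Vietoris sequences
\[0 \to R \to R/I \oplus R/J \to R/(I+J) \to 0,\]
\[0 \to S/(I' \cap J') \to S/I' \oplus S/J' \to S/(I'+J') \to 0.\]
The middle vertical map induced by $\phi$ is an isomorphism by hypothesis, and the right vertical map is an isomorphism because $S/(I'+J') = \cO_{f^{-1}(X\cap Y),q}$ is a residue field (the scheme $f^{-1}(X) \cap f^{-1}(Y)$ maps isomorphically to the reduced point $X \cap Y$ via either restriction). The five-lemma then gives an isomorphism $R \cong S/(I'\cap J')$ via $\phi$.

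The main obstacle is the final step: the five-lemma only yields $R \cong S/N$ where $N := I' \cap J'$, so I must still show $N = 0$. The key is that $I' \cdot J' \subseteq \phi(IJ)S = 0$, so $N$ is annihilated by both $I'$ and $J'$, hence by $I' + J' = \phi(\mathfrak{m})S$; since $S/(I'+J') = k$, this ideal equals $\mathfrak{m}_S$, so $\mathfrak{m}_S N = 0$. The isomorphism $R \cong S/N$ together with $\phi$ furnishes a decomposition $S = \phi(R) \oplus N$ as $R$-modules. Now a short computation shows $I' = \phi(I)S = \phi(I)\phi(R) + \phi(I)N = \phi(I)$, because $\phi(I) \subseteq \mathfrak{m}_S$ annihilates $N$; likewise $J' = \phi(J)$. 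Therefore $N = \phi(I) \cap \phi(J) = \phi(I \cap J) = 0$, completing the proof that $\phi$ is an isomorphism and hence that $f$ is.
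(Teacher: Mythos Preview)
Your argument is correct. The homeomorphism check is routine, and your stalk computation at the node is careful and goes through: the key chain $I'J' = 0 \Rightarrow \mathfrak{m}_S N = 0 \Rightarrow I' = \phi(I),\ J' = \phi(J) \Rightarrow N = \phi(I \cap J) = 0$ is valid, with the splitting $S = \phi(R) \oplus N$ coming exactly from the fact that $R \to S/N$ is an isomorphism.

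The paper takes a different, more categorical route. In the appendix it shows that any $Z$ with $X \cup Y = Z$ and $X \cap Y$ a single reduced point is the \emph{pushout} $X \sqcup_{\mathrm{pt}} Y$, using essentially the same Mayer--Vietoris sequence $0 \to \cO_Z \to \cO_X \oplus \cO_Y \to \cO_{\mathrm{pt}} \to 0$ you wrote down for $R$. Since $A = f^{-1}(X) \cup f^{-1}(Y)$ with $f^{-1}(X) \cap f^{-1}(Y)$ a single reduced point, $A$ is likewise the pushout of the same diagram, and the universal property hands you the inverse $Z \to A$ directly. So where you run the Mayer--Vietoris sequence on \emph{both} $R$ and $S$ and then argue $N = 0$ to close the gap, the paper runs it once on $Z$ to establish the universal property and lets abstract nonsense do the rest. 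Your approach is more hands-on and self-contained; the paper's is shorter at the point of use and generalizes cleanly to nodal gluings over an arbitrary base $S$ (which is how the appendix is actually stated).
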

\begin{proof}
See Corollary \ref{nodal-iso-appendix} in the appendix.
\end{proof}

\begin{thm} \label{box-lift}
With notation as above, let $z$ be an $(r+1)$-st marked point; label $z$ with a single box. Composing the ``forgetting" map $\varphi_{r+1} : \Mbar{r+1} \to \Mbar{r}$ with the structure map for $\cS(\lambda_\bullet;\ybox_z)$ yields the following diagram:
\[
\xymatrix{
\cS(\lambda_\bullet;\ybox_z) \ar[r] \ar[d]_{\pi} & \Mbar{r+1} \ar[d]^{\varphi_{r+1}} \\
\cS(\lambda_\bullet) \ar[r] & \Mbar{r}
}
\]
(This diagram is not Cartesian.) Then $\pi$ is an isomorphism.
\end{thm}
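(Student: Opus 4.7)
My plan has three main steps. First, I would establish the basic structural facts: by Theorem \ref{thm:recall-Sp14}, both $\cS(\lambda_\bullet;\ybox_z)$ and $\cS(\lambda_\bullet)$ are flat and Cohen-Macaulay over their respective bases, with total dimension $r-2$ (since $\varphi_{r+1}$ has relative dimension $1$ while the relative dimensions of the two Schubert families are $0$ and $1$). Both total spaces are projective over $\Mbar{r}$, so $\pi$ is proper.

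Second, I would verify that $\pi$ is set-theoretically bijective using the Pl\"ucker formula. Given $x \in \cS(\lambda_\bullet)|_{[C]}$, the point $x$ satisfies the conditions of some (possibly excess) node labeling $\nu$, with one ``main'' component $C_\nu \subset C$ carrying partitions totaling $k(n-k)-1$ and all other components frozen with total ramification $k(n-k)$. The Pl\"ucker formula then produces a \emph{unique} point $z_x \in C_\nu$ carrying the missing box of ramification. The preimage $\pi^{-1}(x)$ is the singleton $\{([C'], x')\}$, where $[C'] \in \varphi_{r+1}^{-1}([C])$ is determined by $z_x$: if $z_x$ is not a special point of $C_\nu$, set $C' = C$ with $z_x$ marked; if $z_x$ coincides with a special point, bubble off a new $\PP^1$ at $z_x$ to carry the new marked point. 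Combined with properness, this shows $\pi$ is finite and bijective.

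Third, the heart of the argument is promoting bijectivity to an isomorphism, for which I would apply Lemma \ref{nodal-iso} locally. Over the open locus where $z_x$ is not a special point of $C$, the assignment $x \mapsto z_x$ is cut out locally by the single determinantal equation of Lemma \ref{lem:local-equation-box}, so $\pi$ is manifestly an isomorphism there. For the bubbling locus, let $x \in \cS(\lambda_\bullet)|_{[C]}$ have $z_x$ at a special point. By Lemma \ref{excess-labels-dim1}, the target $\cS(\lambda_\bullet)$ has a local nodal structure at $x$: the two branches $S_\nu, S_{\nu'}$ (corresponding to strict labelings that differ at a single adjacent node by one box) meet transversely at the reduced point $x$. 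On the source side, $\cS(\lambda_\bullet;\ybox_z)$ has a matching local structure, with the two branches distinguished by which side of the bubbled-off $\PP^1$ the marked point $z$ lies on as the fiber deforms. Since $\pi$ is an isomorphism on each branch away from $x$ (by the generic analysis) and is continuous and bijective, its restriction to each branch is an isomorphism onto the corresponding branch of the target. Lemma \ref{nodal-iso} then upgrades these to an isomorphism of the full local nodal neighborhood.

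The main obstacle is the case analysis in the third step. I must verify in each bubbling scenario -- extra ramification at an existing marked point $p_i$, at a node between two components, or at the site of a more degenerate label collision -- that the two local branches on each side are genuinely Cohen-Macaulay, meet in a single reduced point, and that $\pi$ pairs them correctly. This should follow by combining the combinatorial description of strict and excess node labelings from Lemma \ref{excess-labels-dim1} with the explicit description from Speyer's construction (Theorem \ref{thm:recall-Sp14}), which identifies the local fiber structure on $\cS(\lambda_\bullet;\ybox_z)$ with the product of the corresponding Grassmannian factors $G(k,n)_{C_i}$.
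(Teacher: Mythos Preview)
Your outline follows essentially the paper's approach: construct the set-theoretic inverse via the Pl\"ucker formula, reduce using Lemma~\ref{nodal-iso} together with the node-labeling analysis of Lemma~\ref{excess-labels-dim1}, and use the explicit equation of Lemma~\ref{lem:local-equation-box} for the scheme-theoretic step. The paper's organization differs slightly: it works fiber-by-fiber over $\Mbar{r}$, uses Lemma~\ref{nodal-iso} (with a transverse-fibers argument showing each $S_\nu \cap S_{\nu'}$ is reduced) to reduce to the case $C = \PP^1$, and then factors $\pi$ as $\cS(\lambda_\bullet;\ybox_z)|_C \xrightarrow{\alpha} S' \xrightarrow{\beta} \cS(\lambda_\bullet)|_{[C]}$, where $S' \subset G(k,n)_C \times \PP^1_z$ is the closure of the incidence correspondence.

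There is one genuine gap. You write that on the open locus ``the assignment $x \mapsto z_x$ is cut out locally by the single determinantal equation of Lemma~\ref{lem:local-equation-box}, so $\pi$ is manifestly an isomorphism there.'' But that equation $f(z) = \sum_I pl_I\, \Delta_{I^c} (-z)^{e_{I^c}}$ has degree $k(n-k)$ in $z$, so it does not a priori cut out the graph of a function of $x$. The paper's key computation is that the Schubert condition $\lambda_i$ at $p_i$ forces $(z-p_i)^{|\lambda_i|} \mid f(z)$, leaving a \emph{linear} cofactor $g(z)$ whose leading coefficient $pl_{[k]}\Delta_{[n]\setminus[k]}$ is a unit; this is what makes $\beta$ an isomorphism, and it is the substantive scheme-theoretic step rather than a formality. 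Separately, your invocation of Lemma~\ref{nodal-iso} at ``bubbling'' points is only appropriate when $z_x$ is a node of $C$ (so that the target is genuinely nodal at $x$). When $z_x = p_i$ on an irreducible $C$, the target is smooth at $x$ and the issue is instead on the source side, where the bubbled-off component contributes an extra Grassmannian factor $G(k,n)_{C_{p_i}}$. The paper handles this as the map $\alpha$: that factor contributes a single reduced point because the relevant Littlewood--Richardson coefficient $c_{\lambda_i,\tinybox}^{\lambda_i^+}$ equals $1$.
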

If $\tilde x \in \cS(\lambda_\bullet;\ \ybox_z)$ lying over $\tilde C \in \Mbar{r+1}$, the map $\pi$ consists of forgetting the marked point $z$, then possibly contracting the component $Z \subset \tilde C$ containing $z$. In the latter case, $\pi(\tilde x)$ also forgets the morphism $Z \to \PP^{k-1}$, which had ramification exactly $\ybox$ at $z$. Thus we must recover both $z$ and, when necessary, the additional morphism.

\begin{proof}
We first construct the set-theoretic inverse for $\pi$. Let $x \in \cS(\lambda_\bullet)$, lying over a stable curve $C$, and let $\nu$ be a node labeling of $C$ with $x \in S_\nu$. Let $C_\nu \subseteq C$ be the main component of $\nu$, so $x$ gives a morphism $C_\nu \to \PP^{k-1}$ for which all but one point of ramification has been specified. Let $t \in C_\nu$ be the point with additional ramification. (Note that $t$ does not depend on the choice of $\nu$: if $x \in S_\nu \cap S_{\nu'}$, then the main components of $\nu$ and $\nu'$ are adjacent by Lemma \ref{excess-labels-dim1}, and $t$ must be the node between them, where the excess labels occur.) The assignment $x \mapsto t$ gives a (set-theoretic) diagonal map $\cS(\lambda_\bullet) \to \Mbar{r+1}$ that commutes with the diagram (thinking of $\Mbar{r+1}$ as the universal family over $\Mbar{r}$.) Let $\tilde C$ be the curve corresponding to $t \in \Mbar{r+1}$.

If $t$ is not a special point, then $\tilde C$ is the same curve as $C$, with $z=t$ as the $(r+1)$-st marked point. The morphisms $C_i \to \PP^{k-1}$ corresponding to $x \in \cS(\lambda_\bullet)$ already satisfy $\ybox$ at $t$, so they recover the point $\tilde x \in \cS(\lambda_\bullet;\ybox_z)$ lying over $\tilde C$. But if $t$ is a special point, $\tilde C$ has an additional component $Z$ bubbled off at $t$; we must recover the morphism $Z \to \PP^{k-1}$. There are two cases.

\emph{Case 1.} Suppose $t = p_i$. Then $Z$ has one node and two marked points $p_i$ and $z$. Let $\lambda_i^+$ be the (stricter) Schubert condition satisfied at $t$ for the map $C_\nu \to \PP^{k-1}$, so $\lambda_i^+/\lambda_i$ is one box. The morphism $Z \to \PP^{k-1}$ must satisfy $\lambda_i$ at $p_i$, $\ybox$ at $z$ and the strict node labeling condition $(\lambda_i^+)^c$ at the node with $C_\nu$. The Littlewood-Richardson coefficient $c_{\lambda_i\ybox(\lambda_i^+)^c}^{\rect}$ is 1, so there is a unique such morphism.

\emph{Case 2.} Suppose $t$ is a node between components $A,B$. Then $x$ satisfied an excess node labeling $\nu \cup \nu'$; let its excess labels at $t$ be $\alpha$ on $A$ and $\beta$ on $B$, so by Lemma \ref{excess-labels-dim1}, $\alpha \supset \beta^c$ and $\alpha/\beta^c$ is one box. Now $Z$ has two nodes and the marked point $z$, and the morphism $Z \to \PP^{k-1}$ must satisfy $\ybox$ at $z$, along with the strict node labeling conditions $\alpha^c$ at the node with $A$ and $\beta^c$ at the node with $B$. The Littlewood-Richardson coefficient $c_{\beta^c \ybox \alpha^c}^{\rect} = 1$, so again the morphism exists and is unique.

We now show that $\pi$ is an isomorphism. In particular, we show that for every point $[C] \in \Mbar{r}$, the restriction of $\pi$ in the diagram
\[
\xymatrix{
\cS(\lambda_\bullet; \ybox_z)\big|_C \ar[r] \ar[d]_{\pi} & C \ar[d]^{\varphi_{r+1}} \\
\cS(\lambda_\bullet)\big|_{[C]} \ar[r] & [C]
}
\]
is an isomorphism. (Recall that the fiber of the forgetting map over $[C]$ is $C$ itself.) In particular, it follows that for every $x \in \cS(\lambda_\bullet)$, the scheme-theoretic fiber $\pi^{-1}(x)$ is one reduced point; hence $\pi$ will be a (global) isomorphism.

\emph{Reduction to the case where $C$ has one component}. Let $\nu$ be a node labeling and let $S_\nu$ be the corresponding subscheme of $\cS(\lambda_\bullet)\big|_{[C]}$. For any frozen component $C_i$, the Schubert problem in $G(k,n)_{C_i}$ has a finite set of solutions. So, in the containment $S_\nu \subset \prod_i G(k,n)_{C_i}$, the coordinates in the $G(k,n)_{C_i}$ factors corresponding to frozen components are locally constant. In particular, projection to $G(k,n)_{C_\nu}$, where $C_\nu$ is the main component, is locally an isomorphism.

Let $\nu'$ be any other node labeling. We claim that the scheme-theoretic intersection $S_\nu \cap S_{\nu'}$ is reduced. By Lemma \ref{excess-labels-dim1}, if the intersection is nonempty, the main components $C_\nu$ and $C_{\nu'}$ are distinct and adjacent. Let $x \in S_\nu \cap S_{\nu'}$. We project to $G(k,n)_{C_\nu} \times G(k,n)_{C_{\nu'}}$; this is an isomorphism on a neighborhood of $x$. But locally, the projections $S_\nu$ and $S_{\nu'}$ are contained in transverse fibers: $S_\nu \subseteq G(k,n)_{C_\nu} \times \{\text{pt}\}$ and $S_{\nu'} \subset \{\text{pt}\} \times G(k,n)_{C_{\nu'}}$. Thus the scheme-theoretic intersection $S_\nu \cap S_{\nu'}$ is reduced at $x$.

It follows from Lemma \ref{nodal-iso} that $\pi$ is an isomorphism if and only if it is an isomorphism when restricted to each $S_\nu$. So, by forgetting all the marked points on the frozen components for $\nu$, and contracting down to the main component, we may assume that $C$ has only one component. So $C = \PP^1$ with distinct marked points $p_1,\ldots, p_r$, and $\cS(\lambda_\bullet)$ lives in the single Grassmannian $G(k,n)_C$.

\emph{Factoring $\pi$}. For each marked point $p \in C \cong \PP^1$, let $C_p$ be the component obtained when $z$ collides with $p$ and bubbles off. We have containments
\[\cS(\lambda_\bullet; \ybox_z)\big|_C \subset \cG(k,n) \subset G(k,n)_C \times \prod_p G(k,n)_{C_p} \times \PP^1_z,\]
and the map $\pi$ is the projection that forgets the $G(k,n)_{C_p}$ factors and the $z$ coordinate. Note that the projection from $G(k,n)_{C_p}$ gives an isomorphism everywhere except possibly at $z=p$.

We factor $\pi$ into two projections,
\[\xymatrix{
\cS(\lambda_\bullet; \ybox_z)\big|_C \ar[r]^-\alpha & S' \ar[r]^-\beta & \cS(\lambda_\bullet),
}\]
where $S' \subset G(k,n)_C \times \mathbb{P}^1$ is obtained by forgetting the $G(k,n)_{C_p}$ factors, but not the $z$ coordinate. The map $\beta : S' \to \cS(\lambda_\bullet)$ is the closure of the incidence correspondence \eqref{incidence-correspondence}. \\

\emph{The map $\beta$ is an isomorphism}. Choose coordinates on $\PP^1_z$ so that $p_1 = 0$ and $\infty$ is not a marked point; we restrict to the set $\AA^1 = \PP^1 - \{\infty\}$. With notation from Lemma \ref{lem:local-equation-box}, the equation for $S'$ is then
\[f(z) = \sum_{I \in {[n] \choose k}} pl_I \Delta_{I^c} \cdot (-z)^{e_{I^c}}.\]
The leading term of $f(z)$ is $pl_{[k]} \Delta_{[n] \setminus [k]}\cdot z^{k(n-k)}$; note that $pl_{[k]}$ is a unit since (over $\AA^1$) the Schubert condition $\ybox$ is never satisfied at $\infty$.

Now, since $S'$ satisfies the Schubert condition $\lambda_1$ at $z=0$, all the Pl\"{u}cker coordinates $pl_I$ are zero for $I > I(\lambda_1)$, where
\[I(\lambda) = (n-k+1, \ldots, n-1, n) - \lambda.\] In particular, the lowest-degree term (corresponding to $I = I(\lambda_1)$) is $z^{|\lambda_1|}$, so we see that $z^{|\lambda_1|}$ divides $f(z)$. Our choice of coordinates was arbitrary, so by the same logic applied to the other marked points, we see that $(z-p_i)^{|\lambda_i|}$ divides $f(z)$ for each $i = 1, \ldots, r$. This gives
\[f(z) = z^{|\lambda_1|}(z-p_2)^{|\lambda_2|} \cdots (z-p_r)^{|\lambda_r|}g(z),\]
and by inspection we see $g(z)$ is linear, with leading term $pl_{[k]} \Delta_{[n] \setminus [k]} \ z$. Now, on the open set $\PP^1 - \{p_1, \ldots, p_r\}$, we may invert the $(z-p_i)$ factors. Hence the equation for $S'$, the closure over this open set, is just $g(z)$. Since $pl_{[k]}$ is a unit, the equation $g(z) = 0$ gives an isomorphism. \\

\emph{The map $\alpha$ is an isomorphism}. We consider the maps
\[\xymatrix{
\cS(\lambda_\bullet; \ybox_z)\big|_C \ar[r]^-\alpha & S' \ar[r]^-{\pi_2} & \PP^1_z.
}\]
We know that $\alpha$ is an isomorphism except possibly over the points $z = p_i$ for each $i$. We restrict to $z = p_i$, so $z$ and $p_i$ bubble off on the component $C_{p_i}$. We may project away from all the $G(k,n)_{C_p}$ factors except the one corresponding to $G(k,n)_{C_{p_i}}$, so the map $\alpha$ is the projection of $G(k,n)_C \times G(k,n)_{C_{p_i}}$ onto its first component. The fiber of $S(\lambda_\bullet; \ybox_z)$ at $z=0$ is now a union of the form
\[\bigcup_{\nu} A_\nu \times B_\nu,\]
where $\nu$ is a node labeling and $A_\nu, B_\nu$ are the corresponding subschemes of $G(k,n)_C$ and $G(k,n)_{C_{p_i}}$. Let $q$ be the node; then $\nu(q,C)$ is an extension of $\lambda_i$ by one box; in particular, the Littlewood-Richardson coefficient $c_{\lambda_i, \ybox}^{\nu(q,C_{p_i})}$ is 1, so
\[B_\nu = \Omega(\lambda_i,p_i) \cap \Omega(\ybox,z) \cap \Omega(\nu(q,C_{p_i}),q) \subset G(k,n)_{C_{p_i}}\]
is one reduced point. Thus the fiber is in fact of the form
\[\bigcup_\nu A_\nu \times \{pt\},\]
so the projection to the first factor is an isomorphism.
\end{proof}



Theorem \ref{thm:main-geom} now follows from Speyer's description of $\cS(\lambda_\bullet,\ybox_z) \to \Mbar{r+1}$. We obtain, as a corollary, our theorem on reality of curves over $\Mo{r}(\RR)$:
\begin{cor} \label{cor:as-real-as-poss}
If the $p_i$ are all in $\mathbb{RP}^1$, the curve $S = S(\lambda_\bullet, p_\bullet) \subset G(k,n)$ has smooth real points. Moreover, $S(\mathbb{C}) - S(\mathbb{R})$ is disconnected.
\end{cor}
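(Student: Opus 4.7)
The plan is to deduce both statements directly from Theorem \ref{thm:main-geom}, restricted to the fiber over the real point $[\PP^1, p_\bullet] \in \Mo{r}(\RR)$. That fiber produces a finite, flat, surjective morphism $f : S \to C = \PP^1$, defined over $\RR$, étale at every point of $S$ lying over $C(\RR) = \RR\PP^1$, and with all fibers over $\RR\PP^1$ consisting entirely of real points.

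For smoothness at real points, I will observe that $C = \PP^1$ is smooth, so étaleness of $f$ at every point of $S$ lying over $C(\RR)$ implies smoothness of $S$ at those points. By the ``all real fibers'' property, those points are precisely $S(\RR)$, so $S$ is smooth at each of its real points.

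For the disconnectedness of $S(\CC) - S(\RR)$, the first step is to verify the set-theoretic identity $f^{-1}(\RR\PP^1) = S(\RR)$. The inclusion $S(\RR) \subseteq f^{-1}(\RR\PP^1)$ follows because $f$ is defined over $\RR$ (complex conjugation commutes with $f$); the reverse inclusion is exactly the ``all real fibers'' clause of Theorem \ref{thm:main-geom}. Since $\RR\PP^1$ separates $\PP^1(\CC)$ into two disjoint nonempty open half-planes $\cH^+$ and $\cH^-$, pulling back gives
\[ S(\CC) - S(\RR) \;=\; f^{-1}(\cH^+) \,\sqcup\, f^{-1}(\cH^-), \]
a disjoint union of two open subsets, each nonempty because $f$ is surjective. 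This exhibits $S(\CC) - S(\RR)$ as disconnected.

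The only substantive content is packaged in Theorem \ref{thm:main-geom}, so no genuine obstacle remains; the one subtlety to flag is the identity $f^{-1}(\RR\PP^1) = S(\RR)$, which combines étaleness (ruling out collisions of complex preimages at real points) with the reality of $f$ and the ``all real fibers'' property. Once this identity is in hand, both claims follow from formal topological arguments.
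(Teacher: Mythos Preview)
Your proposal is correct and follows essentially the same approach as the paper's proof: both use the map $f : S \to \PP^1$ from Theorem \ref{thm:main-geom} (the paper phrases the covering/reality input as Theorem \ref{thm:Sp14}, but this is the same content once restricted to the fiber), deduce smoothness at real points from \'etaleness over $\RR\PP^1$, establish $f^{-1}(\RR\PP^1) = S(\RR)$ from the ``all real fibers'' clause, and then split $S(\CC) - S(\RR)$ as the preimage of the two half-planes. One small remark: the parenthetical about \'etaleness ``ruling out collisions of complex preimages'' is not needed for the set-theoretic identity $f^{-1}(\RR\PP^1) = S(\RR)$---that identity follows purely from $f$ being defined over $\RR$ together with the real-fibers property.
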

\begin{proof}
We have a map $f: S \to \PP^1$. By Theorem \ref{thm:Sp14}, $S(\RR) \to \RR\PP^1$ is a covering map; in particular $S(\RR)$ is smooth. Also, since the preimage of every point $z \in \RR\PP^1$ consists of real points, we have $f^{-1}(\RR\PP^1) = S(\RR)$. Let $H^+, H_{-} \subset \CC$ be the (strict) upper and lower half-planes. Then $S$ is disconnected by its real points since 
\[S(\CC) - S(\RR) = f^{-1}(H^+) \sqcup f^{-1}(H_-). \qedhere\]
\end{proof}

For our applications to K-theory, we also need the following slightly stronger statement, in the case where $S$ is singular or reducible:
\begin{cor} \label{cor:normalization-disconnected}
Let $S' \subset S$ be any irreducible component, and let $\pi : \widetilde{S'} \to S'$ be its normalization. Then $\widetilde{S'}(\RR)$ is nonempty and $\widetilde{S'}(\CC) - \widetilde{S'}(\RR)$ is disconnected.
\end{cor}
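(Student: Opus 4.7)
The plan is to bootstrap the argument of Corollary \ref{cor:as-real-as-poss} by restricting the finite surjective map $f : S \to \PP^1$ from Theorem \ref{thm:main-geom} to the component $S'$ and pulling it back along the normalization $\pi$.

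First I will verify that every irreducible component $S' \subseteq S$ is defined over $\RR$ and that $S'(\RR)$ is nonempty. Since $f$ is finite and $S'$ has dimension one, the image $f(S')$ is closed, irreducible, and one-dimensional, hence equal to $\PP^1$. Taking any $z \in \RR\PP^1$, Theorem \ref{thm:main-geom} gives $f^{-1}(z) \subseteq S(\RR)$, so the nonempty set $f|_{S'}^{-1}(z)$ consists of real points of $S$ lying on $S'$. Because $S(\RR)$ is smooth by Corollary \ref{cor:as-real-as-poss}, each such point is a smooth point of $S$ and therefore lies on a unique irreducible component, which forces the complex conjugate $\overline{S'}$ to coincide with $S'$. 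Thus $S'$ is $\RR$-defined and $S'(\RR) \ne \emptyset$; since $\pi$ is an isomorphism over smooth points, $\pi^{-1}(S'(\RR)) \subseteq \widetilde{S'}(\RR)$ is also nonempty.

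Next consider the composition $\tilde f := f \circ \pi : \widetilde{S'} \to \PP^1$, which is finite, surjective, and defined over $\RR$. I claim $\tilde f^{-1}(\RR\PP^1) = \widetilde{S'}(\RR)$. The inclusion $\supseteq$ is immediate; conversely, if $\tilde f(\tilde x) \in \RR\PP^1$ then $\pi(\tilde x) \in f^{-1}(\RR\PP^1) \cap S' = S'(\RR)$ is a smooth point of $S$ lying only on $S'$, so $\pi$ is a local isomorphism there and $\tilde x$ is real. Decomposing $\PP^1(\CC) = \RR\PP^1 \sqcup H_+ \sqcup H_-$ with $H_\pm$ the open half-planes, we obtain
\[ \widetilde{S'}(\CC) - \widetilde{S'}(\RR) \;=\; \tilde f^{-1}(H_+) \sqcup \tilde f^{-1}(H_-), \]
a disjoint union of open subsets, each nonempty because $\tilde f$ is surjective. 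Hence $\widetilde{S'}(\CC) - \widetilde{S'}(\RR)$ is disconnected.

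The only step requiring genuine thought is the first one: recognizing that the map $f$ forces every irreducible component of $S$ to be $\RR$-defined and to meet $S(\RR)$. Once that is in place, the rest of the proof of Corollary \ref{cor:as-real-as-poss} transfers mechanically through the normalization $\pi$.
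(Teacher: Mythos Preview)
Your proof is correct and follows essentially the same route as the paper: restrict the map $f : S \to \PP^1$ to the component $S'$, use surjectivity together with $f^{-1}(\RR\PP^1) \subseteq S(\RR)$ to produce smooth real points on $S'$, then apply the half-plane decomposition to $f \circ \pi$. You are slightly more explicit than the paper in verifying that $S'$ is actually defined over $\RR$ (via the smooth real point lying on a unique component), and you use finiteness plus a dimension count where the paper invokes flatness to get surjectivity of $S' \to \PP^1$; both are fine and the arguments are otherwise identical.
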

\begin{proof}
Since $f : S \to \PP^1$ is flat, the map $S' \to \PP^1$ is surjective, and the fibers over $\RR\PP^1$ are all smooth real points of $S'$. Thus $\widetilde{S'}(\RR) = S'(\RR) \ne \eset$. The argument above, applied to $f \circ \pi$, shows that $\widetilde{S'}(\CC) - \widetilde{S'}(\RR)$ is disconnected.
\end{proof}

\section{Tableau combinatorics}\label{sec:tableau-combinatorics}

\subsection{Young tableaux and growth diagrams} \label{sec:combo-gds}
We recall the notion of a growth diagram of partitions. Let $G$ be the directed grid graph with vertices $\ZZ \times \ZZ$ and edges pointing up and to the right. We use the Cartesian convention for coordinates, so $(i,j)$ is $i$ steps to the right and $j$ steps up from the origin.

An induced subgraph $D \subset G$ is \emph{convex} if whenever $(a,b),(c,d) \in D$, the rectangle $(a,b) \times (c,d) \subseteq D$. A \newword{growth diagram on $D$} is a labeling $\lambda_{ij}$ of the vertices of $D$ by partitions, such that
\begin{enumerate}
\item[(i)] For each directed edge $\alpha \to \beta$, $\beta$ is an extension of $\alpha$ by a single box;
\item[(ii)] For each square
\[\xymatrix{
\alpha \ar[r] & \beta \\
\gamma \ar[u] \ar[r] & \delta, \ar[u]
}\]
if the two boxes of $\beta/\gamma$ are nonadjacent, then $\alpha$ and $\delta$ are the two distinct intermediate partitions between $\gamma$ and $\beta$.
\end{enumerate}
We think of (i) as the `growth condition' and (ii) as a `recurrence condition', for the following reason:
\begin{lemma}
Let $D$ be the rectangle $[a,b] \times [c,d]$ and let $\lambda_{ij}$ be a choice of partitions along a single path connecting $(a,b)$ to $(c,d)$. Then $\lambda_{ij}$ extends to a unique growth diagram on $D$.
\end{lemma}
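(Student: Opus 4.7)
My plan is to reduce the problem to a local statement at a single unit square, then to propagate from the given path $P$ to the entire rectangle by iterated application of that local rule.

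First I would establish the following local rule: given three of the four corners of a unit square, with labels forming a length-two monotone path, the fourth label is uniquely determined by conditions (i) and (ii). Concretely, suppose we know $\gamma$ at the bottom-left, $\alpha$ at the bottom-right, and $\beta$ at the top-right with $\gamma \to \alpha \to \beta$ each a one-box extension, and we wish to find $\delta$ at the top-left with $\gamma \to \delta \to \beta$. I would argue by a short case analysis on the two-box skew shape $\beta/\gamma$. If its two boxes share an edge, then only one partition lies strictly between $\gamma$ and $\beta$, forcing $\delta = \alpha$. If the two boxes are non-adjacent, there are exactly two intermediate partitions, and condition (ii) forces $\{\alpha,\delta\}$ to be this pair, so $\delta$ is the unique intermediate distinct from $\alpha$. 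The analogous statement for determining any one missing corner from the other three follows by symmetry.

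With the local rule in hand, I would extend the labeling iteratively. At any stage there is a labeled monotone path $P'$ running from one corner of $D$ to the opposite corner, together with some additional labeled vertices. Whenever $P'$ has an inner right-then-up corner $\gamma \to \alpha \to \beta$, the local rule uniquely produces a label $\delta$ at the fourth corner of the containing unit square; replacing $\alpha$ by $\delta$ in $P'$ yields a new labeled monotone path shifted one step northwest at that square. Dually, up-then-right corners may be pushed southeast to fill in vertices on the other side of $P$. Since every vertex of $D$ lies on some monotone path between the same pair of corners, finitely many such moves label every vertex of $D$.

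Uniqueness is automatic: at each step the local rule leaves only one possible label, so any growth diagram extending $P$ must coincide with the one produced by this procedure. The only real content is the case analysis establishing the local rule; I expect the main (minor) obstacle to be organizing the propagation so that each interior vertex receives a consistent label from all four surrounding unit squares, but this is immediate from the uniqueness assertion of the local rule applied to each square separately.
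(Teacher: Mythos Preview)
Your proof is correct and follows essentially the same approach as the paper, which gives only the one-line justification ``Repeated application of condition (ii) uniquely specifies the remaining entries.'' Your version simply unpacks this: you make the local rule explicit via the case analysis on whether the two boxes of $\beta/\gamma$ are adjacent, and you describe the corner-pushing propagation that the paper leaves implicit.
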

\begin{proof}
Repeated application of condition (ii) uniquely specifies the remaining entries.
\end{proof}

Growth diagrams encode the \emph{jeu de taquin} (JDT) algorithm, as follows. Let $S,T$ be skew standard tableaux such that $T$ extends $S$. Let $\sh(S) = \beta/\alpha$ and $\sh(T) = \gamma/\beta$. We think of $S$ as a sequence $\alpha \subset \alpha_1 \subset \cdots \subset \alpha_n = \beta$ of partitions, where for each $i$, $\alpha_i/\alpha_{i-1}$ is the box of $S$ labeled $i$. Likewise, we will think of $T$ as a sequence of partitions $\beta_j$  growing from $\beta$ to $\gamma$.

Let $D$ be a rectangular grid of size $|\gamma/\beta| \times |\beta/\alpha|$. Label the left side of $D$ with the partitions $\alpha_i$ for $S$ and the top with the $\beta_j$ for $T$. Let $\widetilde{T}$ (resp. $\widetilde{S}$) be the bottom (resp. right) edges of the resulting growth diagram, thought of as skew standard tableaux.

\begin{lemma}
The tableau $\widetilde{S}$ is the result of applying forward JDT slides to $S$ in the order indicated by the entries of $T$ (starting with the smallest entry). The tableau $\widetilde{T}$ is the result of applying reverse slides to $T$ in the order indicated by the entries of $S$ (starting with the largest entry).

\end{lemma}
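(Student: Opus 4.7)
The plan is to prove the statement by induction on the area of the rectangular grid $D$. The base case is when $D$ is a single $1 \times 1$ square; here one checks directly that the local rule of condition (ii) matches one step of a jeu de taquin slide, and the dual (reverse) interpretation holds simultaneously for $T$. Label the square so that $\alpha$ is at the bottom-left, $\beta$ at the top-left, $\gamma$ at the top-right, and $\delta$ at the bottom-right. The single box $b_S = \beta/\alpha$ is the lone entry of $S$ and $b_T = \gamma/\beta$ is that of $T$. When $b_S$ and $b_T$ are non-adjacent, the local rule gives $\delta = \alpha \cup b_T$, and the bottom/right edges record the same box positions as $T$/$S$, matching the fact that the forward slide of a single-box skew tableau into a non-adjacent inner corner is the trivial slide (and dually for the reverse slide on $T$). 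When $b_S$ and $b_T$ are adjacent (or coincide), $\delta = \beta$ is forced, and the bottom/right edges record the positional swap that occurs when the slide actually moves a box.

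For the inductive step, split $D$ into a leftmost column $D'$ of width $1$ and a remainder $D''$ of width $|\gamma/\beta| - 1$. Let $t_1$ be the box of $T$ read off the leftmost edge on top of $D'$ (the entry labeled $1$ in $T$). Processing $D'$ square-by-square from top to bottom is the step-by-step execution of a single forward JDT slide of $S$ into the inner corner at position $t_1$: each square reduces to the base case, and each local rule application either keeps a box in place or moves a single entry by one step, precisely as in JDT. Thus the right edge of $D'$ is the skew tableau resulting from the first slide of $\widetilde S$. The diagram $D''$ then has left edge equal to this new tableau and top edge equal to $T$ with its first entry removed and the remaining entries relabeled; by the inductive hypothesis its right edge records the remaining slides in order, producing $\widetilde S$. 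The claim about $\widetilde T$ is obtained by the symmetric argument, splitting $D$ horizontally into a top row and remainder and processing one row at a time; each row executes a single reverse slide on $T$, and the rows are traversed bottom-to-top, corresponding to taking the largest entry of $S$ first.

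The main obstacle will be verifying the base case in all adjacency configurations, particularly when $b_S$ and $b_T$ share a row or column so that $\gamma/\alpha$ consists of two adjacent boxes, and checking that the forced value $\delta = \beta$ correctly encodes the effect of the single JDT step. Once this base case is settled, the induction is essentially bookkeeping: a forward JDT slide is defined as a sequence of elementary moves replacing an inner corner by an adjacent box, and processing a column of $D$ from top to bottom tracks exactly this sequence; symmetrically, processing a row from right to left tracks an elementary reverse slide. One last point requiring care is that the inductive step must respect the order of slides as dictated by the entries of $T$ (resp.\ $S$); this follows because the entries of $T$ are read left-to-right across the top of $D$ in increasing order, matching the left-to-right traversal of columns used in the induction.
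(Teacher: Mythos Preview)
Your argument is essentially the standard one, and indeed the paper does not give its own proof here but simply cites Haiman \cite{Hai}; what you have written is (a sketch of) the argument one finds there. The key observation underlying your base case and column-processing step is correct: in each unit square the local rule (ii) forces exactly the elementary move of a jeu de taquin slide, and the partition-shape constraint ensures that when $b_j$ is adjacent to the current empty cell $e_{j-1}$, it is necessarily to the right or below (since $e_{j-1}\in\beta_{j-1}$ while $b_j\notin\beta_{j-1}$ and $\beta_{j-1}$ is a partition), and moreover $b_j$ carries the smaller of the two candidate entries because both neighbors of $e_{j-1}$ lie in $\gamma\setminus\beta_{j-1}$ and hence carry entries $\geq j$. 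You might want to make this point explicit, since it is what justifies calling the column-processing ``step-by-step JDT'' rather than merely producing the same final answer.

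Two small slips to fix: the cell $t_1$ into which you slide $S$ is an \emph{outer} addable cell of $\beta$, not an inner corner (the paper's ``forward'' slide here is an outward slide, since $\widetilde S$ has larger outer shape than $S$); and for the $\widetilde T$ half, the rows should be traversed \emph{top-to-bottom}, not bottom-to-top, since the top row carries the largest entry $b_n$ of $S$ on its left edge, matching ``starting with the largest entry.'' Neither affects the substance of the argument.
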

\begin{proof}
See \cite{Hai}.
\end{proof}
In this case we say $(S,T)$ \newword{shuffled} to $(\widetilde{T},\widetilde{S})$. We say $T$ is \newword{slide equivalent} to $\widetilde{T}$, and likewise $S$ is slide equivalent to $\widetilde{S}$.
\begin{lemma}
Shuffling is an involution.
\end{lemma}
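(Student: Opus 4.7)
The plan is to exploit the uniqueness of growth diagrams together with the symmetry of the growth diagram rules under swapping the two coordinate directions. Let $D$ be the $m \times n$ rectangular growth diagram used in the shuffle of $(S,T)$, with $m = |\gamma/\beta|$ and $n = |\beta/\alpha|$, and write $\lambda_{i,j}$ for its vertex labels, so that $\lambda_{0,0} = \alpha$, $\lambda_{0,n} = \beta$, $\lambda_{m,n} = \gamma$, and $\lambda_{m,0} = \beta'$ for some partition $\beta'$. By construction the left and top edges of $D$ carry $S$ and $T$, while the bottom and right edges carry $\widetilde{T}$ and $\widetilde{S}$.

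Next I would apply shuffling to $(\widetilde{T}, \widetilde{S})$ and observe that the resulting diagram $D'$ lives on an $n \times m$ rectangle (with dimensions swapped from $D$), with $\widetilde{T}$ on its left edge and $\widetilde{S}$ on its top edge. The key step is to produce a second growth diagram on the same rectangle by transposing $D$: define $\lambda'_{i,j} := \lambda_{j,i}$ on $[0,n] \times [0,m]$. The growth rule (i) is manifestly coordinate-blind, and the square rule (ii) depends only on the unordered pair of intermediate partitions in each square, hence is preserved under reflection across the main diagonal. Therefore $\lambda'$ is a valid growth diagram, and by construction its left edge is $\lambda_{j,0} = \widetilde{T}$ and its top edge is $\lambda_{m,i} = \widetilde{S}$, matching those of $D'$.

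By the uniqueness lemma just established (a growth diagram on a rectangle is determined by its values along any monotone path between opposite corners, in particular along two adjacent sides), we conclude $\lambda' = \mu$, where $\mu$ denotes the entries of $D'$. Reading off the remaining two sides yields $\mu_{i,0} = \lambda_{0,i}$ along the bottom and $\mu_{n,j} = \lambda_{j,n}$ along the right, which encode $S$ and $T$ respectively. Hence the shuffle of $(\widetilde{T}, \widetilde{S})$ is $(S, T)$, proving involutivity.

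The only real obstacle is the verification that the growth diagram axioms are invariant under transposition; this is essentially a triviality for rule (i) and reduces for rule (ii) to the observation that the rule constrains only the pair of intermediate partitions in a square, independently of which one is placed in which off-diagonal corner. Everything else is bookkeeping on the edges of $D$ versus $D'$.
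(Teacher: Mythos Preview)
Your argument is correct and is essentially the same as the paper's: both observe that the transpose of the growth diagram used to shuffle $(S,T)$ is again a growth diagram, with $(\widetilde{T},\widetilde{S})$ now on the left and top edges and $(S,T)$ on the bottom and right. The paper states this in one sentence, while you spell out the verification that the axioms are transposition-invariant and invoke the uniqueness lemma explicitly, but the underlying idea is identical.
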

\begin{proof}
The transpose of the growth diagram $D$ used to shuffle $(S,T)$ is again a growth diagram, with left and top edges $(\widetilde{T},\widetilde{S})$ and bottom and right edges $(S,T)$.
\end{proof}

We will be interested in growth diagrams on the downwards-slanting diagonal region
\[D = \{(i,j) : 0 \leq i + j \leq r\},\]
where every vertex on the main diagonal is labeled $\eset$, and every vertex on the outer diagonal is labeled by the rectangle $\rect$. We call these \newword{cylindrical growth diagrams}, for the following reason:

\begin{lemma}\label{cylindrical-recurrence}
Let $\lambda_{ij}$ be a cylindrical growth diagram. Then
\begin{itemize}
\item[(i)] $\lambda_{(i+r)(j-r)} = \lambda_{ij}$, and
\item[(ii)] $\lambda_{(r-j)(-i)} = \lambda_{ij}^c$.
\end{itemize}
Here $\lambda^c$ denotes the complementary partition with respect to $\rect$.
\end{lemma}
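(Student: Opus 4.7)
The plan is to prove (ii) directly; (i) will then follow by applying (ii) twice, since the map $\phi(i,j) := (r-j, -i)$ satisfies $\phi^2(i,j) = (i+r, j-r)$ and $(\lambda^c)^c = \lambda$.

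For (ii), I would define $\mu_{ij} := \lambda_{\phi(i,j)}^c = \lambda_{(r-j)(-i)}^c$ and show $\mu = \lambda$. The first step is to verify that $\mu$ is itself a cylindrical growth diagram on $D$ with the same boundary data as $\lambda$. The boundary conditions match because $\phi$ swaps the main and outer diagonals of $D$ and $\rect^c = \eset$. The edge condition --- that $\mu$ grows by one box along each upward or rightward edge --- holds because $\phi$ reverses edge orientations in $\lambda$'s grid while complementation reverses containment, and the two operations combine to restore the correct direction. The square recurrence for $\mu$ transforms, under $\phi$ and complementation, into the square recurrence for $\lambda$ at the corresponding rotated square (with bottom-left and top-right corners swapped); this is preserved because $\alpha \mapsto \alpha^c$ is induced by the $180^\circ$ rotation of $\rect$, which is a bijection on intermediate partitions of a two-box skew shape and preserves adjacency of skew boxes.

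Next, I would observe that a cylindrical growth diagram on $D$ is determined by its values on any single column together with the two constant diagonals, via rightward propagation: given column $i$, one fills in column $i+1$ from top to bottom, one square at a time, the top-right corner being either $\rect$ (for the topmost square) or having just been determined. Together with the two known corners on column $i$, the recurrence uniquely determines the remaining bottom-right corner. Hence it suffices to check $\mu$ and $\lambda$ agree on column $0$, i.e., $\lambda_{r-j, 0}^c = \lambda_{0, j}$ for $0 \leq j \leq r$.

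On growth sequences, reverse-complement in $\rect$ is precisely Sch\"{u}tzenberger evacuation, so this last identity asserts that the row-$0$ SYT $(\lambda_{0,0}, \lambda_{1,0}, \ldots, \lambda_{r,0})$ of $\lambda$ equals the evacuation of its column-$0$ SYT $(\lambda_{0,0}, \lambda_{0,1}, \ldots, \lambda_{0,r})$. The main obstacle will be to prove this row-column evacuation symmetry. I expect to prove it by induction on $r$, using that evacuation commutes with the growth-diagram recurrence in a suitable sense --- either by ``peeling off'' the $\eset$ at $(0,0)$ via a jeu-de-taquin slide or by invoking the classical symmetry of Fomin's growth diagrams on a triangular region with cornerpoint $\eset$ and hypotenuse identically $\rect$. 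Once the evacuation identity is established, agreement on column $0$ forces $\mu = \lambda$ throughout $D$, completing the proof of (ii).
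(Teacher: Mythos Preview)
The paper does not prove this lemma; it simply cites Stanley's \emph{Enumerative Combinatorics} (Chapter~7, Appendix~1) and attributes the result to Sch\"{u}tzenberger. Your proposal is therefore considerably more detailed than anything appearing in the paper.

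Your reduction of (i) to (ii) via $\phi^2(i,j) = (i+r,j-r)$ is correct, as is the verification that $\mu_{ij} := \lambda_{(r-j)(-i)}^c$ satisfies the boundary data, the single-box growth condition, and the square recurrence of a cylindrical growth diagram. The observation that such a diagram is determined by any single transversal path is also correct, so the problem legitimately reduces to the identity $\lambda_{r-j,0}^c = \lambda_{0,j}$ on column~$0$.

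You should recognize, though, that this reduced identity is not a genuine simplification: it \emph{is} the rectangular Sch\"{u}tzenberger theorem, which is exactly what the paper's citation points to. In your write-up the emphasis is inverted. With evacuation defined via shuffles as in this paper, the statement that $\ev_r$ reads off a vertical column of the growth diagram is close to definitional---it is the unwinding used just before the proof of Lemma~\ref{DE-evac-involution}---whereas the assertion you treat as already known, that evacuation on $\mathrm{SYT}(\rect)$ coincides with reverse-complement, is the substantive step. Your fallback plan of proving the row--column identity directly by induction on $r$, or by appealing to the classical triangular growth-diagram symmetry, is sound; just be aware that this is where the real work lies, and that carrying it out amounts to reproving the result the paper cites.
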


\begin{proof}
See \cite[Chapter 7, Appendix 1]{StanEC2}. Note that this fact is often attributed to Sch\"{u}tzenberger.
\end{proof}
Thus the rows of a cylindrical growth diagram repeat with period $r$; we may think of them as wrapping around a cylinder.

\subsection{Dual equivalence} Let $S,S'$ be skew standard tableaux of the same shape. We say $S$ is \newword{dual equivalent} to $S'$ if the following is always true: let $T$ be a skew standard tableau whose shape extends, or is extended by, $\sh(S)$. Let $\widetilde{T}, \widetilde{T}'$ be the results of shuffling $T$ with $S$ and with $S'$. Then $\widetilde{T} = \widetilde{T}'$.

In other words, $S$ and $S'$ are dual equivalent if they have the same shape, and they transform \emph{other} tableaux the same way under JDT.

\begin{lemma} \label{lem-dual-def2}
Let $S,S'$ be skew standard tableaux of the same shape. Then $S$ is dual equivalent to $S'$ if and only if the following is always true: 
\begin{itemize}
\item Let $T$ be a tableau whose shape extends, or is extended by, $\sh(S)$. Let $\widetilde{S}$ and $\widetilde{S'}$ be the results of shuffling $S,S'$ with $T$. Then $\sh(\widetilde{S}) = \sh(\widetilde{S}')$.
\end{itemize}
Additionally, in this case $\widetilde{S}$ and $\widetilde{S}'$ are also dual equivalent.
\end{lemma}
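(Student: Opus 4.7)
The plan is to work directly within the growth diagram framework of Section~\ref{sec:combo-gds}, using the fact that a single growth diagram simultaneously encodes both outputs of a shuffle. Construct the rectangular diagram $D$ with $S$ on the left edge (going $\alpha \to \beta$) and $T$ on the top edge (going $\beta \to \gamma$); then $\widetilde T$ appears on the bottom edge (going $\alpha \to \beta'$) and $\widetilde S$ on the right edge (going $\beta' \to \gamma$). Build the analogous diagram from $S'$, producing $\widetilde{T}'$, $\widetilde{S}'$, and bottom-right corner $\beta''$. Observe that $\sh(\widetilde T) = \beta'/\alpha$ and $\sh(\widetilde S) = \gamma/\beta'$, with $\alpha, \gamma$ determined by $T$ alone.

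For the forward direction (original definition implies alternative), $\widetilde T = \widetilde{T}'$ forces $\beta' = \beta''$, whence $\sh(\widetilde S) = \sh(\widetilde{S}')$. For the reverse direction, assume $\sh(\widetilde S) = \sh(\widetilde{S}')$ for every admissible $T$. Fix such a $T$ with partition chain $\beta = \beta_0, \beta_1, \ldots, \beta_{|T|} = \gamma$, and for each $j$ let $T_j \subseteq T$ consist of the entries $\le j$. Each $T_j$ is itself admissible (still extending $\sh(S)$), and applying the hypothesis to $T_j$ forces $\delta_j = \delta_j'$ for the lower-right corners of the $j$-column truncations of $D$ and the analogous diagram for $S'$. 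These corners are exactly the intermediate partitions along the bottom edges, so $\widetilde T = \widetilde{T}'$ as tableaux. The case that $T$ is extended by $\sh(S)$ is symmetric, obtained by placing $T$ on the left edge and $S$ on the top.

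For the final assertion, given $U$ extending $\sh(\widetilde S) = \sh(\widetilde{S}')$, attach a growth diagram $D''$ to the right of $D$ with $U$ on its top edge. By uniqueness of growth-diagram extension, the shared middle edge of $D \cup D''$ is forced to be $\widetilde S$, so $D''$ also computes the shuffle of $\widetilde S$ with $U$, yielding $\widetilde U$ on its bottom. The bottom of $D \cup D''$ is therefore the concatenation $\widetilde T \cdot \widetilde U$. Repeating for $S'$ and applying the (now-established) hypothesis $S \sim S'$ to the admissible tableau $T \cdot U$ gives equality of the full bottoms; canceling the already-agreed first halves $\widetilde T = \widetilde{T}'$ forces $\widetilde U = \widetilde{U}'$. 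Thus $\widetilde S$ and $\widetilde{S}'$ pass all $U$-tests, and since they already have the same shape $\gamma/\beta'$, they are dual equivalent (the ``extended by'' case for $U$ is symmetric).

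The main obstacle is purely bookkeeping: checking that the truncations $T_j$ and the concatenations $T \cdot U$ remain valid test tableaux, and that growth diagrams glue correctly via uniqueness of extension. The argument is otherwise a direct unpacking of the fact that both outputs of a shuffle --- the tableau $\widetilde T$ on the bottom and the shape information of $\widetilde S$ determined by $\beta'$ --- live as faces of a single rigid combinatorial object, the growth diagram, so that any shape-level coincidence on one face propagates to tableau-level coincidence on the other via truncations.
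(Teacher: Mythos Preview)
The paper does not prove this lemma; it defers entirely to Haiman \cite{Hai}. Your growth-diagram argument for the main equivalence is correct and is essentially the standard unpacking: recovering the full bottom edge from shape data via the truncations $T_j$ is exactly the right move, and the forward direction is immediate from reading off the corner $\beta'$.

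There is, however, a gap in your treatment of the ``additionally'' clause. Your concatenation argument (form $T \cdot U$ and glue $D''$ to the right of $D$) handles the case where $T$ extends $\sh(S)$ and $U$ extends $\sh(\widetilde S)$; the genuinely symmetric case is where $T$ is extended by $\sh(S)$ and $U$ is extended by $\sh(\widetilde S)$, handled by gluing below rather than to the right. But to conclude that $\widetilde S$ and $\widetilde S'$ are dual equivalent you must test against \emph{all} $U$, and the mixed case --- $T$ extends $\sh(S)$ while $U$ is extended by $\sh(\widetilde S)$ --- is not reduced to either matching case by any symmetry of the growth diagram (the natural $180^\circ$ symmetry exchanges the two matching cases with each other and the two mixed cases with each other). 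Concretely, such a $U$ has outer shape $\delta$, the lower-right corner of $D$, and there is no face of $D$ along which the growth diagram for $(U,\widetilde S)$ can be glued to produce a single test tableau for $S$. Haiman sidesteps this by taking as his \emph{definition} that every \emph{sequence} of slides (freely mixing inner and outer corners) preserves shape; preservation under one further slide is then tautological, and all the work goes into showing that testing against a single tableau (above or below) already suffices. With the paper's definition you need either that extra equivalence or a separate argument for the mixed case.
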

Thus $S$ and $S'$ are dual equivalent if \emph{their own} shapes evolve the same way under any sequence of slides. See \cite{Hai} for these and other properties of dual equivalence. Following Speyer \cite{Sp}, we extend the definition of \newword{shuffling} to dual equivalence classes:

\begin{lemma}
Let $S,T$ be skew tableaux, with $\sh(T)$ extending $\sh(S)$, and let $(S,T)$ shuffle to $(\widetilde{T},\widetilde{S})$. The dual equivalence classes of $\widetilde{T}$ and $\widetilde{S}$ depend only on the dual equivalence classes of $S$ and $T$.
\end{lemma}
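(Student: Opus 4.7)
The plan is to peel apart the two changes $S \to S'$ and $T \to T'$ one at a time, and appeal to the definition of dual equivalence and to Lemma \ref{lem-dual-def2} at each step. Assume $S$ is dual equivalent to $S'$ and $T$ is dual equivalent to $T'$, and consider the chain of shuffles
\[ (S,T) \rightsquigarrow (\widetilde T_0, \widetilde S_0), \quad (S',T) \rightsquigarrow (\widetilde T_1, \widetilde S_1), \quad (S',T') \rightsquigarrow (\widetilde T_2, \widetilde S_2). \]
We will show that the dual equivalence classes of both output coordinates are preserved at each of the two transitions, so that $\widetilde T_0$ and $\widetilde T_2$ lie in the same class, as do $\widetilde S_0$ and $\widetilde S_2$.

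For the first transition only the inner tableau changes. The definition of dual equivalence, applied to $S \sim S'$ with common partner $T$, immediately gives the literal equality $\widetilde T_0 = \widetilde T_1$; Lemma \ref{lem-dual-def2} then yields that $\widetilde S_0$ is dual equivalent to $\widetilde S_1$. For the second transition, only the outer tableau changes. Here I invoke the role-swapped analog of both the definition and Lemma \ref{lem-dual-def2}: the phrase ``extends, or is extended by'' in the definition of dual equivalence is symmetric in the two tableaux participating in a shuffle, so $T \sim T'$ is equivalent to the statement that, for any appropriately shaped $S$, shuffling $(S,T)$ and $(S,T')$ yields identical $S$-side outputs $\widetilde S = \widetilde S'$. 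The corresponding role-swapped form of Lemma \ref{lem-dual-def2} then gives dual equivalence of the $T$-sides. Applied to the partners $T \sim T'$ with common $S'$, we obtain $\widetilde S_1 = \widetilde S_2$ and $\widetilde T_1 \sim \widetilde T_2$. Composing the two transitions, $\widetilde T_0 = \widetilde T_1 \sim \widetilde T_2$ and $\widetilde S_0 \sim \widetilde S_1 = \widetilde S_2$, as desired.

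The main subtle point is justifying the role-swapped form of Lemma \ref{lem-dual-def2}. A growth diagram on a rectangle is symmetric between its left and top edges, both of which encode skew standard tableaux, so the shuffling operation is genuinely symmetric in its two arguments, modulo the shape constraint that the outer shape of one equal the inner shape of the other. Consequently, the proof of Lemma \ref{lem-dual-def2} given in \cite{Hai} goes through verbatim with the roles of the two tableaux interchanged. Alternatively, one can bypass this symmetry appeal by invoking Haiman's classification of dual equivalence classes by rectification shape \cite{Hai}: since shuffling is a composition of JDT slides and slides preserve rectification, the rectifications of $\widetilde S_i$ and $\widetilde T_i$ equal those of $S$ and $T$, respectively, so the claim reduces to the observation that the dual equivalence classes of $S, T$ determine those of $\widetilde T, \widetilde S$.
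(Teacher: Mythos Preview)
The paper states this lemma without proof, treating it as a standard consequence of the dual equivalence theory in \cite{Hai}. Your main argument via the two-step chain $(S,T) \to (S',T) \to (S',T')$ is correct and is the natural way to unpack the lemma from the definition together with Lemma~\ref{lem-dual-def2}. The appeal to growth-diagram symmetry to justify the role-swapped form of Lemma~\ref{lem-dual-def2} is also valid: transposing a rectangular growth diagram exchanges the two input tableaux (and correspondingly the two output tableaux), so any statement about dual equivalence phrased in terms of shuffles is symmetric in the roles of inner and outer tableau.

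One caution: your final ``alternative'' paragraph is muddled. There is no classification of dual equivalence classes by rectification shape alone---for a fixed skew shape $\beta/\alpha$ there are $c_{\alpha\lambda}^\beta$ dual equivalence classes of rectification shape $\lambda$, as the paper notes in Lemma~\ref{lem-dual-LRcoeff}---so ``classification by rectification shape'' is not a thing you can invoke. And as written, the sentence ends by reducing the claim to itself. I would delete that paragraph; your main argument already does the job.
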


The fact that rectification of skew tableaux is well-defined, regardless of the rectification order (the `fundamental theorem of JDT') is the following statement:
\begin{thm} \label{thm-dual-jdt}
Any two tableaux of the same straight shape are dual equivalent.
\end{thm}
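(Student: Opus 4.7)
The plan is to prove this via a two-step reduction using \emph{elementary dual equivalences}, following Haiman's original approach. First I would define an elementary dual equivalence between two SYT $S, S'$ of the same straight shape: they agree outside the positions of three consecutive entries $i-1, i, i+1$, and these entries do not occupy three cells in a single row or column. In such a configuration one of the three entries is "isolated" from the other two, and there is a unique swap (swapping $i$ with whichever of $i \pm 1$ is not its immediate neighbor in the local row-or-column sense) that yields another SYT of the same shape. Call two SYT \emph{elementarily connected} if one is obtained from the other by a sequence of such moves.

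The first step is to verify that an elementary dual equivalence $S \leftrightarrow S'$ preserves the dual equivalence class. By Lemma \ref{lem-dual-def2}, it suffices to check that for any skew tableau $T$ whose shape extends or is extended by $\sh(S)$, the shapes $\sh(\widetilde S)$ and $\sh(\widetilde{S'})$ arising from shuffling with $T$ agree. Because shuffling is built out of single JDT slides and a single slide interacts with $S$ only through the cells adjacent to its path, we may further reduce to the case where $T$ is a single box. This leaves a finite case analysis: enumerate the possible positions of the triple $\{i-1, i, i+1\}$ relative to the slide cell, and verify in each case that the output shapes coincide. The cases break into a small number of local configurations (an "L," a "$\Gamma$," or a disjoint pair plus a singleton), each checked by direct computation.

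The second step is to show that the graph on $\mathrm{SYT}(\lambda)$ with edges given by elementary dual equivalences is connected. I would argue by induction on $|\lambda|$. Given two tableaux $S, S'$ of the same straight shape $\lambda$, one first uses elementary moves to bring $S$ into agreement with $S'$ on the position of the maximal entry $n$: the entry $n$ must lie in some outer corner of $\lambda$, and elementary moves at positions $n-1$ and $n$ suffice to migrate it between distinct outer corners (by swapping $n$ with $n-1$ when the latter lies in a non-adjacent row/column, using $n-2$ as the "witness"). Once $n$ is placed in a common cell of $S$ and $S'$, deleting it produces two SYT of the same smaller straight shape $\lambda - \{n\}$, and the inductive hypothesis gives a connecting chain, which lifts back to a chain on $\lambda$.

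Combining these two steps yields the theorem: all SYT of shape $\lambda$ lie in a single dual equivalence class. The main obstacle is the local case analysis in Step 1, which is tedious but bounded, since both the elementary swap and a single JDT slide are confined to a constant-size neighborhood. The connectedness statement in Step 2 is classical and essentially combinatorial, so no geometric input is required.
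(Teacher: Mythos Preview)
The paper does not give its own proof of this theorem: it is stated as the ``fundamental theorem of JDT'' and the surrounding dual equivalence facts are deferred to Haiman \cite{Hai}. Your proposal is a faithful outline of Haiman's original argument, so it is in complete agreement with what the paper invokes.

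One small point worth tightening in Step~2: when you want to migrate the entry $n$ from one outer corner to another, the swap $n \leftrightarrow n-1$ is only an elementary dual equivalence move when $n-1$ is not adjacent to $n$. If $n-1$ \emph{is} adjacent, you must first use moves involving $n-2$ (and possibly smaller entries, inductively) to relocate $n-1$ to a non-adjacent cell before $n$ can move. Your phrase ``using $n-2$ as the witness'' gestures at this, but the full inductive statement one actually proves is that any SYT of shape $\lambda$ is connected by elementary moves to a fixed canonical tableau (e.g.\ the row-superstandard one), which sidesteps the need to control the position of $n$ directly. Either formulation works; just make the induction hypothesis explicit enough to handle the adjacent case.
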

We will write $D_\lambda$ for the unique dual equivalence class of straight shape $\lambda$. \\

Since we may use any tableau of straight shape $\beta$ to rectify a skew tableau $S$ of shape $\alpha/\beta$, we may speak of the \newword{rectification tableau} of a slide equivalence class. Similarly, by Lemma \ref{lem-dual-def2} and Theorem \ref{thm-dual-jdt} we may speak of the \newword{rectification shape of a dual equivalence class} $\rsh(D)$: this is the shape of any rectification of any representative of the class $D$. 

\begin{lemma}\label{slide-dual}
Let $D,S$ be a dual equivalence class and a slide equivalence class, with $\rsh(D) = \sh(\mathrm{rect}(S))$. There is a unique tableau in $D \cap S$.
\end{lemma}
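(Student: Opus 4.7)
My plan is to separate the proof into uniqueness and existence, each following from the two equivalent characterizations of dual equivalence (the definition and Lemma \ref{lem-dual-def2}) together with the involution property of shuffling. Let $\alpha/\beta$ be the common shape of tableaux in $D$, and let $\lambda = \rsh(D) = \sh(\mathrm{rect}(S))$. Fix once and for all a standard Young tableau $U$ of shape $\beta$.

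For \emph{uniqueness}, suppose $T, T' \in D \cap S$, and shuffle both $(U, T)$ and $(U, T')$. Since $T, T' \in D$, the $U$-outputs agree by the definition of dual equivalence; since $T, T' \in S$, the rectifications agree and both equal $\mathrm{rect}(S)$. So both shuffles produce the same pair $(\mathrm{rect}(S), W)$, and the involution property of shuffling forces $T = T'$.

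For \emph{existence}, pick any $T_0 \in D$ and shuffle $(U, T_0) \rightsquigarrow (R_0, W)$, where $R_0 = \mathrm{rect}(T_0)$ has shape $\lambda$ and $W$ is skew of shape $\alpha/\lambda$. Let $R = \mathrm{rect}(S) \in SYT(\lambda)$; by Theorem \ref{thm-dual-jdt}, $R_0$ and $R$ are dual equivalent. Now shuffle $(R, W) \rightsquigarrow (V, T)$. Applying the defining property of dual equivalence to the pair $R_0 \sim R$ with partner $W$, the $W$-outputs coincide, so $V = U$ and in particular $\sh(T) = \alpha/\beta$. Applying the ``additionally'' clause of Lemma \ref{lem-dual-def2} to the same pair, the $R$-outputs are dual equivalent: $T$ is dual equivalent to $T_0$, so $T \in D$. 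Finally, the involution property applied to $(U, T) \rightsquigarrow (R, W)$ gives $\mathrm{rect}(T) = R = \mathrm{rect}(S)$, so $T \in S$.

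The main subtlety is verifying that the constructed $T$ lies in $D$: by construction, $T$ is only tied to $D$ through its shared ``companion'' $W$ with $T_0$. The ``additionally'' clause of Lemma \ref{lem-dual-def2} is precisely what converts the straight-shape dual equivalence $R_0 \sim R$ (automatic by Theorem \ref{thm-dual-jdt}) into the desired skew-shape dual equivalence $T_0 \sim T$. With this in hand, no counting argument or external combinatorial input (such as an enumeration of Littlewood--Richardson tableaux) is needed.
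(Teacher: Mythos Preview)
Your proof is correct and follows essentially the same approach as the paper's. The paper abbreviates uniqueness as ``clear'' and leaves implicit the verification that shuffling $(R,W)$ returns $U$ in the first slot, whereas you spell out both points explicitly; otherwise the constructions (pick a representative of $D$, rectify, swap in $\mathrm{rect}(S)$, shuffle back, and invoke Theorem \ref{thm-dual-jdt} and Lemma \ref{lem-dual-def2}) are identical.
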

\begin{proof}
Uniqueness is clear. To produce the tableau, pick any $T_D \in D$. Rectify $T_D$ using an arbitrary tableau $X$, so $(X,T_D)$ shuffles to $(\widetilde{T_D},\widetilde{X})$ (and $X$ and $ \widetilde{T_D}$ are of straight shape). Replace $\widetilde{T_D}$ by the rectification tableau $R_S$ for the class $S$, and let $(R_S,\widetilde{X})$ shuffle back to $(X,T)$. Then $T$ and $R_S$ are slide equivalent, and by Theorem \ref{thm-dual-jdt} and Lemma \ref{lem-dual-def2}, $T$ and $T_D$ are dual equivalent.
\end{proof}

The dual equivalence classes of a given shape and rectification shape are counted by a Littlewood-Richardson coefficient:

\begin{lemma} \label{lem-dual-LRcoeff}
Let $\beta/\alpha$ be a skew shape and let 
\[X_\alpha^\beta(\lambda) = \{\text{dual equivalence classes } D \text{ with } \sh(D) = \beta/\alpha \text{ and } \rsh(D) = \lambda \}.\]
Then $|X_\alpha^\beta(\lambda)| = c_{\alpha \lambda}^\beta.$
\end{lemma}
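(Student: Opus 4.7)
The plan is to exhibit a bijection between $X_\alpha^\beta(\lambda)$ and the set of skew standard tableaux of shape $\beta/\alpha$ that rectify to a fixed standard tableau of shape $\lambda$, since the cardinality of the latter is one of the standard descriptions of $c_{\alpha\lambda}^\beta$.

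Concretely, fix once and for all any standard tableau $R$ of straight shape $\lambda$, and let $\cR$ denote its slide equivalence class, which consists of all skew tableaux rectifying to $R$. For every dual equivalence class $D \in X_\alpha^\beta(\lambda)$ we have $\rsh(D) = \lambda = \sh(\mathrm{rect}(\cR))$, so Lemma \ref{slide-dual} gives a unique element $T_D \in D \cap \cR$. I would first check that the assignment $D \mapsto T_D$ is injective: two classes $D \ne D'$ in $X_\alpha^\beta(\lambda)$ are disjoint, hence $T_D \ne T_{D'}$.

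Next, I would show the map is surjective. Given any $T$ with $\sh(T) = \beta/\alpha$ and $\mathrm{rect}(T) = R$, let $D$ be the dual equivalence class of $T$. By definition $\sh(D) = \beta/\alpha$; moreover $\rsh(D)$ is the shape of any rectification of any representative of $D$, and $T$ itself rectifies to the shape $\lambda$, so $\rsh(D) = \lambda$. Thus $D \in X_\alpha^\beta(\lambda)$, and by uniqueness in Lemma \ref{slide-dual} we must have $T = T_D$. This establishes the bijection
\[
X_\alpha^\beta(\lambda) \;\longleftrightarrow\; \{T : \sh(T) = \beta/\alpha,\ \mathrm{rect}(T) = R\}.
\]

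Finally, I would invoke the standard fact that the right-hand set has cardinality $c_{\alpha\lambda}^\beta$: this is the formulation of the Littlewood-Richardson coefficient as the number of skew standard tableaux of shape $\beta/\alpha$ with a prescribed rectification, independent of the choice of $R$ by Theorem \ref{thm-dual-jdt}. There is no real obstacle here beyond unpacking Lemma \ref{slide-dual}; the only thing to be careful about is that the bijection depends on the arbitrary choice of $R$, but both Lemma \ref{slide-dual} and Theorem \ref{thm-dual-jdt} guarantee the counts are unchanged by this choice.
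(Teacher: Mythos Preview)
Your proof is correct and follows essentially the same approach as the paper's: both use Lemma~\ref{slide-dual} to biject $X_\alpha^\beta(\lambda)$ with tableaux of shape $\beta/\alpha$ having a fixed rectification, then invoke the standard Littlewood--Richardson description. The paper simply picks $R$ to be the highest-weight tableau and is less explicit about injectivity and surjectivity, but the argument is the same.
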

\begin{proof}
It is well-known that $c_{\alpha \lambda}^\beta$ counts tableaux $T$ of shape $\beta/\alpha$ whose rectification is the highest-weight tableau of shape $\lambda$. This specifies the slide equivalence class of $T$; by Lemma \ref{slide-dual}, such tableaux are in bijection with $X_\alpha^\beta(\lambda)$.
\end{proof}

We remark that tableau shuffling commutes with rotation by $180^\circ$. Let $T$ be a tableau of skew shape $\alpha/\beta$, and write $T^R$ for the tableau of shape $\beta^c/\alpha^c$ obtained by rotating $T$ by $180^\circ$, then reversing the numbering of its entries. Then the dual equivalence class of $T^R$ depends only on the dual equivalence class of $T$. This gives an involution of dual equivalence classes
\[D \mapsto D^R : X_\alpha^\beta(\lambda) \to X_{\beta^c}^{\alpha^c}(\lambda).\]
In particular, it follows that any tableaux $T, T'$ of `anti-straight-shape' $\rect/\lambda^c$ are dual equivalent, and their rectifications have shape $\lambda$.

We define a \newword{chain of dual equivalence classes} to be a sequence $(D_1, \ldots, D_r)$ of dual equivalence classes, such that $\sh(D_{i+1})$ extends $\sh(D_i)$, for each $i$. We say the chain has \newword{type} $(\lambda_1, \ldots, \lambda_r)$ if for each $i$, $\rsh(D_i) = \lambda_i$. Let $X_\alpha^\beta(\lambda_1, \ldots, \lambda_r)$ denote the set of chains of dual equivalence classes of type $(\lambda_1, \ldots, \lambda_r)$, such that $\sh(D_1)$ extends $\alpha$ and $\beta$ extends $\sh(D_r)$. This has cardinality equal to the Littlewood-Richardson coefficient $c_{\alpha, \lambda_1, \ldots, \lambda_r}^\beta$. 

Note that there is a natural identification $X_\alpha^\beta(\ybox, \cdots, \ybox)$ (with $|\beta/\alpha|$ boxes) with the set $\mathrm{SYT}(\beta/\alpha)$ of skew standard tableaux. We will think of chains of dual equivalence classes as generalizations of standard tableaux.

\subsubsection{Operations on chains of dual classes} \label{sec:shuffling-ops} We define the \newword{shuffling} operation
\[\sh_i : X_\alpha^\beta(\lambda_1, \ldots, \lambda_i, \lambda_{i+1}, \cdots \lambda_r) \to X_\alpha^\beta(\lambda_1, \ldots, \lambda_{i+1}, \lambda_i, \cdots \lambda_r)\]
by shuffling $(D_i,D_{i+1})$. These satisfy the relations $\sh_i^2 = \mathrm{id}$ and $\sh_i \sh_j = \sh_j \sh_i$ when $|i-j| > 1$. Note, however, that $\sh_i \sh_{i+1} \sh_i \ne \sh_{i+1} \sh_i \sh_{i+1}$ in general. (In the case where $\lambda_i = \ybox$ for all $i$, $\sh_i$ reduces to the Bender-Knuth involution for standard tableaux.)

We next define the $i$-th \newword{evacuation} operation 
\[\ev_i : X_\alpha^\beta(\lambda_1, \ldots, \lambda_r) \to X_\alpha^\beta(\lambda_i, \ldots, \lambda_1, \lambda_{i+1}, \ldots, \lambda_r)\]
by $\ev_i =  \sh_1 (\sh_2 \sh_1) \cdots (\sh_{i-2} \cdots \sh_1) (\sh_{i-1} \cdots \sh_1)$. This results in reversing the first $i$ parts of the chain's type, by first shuffling $D_1$ outwards past $D_i$, then shuffling the $D_2'$ (now the first element of the chain) out past $D_i'$, and so on.

In the case where $\alpha = \eset$ and $\lambda_i = \ybox$ for all $i$, the operation $\ev_i$ reduces to evacuation of the standard tableau formed by the first $i$ entries. In general, $\ev_i$ is an involution:
\begin{lemma} \label{evac-involution}
The operation $\ev_i$ is an involution.
\end{lemma}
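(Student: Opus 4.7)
The plan is to establish $\ev_i^2 = \mathrm{id}$ by a purely algebraic argument using only the two relations from Section~\ref{sec:shuffling-ops}: each $\sh_j$ is an involution, and $\sh_j\sh_k = \sh_k\sh_j$ whenever $|j-k|>1$. Although the braid relations fail for the $\sh_j$'s (as noted), it turns out that no braid identity is actually needed — the specific word defining $\ev_i$ is squared-identity for formal reasons alone.

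The key step is the following auxiliary identity, which I would prove by induction on $i$:
\[
(\sh_{i-1}\sh_{i-2}\cdots\sh_1)\,\ev_{i-1}\,(\sh_{i-1}\sh_{i-2}\cdots\sh_1) \;=\; \ev_{i-1}.
\]
Writing $\ev_{i-1} = \ev_{i-2}(\sh_{i-2}\cdots\sh_1)$ and $(\sh_{i-1}\cdots\sh_1) = \sh_{i-1}(\sh_{i-2}\cdots\sh_1)$, the left-hand side becomes
\[
\sh_{i-1}\,\bigl[(\sh_{i-2}\cdots\sh_1)\,\ev_{i-2}\,(\sh_{i-2}\cdots\sh_1)\bigr]\,\sh_{i-1}\,(\sh_{i-2}\cdots\sh_1).
\]
By induction the bracketed factor collapses to $\ev_{i-2}$. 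Now $\ev_{i-2}$ is a word in $\sh_1,\ldots,\sh_{i-3}$, each of which commutes with $\sh_{i-1}$ by distant commutation, so the two outer $\sh_{i-1}$'s slide together and cancel, leaving $\ev_{i-2}(\sh_{i-2}\cdots\sh_1) = \ev_{i-1}$, as required. The base case $i=2$ is immediate since $\ev_1$ is the empty product and $\sh_1 \cdot \mathrm{id} \cdot \sh_1 = \mathrm{id}$.

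Granting the auxiliary identity, the lemma itself follows by a short induction on $i$. The base case $\ev_2 = \sh_1$ is an involution by definition. For the inductive step,
\[
\ev_i^2 \;=\; \ev_{i-1}\,(\sh_{i-1}\cdots\sh_1)\,\ev_{i-1}\,(\sh_{i-1}\cdots\sh_1) \;=\; \ev_{i-1}\cdot\ev_{i-1} \;=\; \mathrm{id},
\]
using the auxiliary identity in the middle and the inductive hypothesis at the end.

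The only point requiring care — and the closest thing to an obstacle — is making sure the argument never invokes a braid-type identity. This is automatic in the setup above: at each inductive stage, $\sh_{i-1}$ only meets operators $\sh_j$ with $j \leq i-3$, so interactions stay within the distant-commutation range. Consequently, the result holds in any monoid generated by involutions satisfying distant commutation, independent of whether the $\sh_j$'s happen to satisfy braid relations on particular $X_\alpha^\beta(\lambda_\bullet)$.
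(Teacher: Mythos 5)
Your proof is correct and takes essentially the same route as the paper: both reduce $\ev_i^2$ to $\ev_{i-1}^2$ via the identity $(\sh_{i-1}\cdots\sh_1)\,\ev_{i-1}\,(\sh_{i-1}\cdots\sh_1)=\ev_{i-1}$ (equivalently, $(\sh_{i-1}\cdots\sh_1)\ev_i=\ev_{i-1}$) and then induct. The only difference is that the paper justifies this identity with an informal cancellation sweep (``each extra $\sh_j$ cancels the leftmost instance of $\sh_j$ in $\ev_i$''), whereas you establish it by a clean secondary induction using only the distant commutation relations --- a slightly more careful version of the same cancellation.
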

\begin{proof}
By definition, $\ev_i = \ev_{i-1} (\sh_{i-1} \cdots \sh_1)$. On the other hand, observe that $(\sh_{i-1} \cdots \sh_1)\ev_i = \ev_{i-1}$. (Each extra $\sh_j$ cancels the leftmost instance of $\sh_j$ in $\ev_i$.) Thus we have
\[\ev_i^2 = \ev_{i-1}(\sh_{i-1} \cdots \sh_1) \ev_i = \ev_{i-1}^2,\]
and the claim follows by induction.
\end{proof}
In the case $\alpha = \eset$ and $\beta = \rect$, the operation $\ev_r$ is just reversal:
\begin{lemma} \label{DE-evac-involution}
The operation
\[\ev_r : X_\eset^{\rect}(\lambda_1, \ldots, \lambda_r) \to X_\eset^{\rect}(\lambda_r, \ldots, \lambda_1)\]
is given by $\ev_r(D_1, \ldots, D_r) = (D_r^R, \ldots, D_1^R)$.
\end{lemma}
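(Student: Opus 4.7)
I plan to induct on $r$. For base cases, uniqueness of dual equivalence classes of straight and anti-straight shapes pins down the answer directly; for the inductive step, I first identify the outermost positions of $\ev_r(C)$ via a Littlewood--Richardson computation, and then pass to representative standard tableaux to handle the interior.

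When $r=1$, $\lambda_1 = \rect$ and $D_1 = D_1^R$ is the unique dual equivalence class of straight shape $\rect$ (Theorem \ref{thm-dual-jdt}), so $\ev_1 = \mathrm{id}$ suffices. When $r=2$, the identity $c_{\mu,\lambda}^{\rect} = c_\mu^{\lambda^c}$ (nonzero only if $\mu = \lambda^c$) forces $\sh(D_1) = \lambda_1$ and $\sh(D_2) = \rect/\lambda_1$; shuffling produces classes of straight shape $\lambda_2$ and anti-straight shape $\rect/\lambda_2$, which by Theorem \ref{thm-dual-jdt} and the remark after Lemma \ref{lem-dual-LRcoeff} must equal $D_2^R$ and $D_1^R$. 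For $r \geq 3$, I apply $\sigma_r := \sh_{r-1} \cdots \sh_1$, which lands in $X_\eset^{\rect}(\lambda_2, \ldots, \lambda_r, \lambda_1)$. The position-$r$ entry $E$ of $\sigma_r(C)$ has $\rsh(E) = \lambda_1$ and skew shape $\rect/\nu$; nonemptiness forces $c_{\nu, \lambda_1}^{\rect} > 0$, hence $\nu = \lambda_1^c$ and $E$ is the unique anti-straight class of shape $\rect/\lambda_1^c$ with rectification shape $\lambda_1$, namely $D_1^R$. Since $\ev_r = \ev_{r-1} \circ \sigma_r$ and $\ev_{r-1}$ uses only $\sh_1, \ldots, \sh_{r-2}$, position $r$ is untouched, so the last entry of $\ev_r(C)$ equals $D_1^R$. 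Applying the same argument to $C' = \ev_r(C)$ and invoking $\ev_r^2 = \mathrm{id}$ (Lemma \ref{evac-involution}) shows that the first entry of $\ev_r(C)$ equals $D_r^R$.

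For the interior positions, I pass to representative standard tableaux: choose $T_i \in D_i$ and glue them (shifting entries of $T_i$ up by $|\mu_{i-1}|$) into a single SYT $T$ of shape $\rect$ with $T_i$ occupying $\mu_i/\mu_{i-1}$. Under this gluing, the chain-level shuffle $\sh_i$ corresponds to the jeu de taquin shuffle of the blocks $T_i$ and $T_{i+1}$; iterating, the recursive formula for $\ev_r$ becomes the classical Sch\"{u}tzenberger evacuation $\epsilon$ on $T$. By a classical theorem of Sch\"{u}tzenberger, $\epsilon(T) = T^R$ for rectangular shape, where $T^R$ denotes the $180^\circ$ rotation of $T$ with entry relabeling $i \mapsto k(n-k)+1-i$. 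Reading off the blocks of $T^R$ under the reversed decomposition by $\lambda_r, \ldots, \lambda_1$, the $j$-th block is precisely $T_{r-j+1}^R$, whose dual equivalence class is $D_{r-j+1}^R$, establishing $\ev_r(D_1, \ldots, D_r) = (D_r^R, \ldots, D_1^R)$.

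The main obstacle is the identification of the iterated chain-level $\ev_r$ with classical Sch\"{u}tzenberger evacuation $\epsilon$ on the glued SYT. This reduces to comparing the triangular product formula $\ev_r = \sh_1(\sh_2\sh_1)\cdots(\sh_{r-1}\cdots\sh_1)$ with Sch\"{u}tzenberger's analogous formula for $\epsilon$ in terms of elementary Bender--Knuth transpositions, and using the fundamental theorem of jeu de taquin to verify that block-level shuffles decompose coherently into the corresponding single-box shuffles.
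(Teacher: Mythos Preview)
Your route differs substantially from the paper's. The paper interprets $\ev_r(\mathbf{D})$ as the chain read along the rightmost vertical edge of the decgd whose top row is $\mathbf{D}$, and then simply invokes Lemma~\ref{DE-cylindrical-recurrence}(ii) (the decgd reflection symmetry) to identify that edge with $(D_r^R, \ldots, D_1^R)$. The whole argument is two lines once the growth-diagram framework is in place.

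Your argument instead refines to a glued standard tableau $T$ and appeals to Sch\"utzenberger's theorem that $\epsilon(T) = T^R$ for rectangular shape. This is the same classical input (Lemma~\ref{cylindrical-recurrence} is essentially that theorem, stated for growth diagrams), so the two approaches are close in spirit. The real difference is how you link chain-level $\ev_r$ to box-level $\epsilon$, and here there is a genuine gap. You acknowledge this as ``the main obstacle'' and say it ``reduces to comparing the triangular product formulas \ldots\ and using the fundamental theorem of jeu de taquin to verify that block-level shuffles decompose coherently into the corresponding single-box shuffles,'' but you do not carry this out. The subtlety is that when each block shuffle $\sh_i$ is expanded as a rectangle of Bender--Knuth moves, the resulting word is \emph{not} literally $s_1(s_2s_1)\cdots(s_{N-1}\cdots s_1)$; rather, both sequences compute the same right column of the same refined cylindrical growth diagram. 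Making this precise without growth diagrams requires an argument you have not supplied. The paper's Lemma~\ref{DE-cylindrical-recurrence} is exactly the statement that the refinement is compatible, and its proof is simply: choose tableau representatives along one path and apply Lemma~\ref{cylindrical-recurrence}.

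A structural remark: your induction is never actually used---nothing in the $r\geq 3$ step invokes the hypothesis for smaller $r$---and the representative-tableau argument, once completed, already determines all positions, so the separate uniqueness argument for positions $1$ and $r$ (while correct) is redundant.
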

We will give a proof below, using growth diagrams of dual equivalence classes.

Finally, we define the $i$-th \newword{evacuation-shuffle} operation
\[\esh_i : X_\eset^{\rect}(\lambda_1, \ldots, \lambda_i, \lambda_{i+1}, \cdots \lambda_r) \to X_\eset^{\rect}(\lambda_1, \ldots, \lambda_{i+1}, \lambda_i, \cdots \lambda_r)\]
by
\[\esh_i = \ev_{i+1}^{-1} \sh_1 \ev_{i+1}.\]
This operation is simpler than it appears: it only affects the $i$-th and $(i+1)$-th entries of the chain, and its effect is local. Moreover, it does not depend on the other dual equivalence classes in the chain. We have the following:
\begin{lemma} \label{upper-shuffle}
Let ${\bf D} = (D_1,\ldots, D_r) \in X_\eset^{\rect}(\lambda_1, \ldots, \lambda_r)$ and write
\[\esh_i({\bf D}) = (D_1', \ldots, D'_{i+1}, D'_i, \ldots, D_r').\]
\begin{itemize}
\item[(i)] For $j \ne i, i+1$, we have $D_j = D_j'$.
\item[(ii)] The remaining two classes $D_i', D_{i+1}'$ are computed as follows: Let $\tau, \sigma$ be, respectively, the inner shape of $D_i$ and the outer shape of $D_{i+1}$. Let ${\bf D}^* = (D_\tau, D_{i}, D_{i+1}) \in X_\eset^\sigma(\tau, \lambda_i, \lambda_{i+1})$, with $D_\tau$ the unique dual equivalence class of straight shape $\tau$. Then
\[\esh_2({\bf D}^*) = \sh_1 \sh_2 \sh_1 \sh_2 \sh_1({\bf D}^*) = (D_\tau, D_{i+1}', D_i').\]
\end{itemize}
%
\end{lemma}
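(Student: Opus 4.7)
The plan is to prove both parts by induction on $i$. The base case $i=1$ is handled by direct expansion: since $\ev_2 = \sh_1$, one has $\esh_1 = \sh_1 \sh_1 \sh_1 = \sh_1$, which manifestly satisfies (i) and (ii) (in (ii) the prepended class $D_\tau = D_\eset$ is empty, so shuffles involving it merely permute positions without modifying classes, and the three-chain formula reduces correctly).

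For the inductive step, the starting point is the algebraic factoring
\[\ev_{i+1} = \ev_i \cdot \sigma_i, \qquad \sigma_i := \sh_i \sh_{i-1} \cdots \sh_1,\]
which is immediate from the definition of $\ev_{i+1}$. This yields the recursion
\[\esh_i = \sigma_i^{-1} \esh_{i-1} \sigma_i.\]
For part (i), since $\sigma_i$ and $\sigma_i^{-1}$ only involve $\sh_j$ with $j \leq i$, they preserve positions $\geq i+2$, and $\esh_{i-1}$ preserves these positions by the inductive hypothesis; hence so does $\esh_i$. The delicate part is showing that positions $1, \ldots, i-1$ are also preserved. The plan is to establish a \emph{reversal principle}: two chains in $X_\eset^\gamma(\mu_1, \ldots, \mu_{i+1})$ that agree at positions $3, \ldots, i+1$ have images under $\ev_{i+1}$ that agree at positions $1, \ldots, i-1$. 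This reflects the type-reversing character of $\ev_{i+1}$, and it can be proved by a parallel induction on $i$, again using $\ev_{i+1} = \ev_i \sigma_i$ and tracking how $\sigma_i$ transports modifications cyclically through the first $i+1$ positions. Granted the reversal principle, locality of $\esh_i$ follows immediately: $\sh_1 \ev_{i+1}({\bf D})$ differs from $\ev_{i+1}({\bf D})$ only at positions 1, 2, so after applying $\ev_{i+1}^{-1}$ the two chains differ only at positions $i, i+1$.

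For part (ii), once locality is established, the remaining task is to show that the action of $\esh_i$ at positions $i, i+1$ depends only on the triple $(\tau, D_i, D_{i+1})$. This uses the reversal principle together with the observation that the first class of $\ev_{i+1}({\bf D})$ is always of straight shape (its inner shape $\eset$ is preserved throughout), hence determined by its outer shape alone via Theorem \ref{thm-dual-jdt}. This allows replacement of $(D_1, \ldots, D_{i-1})$ by the single class $D_\tau$ of straight shape $\tau$ without altering the result at positions $i, i+1$. In this reduced (three-chain) setting, direct expansion gives
\[\esh_2 = (\sh_1 \sh_2 \sh_1)\cdot\sh_1\cdot(\sh_1 \sh_2 \sh_1) = \sh_1 \sh_2 \sh_1 \sh_2 \sh_1,\]
after cancelling $\sh_1 \sh_1 = \mathrm{id}$. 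Finally, the output class at position 1 has inner shape $\eset$ and rectification type $\tau$, so its outer shape is $\tau$; by uniqueness of straight-shape dual equivalence classes (Theorem \ref{thm-dual-jdt}), it equals $D_\tau$.

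The main obstacle will be the reversal principle. The heuristic is transparent --- $\ev_{i+1}$ inverts the order of classes, so it should send modifications at positions 1, 2 to modifications at mirror positions $i, i+1$ --- but making this rigorous requires careful bookkeeping of how local shuffle modifications propagate through the nested structure of $\ev_{i+1}$. An alternative route would be to interpret $\ev_{i+1}$ via a growth-diagram rotation, in the spirit of Section \ref{sec:combo-gds}, where the reversal principle should manifest as a geometric $180^\circ$ symmetry of the underlying diagram.
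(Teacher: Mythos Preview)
Your approach is genuinely different from the paper's, and the reduction you make is essentially correct: locality of $\esh_i$ at positions $1,\ldots,i-1$ is equivalent to what you call the reversal principle, and part (ii) does reduce to the explicit five-shuffle formula for $\esh_2$ once locality and independence from $D_1,\ldots,D_{i-1}$ are known. The expansion $\esh_2 = \sh_1\sh_2\sh_1\sh_2\sh_1$ and the recursion $\esh_i = \sigma_i^{-1}\esh_{i-1}\sigma_i$ are both correct.

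However, the reversal principle is the entire content of the lemma, and you have not proved it. Your proposed inductive route via $\ev_{i+1} = \ev_i\,\sigma_i$ does not close: when you apply $\sigma_i = \sh_i\cdots\sh_1$ to two chains differing at positions $1,2$, the difference does not stay confined --- each $\sh_j$ can propagate it one step to the right, and by the time $\sigma_i$ finishes, the chains may differ at many positions, so the inductive hypothesis on $\ev_i$ gives you nothing useful. More concretely, in the triangular growth diagram with top row $(C_1,\ldots,C_{i+1})$, the edge $V_{i+1-m,-1}$ (which feeds into the computation of the lower-right portion) genuinely depends on $C_1,\ldots,C_{i+1-m}$, so there is no obvious ``cone of dependence'' argument confining the bottom $i-1$ entries of the right column to the top-right $i-1$ entries of the top row. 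A correct inductive proof, if one exists, would need a much more delicate invariant.

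The paper's proof sidesteps this entirely. It works in the full \emph{cylindrical} decgd (chains from $\eset$ to $\rect$) rather than the triangular sub-diagram, and uses the $180^\circ$ symmetry of Lemma~\ref{DE-cylindrical-recurrence}(ii): the rotated copy ${\bf W}^R$ of $(D_1,\ldots,D_{i-1})$ appears elsewhere in the diagram, in a location determined by ${\bf A'}$ (the tail of the column) together with ${\bf B} = (D_{i+2},\ldots,D_r)$. Since $\sh_1$ leaves ${\bf A'}$ and ${\bf B}$ untouched, ${\bf W}^R$ and hence ${\bf W}$ are unchanged. This is exactly the ``growth-diagram rotation'' you mention as an alternative at the end of your proposal --- it is in fact the paper's main tool, and it is what makes the argument work. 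For part (ii), the paper reads off the five-shuffle formula directly from the two relevant decgds, obtaining $(D_\tau,D'_{i+1},D'_i) = \sh_1\sh_2\sh_1\sh_2\sh_1(D_\tau,D_i,D_{i+1})$ without a separate independence argument.
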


We will also prove this using growth diagrams. For now, we note that from the definition, $\esh_i^2 = \mathrm{id}$, and by Lemma \ref{upper-shuffle}, when $|i-j|>1$, $\esh_i \esh_j = \esh_j \esh_i$ and $\esh_i \sh_j = \sh_j \esh_i$. \\

Let $G$ and $D$ be as in the definition of (ordinary) growth diagrams. Let $\lambda_{ij}$ be an assignment of a partition to each vertex $(i,j)$ of $D$, and let $H_{ij}, V_{ij}$ be assignments of dual equivalence classes to the horizontal and vertical edges beginning at $(i,j)$. We say the triple $\Gamma = (\lambda_{ij}, H_{ij}, V_{ij})$ is a \newword{dual equivalence growth diagram} if:
\begin{enumerate}
\item[(i)] For each directed edge $\xymatrix{\alpha \ar[r]^T & \beta}$, the dual equivalence class $T$ has shape $\beta/\alpha$,
\item[(ii)] For each square
\[\xymatrix{
\alpha \ar[r]^T & \beta \\
\gamma \ar[u]^S \ar[r]_{\widetilde{T}} & \delta, \ar[u]_{\widetilde{S}}
}\]
the dual equivalence classes $(S,T)$ shuffle to $(\widetilde{T},\widetilde{S})$.
\end{enumerate}

By definition, shuffling of dual equivalence classes is computed by choosing representatives, then computing the shuffles using an ordinary growth diagram with edges described by the square shown above. Thus each \emph{square} in a dual equivalence growth diagram is an equivalence class of ordinary growth diagrams. (A dual equivalence growth diagram in which adjacent partitions differ by one box is the same as an ordinary growth diagram.)

We will again only consider dual equivalence growth diagrams on the downwards-slanting diagonal region
\[D = \{(i,j) : 0 \leq i + j \leq r\},\]
with every vertex on the main diagonal labeled $\eset$, and every vertex on the outer diagonal labeled $\rect$. We omit the leftmost and rightmost edge labels. We call such a diagram a \newword{dual equivalence cylindrical growth diagram}, or \newword{decgd}. Decgds inherit the periodicity and symmetry of ordinary cylindrical growth diagrams:

\begin{lemma}\label{DE-cylindrical-recurrence}
Let $\Gamma = (\lambda_{ij}, H_{ij}, V_{ij})$ be a decgd. Then:
\begin{itemize}
\item[(i)] $\lambda_{(i+r)(j-r)} = \lambda_{ij}, H_{(i+r)(j-r)} = H_{ij}$, and $V_{(i+r)(j-r)} = V_{ij}$;
\item[(ii)] $\lambda_{(r-j)(-i)} = \lambda_{ij}^c, H_{(r-1-j)(-i)} = V_{ij}^R$, and $V_{(r-j)(-i-1)} = H_{ij}^R$.
\end{itemize}
\end{lemma}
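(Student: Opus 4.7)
The plan is to establish both parts of the lemma by combining Lemma \ref{cylindrical-recurrence} (which handles the partition identities) with local consistency arguments for the edge labels.

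For both parts, the partition identities $\lambda_{(i+r)(j-r)} = \lambda_{ij}$ and $\lambda_{(r-j)(-i)} = \lambda_{ij}^c$ are immediate from Lemma \ref{cylindrical-recurrence}, since the vertex labels of a decgd form an ordinary cgd. The substantive work is in upgrading these to identities about the dual equivalence classes.

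For part (i), I would proceed as follows. By the partition periodicity, the shapes of $H_{ij}$ and $V_{ij}$ match the shapes of $H_{(i+r)(j-r)}$ and $V_{(i+r)(j-r)}$. Consider the pulled-back diagram $\tau^*\Gamma$, where $\tau(i,j) = (i+r, j-r)$; this is another decgd on $D$ with the same vertex labels as $\Gamma$, and it suffices to show $\tau^*\Gamma = \Gamma$. To do so, I would observe that a decgd on $D$ is determined by its vertex labels together with the edge labels along a single vertical ``column'' from the inner to the outer diagonal: iterated application of the shuffling rule at adjacent squares propagates the data throughout $D$. Encoding the column as a chain $(D_1, \ldots, D_r) \in X_\eset^{\rect}(\lambda_1, \ldots, \lambda_r)$, the shift by $(r, -r)$ becomes the statement that $r$ iterated ``shift-by-one-column'' operations recover the original chain. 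The determinism of shuffling, together with the already-established partition periodicity, forces this identity.

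For part (ii), I would apply the global symmetry $\sigma:(i,j) \mapsto (r-j, -i)$ squarewise. A direct calculation shows that $\sigma$ carries the square at $[(i,j),(i+1,j+1)]$ to the square at $[(r-j-1,-i-1),(r-j,-i)]$, and it swaps horizontal with vertical edges while complementing the partition labels at the corners. Under this identification, the shuffling relation at the image square becomes the statement: if $(S,T)$ shuffle to $(\widetilde{T}, \widetilde{S})$ in $\Gamma$, then $(T^R, S^R)$ shuffle to $(\widetilde{S}^R, \widetilde{T}^R)$. This is the $180^\circ$-rotation compatibility of shuffling (a consequence of the rotational symmetry of JDT noted in the discussion preceding this lemma, and of the fact that $D \mapsto D^R$ is well-defined on dual equivalence classes). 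The formulas $H_{(r-1-j)(-i)} = V_{ij}^R$ and $V_{(r-j)(-i-1)} = H_{ij}^R$ then follow from identifying which edge of the image square corresponds to which edge of the original.

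The main obstacle is the determination/uniqueness argument in part (i): since $D$ is a doubly-infinite strip, a decgd on it is not specified by a finite amount of data, so ``propagation from a column'' must be set up with care. The cleanest resolution is to work modulo the translation $\tau$, viewing the decgd as living on a cylinder, so that the ``shift by one column'' becomes a well-defined involution-like operator on the finite set $X_\eset^{\rect}(\lambda_1, \ldots, \lambda_r)$; periodicity then reduces to checking that the $r$-fold iterate of this operator is the identity, which follows from the partition periodicity and the deterministic propagation.
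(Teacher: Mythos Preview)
Your opening move is incorrect: the vertex labels of a decgd do \emph{not} form an ordinary cylindrical growth diagram. In an ordinary cgd, adjacent partitions differ by exactly one box, whereas in a decgd the edge $\lambda_{ij} \to \lambda_{(i+1)j}$ is labeled by a dual equivalence class whose shape may be any skew shape. So Lemma~\ref{cylindrical-recurrence} does not apply directly to the vertex labels, and the partition identities are not ``immediate'' as you claim.

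This error propagates into your argument for (i). You want to show $\tau^*\Gamma = \Gamma$ by uniqueness, reducing to the claim that $r$ iterates of ``shift-by-one-column'' return the starting chain. But your justification---``the determinism of shuffling, together with the already-established partition periodicity, forces this identity''---is circular: you have not established partition periodicity, and even if you had, equality of shapes does not force equality of dual equivalence classes. There can be many classes of the same skew shape. So nothing here actually pins down the edge labels. Your suggestion to ``work modulo the translation $\tau$'' likewise assumes the periodicity you are trying to prove.

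The paper's proof avoids all of this by a single reduction: fix one path across $\Gamma$, choose tableau representatives for the dual equivalence classes along it, and replace each edge by the corresponding chain of single-box edges. This refines $\Gamma$ to an honest ordinary cylindrical growth diagram $\Gamma'$, to which Lemma~\ref{cylindrical-recurrence} applies verbatim. Since shuffling of dual equivalence classes is \emph{defined} by passing to representatives and computing in an ordinary growth diagram, the coarsening of $\Gamma'$ recovers $\Gamma$ everywhere, and both (i) and (ii) for $\Gamma$ are read off from the corresponding statements for $\Gamma'$. Your rotation-compatibility idea for (ii) is on the right track, but it only shows $\sigma^*\Gamma$ is again a decgd; you still need an anchor to identify it with $\Gamma$, and the paper's refinement trick supplies that anchor automatically.
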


\begin{proof}
Choose a fixed path across $\Gamma$ and choose tableau representatives for each dual equivalence class along the path. Consider a new diagram obtained by replacing each edge in the path by a sequence of edges encoding the chosen tableau. This extends to a unique \emph{ordinary} growth diagram $\Gamma'$, using the recurrence rule. Then the result for decgds follows from Lemma \ref{cylindrical-recurrence} for $\Gamma'$.
\end{proof}
We say the decgd has \newword{type} $(\lambda_1, \ldots, \lambda_r)$ if the entries of the first superdiagonal are the partitions $\lambda_1, \ldots, \lambda_r$. In particular, the type of the decgd is the same as the type of the chain of dual equivalence classes in its first row.

Any path from the main diagonal to the rightmost diagonal gives a chain of dual equivalence classes; on the other hand, by the recurrence condition and the uniqueness of the outermost edge labels, this uniquely specifies the remaining entries of the growth diagram. 

\begin{figure}[h]
\centering
\[\xymatrix{
\eset \ar[r] & \lambda_1 \ar[r]^-{D_2} & \cdot \ar@{..}[r] &\cdot \ar[r]^-{D_{r-1}} & \cdot  \ar[r] & \rect \\
& \eset \ar[r]\ar[u] & \lambda_2 &&&\lambda_1^c \ar[u] \ar[r] & \rect \\
&&  \ar@{}[ul]|{\ddots} \ar@{}[u]|{\vdots} & \lambda_{r-2} \ar@{}[ul]|{\ddots} &&& \ar@{}[ul]|{\ddots} \\
&&& \ar@{}[ul]|{\ddots} \eset \ar[r] \ar[u]& \lambda_{r-1} \\
&&&& \eset \ar[u] \ar[r] & \lambda_r \\
&&&&& \eset \ar[u]
}\]
\caption{The top row of this decgd is a chain of dual equivalence classes of type $(\lambda_1, \ldots, \lambda_r)$. It uniquely determines the remaining entries of the diagram.}
\label{fig:decgd}
\end{figure}
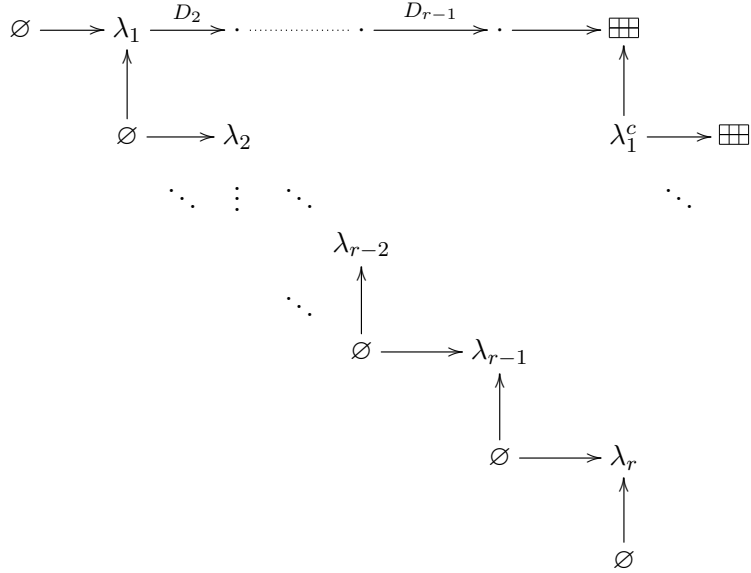

We now prove Lemmas \ref{DE-evac-involution} and \ref{upper-shuffle}. We first describe $\ev_{i+1}$  in terms of decgds. Let $\Gamma$ be the decgd whose $j=0$ row is given by ${\bf D} = (D_1, \ldots, D_r)$. By the definition of evacuation, $\ev_{i+1}({\bf D})$ is the concatenation ${\bf AB}$, where ${\bf A}$ is the chain of labels on the vertical path from $(i+1,-i-1)$ to $(i+1,0)$ and ${\bf B} = (D_{i+2}, \ldots, D_r)$ is the chain of labels on the horizontal path from $(i+1,0)$ to $(r,0)$. 

\begin{proof}[Proof of Lemma \ref{DE-evac-involution}]
Setting $i+1=r$, we have $\ev_r({\bf D}) = {\bf A}$, the path from $(r,-r)$ to $(r,0)$. By Lemma \ref{DE-cylindrical-recurrence}(ii), this sequence is $(D_r^R, \ldots, D_1^R)$.
\end{proof}

\begin{proof}[Proof of Lemma \ref{upper-shuffle}]
Let ${\bf D} = ({\bf W}, D_i, D_{i+1}, {\bf B})$ and ${\bf A} = (D_{\lambda_{i+1}},X,{\bf A'})$. We build a new decgd $\Gamma'$ as follows: we replace ${\bf A}$ by $\sh_1({\bf A})$ (in the same location) and keep ${\bf B}$ unchanged. By the recurrence condition, the remaining entries of the decgd are uniquely determined from $\sh_1({\bf A})$ and ${\bf B}$; by definition, the first row of $\Gamma'$ is $\esh_i(\bf{D})$. (See Figure \ref{fig:upper-shuffle}.) Since ${\bf B}$ and ${\bf A'}$ are unchanged in $\Gamma'$, so is ${\bf W}^R$ and therefore (by Lemma \ref{DE-cylindrical-recurrence}(ii)) ${\bf W}$. In particular, we see that ${\bf D}$ and $\esh_i({\bf D})$ agree outside the $i,i+1$ spots.

\begin{figure}[h]
\centering
\includegraphics[scale=0.7]{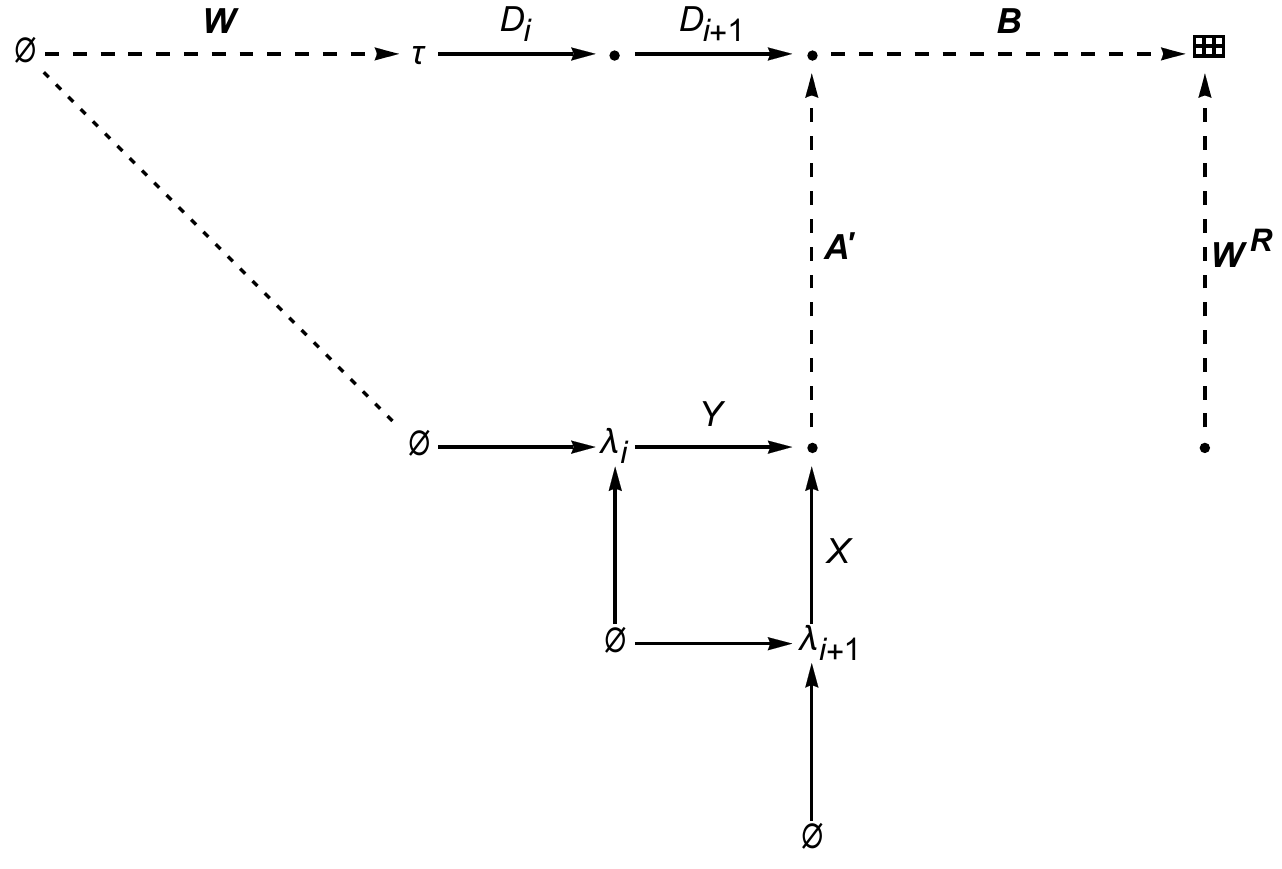} \hspace{0.5in}
\includegraphics[scale=0.7]{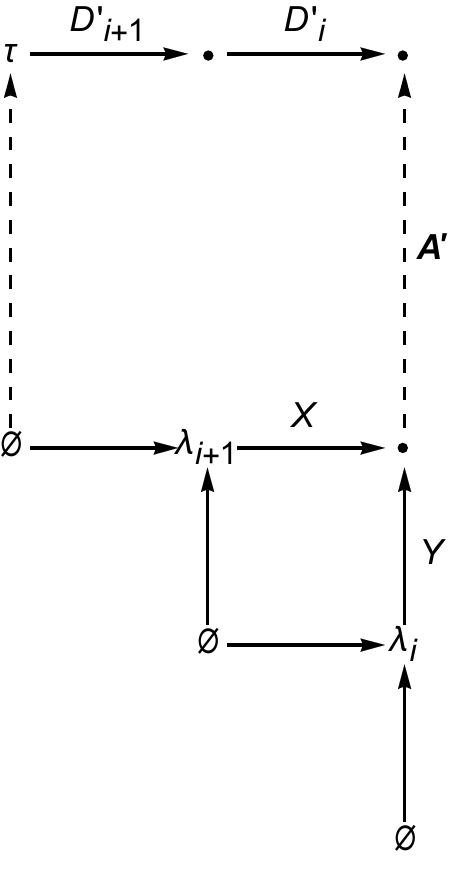}
\caption{Computing $\esh_i$ using a pair of decgds $\Gamma, \Gamma'$. Only the central portion changes. Left: the decgd $\Gamma$. Right: the central portion of the decgd $\Gamma'$.}
\label{fig:upper-shuffle}
\end{figure}

With notation as in Figure \ref{fig:upper-shuffle}, we have $\sh_1({\bf A}) = (D_{\lambda_i}, Y, {\bf A}')$. Thus the central portion of $\Gamma'$ is the second decgd pictured.
To compute $(D'_i, D'_{i+1})$, let $A'$ be the dual equivalence class obtained by concatenating the classes of the chain ${\bf A'}$ and let $\tau$ be its rectification shape. We have:
\begin{align*}
\sh_2\sh_1(D_\tau, D_i, D_{i+1}) &= (D_{\lambda_i},Y,A'), \hspace{0.5cm} \text{(from the first decgd)}\\
\sh_2\sh_1(D_\tau, D'_{i+1}, D'_i) &= (D_{\lambda_{i+1}},X,A') \hspace{0.3cm} \text{(from the second decgd)}
\end{align*}
This gives the desired relation $(D_\tau, D'_{i+1}, D'_i) = \sh_1 \sh_2 \sh_1 \sh_2 \sh_1(D_\tau, D_i, D_{i+1})$.
\end{proof}

\section{Schubert problems over $\Mbar{r}(\RR)$} \label{sec:schubert-real}

By Theorem \ref{box-lift}, we may think of $\cS(\lambda_\bullet)$ as having an extra marked point $z$, labeled by a single box, parametrizing the last point of ramification, which gives a map $\cS(\lambda_\bullet) \to \cC$. We recall our results for stable curves defined over $\RR$:
\begin{cor} \label{basic-real-topology}
Let $[C] \in \Mbar{r}(\RR)$ and let $S = \cS(\lambda_\bullet)\big|_{[C]}$ be the fiber over $[C]$. We have a finite flat map $S \to C$.
\begin{itemize}
\item[(i)] The map $S \to C$ is unramified over the real points of $C$. In particular, the only real singular points of $S$ are irreducible components meeting at simple nodes.
\item[(ii)] The map $S(\RR) \to C(\RR)$ is a covering map.
\item[(iii)] For every irreducible component $S' \subseteq S$, $S'(\RR)$ is a smooth manifold of (real) dimension 1. In particular, $S'(\RR)$ is nonempty.
\end{itemize}
\end{cor}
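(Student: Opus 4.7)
Plan. All three parts will be derived from Theorem \ref{thm:main-geom} by base-changing to the fiber over $[C]$. That theorem gives a finite, flat, surjective morphism $f \colon \cS(\lambda_\bullet) \to \cC$ defined over $\RR$, étale over $\cC(\RR)$, and such that the preimage of each real point of $\cC$ consists entirely of real points. These properties descend to the fiber: $S \to C$ is finite, flat, and surjective over $\RR$, étale over $C(\RR)$, and real-preimage-preserving.

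For (i), unramifiedness is immediate from étaleness. For the description of real singular points, I would split into two cases for $p \in C(\RR)$. If $p$ is a smooth point of $C$, étaleness plus smoothness of the target forces $S$ to be smooth at every preimage. If $p$ is a real node $C_0 \cap C_1$ of $C$, étaleness forces $S$ to have the same local analytic type as $C$ at $p$, namely a simple node whose two smooth branches map étale onto the two branches of $C$. Because $S \to C$ is finite, every irreducible component of $S$ maps onto a single component of $C$, so the two local branches of $S$ at such a node sit in two distinct global components of $S$. Part (ii) is then formal: a finite étale morphism of $\RR$-varieties with real preimages of real points restricts on $\RR$-points to a finite surjective local homeomorphism, i.e., a covering map.

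For (iii), let $S' \subseteq S$ be an irreducible component. By finiteness, $S' \to C$ has image an irreducible $1$-dimensional closed subvariety of $C$, hence equals a single component $C_0 \cong \PP^1$, and $S' \to C_0$ is finite surjective. Part (i) guarantees that $S'$ has no real singular points of its own (every real node of $S$ involves two distinct components), so $S'(\RR)$ is a smooth real $1$-manifold. The argument of (ii) applied to $S' \to C_0$ (which inherits the étale and real-preimage properties) shows that $S'(\RR) \to C_0(\RR) \cong \RR\PP^1$ is a finite covering; algebraic surjectivity of $S' \to C_0$ makes this cover of positive degree, and since $C_0(\RR)$ is nonempty we conclude $S'(\RR) \neq \eset$.

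The main delicate point is the analysis at real nodes in (i): one must combine étaleness at a singular point of $\cC$ (where the source is allowed to be singular, but only in a way matching the singularity of the target) with the global constraint that a finite morphism to a reducible curve sends irreducible components to irreducible components, in order to conclude that the two branches at a node of $S$ belong to distinct global components rather than forming a self-intersection of one component. Once this branch-separation is established, parts (ii) and (iii) follow essentially formally.
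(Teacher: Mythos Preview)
Your proposal is correct and is exactly the intended derivation: the paper states Corollary~\ref{basic-real-topology} without proof, treating it as an immediate unpacking of Theorem~\ref{thm:main-geom} (note that part~(ii) is already part of that theorem's statement). Your analysis of the real nodes in~(i)---in particular the observation that each irreducible component of $S$ maps onto a single component of $C$, forcing the two local branches at a real node of $S$ to lie in distinct global components---is the right way to justify the claim that real singularities are only nodes between components, and your argument for~(iii) correctly combines algebraic surjectivity of $S' \to C_0$ with the real-preimage property to get $S'(\RR) \neq \eset$.
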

If $C$ has a single component, $S(\RR)$ is smooth. In particular, as $C$ varies over a maximal cell of $\Mbar{r}(\RR)$, the real topology of $S(\RR)$ (notably the number of connected components) does not change. We give a combinatorial interpretation of the connected components of $S(\RR)$ below.

We remark that $S(\CC)$ need not be connected (see Example \ref{exa:disconnected}). Also, we have not ruled out the possibility that $S$ may have complex conjugate pairs of singularities. 
%
%
We note that (if the generic fiber is smooth) a generic singular fiber of $\SLdot$ over a complex point $[C] \in \Mo{r}$ should have only one singularity. But if $[C] \in \Mo{r}(\RR)$, there must be at least two distinct singular points. We have the following conjecture:

\begin{conj}
Let $X \subseteq \Mo{r}$ be the closure of the locus where the fiber of $\SLdot$ has at least 2 singularities. Then $\mathrm{codim}(X) \geq 2$. In particular, $\Mo{r}(\RR) - X(\RR)$ is connected, so every fiber of $\SLdot$ over $\Mo{r}(\RR) - X(\RR)$ has the same \emph{complex} topology.
\end{conj}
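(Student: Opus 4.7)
The plan is to exploit the isomorphism $\pi: \cS(\lambda_\bullet; \ybox_z) \to \cS(\lambda_\bullet)$ of Theorem~\ref{box-lift} to factor the family $\cS(\lambda_\bullet) \to \Mo{r}$ as $\varphi_{r+1} \circ f$, where $f: \cS(\lambda_\bullet; \ybox_z) \to \Mbar{r+1}$ is the finite flat cover associated to the zero-dimensional Schubert problem with one extra marked point, and $\varphi_{r+1}: \Mbar{r+1} \to \Mbar{r}$ is the universal curve. Since $\varphi_{r+1}$ is smooth of relative dimension one over $\Mo{r}$, a fiber $\cS(\lambda_\bullet)_{[C]}$ with $[C] \in \Mo{r}$ fails to be smooth at a point $x$ exactly where $f$ fails to be \'{e}tale at $\pi^{-1}(x)$. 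Thus the fiberwise singular locus of $\cS(\lambda_\bullet) \to \Mo{r}$ is controlled entirely by the branch locus $B \subseteq \Mbar{r+1}$ of $f$.

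I would then apply Zariski--Nagata purity to the finite flat cover $f$: since $\Mbar{r+1}$ is smooth and $\cS(\lambda_\bullet;\ybox_z)$ is Cohen--Macaulay by Theorem~\ref{thm:recall-Sp14}, the branch locus $B$ is pure of codimension one in $\Mbar{r+1}$. Let $B = \bigcup_j B_j$ be the decomposition into irreducible components, each of dimension $r-3 = \dim \Mbar{r}$. Under the hypothesis that the generic fiber of $\cS(\lambda_\bullet) \to \Mo{r}$ is smooth, no $B_j$ meeting $\varphi_{r+1}^{-1}(\Mo{r})$ can be dominant over $\Mbar{r}$; since the fibers of $\varphi_{r+1}$ are one-dimensional and $\dim B_j = \dim \Mbar{r}$, the only alternative is that the generic fiber of $\varphi_{r+1}|_{B_j}$ is an entire $\varphi_{r+1}$-fiber. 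Consequently $\dim \varphi_{r+1}(B_j) = r-4$, so $\varphi_{r+1}(B_j)$ has codimension two in $\Mbar{r}$. This shows not only $X$, but in fact the full discriminant locus of $\cS(\lambda_\bullet) \to \Mo{r}$, has codimension at least two in $\Mo{r}$. Geometrically, at a generic $[C] \in \varphi_{r+1}(B_j) \cap \Mo{r}$ the entire curve $\varphi_{r+1}^{-1}([C])$ is contained in $B$, so $f$ is everywhere-ramified over $C$; in characteristic zero this forces $\cS(\lambda_\bullet)_{[C]}$ to be non-reduced along a component, contributing infinitely many singular points to the fiber and placing it well inside $X$.

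The main obstacle I expect is verifying the hypotheses of Zariski--Nagata purity, namely normality (or at least the condition $R_1$) of $\cS(\lambda_\bullet;\ybox_z)$. Speyer's Theorem~\ref{thm:recall-Sp14} supplies $S_2$ via Cohen--Macaulayness, but $R_1$ must be obtained separately. One natural approach is to combine Theorem~\ref{thm:MTV09} -- which gives reducedness of the fibers of $f$ over every real point of $\Mbar{r+1}$ -- with the flatness of $f$ and a density argument to conclude that the non-reduced locus of the source has codimension at least two. Alternatively one could pass to the normalization of $\cS(\lambda_\bullet;\ybox_z)$ and verify that the two branch loci agree away from codimension two, which suffices for the dimension count.

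Once $X$ has complex codimension at least two, the topological conclusions follow formally. Its real locus $X(\RR)$ has real codimension at least two in the smooth $(r-3)$-manifold $\Mo{r}(\RR)$, so the complement $\Mo{r}(\RR) - X(\RR)$ is connected. For real $[C]$ in this complement, the fiber $\cS(\lambda_\bullet)_{[C]}$ is in fact smooth: any complex singularity $x$ would lie over a non-real point $z \in C$ by the real \'{e}taleness of $\cS \to \cC$ in Theorem~\ref{thm:main-geom}, but then $\bar{x}$ is a second singularity lying over $\bar z \ne z$, forcing $[C] \in X$, contrary to our assumption. Thus $\cS(\lambda_\bullet) \to \Mo{r} - X$ restricts to a proper smooth family of complex curves over the connected base $\Mo{r}(\RR) - X(\RR)$, and Ehresmann's fibration theorem yields local, hence global, constancy of the complex topology on this base.
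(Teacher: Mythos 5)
First, note that the statement you are addressing is stated in the paper as an open \emph{conjecture}; the paper offers no proof, so your argument must stand entirely on its own — and it does not. The fatal step is the identification of the fiberwise singular locus of $\cS(\lambda_\bullet) \to \Mo{r}$ with the branch locus $B$ of the finite flat cover $f : \cS(\lambda_\bullet;\ybox_z) \to \Mbar{r+1}$. Only one implication holds: where $f$ is \'{e}tale the fiber curve is smooth, but a perfectly smooth fiber curve $\cS(\lambda_\bullet)_{[C]}$ is still a \emph{ramified} cover of $C \cong \PP^1$ — by Riemann--Hurwitz its degree-$c_{\lambda_\bullet, \tinybox}^{\rect}$ map to $C$ must have branch points whenever that degree exceeds $1$ (as in the smooth $G(2,5)$ example of Section 4). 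So the branch divisor $B$ \emph{does} dominate $\Mbar{r}$ with generically finite fibers, and your dichotomy (``either $B_j$ dominates $\Mbar{r}$, contradicting generic smoothness, or $B_j$ is a union of $\varphi_{r+1}$-fibers'') collapses: dominance of $B_j$ is entirely consistent with all fibers being smooth. There is also an arithmetic slip downstream: a subvariety of $\Mbar{r}$ of dimension $r-4$ has codimension $1$, not $2$, since $\dim\Mbar{r} = r-3$.

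A useful sanity check is that your claimed conclusion — that the \emph{entire} discriminant of $\cS(\lambda_\bullet) \to \Mo{r}$ has codimension $\geq 2$ — is strictly stronger than the conjecture and contradicts the sentence immediately preceding it in the paper, which expects singular fibers to occur over a divisor in $\Mo{r}$, generically with exactly one node; the conjecture is precisely that the \emph{deeper} stratum of fibers with $\geq 2$ singularities has codimension $2$. Any viable argument must therefore separate the one-node stratum of the discriminant from the worse strata (e.g.\ via a local versality or dimension count for the discriminant of the family), which a purity statement about $B$ cannot see. Your final paragraph — deducing smoothness of real fibers away from $X(\RR)$ from conjugation-invariance of the singular set, then invoking Ehresmann — is essentially sound \emph{granted} the codimension claim, except that $\Mo{r}(\RR)$ is itself disconnected (it is a disjoint union of open maximal cells), so removing a real-codimension-two set can only give connectedness cell by cell; but the codimension claim is exactly the part that remains unproven.
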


In certain cases, there are no singularities:

\begin{exa}
Let $\lambda_\bullet = \{\ybox, \ybox, \ybox, \ybox, \ybox\}$ and consider $\SLdot \subseteq \cG(2,5) \to \Mbar{5}$. Let $[C] \in \Mo{5}(\RR)$; then the (complex) curve $S = \SLdot|_{[C]}$ is smooth. To show this, we compute in coordinates: we set $p_1, p_2, p_3, p_4, p_5$ to be $0,1,\infty,z,w$ and work over $\Mo{5} \cong \mathbb{A}^2_{z,w} - \mathbb{V}(zw(z-1)(w-1)(w-z))$.

For real $z,w$, a singular point $s \in S$ cannot satisfy a stricter Schubert condition at any marked point, since the covering $S \to \PP^1$ must send $s$ to a complex point. So we may work in the $\lambda = \ybox$ open Schubert cell for the flag $\sF(\infty)$:
\[
\left(
\begin{array}{ccccc}
 1 & a & 0 & b & c \\
 0 & 0 & 1 & d & e
\end{array}
\right) \subseteq G(2,5).\]
We may eliminate $b,c,e$ from the saturated ideal for the remaining four Schubert conditions, and are left with one equation $f_{z,w}(a,d)$, giving us a plane curve in $\mathbb{A}^2_{a,d}$. We consider the locus
\[X = \{\mathrm{disc}_a(\mathrm{disc}_d(f_{z,w})) = 0 \} \subseteq \mathbb{A}^2_{z,w}.\]
The discriminant $\mathrm{disc}_d(f)$ gives the ramification locus of $S$ under the projection $\mathbb{A}^2_{a,d} \to \mathbb{A}^1_a$; then the $a$-discriminant gives the locus where the ramification index is at least $2$. In particular, this includes any singularity, so $X$ includes any $(z,w)$ for which $S$ is a singular curve. The equation for $X$ is:
\begin{align*}
&2415919104 \big(z(z-1)w(w-1)(w-z)\big)^4\\
& (w^2-w+1)(z^2-z+1)(z^2-zw+w^2) \\
&(1 - w + w^2 - z - w z + z^2) (w^2 - 
   w z - w^2 z + z^2 - w z^2 + w^2 z^2) = 0.
\end{align*}
The factor on line 1 is a unit; the remaining factors have real solutions only at $z=w=0$ and $z=w=1$ (which are not in the open set $\Mo{5}$).
\end{exa}

\begin{rmk}
The discriminant above is a sum of squares. For example, the last two factors are
\begin{align*}
1 - w + w^2 - z - w z + z^2 &= \tfrac{1}{2}\big( (w-z)^2 + (w-1)^2 + (z-1)^2 \big), \\
w^2 - w z - w^2 z + z^2 - w z^2 + w^2 z^2 &= \tfrac{1}{2}\big( (w-z)^2 + (wz - w)^2 + (wz-z)^2\big).
\end{align*}
Sottile has conjectured that for zero-dimensional Schubert problems, the discriminants are always sums of squares of this form (see e.g. Conjecture 7.8 in \cite{Sottile}), and are in fact strictly positive.
If the same holds for one-dimensional Schubert problems, it would follow that the fibers of $\SLdot$ over $\Mo{r}(\RR)$ are smooth algebraic curves.

We also note that each quadratic factor is the pullback, by one of the five possible forgetting maps $\Mbar{5} \to \Mbar{4}$, of the pair of points on $\Mbar{4}$ having symmetry group $A_4$. It is interesting to note that the nonreduced (complex) fibers of the zero-dimensional family $\cS(\ybox^4) \subset \cG(2,4)$ over $\Mbar{4}$ also occur over this pair of points.
\end{rmk}

\begin{conj}
Let $[C] \in \Mo{r}(\RR)$. Then the (complex) fiber $\SLdot|_{[C]}$ is smooth, for any $\lambda_\bullet$.
\end{conj}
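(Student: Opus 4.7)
The goal is to show that, for any real stable $r$-pointed curve $[C] \in \Mo{r}(\RR)$, the complex fiber $S := \SLdot|_{[C]}$ has no singular points. Since $S(\RR)$ is already a smooth $1$-manifold by Corollary \ref{basic-real-topology}, any hypothetical singularity would occur in a complex conjugate pair. By Theorem \ref{thm:main-geom}, we have a finite flat morphism $f: S \to C \cong \PP^1$ of degree $d = c^{\rect}_{\lambda_\bullet, \ybox}$ that is \'{e}tale over $\RR\PP^1$; so every singular point of $S$ lies over $H^+ \sqcup H^- \subset \PP^1$.

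The plan is to analyze the ramification of $f$ over the complex locus. Since $f$ is finite flat and $\PP^1$ is smooth, $S$ is smooth at $s$ if and only if the completed local ring $\hat{\cO}_{S,s}$ is isomorphic to $k[[u]]$; equivalently, the scheme-theoretic fiber of $f$ through $s$ is locally $\Spec k[[u]]/(u^{e_s})$ for some $e_s\ge 1$. To rule out singularities of $S$ it suffices to show that for every $z \in \CC \setminus \RR\PP^1$, the zero-dimensional scheme $f^{-1}(z) = \cS(\lambda_\bullet, \ybox; p_\bullet, z)$ decomposes as $\coprod_i \Spec k[[u_i]]/(u_i^{e_i})$ with $\sum e_i = d$ (so in particular no two analytic branches of $S$ meet at a single point). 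First I would construct the discriminant $\Delta(z; p_\bullet)$ of this family of zero-dimensional Schubert problems as $z$ varies, a polynomial with real coefficients (for $p_\bullet \in (\RR\PP^1)^r$) whose vanishing records the non-unramified locus of $f$.

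Simplicity of the roots of $\Delta(z;p_\bullet)$ --- which, together with $f$ being finite at those roots, is enough to guarantee only the mildest ramification $t = u^2$ in local coordinates --- is equivalent to non-vanishing of the iterated discriminant $\mathrm{Disc}_z(\Delta(z;p_\bullet))$ on $\Mo{r}(\RR)$. As in the $G(2,5)$ example computed above, the natural approach is to show that this iterated discriminant is a strictly positive sum of squares of real polynomials in $p_\bullet$, directly extending Sottile's positivity conjecture (Conjecture 7.8 of \cite{Sottile}) from the zero-dimensional to the one-dimensional setting. By MTV (Theorem \ref{thm:MTV09}) we already know $\Delta(z;p_\bullet)$ has no real $z$-roots, so only the ``no multiple roots'' conclusion remains.

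The main obstacle is this sum-of-squares / strict positivity step, which is open already in the zero-dimensional case. An alternative avenue would be to extend the Bethe-ansatz / Gaudin-model techniques used in \cite{MTV09} to configurations where all but one of the marked points are real: for $p_\bullet$ all real the Gaudin Hamiltonians are self-adjoint with simple joint spectrum, and one would try to analytically continue this simplicity as $z$ ranges over the upper half-plane while $p_\bullet$ stays on $\RR\PP^1$. Propagating simplicity globally across the full upper half-plane (rather than just in a neighborhood of $\RR\PP^1$) is a genuinely new ingredient, and I do not see how to supply it with existing techniques.
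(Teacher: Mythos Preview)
This statement is labeled a \emph{Conjecture} in the paper, and the paper offers no proof; it only provides supporting evidence (the explicit $G(2,5)$ computation) and records that the discriminants there happen to be sums of squares. So there is no ``paper's own proof'' to compare against. Your write-up is likewise not a proof and you say so yourself: both avenues you propose bottom out in open problems (Sottile's strict-positivity conjecture for discriminants, or a global extension of the Gaudin/Bethe-ansatz spectral simplicity to configurations with one non-real point). As a research plan this is reasonable and in fact aligns with the remarks surrounding the conjecture in the paper, but it should be presented as such, not as a proof.

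One technical point is genuinely wrong and worth fixing even in a sketch. You claim that $S$ is smooth at $s$ if and only if the scheme-theoretic fiber $f^{-1}(f(s))$ is locally $\Spec k[[u]]/(u^{e_s})$, and that this rules out two analytic branches meeting. It does not: for an ordinary node $\hat{\cO}_{S,s}\cong k[[x,y]]/(xy)$ with $f$ given locally by $t=x+y$, the fiber over $t=0$ is $k[[x,y]]/(xy,x+y)\cong k[u]/(u^2)$, which satisfies your criterion even though $S$ is singular. The fiber by itself cannot distinguish a smooth double ramification point from a node. What \emph{does} distinguish them is the order of vanishing of the discriminant of $f_*\cO_S$: it is $1$ for a simple smooth ramification point and $2$ for a node (and $\geq 2$ for any singular point). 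So your second paragraph, which asks for simple zeros of $\Delta(z;p_\bullet)$, is the correct local criterion; the first paragraph's fiberwise statement should be dropped or replaced by this discriminant formulation.
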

In this case the complex topology of $\SLdot|_{[C]}$ will not change over any maximal cell of $\Mo{r}(\RR)$.


\subsection{Connected components of real fibers}
We now recall Speyer's description of the topology of zero-dimensional Schubert problems $\SLdot(\RR)$, as covering spaces of $\Mbar{r}(\RR)$.

Let $X$ be a maximal cell of $\Mbar{r}(\RR)$, corresponding to a circular ordering $\sigma(1), \ldots, \sigma(r)$ of the marked points. Let $Y \subset \SLdot(\RR)$ be a cell lying over $X$. Consider an arc in $\overline{X}$ corresponding to a degeneration of $\PP^1$ to a curve $C_1 \cup C_2$, where $C_1$ contains $\sigma(i), \ldots, \sigma(j)$, and $C_2$ contains $\sigma(j+1), \ldots, \sigma(i-1)$ (in circular order). Let $S$ be the limit fiber of $\SLdot$ and $y = S \cap \overline{Y}$ the point obtained by lifting the arc to $\overline{Y}$. By Theorem \ref{thm:recall-Sp14}, $y$ corresponds to some node labeling on $C_1 \cup C_2$; we denote by $\lambda_{j,-i}$ the partition on the $C_2$ side. These partitions turn out to be organized in a dual equivalence growth diagram.

\begin{thm}[Theorem 1.6 and Proposition 7.6 of \cite{Sp}] \label{speyer-covering-space}
Let $\sum |\lambda_i| = k(n-k)$. Then $\SLdot(\RR)$ is a covering space of $\Mbar{r}(\RR)$, so we may lift the CW-complex structure of $\Mbar{r}(\RR)$ to $\SLdot(\RR)$. In particular:
\begin{itemize}
\item[(i)] Let $X$ be a maximal cell of $\Mbar{r}(\RR)$, corresponding to a circular ordering $(\sigma(1), \ldots, \sigma(r))$ of the marked points. The cells $Y$ of $\SLdot(\RR)$ lying over $X$ are indexed by decgds $\Gamma$ of type $(\lambda_{\sigma(1)}, \ldots, \lambda_{\sigma(r)})$.
\item[(ii)] (Wall-crossing) Let $Y$ be a cell lying over $X$ and $\Gamma$ the corresponding decgd. Let $X'$ be the cell obtained by reversing the interval $\sigma(i)\sigma(i+1) \cdots \sigma(j)$ in the circular ordering, and $Y'$ the corresponding cell in $\SLdot(\RR)$. Let $A$ be the triangular region of $\Gamma$ with vertices $(i,-i), (j,-j), (j,-i)$ and $B$ be the ``opposite'' triangle with vertices $(r+j,-i),(j,r-i),(j,-i)$ (see Figure \ref{fig:wall-crossing}). The decgd $\Gamma'$ for $Y'$ is obtained by transposing $A$, leaving $B$ unchanged, deleting all other entries, and refilling them using the decgd recurrence condition.
\end{itemize}
\end{thm}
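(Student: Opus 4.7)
My strategy is to separate the covering map property from the combinatorial indexing, then to reduce the wall-crossing computation to a local model at the codimension-one stratum. The plan proceeds in three steps.

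\emph{Step 1 (Covering map).} By Theorem \ref{thm:recall-Sp14}, $\cS(\lambda_\bullet) \to \Mbar{r}$ is finite and flat, so it suffices to show each real fiber is reduced and consists entirely of real points. Theorem \ref{thm:MTV09} handles $\Mo{r}(\RR)$. For a boundary point $[C]$, decompose the fiber as $\cS(\lambda_\bullet)|_{[C]} = \bigcup_\nu \Phi_\nu \cap \bigcap_i \Omega(\lambda_i,p_i)$ over strict node labelings $\nu$. Every irreducible component of a stable real genus-zero curve is isomorphic to $\PP^1_\RR$ with real special points, so each $\Phi_\nu$-stratum is a product of zero-dimensional Schubert problems on $\PP^1_\RR$, to which MTV applies componentwise. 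Reality and reducedness on the strata, combined with flatness, give reality and reducedness of the whole fiber.

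\emph{Step 2 (Cell indexing).} For a maximal cell $X \subseteq \Mbar{r}(\RR)$ with circular ordering $\sigma$, pick a caterpillar curve $C_\sigma \in X$ and describe $\cS(\lambda_\bullet)|_{[C_\sigma]}$. A strict node labeling is a chain $\eset = \alpha_0 \subset \alpha_1 \subset \cdots \subset \alpha_{r-1} = \rect$ at the $r-3$ interior nodes. On the $i$-th three-pointed component, the Schubert problem carries conditions $\alpha_{i-1}, \lambda_{\sigma(i)}, \alpha_i^c$, with $c_{\alpha_{i-1},\lambda_{\sigma(i)}}^{\alpha_i}$ solutions; by Lemma \ref{lem-dual-LRcoeff} these biject with dual equivalence classes $D_i$ of shape $\alpha_i/\alpha_{i-1}$ and rectification shape $\lambda_{\sigma(i)}$. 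The assembled data lies in $X_\eset^{\rect}(\lambda_{\sigma(1)},\ldots,\lambda_{\sigma(r)})$, and by Lemma \ref{DE-cylindrical-recurrence} extends uniquely to a decgd.

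\emph{Step 3 (Wall-crossing).} Take a real arc $\gamma$ in $\overline{X} \cup \overline{X'}$ crossing the wall transversally at a curve $C^*$ with two components $C_1, C_2$ joined at a single new node $q$, where $C_1$ carries $\sigma(i),\ldots,\sigma(j)$ and $C_2$ carries the rest. The lift $\tilde\gamma$ passes through a unique point over $C^*$ corresponding to a partition $\mu$ at $q$ and a zero-dimensional Schubert solution on each component; by continuity these three pieces of data are locally constant along $\tilde\gamma$, so they are preserved when crossing to $X'$. Via Step 2, the $C_1$-solution is a decgd of type $(\lambda_{\sigma(i)},\ldots,\lambda_{\sigma(j)},\mu^c)$, which fits inside the triangle $A$; the $C_2$-solution is a decgd on the complementary part of the caterpillar curve, which after the cylindrical identification of Lemma \ref{DE-cylindrical-recurrence} fills the region $B$. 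Reversing the order of $\sigma(i),\ldots,\sigma(j)$ on $C_1$ transposes $A$, preserving the $C_2$-solution leaves $B$ untouched, and the decgd recurrence determines the rest of $\Gamma'$ uniquely.

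\emph{Main obstacle.} The delicate point is verifying that $\tilde\gamma(0)$ is genuinely determined by $(\mu,\text{solutions on }C_1,C_2)$ with no extra complex monodromy, and that the wall-crossing really performs the transposition on $A$ and nothing else. My approach is to pass to a local model: in an \'{e}tale neighborhood of $[C^*]$, the family $\cS(\lambda_\bullet)$ splits as a trivial piece times an $\Mbar{4}$-type family of Schubert problems on $\PP^1$ with four marked points colliding in pairs. On this $\Mbar{4}$-slice, both the lifting and the matching with the decgd recurrence can be checked by explicit coordinate computation; the decgd recurrence then propagates the match globally. Most of the care goes into this $\Mbar{4}$-case and into bookkeeping the cylindrical identification that places the $C_2$-solution into the region $B$.
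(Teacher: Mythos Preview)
This theorem is not proved in the paper: it is quoted from Speyer's paper \cite{Sp} (Theorem 1.6 and Proposition 7.6 there), and the present paper simply uses it as input. So there is no ``paper's own proof'' to compare against; your proposal should be judged as a sketch of how one might reprove Speyer's result.

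As such a sketch, Step 1 and the overall architecture of Step 3 are reasonable, but Step 2 contains a genuine gap that undermines the rest. You write that on each three-pointed component the solutions ``biject with dual equivalence classes $D_i$'' by Lemma~\ref{lem-dual-LRcoeff}. That lemma only says the two sets have the same cardinality $c_{\alpha_{i-1},\lambda_{\sigma(i)}}^{\alpha_i}$; it does not supply a \emph{specific} bijection. The entire content of the theorem is that there is a geometrically canonical labeling of fiber points by dual equivalence classes, and that this particular labeling is the one transformed by the decgd wall-crossing rule. Without fixing which point corresponds to which class, part (ii) is not even a well-posed statement about your bijection, and your Step~3 local-model computation has nothing to verify. (Any bijection respecting node labels would make part (i) true set-theoretically; it is the compatibility in (ii) that singles out the right one.)

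What Speyer actually does---and what your plan is missing---is to construct this bijection by an independent geometric argument in the three-point case, analyzing how the finite set of solutions to $\Omega(\alpha,0)\cap\Omega(\lambda,1)\cap\Omega(\mu^c,\infty)$ deforms as one degenerates $\lambda$ to a chain of single boxes, and matching those deformations to jeu de taquin slides. This is the step that ties the Schubert geometry to tableau combinatorics, and it cannot be replaced by a cardinality count. Your ``main obstacle'' paragraph correctly senses that something local must be checked, but the $\Mbar{4}$ reduction you propose presupposes the three-point labeling already in hand; the real work lies one level deeper.
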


\begin{figure}[h]
\centering
\includegraphics[scale=1.1]{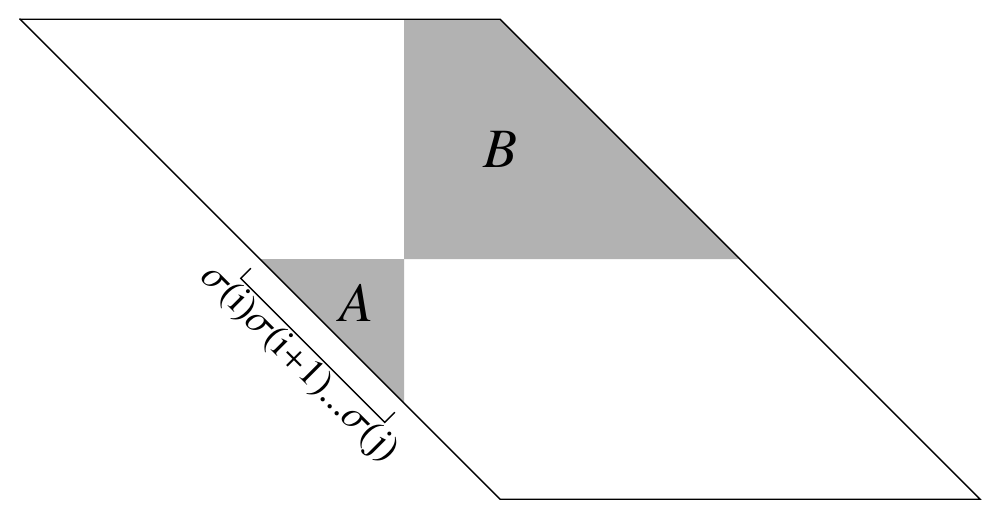}
\caption{The wall-crossing rule for decgds (shown truncated at top and bottom):
Transpose $A$ and leave $B$ unchanged; refill using the decgd recurrence rule.}
\label{fig:wall-crossing}
\end{figure}
We note that a path from the left edge to the right edge of $\Gamma$ corresponds to a choice of caterpillar curve $[\widetilde{C}]$ in the boundary of $X$. The resulting chain of partitions forms the node labeling corresponding to the point $y \in \overline{Y}$ lying over $[\widetilde{C}]$; in fact Theorem \ref{speyer-covering-space} says $y$ has the additional data of the chain of dual equivalence classes.
%
%

We now return to the case of curves. By Theorem \ref{box-lift}, when $\sum |\lambda_i| = k(n-k) - 1$, the total space of $\SLdot$ over $\Mbar{r}$ is isomorphic to the total space of $\SLdotBox$ over $\Mbar{r+1}$. Since we wish to think of this space as fibered in curves over $\Mbar{r}$, we adapt the description from Theorem \ref{speyer-covering-space}. For simplicity, we take the circular ordering $\sigma(i) = i$. Let $\mathrm{DECGD}(\ybox, \lambda_1, \ldots, \lambda_r)$ be the set of decgds of type $(\ybox, \lambda_1, \ldots, \lambda_r)$. Let 
\[\pi : \mathrm{DECGD}(\ybox, \lambda_1, \ldots, \lambda_r) \to \mathrm{DECGD}(\ybox, \lambda_1, \ldots, \lambda_r)\]
be the result of successively wall-crossing $\ybox$ past each of the $\lambda_i$'s ($i=1, \ldots, r$).
\begin{thm} \label{thm:decgd-curve-orbits}
Let $\sum |\lambda_i| = k(n-k)-1$. Let $X$ be the maximal cell of $\Mo{r}(\RR)$ corresponding to the circular ordering $1, 2, \ldots, r$, and let $S = \SLdot|_X$. The connected components of $S(\RR)$ are in bijection with the orbits of $\pi$; each component is homeomorphic to $S^1 \times X$.
\end{thm}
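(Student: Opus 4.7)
The plan is to realize the total space of $\cS(\lambda_\bullet)|_X$ as a family of curves over $X$ via the isomorphism in Theorem \ref{box-lift}, and then compute the monodromy of $S(\RR) \to C(\RR) \cong \RR\PP^1$ using Speyer's description of the zero-dimensional family $\cS(\lambda_\bullet;\ybox_z)$ from Theorem \ref{speyer-covering-space}. First I fix $[C] \in X$ with real marked points $p_1, \ldots, p_r$ in the prescribed circular order and identify $S = \cS(\lambda_\bullet)|_{[C]}$ with $\cS(\lambda_\bullet;\ybox_z)|_{\varphi_{r+1}^{-1}([C])}$, under which the map of Theorem \ref{thm:main-geom} becomes $\varphi_{r+1}$ restricted to the fiber $C \cong \PP^1$. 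By Corollary \ref{basic-real-topology}, $S(\RR) \to \RR\PP^1$ is a covering map, so the connected components of $S(\RR)$ are in bijection with the orbits of monodromy on a single fiber $S(\RR)_{z_0}$.

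Next I would describe both the fiber and the monodromy combinatorially. When $z_0$ lies in the open sector between $p_r$ and $p_1$, the tuple $(p_1, \ldots, p_r, z_0)$ lies in the maximal cell of $\Mo{r+1}(\RR)$ whose circular ordering is $(z, 1, 2, \ldots, r)$, so by Theorem \ref{speyer-covering-space}(i) the fiber $S(\RR)_{z_0}$ is indexed by $\mathrm{DECGD}(\ybox, \lambda_1, \ldots, \lambda_r)$. To compute monodromy, let $\gamma$ parametrize $\RR\PP^1$ starting and ending at $z_0$ and crossing $p_1, p_2, \ldots, p_r$ in that order. Each time $z$ crosses some $p_i$, we cross a codimension-one cell of $\Mbar{r+1}(\RR)$ at which the pair $(\ybox_z, \lambda_i)$ swaps positions in the circular ordering, and by Theorem \ref{speyer-covering-space}(ii) the corresponding decgd is transformed by the local transpose-and-refill rule that moves $\ybox$ past $\lambda_i$ in the type. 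By the very definition of $\pi$ as the composition of these $r$ wall-crossings, the monodromy permutation on $S(\RR)_{z_0}$ is $\pi$, and hence the connected components of $S(\RR)$ biject with the orbits of $\pi$.

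Finally I would globalize over $X$. By Theorem \ref{thm:main-geom} the map $\cS(\lambda_\bullet)(\RR) \to \cC(\RR)$ is \'{e}tale, so the real topology of the fibers over $X$ is locally constant, and each connected component of $\cS(\lambda_\bullet)|_X(\RR)$ is an $S^1$-bundle over $X$. Since $X$ is an open cell of $\Mo{r}(\RR)$, hence contractible, the bundle is trivial and each component is homeomorphic to $S^1 \times X$.

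The main obstacle is identifying the wall-crossing in Theorem \ref{speyer-covering-space}(ii) for $z$ passing $p_i$ with the single step of $\pi$ that moves $\ybox$ past $\lambda_i$. This is essentially a bookkeeping check: the only entries affected are those in the small triangular region of the decgd whose edges include the adjacent pair $(\ybox, \lambda_i)$, while the opposite triangle $B$ (which contains the fixed data of the other $\lambda_j$'s) is unchanged; one must verify both the direction of the swap and the interaction with the cyclic period from Lemma \ref{DE-cylindrical-recurrence}, ensuring that after $r$ wall-crossings one genuinely returns to a decgd of the original type.
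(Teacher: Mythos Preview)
Your proposal is correct and follows essentially the same approach as the paper: identify $S$ with the zero-dimensional family via Theorem \ref{box-lift}, read off the fiber and monodromy from Theorem \ref{speyer-covering-space} as $z$ traverses $\RR\PP^1$ and crosses each $p_i$, and observe that by definition $\pi$ is exactly this composition of wall-crossings. Your justification of the $S^1 \times X$ homeomorphism via contractibility of the open cell $X$ is slightly more explicit than the paper's, which simply observes that the real topology of the fiber does not change over $X$.
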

\begin{proof}
Let $[C] \in X$. The fiber $C \subseteq \Mbar{r+1}$ passes through $r$ maximal cells of $\Mbar{r+1}$, corresponding to the possible placements of the $(r+1)$-st marked point. The decgds labeling these cells for the covering space $\SLdotBox(\RR) \to \Mbar{r+1}(\RR)$ have type $(\lambda_1, \ldots, \lambda_i, \ybox, \lambda_{i+1}, \ldots, \lambda_r)$. When the $\ybox$ switches places with $p_i$, we apply the wall-crossing procedure.

Thus, when $\ybox$ travels around the $\RR\PP^1$, the decgd changes by $\pi$. Since $S(\RR)|_{[C]}$ is a union of circles and the topology does not change as $[C]$ varies over $X$, the homeomorphism follows.
\end{proof}

A natural question is whether $S(\RR)$ has the same number of connected components over every cell $X$. We address this and related questions in the next section.

\subsection{Caterpillar curves and desingularizations}
We give a different combinatorial description with two advantages: first, it is more amenable to computation; second, it makes it easier to compare $\SLdot$ over different cells of $\Mo{r}(\RR)$. It will also connect the operator $\pi$ of Theorem \ref{thm:decgd-curve-orbits} to promotion and evacuation of tableaux.

The idea is to pass to a caterpillar curve $\widetilde{C}$ in the boundary of the maximal cell. We describe the covering space $\widetilde{S}(\RR) \to \widetilde{C}(\RR)$ in terms of chains of dual equivalence classes. For the remainder of this section, let $\widetilde{C}$ be the caterpillar curve with marked points, from left to right, $p_1, \ldots, p_r$. Let the nodes be $q_{23}, \ldots, q_{(r-2)(r-1)}$, and let $q_{12} = p_1$ and $q_{(r-1)r} = p_r$. For $i = 2, \ldots, r-1$, let $\ell_i$ be the arc from $q_{(i-1)i}$ to $q_{i(i+1)}$ \emph{through} $p_i$, and let $u_i$ be the arc \emph{opposite} $p_i$.

We define a covering space $S_{DE} \to \widetilde{C}(\RR)$ as follows:
\begin{enumerate}
\item[(i)] The fiber of $S_{DE}$ over $q_{i(i+1)}$ is indexed by the set $X_\eset^{\rect}(\lambda_1, \ldots, \lambda_i,\ybox,\lambda_{i+1}, \ldots, \lambda_r)$.
\item[(ii)] The arcs covering $\ell_i$ connect ${\bf D}$ to $\esh_i({\bf D})$, where
\[\esh_i : X_\eset^{\rect}(\lambda_1, \ldots, \ybox,\lambda_i, \ldots, \lambda_r) \to X_\eset^{\rect}(\lambda_1, \ldots, \lambda_i,\ybox, \ldots, \lambda_r)\] is the $i$-th evacuation-shuffle.
\item[(iii)] The arcs covering $u_i$ connect ${\bf D}$ to $\sh_i({\bf D})$, where
\[\sh_i : X_\eset^{\rect}(\lambda_1, \ldots, \ybox,\lambda_i, \ldots, \lambda_r) \to X_\eset^{\rect}(\lambda_1, \ldots, \lambda_i,\ybox, \ldots, \lambda_r)\] is the $i$-th shuffle.
\end{enumerate}
(Note: we do not explicitly label the fibers over $p_2, \ldots, p_{r-1}$.) \\

See Figure \ref{fig:covering-space} for a possible such covering, along with the smooth curves obtained by desingularizing the caterpillar curve.

\begin{thm} \label{thm:DE-caterpillar-covering}
Let $\tilde S = \SLdot|_{[\widetilde{C}]}$. Then $S_{DE} \cong \widetilde{S}(\RR)$ as covering spaces of $\widetilde{C}(\RR)$.
\end{thm}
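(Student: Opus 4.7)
The plan is to pull back Speyer's covering $\cS(\lambda_\bullet;\ybox_z)(\RR)\to\Mbar{r+1}(\RR)$ of Theorem \ref{speyer-covering-space} along the universal-curve-fiber inclusion $\widetilde{C}\subset\Mbar{r+1}$. By Theorem \ref{box-lift} this pullback agrees with $\widetilde{S}(\RR)\to\widetilde{C}(\RR)$, so it suffices to identify the fibers of $S_{DE}$ and $\widetilde{S}(\RR)$ at each 0-cell and to verify the monodromy along each arc $\ell_i$ and $u_i$.

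For the fibers at the node $q_{i(i+1)}$ of $\widetilde{C}$ (using the convention $q_{12}=p_1$, $q_{(r-1)r}=p_r$), the lifted curve $\widetilde{C}'_i\in\Mbar{r+1}$ is a caterpillar whose dihedral ordering of the $r+1$ marked points places $\ybox$ between $\lambda_i$ and $\lambda_{i+1}$: for an interior node the extra point $z$ bubbles off at that node, while for the end-nodes $z$ bubbles off jointly with $p_1$ or $p_r$. By Theorem \ref{speyer-covering-space}, the fiber of the covering over $\widetilde{C}'_i$ is indexed by decgds of type $(\lambda_1,\ldots,\lambda_i,\ybox,\lambda_{i+1},\ldots,\lambda_r)$, which correspond bijectively to chains in $X_\eset^{\rect}(\lambda_1,\ldots,\lambda_i,\ybox,\lambda_{i+1},\ldots,\lambda_r)$ via the first-row projection. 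This gives part (i).

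For the monodromy along $u_i$, the lifted path stays in a single cell of $\Mbar{r+1}(\RR)$ in which $z$ sits opposite $p_i$ on $C_i^{int}$, and it connects two adjacent caterpillars that differ by a length-$2$ interval reversal swapping $\ybox$ and $\lambda_i$ in the circular order. By Theorem \ref{speyer-covering-space}(ii), this wall-crossing transposes a single-square triangular region of the decgd, and unpacking the result on the first-row chain yields exactly $\sh_i$; this gives part (iii). The arc $\ell_i$ is more delicate: it passes through the 0-cell $z=p_i$ where $\{p_i,z\}$ bubble off as a leaf-component, a codimension-$\geq 2$ stratum of $\Mbar{r+1}(\RR)$ rather than a single wall. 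I will instead realize $\ell_i$ up to homotopy as a detour through five length-$2$ wall-crossings, and use Lemma \ref{upper-shuffle}, which expresses $\esh_2=\sh_1\sh_2\sh_1\sh_2\sh_1$ in the local 3-chain model, to identify the total monodromy as $\esh_i$. Since that lemma also shows $\esh_i$ touches only the $i$-th and $(i+1)$-st entries of the chain, the detour can be localized to the component of $\widetilde{C}$ containing $p_i$, and the rest of the decgd is undisturbed; this gives part (ii).

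The main obstacle is the $\ell_i$ computation: producing a concrete detour in $\Mbar{r+1}(\RR)$ around the 0-cell at $z=p_i$ and matching the induced five-step decgd wall-crossing sequence to the factorization of $\esh_i$ from Lemma \ref{upper-shuffle}. The localization afforded by that lemma reduces the work to a 3-component subcaterpillar, but carefully tracking which adjacent maximal cell is entered at each of the five wall-crossings, and verifying that the composed first-row action equals $\sh_1\sh_2\sh_1\sh_2\sh_1$ in the prescribed order, will be the technical heart of the argument.
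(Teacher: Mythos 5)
Your overall strategy (pull back Speyer's covering along the fiber $\widetilde{C}\subset\Mbar{r+1}$, index fibers by decgds, compute monodromy cell by cell) is the paper's strategy, and your treatment of part (i) matches. But the mechanisms you assign to the two arcs are essentially swapped, and this produces real errors. For the arc $u_i$ opposite $p_i$ you correctly observe that the lift stays in a single maximal cell (the one with $\ybox$ between $\lambda_r$ and $\lambda_1$ in the circular order), but you then invoke the wall-crossing rule of Theorem \ref{speyer-covering-space}(ii) and claim the resulting triangle transposition ``unpacks to $\sh_i$.'' No wall is crossed on $u_i$, and the triangle transposition is precisely the operation that Lemma \ref{upper-shuffle} identifies with $\esh_i$, not $\sh_i$. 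The correct argument is the one forced by your own first observation: both endpoints of $u_i$ lie in the closure of one cell $Y_{r1}$, so they are labeled by two maximal paths through the \emph{same} decgd $\Gamma$, and those two paths differ exactly by shuffling $D_{\ybox}$ past $D_i$, giving $\sh_i$.

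For the arc $\ell_i$ through $p_i$, you dismiss the single-wall-crossing computation on the grounds that the limit $z=p_i$ is a codimension-$\geq 2$ stratum, and propose an unspecified detour through five length-$2$ wall-crossings matching $\esh_2=\sh_1\sh_2\sh_1\sh_2\sh_1$. This is both unnecessary and, as stated, wrong. The two halves of $\ell_i$ lie in the closures of the adjacent maximal cells $X_{(i-1)i}$ and $X_{i(i+1)}$, which meet along the codimension-one wall where $z$ and $p_i$ collide generically; the monodromy along $\ell_i$ is therefore exactly one application of the wall-crossing rule for the length-$2$ reversal of $\{\ybox,\lambda_i\}$, i.e.\ the transposition of the small triangle containing $\ybox$, $\lambda_i$, and the partition covering them, which Lemma \ref{upper-shuffle} converts into $\esh_i$ on the first row. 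Moreover, the factorization $\sh_1\sh_2\sh_1\sh_2\sh_1$ in Lemma \ref{upper-shuffle} lives in the truncated model $X_\eset^\sigma(\tau,\lambda_i,\lambda_{i+1})$, where $\tau$ is the concatenation of the entire prefix of the chain: the shuffles $\sh_1$ there move $D_\tau$, so geometrically they would correspond to reversing long intervals of marked points, not to five length-$2$ wall-crossings. So the detour you propose does not realize that factorization, and you have not constructed it; this is the gap you yourself flag as the ``technical heart,'' and it is where the argument fails.
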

\begin{proof}
Let $X$ be the cell of $\Mo{r}(\RR)$ containing $\widetilde{C}$ in its boundary, corresponding to the circular ordering $12\cdots r$. For $i = 1, \ldots, r-1$, let $X_{i(i+1)}$ be the cell of $\Mo{r+1}(\RR)$ over $X$ in which $\ybox$ is between $\lambda_i$ and $\lambda_{i+1}$. Finally, let $X_{r1}$ be the cell in which $\ybox$ is between $\lambda_r$ and $\lambda_1$.

We first describe the indexing of the fiber over $q_{i(i+1)}$. Let $s \in \pi^{-1}(q_{i(i+1)})$. There are many cells of $\Mo{r+1}(\RR)$ containing $q_{i(i+1)}$ in their boundary (for example, $X_{i(i+1)}$ and $X_{r1}$). Let $X^*$ be any such cell and $Y$ the unique cell lying over $X^*$ containing $s$ in its boundary. Let $\Gamma$ be the decgd corresponding to $Y$. There is a unique path through $\Gamma$ that yields a chain of dual equivalence classes ${\bf D}$ of type $(\lambda_1, \ldots, \lambda_i, \ybox, \lambda_{i+1}, \ldots, \lambda_r)$. We label $s$ by ${\bf D}$. (By Theorem \ref{speyer-covering-space}, ${\bf D}$ does not depend on our choice of cell.) This gives (i).

We next compute the effect of lifting an arc from $q_{(i-1)i}$ to $q_{i(i+1)}$, starting from $s$.

Let $Y_{(i-1)i}$ be the cell covering $X_{(i-1)i}$ corresponding to the decgd $\Gamma$ whose first row is ${\bf D}$. Let $Y_{i(i+1)}$ be the cell obtained by following the arc $\ell_i$ (crossing a wall when $\ybox$ collides with $\lambda_i$). Let the decgd for $Y_{i(i+1)}$ be $\Gamma'$. From the wall-crossing rule of Theorem \ref{speyer-covering-space}, $\Gamma'$ is obtained by transposing the portion of $\Gamma$ consisting of $\ybox, \lambda_i$ and the partition covering the two. By Lemma \ref{upper-shuffle}, the $\eset \longrightarrow \lambda_1 \longrightarrow \cdots$ row of $\Gamma'$ is $\esh_i({\bf D})$. This gives (ii).

Now let $Y_{r1}$ be the unique cell covering $X_{r1}$ containing $s$ in its boundary, and let $\Gamma$ be its decgd. Let ${\bf D}'$ be the label on the point obtained by lifting the arc $u_i$ to $s$. The lift of $u_i$ does not cross a wall (it lies entirely in $\overline{Y_{r1}}$), so ${\bf D, D'}$ both appear in $\Gamma$ as the paths: 
\[\xymatrix{
\eset \ar[r] & \ybox \ar@{..}[r]  &\ar@{..}[r] &\ar@{..}[r] &\ar@{..}[r] & \cdot \ar[r]^{D_i} & \cdot \ar@{--}[r] &\cdot \ar[r]^-{D_{r-1}} & \lambda_r^c  \ar[r] & \rect \\
& \eset \ar[u] \ar[r] & \lambda_1 \ar[r]^-{D_2} & \cdot \ar@{--}[r] &\cdot \ar[r]^-{D_{i-1}} & \cdot \ar[u]^{D_\ybox} \ar[r]^{D_i'} & \cdot \ar[u]_{D'_\ybox} \ar@{..}[r]  & \ar@{..}[r]  & \ar@{..}[r] & \ybox^c\ar[u] \ar[r] & \rect,
}\]
so we see that ${\bf D'} = \sh_i({\bf D})$, which is (iii).
\end{proof}

We next compare $\widetilde{\pi} : \widetilde{S}(\RR) \to \widetilde{C}(\RR)$ to a nearby desingularization $\pi : S(\RR) \to C(\RR) \cong \RR\PP^1$, with $C \in \Mo{r}(\RR)$. Let $\gamma$ be the loop around the circle $C(\RR)$, starting from $p_1$ and traversing $p_2$ last. Inside $\Mbar{r+1}$, $\gamma$ is homotopic to a unique sequence of arcs around $\widetilde{C}(\RR)$, as in Figure \ref{fig:desings}. Let $\omega$ be the corresponding composition of shuffles and evacuation-shuffles. The monodromy action of $\pi_1(C(\RR))$ on $\pi^{-1}(p_1)$ is equivalent to the action of $\omega$ on $\widetilde{\pi}^{-1}(p_1) \subset \widetilde{S}(\RR)$.

\begin{figure}[h]
\centering
\includegraphics[scale=0.6]{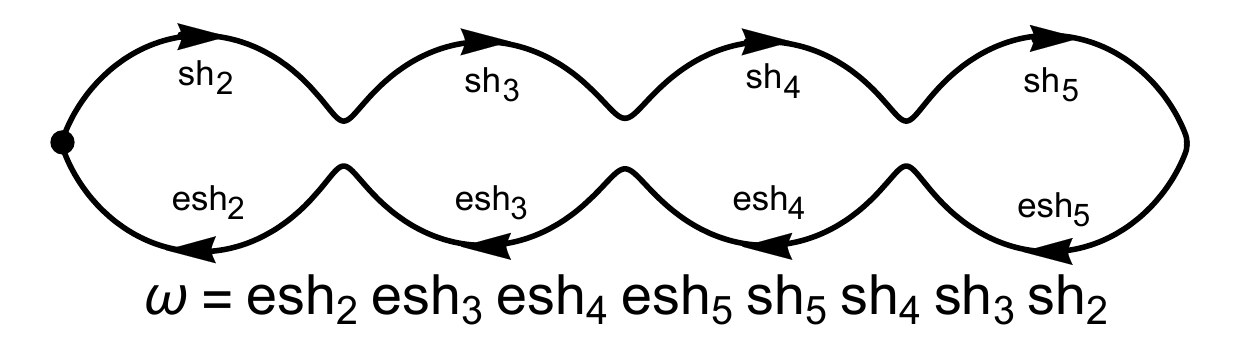}
\includegraphics[scale=0.6]{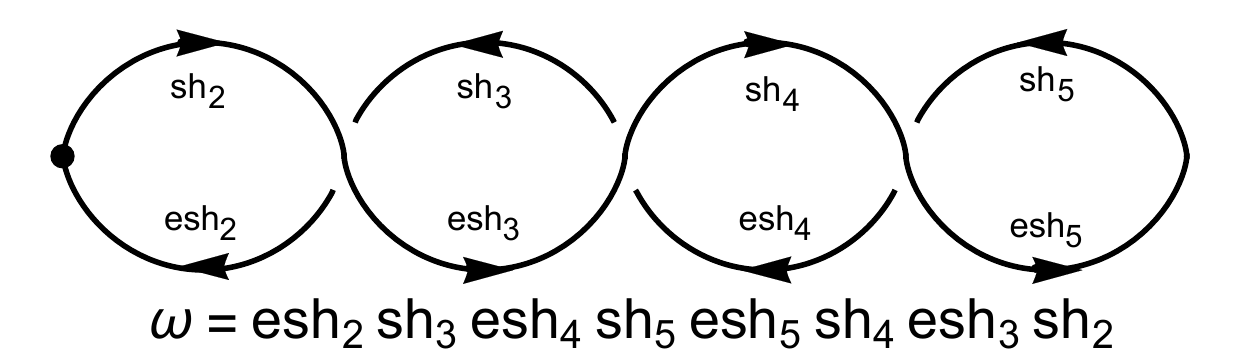}
\caption{Two desingularizations of a caterpillar curve, with the associated operations $\omega$. For $i=2, \ldots, 5$, the marked point $p_i$ is on the arc labeled $\esh_i$.}
\label{fig:desings}
\end{figure}

It is convenient to reindex the fiber of $\widetilde{S}$ over $p_1$ by $X_{\tinybox}^{\rect}(\lambda_1, \ldots, \lambda_r)$. Note that there is a canonical bijection
\[\iota : X_{\tinybox}^{\rect}(\lambda_1, \ldots, \lambda_r) \to X_\eset^{\rect}(\ybox,\lambda_1, \ldots, \lambda_r),\]
and that the two operations
\[\sh_1, \esh_1 : X_\eset^{\rect}(\ybox, \lambda_1, \ldots, \lambda_r) \to X_\eset^{\rect}(\lambda_1, \ybox, \ldots, \lambda_r)\]
are the same. We deduce:

\begin{cor} \label{cor:connected-components-desing}
Let $X$ be a maximal cell of $\Mo{r}(\RR)$, containing $[\widetilde{C}]$ in its boundary. Let $[C] \in X$ be any desingularization and let $S = S(\lambda_\bullet)|_{[C]}$ be the Schubert curve over $[C]$.
Let
\[\omega_X : X_{\tinybox}^{\rect}(\lambda_1, \ldots, \lambda_r) \to X_{\tinybox}^{\rect}(\lambda_1, \ldots, \lambda_r)\]
be the composition of shuffles and evacuation-shuffles corresponding to the loop around $C(\RR)$. There is a bijection
\[\left\{\begin{split}\text{components\ } \\ \text{ of } S(\RR) \hspace{0.4cm} \end{split}\right\} \longleftrightarrow X_{\tinybox}^{\rect}(\lambda_1, \ldots, \lambda_r) / \omega_X.\]
\end{cor}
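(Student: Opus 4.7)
The plan is to combine Theorem \ref{thm:DE-caterpillar-covering}, which describes the caterpillar covering $\widetilde\pi : \widetilde S(\RR) \to \widetilde C(\RR)$, with the topological constancy of $\cS(\lambda_\bullet)(\RR) \to \cC(\RR)$ over the maximal cell $X$ recorded in Corollary \ref{basic-real-topology}. Pick any smooth $[C] \in X$. Then $S(\RR) \to C(\RR) \cong S^1$ is a covering map, so connected components of $S(\RR)$ are in bijection with the orbits of the monodromy action of $\pi_1(C(\RR),p_1) \cong \ZZ$ on the fiber $\pi^{-1}(p_1)$. Since the topology of this covering is locally constant as $[C]$ varies in $X$, it suffices to compute these orbits after degenerating $[C]$ to $[\widetilde C]$ along a real arc in $\overline{X}$.

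To transport the fiber, I use that the marked point $p_1$ is a section of $\cC \to \Mbar{r}$ meeting only one extremal component of $\widetilde C$, so that the étale covering $\cS(\lambda_\bullet)(\RR) \to \cC(\RR)$ canonically identifies $\pi^{-1}(p_1)$ with $\widetilde\pi^{-1}(p_1)$. By Theorem \ref{thm:DE-caterpillar-covering}(i) together with the bijection $\iota$ and the coincidence of $\sh_1$ and $\esh_1$ noted just before the corollary, this fiber is canonically indexed by $X_{\tinybox}^{\rect}(\lambda_1,\ldots,\lambda_r)$.

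To identify the monodromy operator, I homotope the generator $\gamma$ of $\pi_1(C(\RR),p_1)$ (the loop realizing the circular ordering attached to $X$) inside the real universal family $\cC(\RR)|_{\overline X}$ to a loop $\widetilde\gamma$ on $\widetilde C(\RR)$ that traverses each intermediate component of the caterpillar in circular order. On the component containing $p_i$, $\widetilde\gamma$ either passes \emph{through} $p_i$ (an arc $\ell_i$) or \emph{opposite} $p_i$ (an arc $u_i$), according to which side of the corresponding node the desingularization $[C]$ was chosen. By Theorem \ref{thm:DE-caterpillar-covering}(ii)--(iii), the lift of $\ell_i$ acts by $\esh_i$ and the lift of $u_i$ acts by $\sh_i$, so concatenating these in the order prescribed by $\widetilde\gamma$ gives exactly the operator $\omega_X$ of the corollary statement. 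Components of $S(\RR)$ therefore correspond to $\omega_X$-orbits.

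The main obstacle is the homotopy step: one must verify that as $[C]$ specializes to $[\widetilde C]$ inside $\overline X$, the circle $C(\RR)$ degenerates to the cactus $\widetilde C(\RR)$ in such a way that $\gamma$ decomposes precisely into the $\ell_i$'s and $u_i$'s predicted by the combinatorial ordering of $X$. This is a purely topological statement about the real universal curve $\cC(\RR) \to \Mbar{r}(\RR)$ over $\overline{X}$, independent of the Schubert geometry; once it is checked (cf.\ the pictures in Figure \ref{fig:desings}), the remainder of the argument is straightforward bookkeeping from Theorem \ref{thm:DE-caterpillar-covering}.
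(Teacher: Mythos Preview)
Your proposal is correct and follows essentially the same approach as the paper: the paper's argument is the paragraph immediately preceding the corollary, which homotopes the loop $\gamma$ around $C(\RR)$ inside $\Mbar{r+1}$ to a sequence of arcs on $\widetilde{C}(\RR)$, reads off the monodromy via Theorem~\ref{thm:DE-caterpillar-covering}, and reindexes the fiber over $p_1$ by $X_{\tinybox}^{\rect}(\lambda_\bullet)$ using $\iota$ and the equality $\sh_1=\esh_1$. Your write-up is a faithful expansion of that sketch, and you correctly flag the one nontrivial topological point (that the degeneration of $C(\RR)$ to the cactus $\widetilde{C}(\RR)$ decomposes $\gamma$ into the predicted $\ell_i$'s and $u_i$'s), which the paper handles by reference to Figure~\ref{fig:desings}.
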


\begin{cor} \label{cor:promotion}
If $X$ is the cell corresponding to the circular ordering $p_1, \ldots, p_r$, the connected components of $\SLdot(\RR)|_X$ are the orbits of
\[\omega_X = \iota^{-1} \circ \esh_1 \cdots \esh_{r-1} \sh_{r-1} \cdots \sh_{1} \circ \iota.\]
In particular, the connected components of $\cS(\ybox, \ldots, \ybox)(\RR)|_X$ are in bijection with the orbits of tableau promotion on $\mathrm{SYT}(\rect)$.
\end{cor}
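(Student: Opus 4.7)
The statement is essentially a direct specialization of Corollary \ref{cor:connected-components-desing} to the cell $X$ corresponding to the circular ordering $(p_1,\ldots,p_r)$. My plan is to identify the operator $\omega_X$ explicitly by degenerating to a caterpillar and applying the wall-crossing description of Theorem \ref{thm:DE-caterpillar-covering}.

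First, I would choose a caterpillar $\widetilde{C}$ in the boundary of $\overline{X}$ whose marked points are arranged left-to-right as $p_1,\ldots,p_r$, together with a desingularization $C\in X$. By the construction leading to Corollary \ref{cor:connected-components-desing}, the monodromy loop $\gamma$ around $C(\RR)$ based at $p_1$ is homotopic, inside the total space of the universal curve restricted to $\overline{X}$, to a specific chain of arcs of $\widetilde{C}(\RR)$: namely, starting at $p_1 = q_{12}$, the extra marked point $z$ traverses in turn the ``opposite'' arcs $u_2,\ldots,u_{r-1}$ across the caterpillar, reaches the far end $p_r$, and returns via the ``through'' arcs $\ell_{r-1},\ldots,\ell_2$. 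By Theorem \ref{thm:DE-caterpillar-covering}, lifts of $u_i$ and $\ell_i$ to the Schubert curve act on the fiber by $\sh_i$ and $\esh_i$ respectively. Reading off the composition and absorbing the base-point identification at $p_1$ via the natural bijection $\iota : X_{\tinybox}^{\rect}(\lambda_\bullet) \to X_\eset^{\rect}(\ybox,\lambda_1,\ldots,\lambda_r)$ yields the stated formula
\[\omega_X = \iota^{-1} \circ \esh_1 \cdots \esh_{r-1}\, \sh_{r-1} \cdots \sh_1 \circ \iota,\]
and Corollary \ref{cor:connected-components-desing} then gives the desired bijection with connected components of $\SLdot(\RR)|_X$.

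For the promotion case, take $\lambda_i = \ybox$ for all $i$ with $r = k(n-k) - 1$. Recall from Section \ref{sec:shuffling-ops} that on chains of all-single-box type the operator $\esh_i$ acts as the identity, while $\sh_i$ coincides with the $i$-th Bender--Knuth involution. Hence the prefix $\esh_1\cdots\esh_{r-1}$ collapses and $\omega_X$ reduces to $\iota^{-1}(\sh_{r-1}\cdots\sh_1)\iota$. Under the identifications $X_{\tinybox}^{\rect}(\ybox^r) \cong X_\eset^{\rect}(\ybox^{r+1}) = \mathrm{SYT}(\rect)$, one compares this with the paper's earlier description of Sch\"{u}tzenberger promotion as $\sh_{|\rect|-1}\circ\cdots\circ\sh_1$ on $\mathrm{SYT}(\rect)$, and sees that $\omega_X$ is conjugate to promotion. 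The orbit correspondence follows.

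The main obstacle is bookkeeping: I need to verify precisely the homotopy class of $\gamma$ inside $\cC(\RR)|_{\overline{X}}$, fix orientations consistently with the wall-crossing convention of Theorem \ref{speyer-covering-space}, and check that the base-point identification at $p_1 = q_{12}$ produces exactly the $\iota$-conjugation and not some off-by-one variant. Once the geometric picture is pinned down, the rest is a formal computation within the combinatorial framework of Section \ref{sec:tableau-combinatorics}.
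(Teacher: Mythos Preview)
Your proposal is correct and follows essentially the same approach as the paper. The paper's own proof is a single sentence relying entirely on Corollary~\ref{cor:connected-components-desing} and the preceding discussion of the loop $\gamma$; you have simply unpacked that discussion (identifying the homotopy class via the arcs $u_i$ and $\ell_i$, absorbing the base-point via $\sh_1 = \esh_1$, and collapsing the $\esh_i$'s in the all-box case), which is exactly what is needed.
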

\begin{proof}
Tableau promotion is the composition $\sh_{r-1} \cdots \sh_1$. (Recall that under the identification of $X_\tinybox^\rect(\ybox, \ldots, \ybox)$ with $SYT(\rect)$, $\esh_i$ becomes trivial.)
\end{proof}

We remark that the identification above was contingent on the choice of caterpillar curve $\widetilde{C}$ in the boundary of $X$. (The statement in Theorem \ref{thm:decgd-curve-orbits} is canonical for the entire cell, though the connection to tableau promotion is less apparent.) A different caterpillar curve $\widetilde{C}'$ in the boundary of $X$, with points $p_{\sigma(1)}, \ldots, p_{\sigma(r)}$ from left to right, yields an operator $\omega'$ that differs from $\omega$ by an intertwining operator
\[\psi : X_{\tinybox}^{\rect}(\lambda_1, \ldots, \lambda_r) \to X_\tinybox^{\rect}(\lambda_{\sigma(1)}, \ldots, \lambda_{\sigma(r)}).\]
The function $\psi$ is a sequence of shuffles and evacuation-shuffles, corresponding to changing paths in a decgd. We do not describe $\psi$ explicitly.

The advantage of Corollary \ref{cor:connected-components-desing} is that we may compare different cells $X_i$ by desingularizing $\widetilde{C}$ in different ways. We have:

\begin{cor}
Let $\eta(X)$ be the number of connected components of $\SLdot(\RR)|_X$. For any two maximal cells   $X, X'$ of $\Mo{r}(\RR)$, $\eta(X) \equiv \eta(X')\ \mathrm{mod} \ 2.$
\end{cor}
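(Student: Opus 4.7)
The plan is to convert the parity statement into an Euler characteristic computation and then use flatness to make the Euler characteristic a global invariant of the family.

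First, fix a maximal cell $X \subseteq \Mo{r}(\RR)$ and choose any $[C] \in X$, so $C \cong \PP^1$ with $r$ distinct real marked points, and set $S = \cS(\lambda_\bullet)|_{[C]}$. By Corollary \ref{cor:as-real-as-poss}, $S(\RR)$ is smooth and $S(\CC) - S(\RR)$ is disconnected; by Corollary \ref{cor:normalization-disconnected}, the same properties hold on the normalization of every irreducible component. Since $S(\RR)$ is smooth, any singular points of $S$, and any intersection points between irreducible components of $S$, must occur in complex conjugate pairs. Applying the parity identity recorded in the introduction to each real irreducible component of $S$ and then adding up, I obtain
\[
\eta(X) \;=\; \#\{\text{components of } S(\RR)\} \;\equiv\; \chi(\cO_S) \pmod 2,
\]
where the contributions from complex-conjugate pairs of singularities, of intersection points, and of conjugate pairs of non-real irreducible components all cancel modulo $2$ on both sides.

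Next, I would invoke Theorem \ref{thm:recall-Sp14}: the family $\cS(\lambda_\bullet) \to \Mbar{r}$ is flat, so the function $[C] \mapsto \chi(\cO_{\cS(\lambda_\bullet)|_{[C]}})$ is locally constant on the connected (in fact irreducible) base $\Mbar{r}$. Hence this Euler characteristic takes a single value $\chi_0$ across the entire moduli space, and in particular the same value for every $[C] \in \Mo{r}(\RR)$, regardless of which maximal cell contains it. Combining with the previous step, $\eta(X) \equiv \chi_0 \pmod 2$ for every maximal cell $X$, which gives the claim.

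The main point requiring care is the first step: the parity identity is traditionally stated for smooth integral curves, and here $S$ is only guaranteed to be reduced, possibly reducible, and possibly singular. The reduction to the smooth integral case is entirely a bookkeeping exercise, relying on the fact (built into the statement of Corollary \ref{cor:as-real-as-poss} and its analogue Corollary \ref{cor:normalization-disconnected} for components) that all non-real phenomena come in conjugate pairs. Once this bookkeeping is done, the remainder of the argument is immediate from flatness, and no further obstacle is anticipated.
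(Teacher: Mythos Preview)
Your argument is correct but takes a genuinely different route from the paper's own proof. The paper's proof is purely combinatorial: for adjacent maximal cells $X,X'$ one may choose a common caterpillar curve $\widetilde{C}$ in the closure of the shared wall, and then $\omega_X$ and $\omega_{X'}$ are compositions of the \emph{same} collection of bijections $\{\sh_i,\esh_i\}$, merely in different orders. Hence they have the same sign as permutations of $X_{\tinybox}^{\rect}(\lambda_\bullet)$, which forces the number of orbits---i.e.\ $\eta(X)$---to have the same parity.

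Your approach instead converts $\eta(X)$ into $\chi(\cO_S)\pmod 2$ via the real-curve parity identity (this is exactly what the paper later proves carefully as Lemma~\ref{lem:recall-euler-char}), and then invokes flatness of $\cS(\lambda_\bullet)\to\Mbar{r}$ to see that $\chi(\cO_S)$ is constant over the connected base. This is more conceptual and avoids the shuffle/evacuation-shuffle machinery entirely; in effect you are anticipating the K-theory discussion of Section~\ref{sec:k-theory}. The paper's combinatorial proof, by contrast, is self-contained within the tableau framework already set up and does not need the Euler characteristic computation. One small remark: your mention of ``conjugate pairs of non-real irreducible components'' is unnecessary here, since Corollary~\ref{cor:normalization-disconnected} shows every irreducible component of $S$ has real points and is therefore defined over $\RR$; but this does not affect the validity of your argument.
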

\begin{proof}
We may assume $X$ and $X'$ share a wall and that $\widetilde{C}$ is in the closure of this wall. Then the operations $\omega_X, \omega_{X'}$ are reorderings of the same set of bijections (each $\esh_i$ and $\sh_i$ appears once). Thus, as permutations of $X_{\tinybox}^{\rect}(\lambda_1, \ldots, \lambda_r)$, $\omega_X$ and $\omega_{X'}$ have the same sign. This determines the parity of the number of orbits.
\end{proof}
In general, $\eta(X)$ and $\eta(X')$ need not be equal, as the following example shows:

\begin{exa} \label{exa:minimal-counterex}
Let $\lambda_\bullet = \big\{{\tiny \yng(2), \yng(2,1), \yng(3,1), \yng(3,2) }\big\}$ and consider $\SLdot \subseteq \cG(3,8)$ over $\Mbar{4}$. Let $X, X', X'' \subseteq \Mo{4}(\RR)$ be the cells corresponding to the circular orderings $1234, 1243,1324$. Then $\eta(X) = 3$, but $\eta(X') = \eta(X'') = 1$.
\end{exa}

The absence of smaller examples is explained in part by the following.

\begin{lemma} \label{lem:mult-free-transpositions}
Let the circular orderings for $X, X'$ differ by exchanging two adjacent points $p_i, p_j$, and suppose the product $\lambda_i \cdot \lambda_j$ in $H^*(G(k,n))$ is multiplicity-free, that is, $c_{\lambda_i \lambda_j}^\nu \leq 1$ for all $\nu$. Then $\eta(X) = \eta(X')$.
\end{lemma}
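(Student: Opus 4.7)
My plan is to construct a bijection $\psi : X_{\tinybox}^{\rect}(\lambda_1,\ldots,\lambda_r) \to X_{\tinybox}^{\rect}(\lambda_{\sigma(1)},\ldots,\lambda_{\sigma(r)})$ (where $\sigma$ is the transposition of $i$ and $i+1$) that conjugates $\omega_X$ to $\omega_{X'}$; by Corollary \ref{cor:connected-components-desing}, the two permutations then have the same cycle structure, and hence $\eta(X)=\eta(X')$. The source of $\psi$ will be geometric: I would pick caterpillar curves $\widetilde{C}\subset\overline{X}$ and $\widetilde{C}'\subset\overline{X'}$ arranged so that they share a common codimension-$1$ boundary stratum $[\widetilde{C}^*]\in\Mbar{r}(\RR)$, corresponding to the curve in which $p_i$ and $p_{i+1}$ have bubbled off together (so that the two desingularizations of $\widetilde{C}^*$ are $\widetilde{C}$ and $\widetilde{C}'$).

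At $[\widetilde{C}^*]$, the fiber of $\cS(\lambda_\bullet)$ decomposes according to the node label $\mu$ on the bubble. The key input from the multiplicity-free hypothesis is that the $0$-dimensional Schubert problem on the bubble, with labels $(\lambda_i,\lambda_{i+1},\mu^c)$, has exactly $c_{\lambda_i\lambda_{i+1}}^\mu\in\{0,1\}$ solutions. Thus a component of $\cS|_{[\widetilde{C}^*]}(\RR)$ is canonically specified by $\mu$ together with a chain of dual equivalence classes on the rest of the curve. I would define $\psi$ by: take a chain ${\bf D}$ indexing a component of $\cS|_{[\widetilde{C}]}(\RR)$, degenerate to $[\widetilde{C}^*]$ to obtain $(\mu,{\bf D}^*)$, and then desingularize back out toward $\widetilde{C}'$. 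The multiplicity-freeness guarantees that the two possible interpolations of dual equivalence classes of rectification shapes $\lambda_i,\lambda_{i+1}$ (in either order) filling the skew shape $\mu/\alpha$ both exist uniquely and are canonical, so $\psi$ is a well-defined bijection.

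The final step is the intertwining identity $\psi\omega_X=\omega_{X'}\psi$. The monodromy loops for $X$ and $X'$ agree on the portion of the curve away from the bubble, and near the shared stratum $[\widetilde{C}^*]$ they differ only by the local reordering of $p_i,p_{i+1}$. Using the formula $\omega_X=\iota^{-1}\circ\esh_1\cdots\esh_{r-1}\sh_{r-1}\cdots\sh_1\circ\iota$ from Corollary \ref{cor:promotion}, I would reduce the intertwining to a local identity at positions $(i,i+1)$ of the chain: roughly, that the partial composition $\sh_{i+1}\circ\sh_i\circ\sh_{i+1}$ equals $\sh_i\circ\sh_{i+1}\circ\sh_i$ (and similarly with $\esh$'s and mixed) when the pair of dual equivalence classes at those positions has $\rsh$'s of multiplicity-free product. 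Such a braid-type relation should follow because mult-freeness forces the intermediate partition in each $2\times 2$ square of the associated decgd to be uniquely determined by its corner shapes, collapsing the ambiguity in the shuffle/evacuation-shuffle operation.

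The main obstacle will be this final local braid identity. The mult-free hypothesis does not directly imply $\sh_i=\esh_i$ pointwise (the LR coefficient $c_{\alpha\lambda_{i+1}\lambda_i}^\gamma$ can exceed $1$ even when $c_{\lambda_i\lambda_{i+1}}^\mu\leq 1$), so the proof must proceed by showing that the swap operator $\psi$, defined via the shared boundary, interchanges $\sh_i$ and $\esh_i$ up to the rest of the formula. Concretely, I expect this to come down to tracking how Speyer's wall-crossing rule (Theorem \ref{speyer-covering-space}(ii)) transposes the triangle at position $(i,i+1)$ of the decgd: under the multiplicity-free assumption, this transposition commutes with the transpositions corresponding to all other wall-crossings in the monodromy word, because each such commutation reduces to the uniqueness of a single partition filling a $2\times 2$ square of the decgd.
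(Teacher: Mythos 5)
Your overall instinct---localize the difference between $\omega_X$ and $\omega_{X'}$ at the component carrying $p_i,p_{i+1}$, and use multiplicity-freeness to collapse the local combinatorics---is the right one, and your use of the hypothesis (the bubble's zero-dimensional Schubert problem has at most one solution per node label) is essentially the same input the paper exploits. But the argument has a genuine gap at precisely the step you yourself flag as ``the main obstacle'': the intertwining identity $\psi\omega_X=\omega_{X'}\psi$ is never established, only asserted to ``follow'' from a braid relation $\sh_{i+1}\sh_i\sh_{i+1}=\sh_i\sh_{i+1}\sh_i$. That relation is false in general (the paper notes $\sh_i\sh_{i+1}\sh_i\ne\sh_{i+1}\sh_i\sh_{i+1}$ in Section \ref{sec:shuffling-ops}), and more to the point it is not the relevant local statement: the monodromy words are compositions of operators that move the single $\ybox$ past each $\lambda_j$ in turn; they never transpose $\lambda_i$ with $\lambda_{i+1}$ directly, so no braid relation among consecutive shuffles ever enters. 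You correctly observe that multiplicity-freeness does not force $\sh=\esh$ pointwise, but you do not identify the weaker identity that actually suffices, and without it the proof does not close.

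The paper's route is more economical and avoids the bijection $\psi$ entirely. Because $X$ and $X'$ share a wall whose closure contains the \emph{same} caterpillar curve $\widetilde{C}$ (the one on which $p_i,p_{i+1}$ occupy a common component), both $\omega_X$ and $\omega_{X'}$ act on the same set $X_{\tinybox}^{\rect}(\lambda_1,\ldots,\lambda_r)$. After reducing to $i=1,j=2$, the two words are $\esh_2 M\sh_2$ and $\sh_2 M\esh_2$ for the same middle word $M$; a purely formal conjugation shows $\omega_X$ is conjugate to $\omega_{X'}(\esh_2\sh_2)^2$, so the lemma reduces to the local identity $(\esh_2\sh_2)^2=\mathrm{id}$---strictly weaker than $\esh_2=\sh_2$. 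Only here does the hypothesis enter: by Lemma \ref{upper-shuffle}, $\esh_2\sh_2$ acts within a truncated set of chains whose cardinality is at most $2$ by the Pieri rule and $c_{\lambda_1\lambda_2}^\nu\le 1$, and every permutation of a two-element set is an involution. If you want to salvage your version, the missing piece is exactly this: isolate the algebraic difference between the two monodromy words (a single $\esh$/$\sh$ swap at the ends, up to conjugation) and prove the resulting local involutivity from the cardinality bound, rather than a braid relation.
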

\begin{proof}
We may assume $i=1$, $j=2$ and the circular ordering for $X$ is $123\cdots r$. Let $\omega_X, \omega_{X'}$ be the bijections as in Corollary \ref{cor:connected-components-desing} corresponding to the loops for $X,X'$:
\begin{align*}
\omega_X &= \esh_2 \circ \esh_3 \circ \cdots \circ \esh_{r-1} \circ \sh_{r-1} \circ \cdots \circ \sh_3 \circ \sh_2, \\
\omega_{X'} &= \sh_2 \circ \esh_3 \circ \cdots \circ \esh_{r-1} \circ \sh_{r-1} \circ \cdots \circ \sh_3 \circ \esh_2.
\end{align*}
We see that $\omega_X$ is conjugate to $\omega_{X'} (\esh_2 \sh_2)^2$, and $\esh_2 \sh_2$ corresponds to the loop around the first component of the caterpillar curve. 
 
Let $s \in \pi^{-1}(p_1) \subset \widetilde{S}(\RR)$ and ${\bf D}$ the corresponding chain of dual equivalence classes. Let $\nu$ be the node labeling with $s \in S_\nu$ and $\nu(q_{23}, C_1)$ the node label of $q_{23}$ on the first component. Since $\esh_2 \sh_2$ only affects the $\ybox, \lambda_1, \lambda_2$ dual equivalence classes, we truncate ${\bf D}$ and work in the set $X_\eset^{\rect}(\ybox,\lambda_1, \lambda_2,\nu(q_{23},C_1))$.

By the Pieri rule and our assumption on $\lambda_1$ and $\lambda_2$, $X_\eset^{\rect}(\ybox,\lambda_1, \lambda_2,\nu(q_{23},C_1))$ has cardinality $\leq 2$, so $(\esh_2 \sh_2)^2 = \mathrm{id}$. This holds for all points $s$, so $\omega_X$ and $\omega_{X'}$ are conjugate, hence have the same orbit structure.
\end{proof}
\begin{cor}\label{cor:mult-free}
Suppose every pairwise product $\lambda_i \cdot \lambda_j$ in $H^*(G(k,n))$ is multiplicity-free. Then the operators $\omega$ for different circular orderings are all conjugate. In particular, the number of real connected components of $S(\RR)$ does not depend on the ordering of the $p_\bullet$.
\end{cor}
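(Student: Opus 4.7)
The plan is to reduce the statement to the already-proved Lemma \ref{lem:mult-free-transpositions}, by noting that any two circular orderings differ by a sequence of adjacent transpositions.

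First I would observe that the proof of Lemma \ref{lem:mult-free-transpositions} actually gives a stronger conclusion than the equality $\eta(X) = \eta(X')$ stated there: it shows that the operators $\omega_X$ and $\omega_{X'}$ are conjugate as permutations. Indeed, the argument factors $\omega_X$ through $\omega_{X'}$ times $(\esh_2 \sh_2)^2$ under conjugation by a fixed sequence of shuffles; the multiplicity-free hypothesis on $\lambda_i \cdot \lambda_j$, combined with the Pieri rule, forces $|X_\eset^{\rect}(\ybox, \lambda_i, \lambda_j, \nu)| \leq 2$ for every relevant $\nu$, so $(\esh_2 \sh_2)^2 = \mathrm{id}$ pointwise, and the remaining conjugation identifies $\omega_X$ with $\omega_{X'}$ (up to the intertwining bijection $\psi$ between the two sets $X_\tinybox^\rect$ of chains of dual equivalence classes for the two circular orderings, as described in the paragraph following Corollary \ref{cor:connected-components-desing}).

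Next, any two dihedral orderings of $r$ points are connected by a finite sequence of adjacent transpositions, e.g.\ by bubble-sort. Under the hypothesis that \emph{every} pairwise product $\lambda_i \cdot \lambda_j$ is multiplicity-free, the hypothesis of Lemma \ref{lem:mult-free-transpositions} is satisfied at every step of such a sequence. Composing the corresponding conjugations shows that, for any two maximal cells $X$ and $X'$ of $\Mo{r}(\RR)$, the operators $\omega_X$ and $\omega_{X'}$ are conjugate (via a composite intertwining bijection built from the sequence of $\psi$'s). Since conjugate permutations have the same cycle structure, they have the same number of orbits, and Corollary \ref{cor:connected-components-desing} then gives $\eta(X) = \eta(X')$.

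There is no substantial obstacle here; the only bookkeeping point is to verify that the intertwining bijections $\psi$ at each adjacent swap compose to give a genuine conjugating bijection between $X_\tinybox^\rect(\lambda_\bullet)$ for the two orderings. This is automatic once we know each individual step is a conjugation, so no explicit description of the composite $\psi$ is needed. The content of the corollary thus lies entirely in Lemma \ref{lem:mult-free-transpositions}; the final statement about rectangular $\lambda_i$ or $G(2,n)$ is then immediate from the fact that Pieri-type and rectangle-rectangle products in $H^*(G(k,n))$ are well-known to be multiplicity-free.
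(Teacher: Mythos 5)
Your proposal is correct and takes essentially the same route as the paper: the corollary is obtained by connecting any two circular orderings through a chain of adjacent transpositions and applying Lemma \ref{lem:mult-free-transpositions} at each step, and your observation that the lemma's proof actually establishes conjugacy of $\omega_X$ and $\omega_{X'}$ (not merely $\eta(X)=\eta(X')$) is exactly the strengthening the corollary relies on. Nothing further is needed.
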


As an example, if $\alpha, \beta$ are rectangular partitions, then $\alpha \cdot \beta$ is known to be multiplicity free. We may make a slightly stronger statement:
\begin{cor}
Let the circular orderings of $X, X'$ differ by any permutation $\sigma$, where $\sigma$ fixes all non-rectangular partitions and does not move any partition past a non-rectangular partition. Then $\eta(X) = \eta(X')$.
\end{cor}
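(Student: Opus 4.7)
The plan is to reduce the corollary to repeated applications of Lemma \ref{lem:mult-free-transpositions}. First, I would observe that the non-rectangular partitions split the circular ordering into \emph{blocks} of consecutive rectangular partitions, separated by fixed non-rectangular entries. The hypothesis on $\sigma$ says exactly that $\sigma$ fixes the non-rectangular partitions pointwise and permutes each block within itself. Since the symmetric group on each block is generated by adjacent transpositions \emph{internal} to the block, I can factor $\sigma$ as a product of adjacent transpositions, each swapping two rectangular partitions that are adjacent in the circular ordering.

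Second, I would invoke the classical fact (already noted immediately before the statement) that for rectangular partitions $\alpha, \beta \subseteq \rect$, every Littlewood--Richardson coefficient $c_{\alpha\beta}^\nu$ is either $0$ or $1$, so the product $[\Omega(\alpha)] \cdot [\Omega(\beta)]$ in $H^*(G(k,n))$ is multiplicity-free. This follows e.g.\ from Stembridge's classification of multiplicity-free products of Schur functions, and is the precise hypothesis required by Lemma \ref{lem:mult-free-transpositions}.

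Combining these two ingredients, each adjacent transposition in the chosen factorization of $\sigma$ satisfies the hypothesis of Lemma \ref{lem:mult-free-transpositions} and therefore preserves $\eta$. Composing over all transpositions gives $\eta(X) = \eta(X')$, as desired.

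The one point that needs verification is that the factorization of $\sigma$ into adjacent transpositions can be realized as an honest sequence of wall-crossings in $\Mo{r}(\RR)$, each exchanging only two consecutive marked points as in Lemma \ref{lem:mult-free-transpositions}. This is automatic given the hypothesis: the fixed non-rectangular partitions act as impassable barriers along the circle, and the adjacent transpositions we use remain strictly inside a single block, so each intermediate circular ordering is a valid maximal cell of $\Mo{r}(\RR)$ and consecutive orderings in the chain differ across a single codimension-one wall. I do not expect a genuine obstacle here — the argument is essentially a word-by-word iteration of Lemma \ref{lem:mult-free-transpositions} together with the multiplicity-freeness of products of rectangles.
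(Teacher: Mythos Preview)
Your proposal is correct and takes essentially the same approach as the paper: the paper's proof is the single sentence ``$X$ and $X'$ are connected by a sequence of transposition wall-crossings as in Lemma \ref{lem:mult-free-transpositions},'' which is exactly your argument with the details suppressed. The multiplicity-freeness of products of rectangular partitions is noted in the paper immediately before the statement, just as you use it.
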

\begin{proof}
$X$ and $X'$ are connected by a sequence of transposition wall-crossings as in Lemma \ref{lem:mult-free-transpositions}.
\end{proof}
\begin{cor}
If all or all but one $\lambda_i$ are rectangular, $\eta(X)$ is the same for all $X$.
\end{cor}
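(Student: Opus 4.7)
The goal is to deduce this corollary from the immediately preceding one, which allows any permutation $\sigma$ of the circular ordering that fixes the non-rectangular $\lambda_i$ and does not move any partition past a non-rectangular partition. The plan is to split into the two cases and, in each, exhibit the permutation relating two arbitrary circular orderings as one of this allowed type.

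First, suppose all $\lambda_i$ are rectangular. Then both hypotheses of the previous corollary are vacuous, so \emph{any} permutation of the circular ordering is allowed. Hence $\eta(X) = \eta(X')$ for any two cells $X,X'$. (Equivalently, this is a direct instance of Corollary \ref{cor:mult-free}, since products of two rectangular partitions are multiplicity-free.)

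Second, suppose exactly one partition is non-rectangular; relabel so that this is $\lambda_1$, and the remaining $\lambda_2, \ldots, \lambda_r$ are rectangular. Given two cells $X, X'$ of $\Mo{r}(\RR)$, recall that each corresponds to a dihedral ordering of the $r$ marked points. Using the rotational freedom in the definition of a dihedral ordering, we may normalize both representatives so that $\lambda_1$ sits at a common position in the circular arrangement. Then $X'$ is obtained from $X$ by a permutation $\sigma$ that acts only on the remaining $r-1$ positions (all of which carry rectangular partitions) and possibly by the reflection that fixes $\lambda_1$ and reverses the linear sequence of the others. In either case, $\sigma$ fixes the unique non-rectangular partition $\lambda_1$ and never transposes any $\lambda_j$ across $\lambda_1$, so the hypotheses of the previous corollary are satisfied, giving $\eta(X) = \eta(X')$.

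The only subtlety — and it is minor — is checking that after pinning $\lambda_1$, every remaining dihedral ordering is indeed reachable by a $\sigma$ of the permitted type. But once $\lambda_1$ is fixed at a position on the circle, the other $r-1$ labels form a linear string around it, and the group of permutations of this string (including its reversal) already exhausts all dihedral orderings with $\lambda_1$ in that slot. No transposition in such a $\sigma$ ever crosses $\lambda_1$, so the previous corollary applies throughout, completing the proof.
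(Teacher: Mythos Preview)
Your argument is correct and is precisely the intended deduction from the preceding corollary; the paper in fact states this result without proof, treating it as immediate. Your case split and the normalization placing $\lambda_1$ at a fixed position make explicit exactly why any two dihedral orderings are connected by adjacent transpositions avoiding the single non-rectangular partition, which is all that is needed.
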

We also note that Corollary \ref{cor:mult-free} applies to any Schubert problem on $G(2,n)$. Certain other cases also trivially have $\eta(X) = \eta(X')$, such as when two identical partitions switch places. It is interesting to point out a smaller candidate counterexample $\lambda_\bullet = \big\{{\tiny \yng(2), \yng(2), \yng(2,1), \yng(3,1)}\big\}$, with $\SLdot \subset \cG(3,7)$. Here, $\eta(X) = 2$ for all circular orderings, but the permutations $\omega_X$ and $\omega_{X'}$ for the circular orderings $1234$ and $1324$ are not conjugate: $X_{\tinybox}^{\rect}(\lambda_\bullet)$ has $8$ elements, which are partitioned into two orbits of sizes $3,5$ by $\omega_X$ and $4,4$ by $\omega_{X'}$.

\section{Connections to K-theory} \label{sec:k-theory}

\subsection{Basic facts}
The classes of the Schubert structure sheaves $[\cO_\lambda] := [\cO_{\Omega(\lambda)}]$ form an additive basis for the K-theory of $G(k,n)$. We write $k_{\lambda_\bullet}^\nu$ for the absolute value of the coefficient of $[\cO_\nu]$ in the product $\prod_i [\cO_{\lambda_i}]$. This is zero unless $|\nu| \geq \sum |\lambda_i|$, and the leading terms agree with cohomology:
\[c_{\lambda_\bullet}^\nu = k_{\lambda_\bullet}^\nu \text{ when } |\nu| = \sum |\lambda_i|.\]
The coefficients alternate in sign:
\begin{thm}\emph{\cite{Bu02}} The structure constant $k_{\lambda_\bullet}^\nu$ appears with sign $\displaystyle{(-1)^{|\nu| - \sum |\lambda_i|}}.$
\end{thm}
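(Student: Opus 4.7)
The plan is to induct on the number of factors $r$, reducing to the case of a two-Schubert product $[\cO_\lambda] \cdot [\cO_\mu]$, and then to prove the sign claim for pairs. The inductive step is formal: write
$$\prod_{i=1}^{r} [\cO_{\lambda_i}] \ = \ [\cO_{\lambda_1}] \cdot \Bigl(\prod_{i=2}^{r} [\cO_{\lambda_i}]\Bigr),$$
expand the inner factor in the Schubert basis, and apply the inductive hypothesis. The coefficient of an intermediate class $[\cO_\mu]$ carries sign $(-1)^{|\mu| - \sum_{i \geq 2}|\lambda_i|}$; assuming the pair result, the coefficient of $[\cO_\nu]$ in $[\cO_{\lambda_1}] \cdot [\cO_\mu]$ carries sign $(-1)^{|\nu| - |\lambda_1| - |\mu|}$; and these multiply to $(-1)^{|\nu| - \sum_i |\lambda_i|}$, independent of $\mu$. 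So the coefficient of $[\cO_\nu]$ in the full $r$-fold product is a sum of nonnegative integers, each weighted by this common sign.

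For the base case $r=2$, I would try to establish an explicit combinatorial formula of the shape
$$[\cO_\lambda] \cdot [\cO_\mu] \ = \ \sum_\nu (-1)^{|\nu| - |\lambda| - |\mu|}\, N_{\lambda\mu}^\nu\, [\cO_\nu]$$
with each $N_{\lambda\mu}^\nu \in \ZZ_{\geq 0}$. A natural candidate is to take $N_{\lambda\mu}^\nu$ to be the number of ``set-valued'' Littlewood–Richardson fillings of the skew shape $\nu/\lambda$ with content $\mu$, where every box contains a \emph{nonempty set} of positive integers, a suitable ballot/reading-word condition is imposed, and the total excess (entries minus boxes) is forced to equal $|\nu| - |\lambda| - |\mu|$. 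Each tableau then contributes $(-1)^{\text{excess}}$; constancy of the excess across the set pins down the common sign.

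The main obstacle is proving that such a formula actually gives $[\cO_\lambda] \cdot [\cO_\mu]$. The strategy I would pursue is: first verify a K-theoretic Pieri rule (where $\mu$ is a horizontal or vertical strip) by direct computation, using a Koszul-type resolution of $\cO_{\Omega(\mu)}$ on $G(k,n)$; then show that the proposed set-valued count satisfies the matching Pieri-type recursion and is associative, forcing the general identity by induction on $|\mu|$. If the combinatorial verification of associativity proves unwieldy, a geometric backup is available: for opposite flags, the intersection $\Omega(\lambda,\sF) \cap \Omega(\mu,\sF')$ is a Richardson variety with rational singularities, so its structure-sheaf class computes $[\cO_\lambda] \cdot [\cO_\mu]$; resolving this variety and pushing forward in K-theory produces an alternating sum whose signs match $(-1)^{|\nu|-|\lambda|-|\mu|}$, at the cost of appealing to substantial geometric input (rational singularities of Richardson varieties and cohomology vanishing on the resolution).
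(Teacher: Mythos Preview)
The paper does not supply a proof: this theorem is quoted from Buch \cite{Bu02} and used as a black box, so there is no argument in the paper to compare your proposal against.

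Your outline nonetheless tracks the literature. The reduction from $r$ factors to two is formal and correct. For $r=2$, set-valued tableaux are indeed Buch's tool in \cite{Bu02}; his proof, however, does not run via the Pieri-plus-associativity bootstrap you suggest but by constructing a bialgebra on set-valued tableaux and identifying it with the ring of stable Grothendieck polynomials. Your geometric backup via rational singularities of Richardson varieties and cohomology vanishing on a resolution is essentially Brion's later argument, which establishes the alternating-sign statement for arbitrary $G/P$ without producing an explicit count. Either route is viable; the paper simply takes the result on faith.
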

\noindent We note that a Schubert variety for $\ybox^c$ is isomorphic to $\PP^1$; in particular, the Euler characteristic is $\chi(\cO_{\tinybox^c}) = 1$.
\begin{lemma} \label{lem:complementary-ktheory}
Suppose $|\mu| + |\lambda| = k(n-k) - 1$. Then $k_{\lambda \mu}^{\rect} = 0$ and $[\cO_\lambda] \cdot [\cO_\mu] = k_{\lambda \mu}^{\tinybox^c} [\cO_{\tinybox^c}].$
\end{lemma}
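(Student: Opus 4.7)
The lemma is a K-theoretic identity. By degree considerations, only two terms can appear in the Schubert-basis expansion:
\[
[\cO_\lambda]\cdot[\cO_\mu] = k_{\lambda\mu}^{\ybox^c}[\cO_{\ybox^c}] - k_{\lambda\mu}^\rect[\cO_\rect],
\]
since $\ybox^c$ and $\rect$ are the only partitions in $\rect$ of size $\geq |\lambda|+|\mu| = k(n-k)-1$. The leading coefficient $k_{\lambda\mu}^{\ybox^c} = c_{\lambda\mu}^{\ybox^c}$ is automatic from the equality of K-theoretic and cohomological structure constants at the cohomological degree, so all that remains is to show the K-theoretic correction $k_{\lambda\mu}^\rect$ vanishes.

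My plan is to first reduce the problem to the case where the cohomological coefficient equals $1$, by observing the combinatorial identity $c_{\lambda\mu}^{\ybox^c}\in\{0,1\}$. Using the stated identity $c_{\lambda\mu}^{\ybox^c} = c_{\lambda\mu\ybox}^\rect$ and associativity,
\[
c_{\lambda\mu\ybox}^\rect \;=\; \sum_\sigma c_{\mu\ybox}^\sigma\, c_{\lambda\sigma}^\rect.
\]
The Pieri rule makes $c_{\mu\ybox}^\sigma$ the indicator that $\sigma/\mu = \ybox$, and Poincar\'e duality on $G(k,n)$ makes $c_{\lambda\sigma}^\rect = \delta_{\sigma,\lambda^c}$. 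The size constraint $|\lambda^c|-|\mu|=1$ is automatic, so the sum collapses to the indicator $[\lambda^c/\mu = \ybox]$, proving the combinatorial claim. If this indicator is $0$, then the cohomological product $[\Omega(\lambda)]\cdot[\Omega(\mu)]$ is zero, which by Kleiman's transversality forces $\Omega(\lambda,\sF_1)\cap\Omega(\mu,\sF_2)=\eset$ for generic flags, and therefore $[\cO_\lambda]\cdot[\cO_\mu]=0$ in K-theory, giving $k_{\lambda\mu}^\rect = 0$.

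It remains to treat the case $c_{\lambda\mu}^{\ybox^c}=1$. By Brion's theorem -- Schubert varieties in Grassmannians are Cohen--Macaulay with rational singularities, and Kleiman's transversality applies to generic flags -- the scheme-theoretic intersection $S = \Omega(\lambda,\sF_1)\cap\Omega(\mu,\sF_2)$ is reduced and Cohen--Macaulay of pure dimension one, with $[\cO_S] = [\cO_\lambda]\cdot[\cO_\mu]$ in K-theory. The cohomology class of $S$ equals $[\Omega(\ybox^c)]$, the class of a line in the Pl\"ucker embedding. Since the Pl\"ucker-degree of $S$ is $1$ and $S$ is irreducible (any decomposition would have to split the degree), $S$ is an integral curve of Pl\"ucker-degree $1$, hence $S\cong\PP^1$ -- smooth and rational.

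Finally, taking Euler characteristics and using $\chi(\cO_\nu)=1$ for every Schubert variety in a Grassmannian (each is rational with rational singularities) gives $\chi(\cO_S) = c_{\lambda\mu}^{\ybox^c} - k_{\lambda\mu}^\rect = 1 - k_{\lambda\mu}^\rect$, while $\chi(\cO_{\PP^1}) = 1$; comparing forces $k_{\lambda\mu}^\rect = 0$. The main substantive step is the combinatorial observation that $c_{\lambda\mu}^{\ybox^c}\le 1$: without it, one would have to analyze possibly reducible or non-integral intersections, which is significantly more delicate. Once this reduction is in place, the rest of the argument is standard.
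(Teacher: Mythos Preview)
Your proof is correct and follows essentially the same route as the paper: write the product as $k_{\lambda\mu}^{\tinybox^c}[\cO_{\tinybox^c}] - k_{\lambda\mu}^{\rect}[\cO_{\rect}]$, split into the cases $\mu^c \not\supset \lambda$ (empty intersection) and $\mu^c \supset \lambda$ (degree-$1$ curve, hence $\PP^1$), and compare Euler characteristics. Your version is simply more explicit about the justifications (Kleiman, Brion) where the paper is terse.
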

\begin{proof}
We may write
\[[\cO_\lambda] \cdot [\cO_\mu] = k_{\lambda \mu}^{\tinybox^c} [\cO_{\tinybox^c}] - k_{\lambda \mu}^{\rect} [\cO_{\rect}], \text{ and so } \chi(\cO_S) = k_{\lambda \mu}^{\tinybox^c} - k_{\lambda \mu}^{\rect},\]
where $S$ is the corresponding intersection of Schubert varieties. There are two cases. If $\mu^c \not\supset \lambda$, $S$ is empty and both coefficients are zero. Otherwise, $S$ is a reduced curve, whose degree in the Pl\"{u}cker embedding is 1 because (by the Pieri rule) $k_{\lambda \mu}^{\tinybox^c} = 1$. Hence $S$ must be isomorphic to $\PP^1$ and have Euler characteristic 1.
\end{proof}

\begin{lemma}
Let $\alpha, \beta, \gamma$ be partitions such that $|\alpha| + |\beta| + |\gamma| = k(n-k) - 1$. Then
\[[\cO_\alpha] \cdot [\cO_\beta] \cdot [\cO_\gamma] = k_{\alpha \tinybox \beta \gamma}^{\rect} [\cO_{\tinybox^c}] - k_{\alpha \beta}^{\gamma^c} [\cO_{\rect}].\]
\end{lemma}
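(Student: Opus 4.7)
\medskip

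The plan is to identify the classes that can appear in the triple product by degree and sign considerations, and then compute the two possible coefficients separately. Since $|\alpha|+|\beta|+|\gamma| = k(n-k)-1$, Buch's sign theorem forces the expansion of $[\cO_\alpha]\cdot[\cO_\beta]\cdot[\cO_\gamma]$ to involve only classes $[\cO_\nu]$ with $|\nu|\geq k(n-k)-1$. The only two partitions of this size are $\nu=\ybox^c$ and $\nu=\rect$, with sign $+1$ and $-1$ respectively by the sign rule. So I first write
\[[\cO_\alpha]\cdot[\cO_\beta]\cdot[\cO_\gamma] \;=\; k_{\alpha\beta\gamma}^{\tinybox^c}\,[\cO_{\tinybox^c}] \;-\; k_{\alpha\beta\gamma}^{\rect}\,[\cO_{\rect}],\]
and it remains to identify the two coefficients with $k_{\alpha\,\tinybox\,\beta\,\gamma}^{\rect}$ and $k_{\alpha\beta}^{\gamma^c}$.

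For the $[\cO_{\tinybox^c}]$ coefficient, note that $|\ybox^c| = |\alpha|+|\beta|+|\gamma|$, so this is a ``leading term'' of the K-theory product and equals the cohomology Littlewood-Richardson number: $k_{\alpha\beta\gamma}^{\tinybox^c} = c_{\alpha\beta\gamma}^{\tinybox^c}$. Using the identity $c_{\lambda_1,\ldots,\lambda_r}^{\mu} = c_{\lambda_1,\ldots,\lambda_r,\mu^c}^{\rect}$ recalled in Section~\ref{sec:grass-schubert}, together with the commutativity of the LR product, this becomes $c_{\alpha,\tinybox,\beta,\gamma}^{\rect}$; and since $|\alpha|+1+|\beta|+|\gamma| = k(n-k)$, this cohomology number coincides with the K-theory coefficient $k_{\alpha\,\tinybox\,\beta\,\gamma}^{\rect}$ by the leading-term equality.

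For the $[\cO_{\rect}]$ coefficient, I would use associativity of K-theoretic multiplication: expanding $[\cO_\alpha]\cdot[\cO_\beta]$ and matching the coefficient of $[\cO_{\rect}]$ in the subsequent product with $[\cO_\gamma]$ gives the integer identity
\[k_{\alpha\beta\gamma}^{\rect} \;=\; \sum_{\mu} k_{\alpha\beta}^{\mu}\, k_{\mu\gamma}^{\rect}.\]
The two conditions $k_{\alpha\beta}^{\mu}\neq 0$ (forcing $|\mu|\geq |\alpha|+|\beta| = |\gamma^c|-1$) and $k_{\mu\gamma}^{\rect}\neq 0$ (forcing $|\mu|\leq |\gamma^c|$) leave only the two values $|\mu|\in\{|\gamma^c|-1,|\gamma^c|\}$. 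The key input here is Lemma~\ref{lem:complementary-ktheory}, which kills the $|\mu|=|\gamma^c|-1$ terms outright. The remaining $|\mu|=|\gamma^c|$ term forces $\mu=\gamma^c$ with $k_{\gamma^c,\gamma}^{\rect}=1$ (Poincar\'e duality via the Pieri-style identity in the same lemma), so the sum collapses to $k_{\alpha\beta\gamma}^{\rect} = k_{\alpha\beta}^{\gamma^c}$.

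I expect no serious obstacle: the content is really just sign alternation in K-theory plus one application of Lemma~\ref{lem:complementary-ktheory}. The only mildly delicate point is ensuring that the sign bookkeeping in the associativity step is consistent, i.e.\ that the identity $k_{\alpha\beta\gamma}^{\nu} = \sum_\mu k_{\alpha\beta}^\mu k_{\mu\gamma}^\nu$ is a genuine equality of nonnegative integers and not of signed quantities; this is a short calculation tracking the factors $(-1)^{|\mu|-|\alpha|-|\beta|}$ and $(-1)^{|\nu|-|\mu|-|\gamma|}$ whose product depends only on $|\nu|-|\alpha|-|\beta|-|\gamma|$, independent of the intermediate $\mu$.
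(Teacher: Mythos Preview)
Your proposal is correct and follows essentially the same approach as the paper: identify the two possible classes $[\cO_{\tinybox^c}]$ and $[\cO_{\rect}]$ by degree, compute the first coefficient via the leading-term equality with cohomology and the Pieri rule, and compute the second via associativity together with Lemma~\ref{lem:complementary-ktheory}. Your version is slightly more explicit about the sign bookkeeping in the associativity step, but otherwise the arguments are the same.
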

\begin{proof}
We note that $k_{\alpha \beta \gamma}^{\tinybox^c} = k_{\alpha \tinybox \beta \gamma}^{\rect}$ by the Pieri rule (in cohomology). For the coefficient of $[\cO_{\rect}]$, by definition, we have
\[k_{\alpha \beta \gamma}^{\rect} = \sum_\nu \pm k_{\alpha \beta}^\nu k_{\nu \gamma}^{\rect}.\]
If $|\nu| + |\gamma| = k(n-k) - 1$, then $k_{\nu \gamma}^{\rect} = 0$ by Lemma \ref{lem:complementary-ktheory}. But if $|\nu| + |\gamma| = k(n-k)$, we know from cohomology that $k_{\nu \gamma}^{\rect}$ is $1$ if $\nu = \gamma^c$ and $0$ otherwise.
\end{proof}

The coefficient $k_{\alpha \beta}^{\gamma^c}$ counts increasing tableaux of shape $\gamma^c/\alpha$ whose rectification, under K-theoretic jeu de taquin, is the highest-weight standard tableau of shape $\beta$ (see \cite{ThYo}). When $|\gamma^c / \alpha| = |\beta| + 1$, any such tableau is standard except for a single repeated entry.

\subsection{Schubert curves in K-theory}

Our key connection to K-theory comes from the following:

\begin{lemma}\label{lem:euler-char-integral}
Let $S$ be a smooth, integral projective curve, defined over $\RR$, and suppose $S(\CC) - S(\RR)$ is disconnected. Let $\eta(S)$ be the number of connected components of $S(\RR)$. Then $\eta(S) \equiv \chi(\cO_S)\ (\mathrm{mod}\ 2).$
\end{lemma}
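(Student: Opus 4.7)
The plan is to reduce the statement to Klein's classical dichotomy for real algebraic curves via a direct topological Euler-characteristic computation. First I would translate the K-theoretic quantity $\chi(\cO_S)$ into a topological invariant: since $S$ is a smooth integral projective curve of genus $g = \dim_\CC H^1(S,\cO_S)$, we have $\chi(\cO_S) = 1 - g$, while the Riemann surface $S(\CC)$ is a compact orientable surface with topological Euler characteristic $2-2g$. On the real side, $S(\RR)$ is a compact $1$-manifold without boundary (by smoothness), hence a disjoint union of $\eta = \eta(S)$ circles, with $\chi(S(\RR)) = 0$.

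Next I would exploit complex conjugation $\sigma : S(\CC) \to S(\CC)$, an orientation-reversing involution whose fixed locus is exactly $S(\RR)$. The hypothesis that $S(\CC) - S(\RR)$ is disconnected, together with the fact that $\sigma$ swaps the components on either side of $S(\RR)$ and that $S(\CC)$ is connected, forces $S(\CC) - S(\RR)$ to have exactly two connected components $U_+$ and $U_-$, interchanged by $\sigma$. The closures $\overline{U_\pm}$ are compact orientable surfaces with boundary $S(\RR)$, and $\sigma$ restricts to a homeomorphism $\overline{U_+} \cong \overline{U_-}$, so in particular $\chi(\overline{U_+}) = \chi(\overline{U_-})$.

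Finally I would apply Mayer--Vietoris to the decomposition $S(\CC) = \overline{U_+} \cup \overline{U_-}$ with intersection $S(\RR)$:
\[
2 - 2g \;=\; \chi(S(\CC)) \;=\; \chi(\overline{U_+}) + \chi(\overline{U_-}) - \chi(S(\RR)) \;=\; 2\,\chi(\overline{U_+}).
\]
Hence $\chi(\overline{U_+}) = 1 - g$. On the other hand, $\overline{U_+}$ is a compact orientable surface of some genus $h \geq 0$ with $\eta$ boundary circles, so $\chi(\overline{U_+}) = 2 - 2h - \eta$. Equating gives $\eta = 1 + g - 2h$, so $\eta \equiv g + 1 \equiv 1 - g \equiv \chi(\cO_S) \pmod{2}$, as desired.

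I do not expect any serious obstacles: the only subtle point is verifying that $S(\CC) - S(\RR)$ has \emph{exactly} two components (not more) and that the two halves are swapped by $\sigma$. Both follow immediately from connectedness of $S(\CC)$ plus the fact that $\sigma$ is a continuous involution fixing $S(\RR)$ pointwise and acting freely on the complement.
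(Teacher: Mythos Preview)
Your argument is correct and is the standard topological proof of this classical fact about dividing real curves. The paper itself does not give a proof: it simply states ``This is well-known (see, for example, \cite{GrHa})'' and moves on. So there is nothing to compare against except to note that what you have written is essentially the argument one finds in the cited reference (Klein's type-I/type-II dichotomy combined with an Euler-characteristic count on the quotient half).

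One small point worth tightening in your final paragraph: when you say the two-component claim ``follows immediately'' from $\sigma$ acting freely on the complement, you are implicitly using that $\sigma$ cannot fix any \emph{component} (not just any point) of $S(\CC)-S(\RR)$, which relies on the local picture near $S(\RR)$ where $\sigma$ swaps the two sides. Once you know $\sigma$ exchanges components in pairs, the observation that $\overline{U}\cup\overline{\sigma(U)}$ is both open and closed in the connected surface $S(\CC)$ finishes the job. This is exactly what you have in mind, but making it explicit would remove any doubt.
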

\begin{proof}
This is well-known (see, for example, \cite{GrHa}).
\end{proof}
Our curves may not be smooth or integral, but the identity holds nonetheless.
\begin{lemma}\label{lem:recall-euler-char}
Let $S = S(\lambda_\bullet,p_\bullet)$ be the Schubert curve, with $p_i \in \RR\PP^1$ for each $i$. Let $\eta(S)$ be the number of connected components of $S(\RR)$. Then $\eta(S) \equiv \chi(\cO_S)\ (\mathrm{mod}\ 2).$
\end{lemma}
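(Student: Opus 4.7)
The plan is to reduce to the smooth, integral case (Lemma \ref{lem:euler-char-integral}) by passing to the normalization, and then track the discrepancies modulo $2$. Write $S = S_1 \cup \cdots \cup S_m$ for the decomposition of $S$ into irreducible components, and let $\pi : \widetilde{S} = \bigsqcup_i \widetilde{S_i} \to S$ be the normalization. The paper has already noted that $S$ is reduced, and Corollary \ref{cor:normalization-disconnected} ensures that each $\widetilde{S_i}(\RR)$ is nonempty, which forces each $S_i$ (and hence each $\widetilde{S_i}$) to be defined over $\RR$; it also supplies the hypothesis that $\widetilde{S_i}(\CC) - \widetilde{S_i}(\RR)$ is disconnected. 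Thus each $\widetilde{S_i}$ satisfies the hypotheses of Lemma \ref{lem:euler-char-integral}.

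The first key step is to observe that $S$ has no real singular points. By Theorem \ref{thm:main-geom}, the map $S \to C$ is \'{e}tale over $C(\RR)$, so $S$ is smooth at every real point. This has two topological consequences: distinct irreducible components of $S$ cannot meet at real points, and no single component can self-intersect at a real point. Hence $\pi$ is a bijection over $S(\RR)$, and the real connected components of $S$ are exactly the disjoint union of the real connected components of the $\widetilde{S_i}$, giving
\[\eta(S) \;=\; \sum_{i=1}^m \eta(\widetilde{S_i}) \;\equiv\; \sum_{i=1}^m \chi(\cO_{\widetilde{S_i}}) \;=\; \chi(\cO_{\widetilde{S}}) \pmod{2},\]
where the congruence uses Lemma \ref{lem:euler-char-integral} applied to each $\widetilde{S_i}$.

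Next I would convert $\chi(\cO_{\widetilde{S}})$ to $\chi(\cO_S)$ using the standard short exact sequence $0 \to \cO_S \to \pi_* \cO_{\widetilde{S}} \to \cQ \to 0$, where $\cQ$ is the skyscraper sheaf of $\delta$-invariants supported on the singular locus of $S$. This yields
\[\chi(\cO_{\widetilde{S}}) - \chi(\cO_S) \;=\; \sum_{p \in \mathrm{Sing}(S)} \delta_p.\]
Because $S$ has no real singularities, every $p$ in the sum is non-real, and the singularities come in complex conjugate pairs $(p,\bar{p})$ with $\delta_p = \delta_{\bar{p}}$. Therefore the right-hand sum is even, so $\chi(\cO_S) \equiv \chi(\cO_{\widetilde{S}}) \pmod{2}$; combining with the congruence above gives $\eta(S) \equiv \chi(\cO_S) \pmod{2}$.

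The principal obstacle is really just the first step: verifying that $S$ has no real singularities. Once that is in place --- and it follows directly from \'{e}taleness over $C(\RR)$ in Theorem \ref{thm:main-geom} --- the remainder of the argument is essentially formal bookkeeping of parities. Notably, one does not need to understand or classify the complex singularities of $S$ at all, since Galois symmetry handles their $\delta$-invariant contributions in pairs.
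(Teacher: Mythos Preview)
Your proof is correct and follows essentially the same route as the paper: normalize, use smoothness of the real locus (the paper cites Corollary \ref{cor:as-real-as-poss} rather than Theorem \ref{thm:main-geom}, but the content is identical) to pair up complex singularities and conclude $\chi(\cO_S)\equiv\chi(\cO_{\widetilde{S}})\pmod 2$, then apply Lemma \ref{lem:euler-char-integral} and Corollary \ref{cor:normalization-disconnected} componentwise. You are slightly more explicit than the paper in justifying $\eta(S)=\sum_i\eta(\widetilde{S_i})$ via the bijectivity of $\pi$ over $S(\RR)$, which is a nice touch.
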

\begin{proof}
Let $S$ have irreducible components $S_i$, and let $\widetilde{S} = \bigsqcup \widetilde{S_i}$, where $\widetilde{S_i} \to S_i$ is the normalization. We have a birational morphism $\pi: \widetilde{S} \to S$, and an exact sequence
\[0 \to \cO_S \to \pi_* \cO_{\widetilde{S}} \to \cF \to 0,\]
with cokernel supported at the singular points of $S$. By Theorem \ref{cor:as-real-as-poss}, $S$ has smooth real points. The singularities of $S$ therefore occur in (isomorphic) complex conjugate pairs, so $\chi(\cF) = \dim_\CC H^0(\cF)$ is even and $\chi(\cO_S) \equiv \chi(\cO_{\widetilde{S}})\ \mathrm{mod}\ 2$. By Corollary \ref{cor:normalization-disconnected}, each $\widetilde{S_i}$ is disconnected by its real points, so our conclusion follows by summing over the $\widetilde{S_i}$.
\end{proof}
We also have the following inequality:
\begin{lemma}
With notation as above, let $\iota(S)$ be the number of irreducible components of $S$. Then $\chi(\cO_S) \leq \iota(S) \leq \eta(S)$.
\end{lemma}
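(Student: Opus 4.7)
The plan is to handle the two inequalities separately: the first by a standard normalization computation, the second via the \'{e}tale covering structure of Theorem \ref{thm:main-geom}.

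For $\chi(\cO_S) \leq \iota(S)$, I would take the normalization $\pi : \widetilde{S} = \bigsqcup_i \widetilde{S_i} \to S$ (well-defined as $S$ is reduced), which sits in a short exact sequence
\[0 \to \cO_S \to \pi_*\cO_{\widetilde{S}} \to \cF \to 0\]
with $\cF$ supported on the zero-dimensional non-normal locus of $S$. Taking Euler characteristics gives $\chi(\cO_S) = \sum_i(1 - g(\widetilde{S_i})) - h^0(\cF)$, and since every $g(\widetilde{S_i}) \geq 0$ and $h^0(\cF) \geq 0$, the inequality follows immediately.

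For $\iota(S) \leq \eta(S)$, the key input is that with $p_i \in \RR\PP^1$ the base $C = \PP^1$ has no real nodes, so Theorem \ref{thm:main-geom} makes $S(\RR) \to \RR\PP^1$ a covering of smooth $1$-manifolds; in particular $S(\RR)$ is itself a smooth manifold, and $S$ has no real singular points. The next step is to argue every irreducible component $S_i$ is defined over $\RR$: by Corollary \ref{cor:normalization-disconnected}, $S_i(\RR) \neq \eset$, but if $S_i$ were paired with a distinct conjugate component $\overline{S_i}$, then $S_i(\RR)$ would lie in the finite set $S_i \cap \overline{S_i}$ of singular points of $S$, and these points would be real, contradicting smoothness of $S(\RR)$. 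The same argument shows $S_i(\RR) \cap S_j(\RR) = \eset$ for $i \neq j$, so $S(\RR) = \bigsqcup_i S_i(\RR)$ as a disjoint union of nonempty closed $1$-manifolds, and each summand contributes at least one connected component to $\eta(S)$.

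The only mildly delicate step is this reality check, verifying that the irreducible components over $\CC$ are individually defined over $\RR$ and have pairwise disjoint real loci; once that is in hand, both inequalities reduce to formal consequences of the previous corollaries.
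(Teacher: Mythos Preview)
Your argument is correct. For the second inequality $\iota(S) \leq \eta(S)$, your approach is essentially identical to the paper's: every irreducible component meets $S(\RR)$ (Corollary \ref{cor:normalization-disconnected}), and since $S(\RR)$ is smooth, no real point can lie on two distinct components, so the $S_i(\RR)$ partition $S(\RR)$ into nonempty closed (hence clopen) pieces. Your extra step checking that each $S_i$ is defined over $\RR$ is not strictly needed for the counting, but it is correct and harmless.

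For the first inequality $\chi(\cO_S) \leq \iota(S)$, the paper takes a shorter route: since $S$ is reduced, $h^0(\cO_S)$ equals the number of connected components of $S(\CC)$, and trivially $\chi(\cO_S) \leq h^0(\cO_S) \leq \iota(S)$. Your normalization argument is also valid and gives the same bound; it is just slightly more machinery than required.
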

\begin{proof}
Since $S$ is reduced, $h^0 = \dim_\CC H^0(\cO_S)$ is the number of connected components of $S(\CC)$. We have $\chi(\cO_S) \leq h^0$. We have shown (Corollary \ref{cor:normalization-disconnected}) that every irreducible component of $S$ contains a real point, and $S(\RR)$ is smooth, so $h^0 \leq \iota(S) \leq \eta(S)$.
\end{proof}

For the remainder of this section, we specialize to the case of three partitions $\alpha, \beta, \gamma$ whose sizes sum to $k(n-k)-1$. By Corollary \ref{cor:connected-components-desing}, the connected components of $S(\RR)$ are in bijection with the orbits of $\omega = \esh_2 \circ \sh_2$, where
\[\esh_2, \sh_2 : X_\eset^{\rect}(\alpha, \ybox, \beta, \gamma) \to X_\eset^{\rect}(\alpha, \beta, \ybox, \gamma)\]
are the shuffle and evacuation-shuffle on chains of dual equivalence classes. Note that the cardinality of $X_\eset^{\rect}(\alpha, \ybox, \beta, \gamma)$ is $k_{\alpha \tinybox \beta \gamma}^{\rect}$. We have proven the following combinatorial facts:
\begin{cor} \label{cor:parity-eqn} We have
\begin{equation} \label{eqn:recall-parity-eqn}
\#\mathrm{orbits}(\omega) \equiv k_{\alpha \tinybox \beta \gamma}^{\rect} - k_{\alpha \beta}^{\gamma^c} \ (\mathrm{mod}\ 2) \ \ \text{ and }\ \  \mathrm{sign}(\omega) \equiv k_{\alpha \beta}^{\gamma^c}\ (\mathrm{mod}\ 2),
\end{equation}
where $\mathrm{sign}(\omega) = 0$ or $1$, and the inequality
\begin{equation} \label{eqn:ktheory-inequality}
k_{\alpha \tinybox \beta \gamma}^{\rect} \leq \#\mathrm{orbits}(\omega) + k_{\alpha \beta}^{\gamma^c}.
\end{equation}
\end{cor}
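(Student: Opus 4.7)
My plan is to assemble Corollary \ref{cor:parity-eqn} from three ingredients already in hand: the monodromy description of real connected components from Corollary \ref{cor:connected-components-desing}/Corollary \ref{cor:promotion}, the K-theoretic computation of $\chi(\cO_S)$ from the lemma immediately preceding the statement, and the parity and inequality bounds $\eta(S) \equiv \chi(\cO_S) \pmod 2$ and $\chi(\cO_S) \leq \iota(S) \leq \eta(S)$.

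Let $S = S(\alpha, \beta, \gamma; p_\bullet)$ for real points $p_1, p_2, p_3 \in \RR\PP^1$, and write $\eta(S)$ for the number of components of $S(\RR)$. I would first identify $\#\mathrm{orbits}(\omega)$ with $\eta(S)$. Specializing Corollary \ref{cor:promotion} to $r = 3$ gives the monodromy operator
\[\omega_X \;=\; \iota^{-1} \circ \esh_1 \circ \esh_2 \circ \sh_2 \circ \sh_1 \circ \iota\]
on $X_\tinybox^\rect(\alpha, \beta, \gamma)$. By the remark in Section \ref{sec:shuffling-ops} that $\esh_1 = \sh_1$ whenever $\ybox$ occupies one of the outermost slots being swapped, the inner composition $\esh_1 \circ \esh_2 \circ \sh_2 \circ \sh_1$ on $X_\eset^\rect(\ybox, \alpha, \beta, \gamma)$ equals $\sh_1 \circ (\esh_2 \circ \sh_2) \circ \sh_1$, a conjugate of $\omega = \esh_2 \circ \sh_2$ on $X_\eset^\rect(\alpha, \ybox, \beta, \gamma)$. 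Hence $\omega_X$ and $\omega$ have the same cycle structure, so $\#\mathrm{orbits}(\omega) = \eta(S)$; note also that $|X_\eset^\rect(\alpha, \ybox, \beta, \gamma)| = k_{\alpha \tinybox \beta \gamma}^\rect$ by the Littlewood--Richardson count.

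Next I would compute $\chi(\cO_S)$ via K-theory. The lemma just before the statement gives
\[[\cO_\alpha] \cdot [\cO_\beta] \cdot [\cO_\gamma] \;=\; k_{\alpha \tinybox \beta \gamma}^\rect [\cO_{\tinybox^c}] - k_{\alpha \beta}^{\gamma^c} [\cO_\rect],\]
and since $\cO_{\tinybox^c}$ is the structure sheaf of a $\PP^1$ while $\cO_\rect$ is that of a point, both Euler characteristics equal $1$, giving $\chi(\cO_S) = k_{\alpha \tinybox \beta \gamma}^\rect - k_{\alpha \beta}^{\gamma^c}$.

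With these two pieces in place the three assertions fall out. The mod-$2$ orbit identity is just $\eta(S) \equiv \chi(\cO_S) \pmod 2$, which is Lemma \ref{lem:recall-euler-char}. The sign formula follows from the elementary fact that $\mathrm{sign}(\omega) \equiv N - \#\mathrm{orbits}(\omega) \pmod 2$ for a permutation of a set of size $N$; taking $N = k_{\alpha \tinybox \beta \gamma}^\rect$ and substituting the first identity gives $\mathrm{sign}(\omega) \equiv k_{\alpha \beta}^{\gamma^c} \pmod 2$. Finally, the inequality $k_{\alpha \tinybox \beta \gamma}^\rect \leq \#\mathrm{orbits}(\omega) + k_{\alpha \beta}^{\gamma^c}$ is a rearrangement of $\chi(\cO_S) \leq \eta(S)$, which comes from the chain $\chi(\cO_S) \leq \iota(S) \leq \eta(S)$ proved in the lemma just above the corollary. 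I do not anticipate a real obstacle here; the only step demanding a little care is the reduction in the first paragraph, where the collapse $\esh_1 = \sh_1$ in the outer position is what lets me rewrite the four-letter monodromy operator as a conjugate of $\esh_2 \circ \sh_2$.
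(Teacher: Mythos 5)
Your proposal is correct and follows essentially the same route as the paper, which assembles the corollary from exactly these pieces: the identification of $\#\mathrm{orbits}(\omega)$ with $\eta(S)$ via the monodromy description (the paper invokes Corollary \ref{cor:connected-components-desing} directly, while you spell out the conjugation $\esh_1\esh_2\sh_2\sh_1 = \sh_1(\esh_2\sh_2)\sh_1$ using $\esh_1=\sh_1$, which is the intended justification), the K-theoretic computation $\chi(\cO_S)=k_{\alpha\tinybox\beta\gamma}^{\rect}-k_{\alpha\beta}^{\gamma^c}$, the parity statement of Lemma \ref{lem:recall-euler-char}, the standard fact $\mathrm{sign}(\omega)\equiv N-\#\mathrm{orbits}(\omega)$, and the inequality $\chi(\cO_S)\leq\eta(S)$. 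No gaps.
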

We note that if $k_{\alpha \beta}^{\gamma^c} = 0$, then $\omega$ is the identity permutation. In this case $[\cO_S] = k_{\alpha \tinybox \beta \gamma}^{\rect} \cdot [\cO_{\tinybox^c}]$ in K-theory, and it is easy to see that $S$ must then be a disjoint union of $k_{\alpha \tinybox \beta \gamma}^{\rect}$ copies of $\PP^1$.

\begin{exa}[A disconnected Schubert curve] \label{exa:disconnected}
Let $\alpha = \beta = \gamma = {\tiny \yng(3,1,1)}$, and let $S = S(\alpha, \beta, \gamma; p_\bullet) \subseteq G(4,8)$. Then $k_{\alpha \beta}^{\gamma^c} = 0$ and $k_{\alpha \beta \tinybox \gamma}^{\rect} = 2$, so $S \cong \PP^1 \sqcup \PP^1$.
\end{exa}

On the other hand, there are examples where $S$ is integral and $\eta(S) < g(S) + 1$:

\begin{exa}[A Schubert curve with fewer than $g+1$ components]\label{exa:large-k}
Let $S = S(\alpha, \beta, \gamma; p_\bullet) \subseteq G(4,9)$, with
\[\alpha = \gamma = {\tiny \yng(3,2,1)} \text{ and } \beta = {\tiny \yng(4,2,1)}.\]
Then $\eta(S) = 1$, $k_{\alpha \beta \tinybox \gamma}^{\rect} = 12$ and $k_{\alpha \beta}^{\gamma^c} = 13$, so $S$ is integral with arithmetic genus $2$. A computation in coordinates shows that $S$ is smooth.
\end{exa}

We do not know a combinatorial explanation in general for equations \eqref{eqn:recall-parity-eqn} or \eqref{eqn:ktheory-inequality} or their analogs for products of more than three partitions. Below, we prove equation \eqref{eqn:recall-parity-eqn} in the case where $\beta$ is a horizontal or vertical strip (the `Pieri case') and $\alpha, \gamma$ are arbitrary. By the associativity of Littlewood-Richardson numbers, this gives an independent combinatorial proof of the analog of equation \eqref{eqn:recall-parity-eqn} for arbitrary products of horizontal and vertical strips.

\begin{rmk}
In the Pieri case, \eqref{eqn:ktheory-inequality} is actually an equality, and \eqref{eqn:recall-parity-eqn} holds over $\ZZ$. Example \ref{exa:large-k} shows that this is not the case in general.
\end{rmk}

We also give a simple proof of the parity identity for the product of $k(n-k)-1$ copies of $\ybox$ (the `promotion case').

\subsection{The Pieri Case}
Let $\beta$ be a horizontal strip of length $d$ and $\alpha, \gamma$ be arbitrary. (The proof for vertical strips is entirely analogous.) Assume $c_{\alpha \tinybox \beta \gamma}^{\rect} \ne 0$. There are two cases to consider:

\emph{Case 1}. Suppose $\gamma^c/\alpha$ is not a horizontal strip. Then $\gamma^c/\alpha$ must contain a single vertical domino ${\tiny \yng(1,1)}$, but be a horizontal strip otherwise. Then $X_{\eset}^{\rect}(\alpha, \ybox, \beta, \gamma)$ has only one element, since the $\ybox$ must go in the top box of the domino, and  $k_{\alpha \beta}^{\gamma^c} = 0$. \\

\emph{Case 2}. Suppose $\gamma^c/\alpha$ is a horizontal strip of $d+1$ boxes; let $r$ be the number of nonempty rows of the skew shape $\gamma^c/\alpha$. Then there is a natural ordering of the chains
\[X_{\eset}^{\rect}(\alpha, \ybox, \beta, \gamma) = \{{\bf D}_1, \ldots, {\bf D}_r\},\]
where ${\bf D}_i$ is the chain where the $\ybox$ is at the start of the $i$-th lowest row of $\gamma^c/\alpha$. (The other dual equivalence classes are all determined by this choice.)

\begin{thm} \label{thm:pieri-case}
Let $\omega = \esh_2 \circ \sh_2$. Then $\omega({\bf D}_i) = {\bf D}_{i+1 \ \mathrm{mod}\ r}.$
\end{thm}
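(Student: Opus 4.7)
My plan is to track the position of the moving $\ybox$ through the composition $\omega = \esh_2 \circ \sh_2$. In the Pieri case, each chain ${\bf D}_i \in X_\eset^{\rect}(\alpha, \ybox, \beta, \gamma)$ is determined by a single datum -- the position of the $\ybox$ -- because on a horizontal-strip skew shape there is a unique dual equivalence class rectifying to a single row (as $c_{\mu,(d)}^{\nu} \le 1$ whenever $\nu/\mu$ is a horizontal strip). I aim to show two explicit moves: first, that $\sh_2$ sends the $\ybox$ from the leftmost box of the $i$-th lowest row of $\gamma^c/\alpha$ to the rightmost box of the same row; second, that $\esh_2$ sends the $\ybox$ from the rightmost box of row $i$ to the leftmost box of row $i+1$ (cyclically, so from the top row we wrap back to the bottom row). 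Composing these gives $\omega({\bf D}_i) = {\bf D}_{i+1 \bmod r}$.

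For the first move, I would fix a canonical standard tableau representative of $D_\beta$ by filling the $d$ boxes of $\gamma^c/\alpha_i^+$ row-by-row from the bottom row up, left-to-right within each row; a direct JDT slide computation (sliding into the unique inner corner at each step) shows this representative rectifies to the single row $(d)$. The shuffle $\sh_2$ is then encoded by a single $d \times 1$ growth diagram whose top edge records this representative and whose left edge is the one-box $D_\ybox$ at $\alpha \to \alpha_i^+$. Applying the recurrence cell-by-cell, and using that the horizontal-strip geometry forces each added box of $D_\beta$ to occupy a column distinct from the $\ybox$-column (so every square has non-adjacent added boxes except those within a common row of $\gamma^c$), I would show by induction along the row that the bottom edge adds the $d$ boxes in the same order as the top edge except that the $\ybox$ migrates from the left edge to the right edge. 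The resulting intermediate is $\tau = \gamma^c \setminus \{(r_i, c_i^R)\}$, where $(r_i, c_i^R)$ is the rightmost box of the $i$-th lowest row, so $\sh_2({\bf D}_i)$ places $\ybox$ at that position.

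For the second move, I would use Lemma~\ref{upper-shuffle} to reduce $\esh_2$ to $\sh_1 \sh_2 \sh_1 \sh_2 \sh_1$ on the localized 3-chain $(D_\alpha, D_{\beta'}, D_{\ybox'}) \in X_\eset^{\gamma^c}(\alpha, \beta, \ybox)$. Rather than performing the five shuffles in sequence, I would package them into a single dual equivalence cylindrical growth diagram built from this 3-chain and extract the evacuation-shuffled chain by reading off a vertical-then-horizontal path, invoking the rotational symmetry of Lemma~\ref{DE-cylindrical-recurrence}(ii) to identify the new $\ybox$ position. Because $\gamma^c/\alpha$ is a horizontal strip, the possible positions of a single added box along any monotone path in the diagram are severely restricted -- they correspond exactly to the $r$ row-leftmost and row-rightmost positions -- and the cylindrical symmetry cycles through the leftmost positions in order.

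The main obstacle is the $\esh_2$ computation: while $\sh_2$ is a single growth-diagram recurrence, $\esh_2$ is a composition of five shuffles, and the wraparound from row $r$ back to row $1$ is only visible through the rotational symmetry. Verifying that the cylindrical symmetry produces exactly the claimed cyclic shift will require exploiting the separated-column geometry of horizontal strips, i.e.\ that the rows of $\gamma^c/\alpha$ lie in pairwise disjoint column ranges with each higher row lying strictly to the right of the next lower row. This rigidity pins down the image of the $\ybox$ to the unique leftmost-of-next-row position compatible with the partition growth, completing the proof.
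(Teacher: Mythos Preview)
Your analysis of $\sh_2$ is correct and matches the paper: in a horizontal strip the box directly below the $\ybox$ is never in the skew shape, so the forward slide can only move right, terminating at the rightmost box of the same row. The paper states this without ceremony.

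The gap is in your treatment of $\esh_2$. Your plan invokes Lemma~\ref{upper-shuffle} correctly to localize the computation, but then appeals to ``cylindrical symmetry cycles through the leftmost positions in order'' without any mechanism showing why. Lemma~\ref{DE-cylindrical-recurrence}(ii) relates a partition in the diagram to the \emph{complement} of another; it does not, by itself, produce a cyclic shift of row indices in $\gamma^c/\alpha$. Moreover, the localized $3$-chain lives in $X_\eset^{\gamma^c}(\alpha,\beta,\ybox)$, not in a chain terminating at $\rect$, so the cylindrical symmetry lemma does not even apply directly. You would still need to track the $\ybox$ through the five shuffles $\sh_1\sh_2\sh_1\sh_2\sh_1$, and for generic $i$ this is genuinely delicate: after rectifying the combined tableau $T$ (treating $\ybox$ as the largest entry), the $\ybox$ lands at the start of the \emph{second} row of the rectified shape, and un-rectifying it after the middle $\sh_1$ is where the subtlety lies.

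The paper avoids this direct computation entirely. It handles the single case $i=r$ explicitly (where the rectified tableau is a single row, so the argument is clean), and for $i<r$ proves only the \emph{inequality} $\omega({\bf D}_i)={\bf D}_j$ with $j\le i+1$. This is done by showing that the rectification path of the $\ybox$ never passes immediately south or east of the rectification path of any box in rows $i+2$ and above; hence those boxes return to their original positions upon un-rectifying, forcing the new $\ybox$ into row $i+1$ or lower. Since $\omega$ is a bijection and $\omega({\bf D}_r)={\bf D}_1$, the inequality forces $\omega$ to be the full cycle. This monotonicity-plus-bijectivity trick is the key idea you are missing; it sidesteps the hardest part of the direct computation.
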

\begin{proof}
We first show that $\omega({\bf D}_r) = {\bf D}_1$. Observe that $\sh_2({\bf D}_r)$ has the $\ybox$ at the end of the top row of $\gamma^c/\alpha$. We think of the filling of $\gamma^c/\alpha$ as a single skew tableau $T$, with $\ybox$ as its largest entry. Then $\sh_2\sh_1(\sh_2{\bf D}_r)$ rectifies $T$, and since the entries of $T$ strictly increase from left to right, the rectification is a horizontal strip of length $d+1$, with $\ybox$ at the end. Then $\sh_1$ slides the $\ybox$ to the beginning of the strip, so $\sh_1\sh_2$ must move the $\ybox$ to the leftmost space of $\gamma^c/\alpha$, i.e. the beginning of the lowest row. (See Figure \ref{fig:pieri-r1}.) Thus $\omega({\bf D}_r) = {\bf D}_1$.

Next, we show that, for all $i$, $\omega({\bf D}_i) = {\bf D}_j$ with $j \leq i+1$. Since we know $\omega({\bf D}_r) = {\bf D}_1$, this forces $\omega$ to be the desired permutation.

We may assume $i + 1 < r$. By definition, $\sh_2({\bf D}_i)$ has the $\ybox$ at the end of the $i$-th lowest row of $\gamma^c/\alpha$. Let $X \subset \gamma^c/\alpha$ be the subtableau consisting only of the entries in the $(i+2)$-th row and above. We analyze the rectification ${\bf R} = \sh_2\sh_1(\sh_2{\bf D}_i)$, using the highest-weight tableau $T$ of shape $\alpha$. Note that the $\ybox$ must end up as the first entry in the second row in ${\bf R}$, and that $\sh_1({\bf R})$ slides the $\ybox$ upwards. We claim the following: no square of the rectification path of the $\ybox$ is immediately south or east of any square on the rectification path of any box of $X$. So, when computing ${\bf D}_j := \sh_1\sh_2(\sh_1{\bf R})$, the rectified squares of $X$ must return to their original locations. It follows that $j \leq i+1$.

Let $a_j$ be the number of boxes in the $j$-th lowest row of $\gamma^c/\alpha$. To prove the claim, we observe the following: when we compute $\sh_1(\sh_2 {\bf D}_i)$, the squares in the $j$-th lowest row of $\gamma^c/\alpha$ first slide left until the leftmost is in column $a_1 + \cdots + a_{j-1}+1$, then directly upwards. In particular, the leftmost box of $X$ lands in column $c = a_1 + \cdots + (a_i - 1) + a_{i+1} + 1$. Similarly, when we compute $\sh_2\sh_1(\sh_2{\bf D}_i)$, the $\ybox$ slides left to column $c' = a_1 + \cdots + a_i$, then up to row 2, then left to column 1. (See Figure \ref{fig:pieri-i}.) For the first set of slides, $\ybox$ is at least two rows lower than any square of $X$; afterwards it is strictly left of any square of $X$, since $c' < c$.
\end{proof}

\begin{figure}[h]
\centering
\includegraphics[scale=0.7]{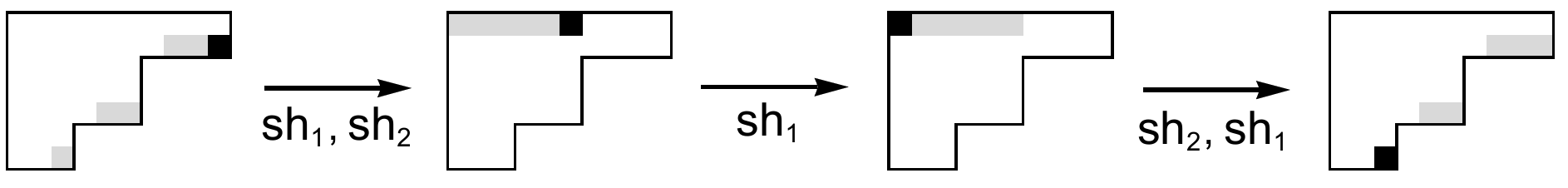}
\caption{Applying $\esh_2$ to the chain $\sh_2({\bf D_r})$. The result is ${\bf D}_1$.}
\label{fig:pieri-r1}
\end{figure}

\begin{figure}[h]
\centering
\includegraphics[scale=0.5]{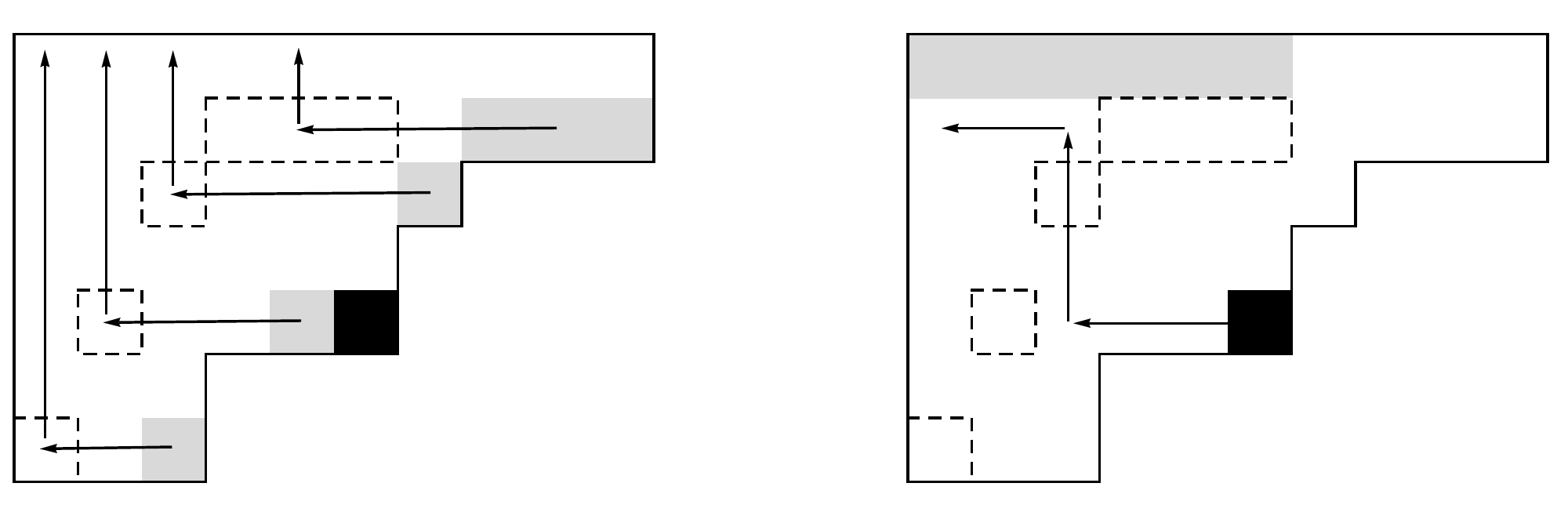}
\caption{Showing that $\omega(2) \leq 3$. The rectification path taken by the black box is never immediately south or east of the path taken by the highest strip.}
\label{fig:pieri-i}
\end{figure}

Finally, we recall the following description of $k_{\alpha \beta}^{\gamma^c}$:

\begin{prop}[\cite{ThYo}]
Let $\gamma^c/\alpha$ be a horizontal strip of size $d+1$ and $\beta = (d)$. Let $r$ be the number of nonempty rows in $\gamma^c / \alpha$. Then $k_{\alpha \beta}^{\gamma^c} = r-1$. The corresponding increasing skew tableaux are $K_{\alpha \beta}^{\gamma^c} = \{T_{12}, \ldots, T_{r,r-1}\}$, where the entries of $T_{i,i+1}$ are strictly increasing from left to right, except that the last entry of the $i$-th lowest row equals the first entry of the row above it.
\end{prop}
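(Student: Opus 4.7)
The plan is to combine Thomas and Yong's theorem with a direct analysis of K-jeu de taquin on horizontal strips. By Theorem~\ref{thm:thomas-yong-kjdt}, $k_{\alpha\beta}^{\gamma^c}$ equals the number of increasing tableaux of shape $\gamma^c/\alpha$ that K-rectify to the highest-weight increasing tableau of shape $\beta = (d)$, namely the single row whose $j$-th entry is $j$. Since $\gamma^c/\alpha$ is a horizontal strip, no two cells share a column, so the column-strict condition is vacuous and an increasing filling is just one whose rows are strictly increasing. Because all entries of a rectifiable tableau are bounded above by the largest entry of its rectification, the entries lie in $\{1,\ldots,d\}$, and with $d+1$ cells exactly one value must repeat; by row-strictness this repeat must span two distinct rows. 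In particular, if $r = 1$ no such tableau exists, matching $k_{\alpha\beta}^{\gamma^c} = r - 1 = 0$.

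Next I would characterize the admissible fillings by their reading words. Reading bottom-to-top and left-to-right yields a word $w$ of length $d+1$ in the alphabet $\{1,\ldots,d\}$. The claim is that $w$ comes from a tableau K-rectifying to $(1,2,\ldots,d)$ if and only if $w$ is weakly increasing of the form $1,2,\ldots,v,v,v+1,\ldots,d$ for some value $v$, with the sole repeat straddling the boundary between the $i$-th and $(i+1)$-th rows from the bottom. This forces each row to be a consecutive integer interval: rows $j < i$ occupy $[a_1+\cdots+a_{j-1}+1,\ a_1+\cdots+a_j]$, row $i$ ends at $v = a_1+\cdots+a_i$, row $i+1$ begins at the same $v$, and rows $j > i+1$ are shifted down by $1$. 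This is precisely $T_{i,i+1}$, and the $r-1$ choices of $i \in \{1,\ldots,r-1\}$ give the claimed count.

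The main obstacle is the forward direction of the reading-word characterization. One would verify directly, by induction on $r$, that each $T_{i,i+1}$ K-rectifies to the desired row: the duplicate $v$ is consumed by a single K-slide that merges two adjacent cells at an inner corner, after which the residual tableau has no repeats and rectifies as in classical JDT to a straight shape of size $d$. The converse---that no other row-strict filling K-rectifies to $(1,2,\ldots,d)$---follows from a careful inspection of the admissible K-slides: any other distribution of entries either produces a rectification of shape different from $(d)$, or fails at some point to preserve the weakly-increasing reading word, so cannot collapse to a single row. Alternatively, one may cite the K-theoretic Pieri rule of Lenart, which computes $k_{\alpha,(d)}^{\gamma^c}$ directly and reproduces the count $r-1$.
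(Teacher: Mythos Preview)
The paper does not prove this proposition: it is stated with the attribution \cite{ThYo} and no argument is given in the text. There is therefore no ``paper's own proof'' to compare your attempt against; the author is simply quoting a known consequence of the K-theoretic Pieri rule and Thomas--Yong's K-jeu de taquin.

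As for your outline itself: the setup is sound. On a horizontal strip the column condition is vacuous, and since K-jdt preserves the set of values (merges only identify equal entries), any increasing filling of the $d+1$ cells that K-rectifies to the row $(1,2,\ldots,d)$ must use each of $1,\ldots,d$ at least once, hence exactly one value twice, and row-strictness forces the repeat to lie in distinct rows. Your forward verification that each $T_{i,i+1}$ rectifies correctly is straightforward. The genuine gap is the converse: ``careful inspection of the admissible K-slides'' is not an argument, and the claim that any other row-strict filling fails to rectify to a single row is exactly the content that needs proof. (For instance, one must rule out fillings whose reading word is a nontrivial permutation of $1,\ldots,v,v,\ldots,d$, and it is not immediate from what you wrote why such a word cannot still K-rectify to a row.) Your suggested fallback --- invoking Lenart's K-theoretic Pieri rule to obtain the count $r-1$ directly, then checking that the $r-1$ tableaux $T_{i,i+1}$ all rectify correctly --- is the cleanest route and is essentially what the citation to \cite{ThYo} is doing.
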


\begin{exa} Let $\gamma^c/\alpha = {\tiny \young(::::\hfil\hfil,::\hfil\hfil,\hfil)}$ and let $\beta = {\tiny \yng(4)}$. The corresponding tableaux are
\[X_\eset^\rect(\alpha, \ybox, \beta, \gamma) = \{{\bf D}_1, {\bf D}_2, {\bf D}_3\} = \bigg\{{\tiny \young(::::34,::12,\hfil), \young(::::34,::\hfil2,1), \young(::::\hfil4,::23,1)} \bigg\},\]
\[K_{\alpha \beta}^{\gamma^c} = \{T_{12}, T_{23}\} = \bigg\{{\tiny \young(::::34,::12,1), \young(::::34,::23,1)} \bigg\}.\]
We think of the tableau $T_{i,i+1}$ in $K$-theory as corresponding to the equation $\omega({\bf D}_i) = {\bf D}_{i+1}$ (for $i < r$).
\end{exa}

\begin{cor}
If $\gamma^c/\alpha$ is a horizontal strip of size $d+1$ and $\beta = (d)$, then $\omega$ has one orbit on $X_{\eset}^{\rect}(\alpha, \ybox, \beta, \gamma)$. In particular, the corresponding Schubert curve is irreducible, hence isomorphic to $\PP^1$ (since $\chi(\cO_S) = k_{\alpha \beta \tinybox \gamma}^{\rect} - k_{\alpha \beta}^{\gamma^c} = r - (r-1) = 1.$)
\end{cor}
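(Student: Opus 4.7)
The proposal is essentially to read off the corollary from Theorem \ref{thm:pieri-case} combined with the Euler-characteristic machinery developed in Section \ref{sec:k-theory}. First, Theorem \ref{thm:pieri-case} says exactly that $\omega$ acts on $X_\eset^\rect(\alpha,\ybox,\beta,\gamma) = \{{\bf D}_1,\dots,{\bf D}_r\}$ as the cyclic permutation ${\bf D}_i \mapsto {\bf D}_{i+1 \bmod r}$. Such a permutation has a single orbit, which is the first assertion. By Corollary \ref{cor:connected-components-desing}, this means the real covering $S(\RR) \to C(\RR)$ has a single connected component, i.e.\ $\eta(S) = 1$.

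Next, I would compute $\chi(\cO_S)$ using the K-theoretic formulas of Section \ref{sec:k-theory}. Applying the formula
\[[\cO_\alpha]\cdot[\cO_\beta]\cdot[\cO_\gamma] = k_{\alpha\tinybox\beta\gamma}^\rect[\cO_{\tinybox^c}] - k_{\alpha\beta}^{\gamma^c}[\cO_\rect]\]
and taking Euler characteristics (recalling $\chi(\cO_{\tinybox^c}) = 1$ and $\chi(\cO_\rect) = 1$) gives
\[\chi(\cO_S) = k_{\alpha\tinybox\beta\gamma}^\rect - k_{\alpha\beta}^{\gamma^c}.\]
Under the hypothesis that $\gamma^c/\alpha$ is a horizontal strip of size $d+1$, the Littlewood--Richardson number $k_{\alpha\tinybox\beta\gamma}^\rect = c_{\alpha\tinybox\beta\gamma}^\rect$ counts exactly the $r$ chains ${\bf D}_i$, while the K-theoretic number $k_{\alpha\beta}^{\gamma^c}$ was computed (in the Thomas--Yong Pieri formula recalled just above) to equal $r-1$. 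Hence $\chi(\cO_S) = 1$.

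Finally, the irreducibility and smoothness follow from the chain of inequalities $\chi(\cO_S) \leq \iota(S) \leq \eta(S)$ established in the excerpt: here both ends equal $1$, forcing $\iota(S) = 1$, so $S$ is irreducible (and reduced, by Theorem \ref{thm:recall-Sp14}, hence integral). The normalization $\pi: \widetilde{S} \to S$ fits into $0 \to \cO_S \to \pi_*\cO_{\widetilde{S}} \to \cF \to 0$ with $\cF$ supported on the (non-real) singular locus; then $\chi(\cO_{\widetilde{S}}) = 1 + \dim H^0(\cF) \geq 1$ and $\widetilde{S}$ is a smooth projective curve, so $g(\widetilde{S}) = 0$, forcing $\widetilde{S} \cong \PP^1$ and $\cF = 0$. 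Thus $S = \widetilde{S} \cong \PP^1$. No step is really an obstacle here, since all of the work was done in Theorem \ref{thm:pieri-case} and the general Euler-characteristic discussion; the corollary is a short synthesis of these inputs.
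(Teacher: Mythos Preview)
Your proposal is correct and follows essentially the same approach as the paper: the one-orbit claim is immediate from Theorem \ref{thm:pieri-case}, irreducibility follows from the inequality $\chi(\cO_S) \leq \iota(S) \leq \eta(S)$ with both ends equal to $1$, and the identification with $\PP^1$ follows from $\chi(\cO_S)=1$. Your normalization argument in the last paragraph is a bit more explicit than the paper's (which simply invokes the fact that an integral projective curve of arithmetic genus $0$ is $\PP^1$), but the substance is the same.
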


\subsection{The promotion case} We consider the case of $N = k(n-k)-1$ copies of $\ybox$. In this case $\omega$ is given by tableau promotion on
$X_\eset^{\rect}(\ybox, \ldots, \ybox) = SYT(\rect)$, that is,
\[\omega = \sh_N \circ \cdots \circ \sh_1.\]
Note that, under the identification with $SYT(\rect)$, the evacuation-shuffles correspond to the identity map. We claim the following:
\begin{prop}
We have $\displaystyle{\mathrm{sign}(\omega) = \sum_i \mathrm{sign}(\sh_i) = k_{\underbrace{\tinybox, \ldots, \tinybox}_N}^{\rect}. \ (\mathrm{mod}\ 2).}$
\end{prop}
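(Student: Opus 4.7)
The first equality is immediate: the sign map is a group homomorphism, so $\mathrm{sign}(\omega) = \prod_i \mathrm{sign}(\sh_i)$, and identifying $\pm 1$ with $\{0,1\}$ under addition turns the product into the sum mod~2.

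For the second equality, the plan is to combine the geometric interpretation of orbits with a K-theoretic computation of $\chi(\cO_S)$. Let $S = S(\ybox, \ldots, \ybox\ ;\ p_\bullet) \subseteq G(k,n)$ with real points $p_1, \ldots, p_{N+1} \in \RR\PP^1$ in convex position, so that $\SLdot(\RR)\big|_{[C]}$ corresponds to the promotion orbits on $SYT(\rect)$. By Corollary~\ref{cor:promotion}, the number $\eta(S)$ of real connected components of $S$ equals $\#\mathrm{orbits}(\omega)$, and by Lemma~\ref{lem:recall-euler-char}, $\eta(S) \equiv \chi(\cO_S) \pmod{2}$.

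Next, I would compute $[\cO_S] = [\cO_{\ybox}]^N$ in $K(G(k,n))$. Because $\rect$ has a unique outer corner, the only $\nu \subseteq \rect$ with $|\nu| \geq N = k(n-k)-1$ are $\nu = \tinybox^c$ (with $|\nu|=N$) and $\nu = \rect$ (with $|\nu|=N+1$). So the expansion collapses to
\[[\cO_{\ybox}]^N = k^{\tinybox^c}_{\ybox, \ldots, \ybox}\,[\cO_{\tinybox^c}] - k^{\rect}_{\ybox, \ldots, \ybox}\,[\cO_{\rect}].\]
Since $\Omega(\tinybox^c) \cong \PP^1$ and $\Omega(\rect)$ is a point, both have $\chi(\cO) = 1$, giving $\chi(\cO_S) = k^{\tinybox^c}_{\ybox, \ldots, \ybox} - k^{\rect}_{\ybox, \ldots, \ybox}$. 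Moreover, $k^{\tinybox^c}_{\ybox, \ldots, \ybox} = c^{\tinybox^c}_{\ybox, \ldots, \ybox}$ is cohomological (leading degree), and removing the largest entry (necessarily in the unique inner corner of $\rect$) defines a bijection $SYT(\rect) \to SYT(\tinybox^c)$, so $c^{\tinybox^c}_{\ybox, \ldots, \ybox} = |SYT(\rect)|$.

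To close the argument, use the elementary identity $\mathrm{sign}(\omega) \equiv |X| - \#\mathrm{orbits}(\omega) \pmod 2$ for any permutation $\omega$ of a finite set $X$, applied to $X = SYT(\rect)$. Stitching everything together:
\[
\mathrm{sign}(\omega) \equiv |SYT(\rect)| - \eta(S) \equiv |SYT(\rect)| - \bigl(|SYT(\rect)| - k^{\rect}_{\ybox,\ldots,\ybox}\bigr) \equiv k^{\rect}_{\ybox,\ldots,\ybox} \pmod 2,
\]
as desired. I do not anticipate a serious obstacle: the only subtle point is verifying that the K-theoretic expansion of $[\cO_{\ybox}]^N$ has only the two terms above, which is where the rectangular shape of $\rect$ (having a unique outer corner) enters in an essential way. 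Everything else is a direct assembly of Lemma~\ref{lem:recall-euler-char}, Corollary~\ref{cor:promotion}, and standard permutation parity.
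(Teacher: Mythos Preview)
Your argument is correct, but it takes a genuinely different route from the paper. The paper gives a direct combinatorial bijection: writing $Y_i$ for the set of pairs $\{S,S'\}\subset SYT(\rect)$ exchanged by the Bender--Knuth involution $\sh_i$, and $X_i$ for the set of increasing tableaux of shape $\rect$ on $\{1,\ldots,N\}$ whose repeated entry is $i$, the paper produces an explicit bijection $X_i\leftrightarrow Y_i$ (replace the two $i$'s by $i,i{+}1$ in the two possible ways). This yields the \emph{integer} equality $\sum_i |Y_i| = k_{\ybox,\ldots,\ybox}^{\rect}$, from which the mod~$2$ statement follows immediately. In particular, the paper's proof is independent of the geometry and of Lemma~\ref{lem:recall-euler-char}, which is precisely the point of this subsection: the paper explicitly says it does not know a combinatorial explanation of \eqref{eqn:recall-parity-eqn} in general, and this proposition is offered as a special case where one exists.

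Your approach, by contrast, is essentially the specialization of Corollary~\ref{cor:parity-eqn} (or rather its multi-factor analog) to $\lambda_i=\ybox$ for all $i$: you feed Corollary~\ref{cor:promotion} and Lemma~\ref{lem:recall-euler-char} into the K-theory computation of $\chi(\cO_S)$ and extract the parity. This is logically sound, and the step you flag---that $[\cO_{\ybox}]^N$ has only the two terms $[\cO_{\ybox^c}]$ and $[\cO_{\rect}]$ because $\rect$ has a unique removable corner---is exactly the extra observation needed beyond the three-factor lemma already in the paper. What your proof buys is brevity; what it forfeits is independence from the geometry (and the stronger integer identity the paper's bijection gives).
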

\begin{proof}
Let $S \in SYT(\rect)$. Then $\sh_i$ is the $i$-th Bender-Knuth involution, so $\sh_i$ acts by swapping the $i$-th and $(i+1)$-th entries of $S$ if they are nonadjacent. Let $Y_i$ be the set of (unordered) pairs $\{S, S'\}$ of standard tableaux exchanged by $\sh_i$. Note that $\mathrm{sign}(\sh_i) = Y_i \ (\mathrm{mod}\ 2).$

In $K$-theory, it follows from the K-theoretic Pieri rule that $k_{(\tinybox^N)}^{\rect}$ is the number of increasing tableaux $T$ of shape $\rect$ with entries $1, \ldots, N$. In particular, any such $T$ has a single repeated entry $i$, which occurs exactly twice in nonadjacent boxes. Let $X_i$ be the set of tableaux for which the repeated entry is $i$. Given $T \in X_i$, let $T'$ be the tableau in which the $i$'s are replaced by $*$ and each entry $j > i$ is replaced by $j+1$:
\[T = \young(123,345) \ \  \longrightarrow \ \  T' = \young(12*,*56).\]
We may set either $*$ of $T'$ to be $i$ or $i+1$, and so obtain a pair of \emph{standard} tableaux $S, S'$, and it is clear that $\sh_i(S) = S'$. This gives a bijection $X_i \to Y_i$.
\end{proof}

%
%
\appendix

\section{Simple nodes}

We will need the following standard lemma on simple nodes:

\begin{lemma}[Universal property of simple nodes] \label{nodal-appendix} Let $X,Y$ be $S$-schemes and let $\sigma_X : S \to X,\ \sigma_Y : S \to Y$ be $S$-points. There exists a scheme $\displaystyle{Z = X \bigsqcup_{\sigma_X \sim \sigma_Y} Y}$ over $S$, unique up to unique isomorphism, called the nodal gluing of $X$ to $Y$ along $\sigma_X$ and $\sigma_Y$, with the following properties:
\begin{enumerate}
\item[(i)] There are closed embeddings $i_X : X \hookrightarrow Z$ and $i_Y : Y \hookrightarrow Z$ over $S$, such that (identifying $X$ and $Y$ with their images in $Z$) we have $X \cup Y = Z$ and the scheme-theoretic intersection $X \cap Y$ is $\sigma_X(S)$ in $X$ and $\sigma_Y(S)$ in $Y$.
\item[(ii)] $Z$ is the pushout of the diagram of inclusions
\[\xymatrix{
& X \ar@{-->}[dr]^{i_X} \\
S \ar^{\sigma_X}[ur] \ar_{\sigma_Y}[dr] & & Z \\
& Y \ar@{-->}[ur]_{i_Y}
}\]
of $S$-schemes. In particular, a morphism $Z \to Z'$ over $S$ is the same as a pair of morphisms $X \to Z', Y \to Z'$ over $S$ that agree along $\sigma_X$ and $\sigma_Y$.
\end{enumerate}
Moreover, the first property implies the second, hence also characterizes $Z$.
\end{lemma}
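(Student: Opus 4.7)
The plan is to construct $Z$ locally as a fiber product in commutative rings, glue to obtain a scheme satisfying (i), and then derive (ii) from (i) by a sheaf-theoretic argument. Throughout I assume (as the statement implicitly requires, by asking for closed immersions $i_X, i_Y$ with intersection $\sigma_X(S) = \sigma_Y(S)$) that the $S$-points $\sigma_X, \sigma_Y$ are closed immersions.

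I would first handle the affine case. Assume $S = \Spec R$, $X = \Spec A$, $Y = \Spec B$, with $\sigma_X, \sigma_Y$ corresponding to surjections $\alpha : A \onto R$ and $\beta : B \onto R$, and define
\[C = A \times_R B = \{(a,b) \in A \times B : \alpha(a) = \beta(b)\}, \qquad Z = \Spec C.\]
The projections $\pi_A : C \to A$ and $\pi_B : C \to B$ are surjective (since $\alpha, \beta$ are), so they induce closed immersions $i_X : X \into Z$ and $i_Y : Y \into Z$. A direct calculation shows $\ker(\pi_A) \cdot \ker(\pi_B) = 0$ (the product in $C$ of an element of the form $(0,b)$ with one of the form $(a,0)$ is $(0,0)$), hence $i_X(X) \cup i_Y(Y) = Z$ scheme-theoretically. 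Moreover $\ker(\pi_A) + \ker(\pi_B)$ is the ideal of pairs $(a,b)$ with $\alpha(a) = \beta(b) = 0$, whose quotient is $R$; thus the scheme-theoretic intersection $i_X(X) \cap i_Y(Y) \subset Z$ equals $\sigma_X(S) = \sigma_Y(S)$. This verifies (i) in the affine case.

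To globalize, for each $s \in S$ I would choose affine opens $U \subset X$ and $V \subset Y$ containing $\sigma_X(s), \sigma_Y(s)$, and shrink so that $\sigma_X^{-1}(U) = \sigma_Y^{-1}(V)$ is a common affine open $W \subset S$. Applying the affine construction to $(U, V, W)$ gives an affine nodal gluing $Z_{U,V,W}$. On an overlap with another such triple $(U', V', W')$, both gluings restrict to the nodal gluing of $(U \cap U', V \cap V', W \cap W')$ by the universal property of the ring fiber product (or simply by direct inspection of the construction). The standard gluing lemma for schemes then produces a single scheme $Z$ with closed immersions $i_X, i_Y$ satisfying (i).

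Finally, to show (i) implies (ii), suppose we are given a scheme $Z'$ over $S$ together with morphisms $f_X : X \to Z'$ and $f_Y : Y \to Z'$ with $f_X \circ \sigma_X = f_Y \circ \sigma_Y$. Since $Z = i_X(X) \cup i_Y(Y)$ as a topological space with intersection $\sigma_X(S)$, the agreement condition determines a continuous map $f : Z \to Z'$. To construct the morphism of sheaves, one works locally on an affine $W = \Spec C'$ of $Z'$ meeting $f(Z)$: pulling back to an affine patch $Z = \Spec(A \times_R B)$ as above, the maps $f_X, f_Y$ restrict to ring homomorphisms $C' \to A$ and $C' \to B$ that agree after post-composition with $\alpha, \beta$, so by the universal property of the ring fiber product they factor uniquely through $C' \to A \times_R B$. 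These local factorizations are unique, hence glue to give the required morphism $Z \to Z'$; uniqueness of this morphism, together with (i), characterizes $Z$ up to unique isomorphism.

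The main obstacle is purely bookkeeping: one must check that the affine patches used in the gluing step can be chosen so that the local ring fiber products are compatible on overlaps, and that the resulting glued object is genuinely a scheme. This is routine given that $\sigma_X, \sigma_Y$ are closed immersions, because then the intersection locus in each affine patch is controlled by a single ideal and the local pictures fit together without additional choices.
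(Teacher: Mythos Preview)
Your argument is correct, but the paper's is considerably more economical on both halves, and your proof of ``(i) $\Rightarrow$ (ii)'' has a small sleight of hand worth flagging.

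For existence, you build $Z$ by gluing affine ring fiber products $A\times_R B$. The paper avoids gluing entirely: it takes $Z$ to be the union of the two slices $X\times_S\sigma_Y(S)$ and $\sigma_X(S)\times_S Y$ inside the fiber product $X\times_S Y$. This gives (i) immediately and globally, with no patching. (Your gluing is fine, though note your affine cover as written only sees neighborhoods of the sections; you also need to throw in the opens of $X\smallsetminus\sigma_X(S)$ and $Y\smallsetminus\sigma_Y(S)$, where the construction is trivial.)

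For (i) $\Rightarrow$ (ii), you reduce to an affine patch and say ``$Z=\Spec(A\times_R B)$'', then invoke the universal property of the ring fiber product. But (i) does not hand you this description; it only gives you ideals $\sI_X,\sI_Y\subset\cO_Z$ with $\sI_X\cap\sI_Y=0$ and $\sI_X+\sI_Y=\sI_S$. You would still need to check that the resulting map $\cO_Z\to\cO_X\times_{\cO_S}\cO_Y$ is an isomorphism. The paper does exactly this, globally and sheaf-theoretically: from those two ideal conditions one gets the short exact sequence
\[0\to\cO_Z\to\cO_X\oplus\cO_Y\to\cO_S\to 0,\]
which identifies $\cO_Z$ with the fiber product of sheaves and immediately produces the unique factorization $\cO_{Z'}\to\cO_Z$ from any compatible pair $\cO_{Z'}\to\cO_X$, $\cO_{Z'}\to\cO_Y$. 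This is the same idea as yours, but it is phrased so that only the hypotheses in (i) are used, and no reduction to affines or re-gluing is required.
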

\begin{rmk}
We will only need this lemma in the case where $S = \Spec(k)$, a point, though the proof is the same either way. In this case (i) says that whenever $Z$ is the union of two subschemes $X,Y$ whose scheme-theoretic intersection is one reduced $k$-point, $Z$ is the nodal gluing of $X$ to $Y$.
\end{rmk}
\begin{proof}
To show that $Z$ exists, we may take $Z$ to be the union of the fibers $X \times_S \sigma_Y(S) \cong X$ and $\sigma_X(S) \times_S Y \cong Y$ in $X \times_S Y$. This clearly satisfies (i).

To show that (i) implies (ii), let $\sI_X,\sI_Y$ be the ideals of $X$ and $Y$ in $Z$. Let $\sigma : S \to Z$ be the induced $S$-point, and let $\sI_S$ be the ideal of $\sigma(S)$ in $Z$. We have the short exact sequence
\[0 \to \cO_Z /(\sI_X \cap \sI_Y) \to \cO_Z/\sI_X \oplus \cO_Z/\sI_Y \to \cO_Z/(\sI_X+\sI_Y) \to 0,\]
where the first map is the inclusion and the second is subtraction. From (i), $\sI_X + \sI_Y = \sI_S$ and $\sI_X \cap \sI_Y = 0$, so we in fact have
\[0 \to \cO_Z \to \cO_X \oplus \cO_Y \to \cO_Z/\sI_S \cong \cO_S \to 0.\]
Suppose we have morphisms $X \to Z', Y \to Z'$ over $S$ that agree along $\sigma_X$ and $\sigma_Y$.  We must glue these morphisms together. We have maps $\cO_{Z'} \to \cO_X$ and $\cO_{Z'} \to \cO_Y$, fitting in the diagram
\[\xymatrix{
\cO_{Z'} \ar@/^/[drr] \ar@/_/[ddr] \ar@{-->}[dr]\\
& \cO_Z \ar[r] \ar[d] & \cO_X \ar[d]^{\sigma_X^\#} \\
& \cO_Y \ar[r]_{\sigma_Y^\#} & \cO_S
}\]
and we wish to construct the map $\cO_{Z'} \to \cO_Z$. In particular, let $f \in \cO_{Z'}$ be a section with images $f|_X \in \cO_X, f|_Y \in \cO_Y$. Mapping to $\cO_S$, we have $\sigma_Y^\#(f|_Y) = \sigma_X^\#(f|_X)$, so by the short exact sequence, there exists a unique $g \in \cO_Z$ mapping to $f|_X$ and $f|_Y$. This gives the desired map.
\end{proof}
\begin{cor} \label{nodal-iso-appendix}
Let $X,Y,Z,S$ be as above, and let $f : A \to Z$ be a morphism of schemes over $S$ whose restrictions $f^{-1}(X) \to X$ and $f^{-1}(Y) \to Y$ are isomorphisms. Then $f$ is an isomorphism.
\end{cor}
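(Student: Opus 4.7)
The plan is to use the universal property of the nodal gluing $Z$ (Lemma \ref{nodal-appendix}(ii)) to construct an inverse to $f$. Set $A_X = f^{-1}(X)$ and $A_Y = f^{-1}(Y)$ as closed subschemes of $A$, and let $g_X \colon X \to A$ and $g_Y \colon Y \to A$ be the inverses of the given isomorphisms, composed with the inclusions $A_X, A_Y \hookrightarrow A$. The scheme-theoretic intersection $A_X \cap A_Y$ equals $f^{-1}(X \cap Y)$, which via $f|_{A_X}$ is identified with the reduced section $\sigma_X(S) \subset X$; in particular $f|_{A_X \cap A_Y}$ is an isomorphism onto $X \cap Y \cong S$, and so $g_X \circ \sigma_X$ and $g_Y \circ \sigma_Y$ coincide, both being the unique section inverse to it. Lemma \ref{nodal-appendix}(ii) therefore yields a unique $S$-morphism $g \colon Z \to A$ with $g \circ i_X = g_X$ and $g \circ i_Y = g_Y$.

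The uniqueness half of the same universal property immediately forces $f \circ g = \mathrm{id}_Z$, since both sides restrict to $i_X$ and $i_Y$ respectively. For the reverse composition $g \circ f$: by construction it restricts to $\mathrm{id}_{A_X}$ on $A_X$ and $\mathrm{id}_{A_Y}$ on $A_Y$. To conclude that $g \circ f = \mathrm{id}_A$, I plan to verify that $A$ itself satisfies property (i) of Lemma \ref{nodal-appendix}; the ``(i) implies (ii)'' portion of the lemma will then identify $A$ with the nodal gluing (so in particular morphisms out of $A$ are determined by their restrictions to $A_X$ and $A_Y$). The intersection condition $A_X \cap A_Y \cong S$ is already in hand; what remains is the scheme-theoretic union $A = A_X \cup A_Y$, i.e.\ the vanishing of $\sI_{A_X} \cap \sI_{A_Y}$ in $\cO_A$.

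This ideal-theoretic identity is local. Away from the node, one of $\sI_X, \sI_Y$ vanishes in a neighborhood of $z = f(a)$ in $Z$, so its pullback vanishes near $a$ and the intersection is trivially zero. At a stalk $R = \cO_{A,a}$ above a node point $z$, the local ring $\cO_{Z,z}$ is (a localization of) $k[x,y]/(xy)$ with $\sI_X = (y)$ and $\sI_Y = (x)$. Writing $\mathfrak{a} = (f^*y)R$ and $\mathfrak{b} = (f^*x)R$, we have $\mathfrak{a}\mathfrak{b} = 0$ because $f^*(xy) = 0$, and the hypotheses translate into isomorphisms $R/\mathfrak{a} \cong \cO_{X,z}$ and $R/\mathfrak{b} \cong \cO_{Y,z}$, both integral domains (localizations of $k[x]$ and $k[y]$). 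Given $\xi = \beta \cdot f^*y \in \mathfrak{a} \cap \mathfrak{b}$, reducing modulo $\mathfrak{b}$ yields $\bar\beta \cdot \overline{f^*y} = 0$ in the domain $R/\mathfrak{b}$; since $\overline{f^*y}$ is a local parameter and hence nonzero, $\bar\beta = 0$, so $\beta \in \mathfrak{b}$ and $\xi \in \mathfrak{a}\mathfrak{b} = 0$.

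The main obstacle is precisely this local identity $\mathfrak{a} \cap \mathfrak{b} = 0$. Although the product vanishing $\mathfrak{a}\mathfrak{b} = 0$ is automatic from $f$ being a morphism into $Z$, the full intersection vanishing is not formal: it is here that the integral-domain structure of $R/\mathfrak{a}$ and $R/\mathfrak{b}$, coming from the isomorphism hypothesis on $f|_{A_X}$ and $f|_{A_Y}$, enters in an essential way. Everything else in the argument (construction of $g$, agreement of $f \circ g$ with $\mathrm{id}_Z$, and the reduction to the intersection of ideals) is formal.
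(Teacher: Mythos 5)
Your overall strategy---construct $g : Z \to A$ from the pushout property, get $f \circ g = \mathrm{id}_Z$ by uniqueness, and reduce $g \circ f = \mathrm{id}_A$ to the scheme-theoretic identity $A = A_X \cup A_Y$, i.e.\ $\sI_{A_X} \cap \sI_{A_Y} = 0$ in $\cO_A$---is the right one, and it fleshes out exactly what the paper's one-line proof intends. The gap is in your verification of that last identity. You assume that at the node $\cO_{Z,z}$ is a localization of $k[x,y]/(xy)$, that $\sI_X$ and $\sI_Y$ are principal there, and that $\cO_{X,z}$ and $\cO_{Y,z}$ are integral domains (``localizations of $k[x]$ and $k[y]$''), so that you can cancel the local parameter $\overline{f^*y}$ in $R/\mathfrak{b}$. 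None of this is among the hypotheses: Lemma \ref{nodal-appendix} concerns arbitrary $S$-schemes $X,Y$ glued along sections, and in the application (Lemma \ref{nodal-iso} inside the proof of Theorem \ref{box-lift}) the branches are the subschemes $S_\nu, S_{\nu'}$ of a Schubert fiber, which are not known to be smooth or integral at the intersection point. Your argument makes essential use of the domain property of $R/\mathfrak{b}$ and the principality of $\mathfrak{a}$, so it does not cover the statement as it is used.

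The identity is in fact formal and needs much less. Write $\phi : \cO_Z \to f_*\cO_A$ and $I = \sI_X$, $J = \sI_Y$; the only inputs are $IJ \subseteq I \cap J = 0$ and the hypothesis that $\cO_Z/I \to f_*\cO_A/If_*\cO_A$ and $\cO_Z/J \to f_*\cO_A/Jf_*\cO_A$ are isomorphisms. First, $\ker\phi \subseteq \phi^{-1}(If_*\cO_A) \cap \phi^{-1}(Jf_*\cO_A) = I \cap J = 0$, so $\phi$ is injective. Next, surjectivity of $\cO_Z \to f_*\cO_A/Jf_*\cO_A$ gives $f_*\cO_A = \phi(\cO_Z) + Jf_*\cO_A$, hence $If_*\cO_A = \phi(I)\phi(\cO_Z) + \phi(IJ)f_*\cO_A = \phi(I)$, and symmetrically $Jf_*\cO_A = \phi(J)$. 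Therefore $If_*\cO_A \cap Jf_*\cO_A = \phi(I) \cap \phi(J) = \phi(I\cap J) = 0$, which is what you need. (The same computation gives $f_*\cO_A = \phi(\cO_Z) + \phi(J) = \phi(\cO_Z)$, so $\phi$ is already an isomorphism of sheaves; since your construction of $g$ shows $f$ is a homeomorphism, this finishes the proof without even invoking the pushout property of $A$.) With this replacement the rest of your argument---the construction of $g$, the agreement of the two sections, and $f\circ g = \mathrm{id}_Z$---is correct as written.
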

\begin{proof}
This follows from using the universal property to construct the inverse map $Z \to A$.
\end{proof}

\bibliographystyle{alpha}
\bibliography{schubert}

\end{document}